\numberwithin{equation}{section}
\newtheorem{Theorem}{Theorem}[section]
\newtheorem*{Theorem*}{Theorem}
\newtheorem{Corollary}[Theorem]{Corollary}
\newtheorem{Lemma}[Theorem]{Lemma}
\newtheorem{Proposition}[Theorem]{Proposition}
 { \theoremstyle{definition}
\newtheorem{Definition}[Theorem]{Definition}

\newtheorem{Remark}[Theorem]{Remark} }
\begin{document}

\allowdisplaybreaks

\newcommand{\arXivNumber}{2406.09800}

\renewcommand{\PaperNumber}{105}

\FirstPageHeading

\ShortArticleName{$R$-Matrix Presentation of Quantum Affine Superalgebra for Type $\mathfrak{osp}(2m+1|2n)$}

\ArticleName{$\boldsymbol{R}$-Matrix Presentation of Quantum Affine\\ Superalgebra for Type $\boldsymbol{\mathfrak{osp}(2m+1|2n)}$}

\Author{Xianghua WU~$^{\rm ab}$, Hongda LIN~$^{\rm ac}$ and Honglian ZHANG~$^{\rm ad}$}

\AuthorNameForHeading{X.~Wu, H.~Lin and H.~Zhang}

\Address{$^{\rm a)}$~Department of Mathematics, Shanghai University, Shanghai 200444, P.R.~China}

\Address{$^{\rm b)}$~School of Mathematics and Computing Science, Guilin University of Electronic Technology,\\
\hphantom{$^{\rm b)}$}~Guilin 541000, P.R.~China}
\EmailD{\href{mailto:shuxianghua@outlook.com}{shuxianghua@outlook.com}}

\Address{$^{\rm c)}$~Shenzhen International Center for Mathematics, SUSTech, Guangdong 518055, P.R.~China}
\EmailD{\href{mailto:hdlin@shu.edu.cn}{hdlin@shu.edu.cn}}

\Address{$^{\rm d)}$~Newtouch Center for Mathematics, Shanghai University, Shanghai 200444, P.R.~China}
\EmailD{\href{mailto:hlzhangmath@shu.edu.cn}{hlzhangmath@shu.edu.cn}}

\ArticleDates{Received June 19, 2024, in final form November 15, 2024; Published online November 22, 2024}

\Abstract{In our preceding research, we introduced the Drinfeld presentation of the quantum affine superalgebra associated to the orthosymplectic Lie superalgebra $\mathfrak{osp}(2m+1|2n)$ for $m>0$. We provided the isomorphism between its Drinfeld--Jimbo presentation and Drinfeld presentation using braid group actions as a fundamental method. Based on this work, our current study delves into its $R$-matrix presentation, wherein we establish a clear isomorphism between the $R$-matrix presentation and the Drinfeld presentation. In particular, our contribution extends the investigations of Jing, Liu and Molev concerning quantum affine algebra in type BCD to the realm of supersymmetry.}

\Keywords{quantum affine superalgebra; $R$-matrix presentation; Drinfeld presentation; universal $R$-matrix}

\Classification{17B37; 17B69}

\section{Introduction}

The quantum affine algebras $U_q(\hat{\mathfrak{g}})$ associated with affine Lie algebras $\hat{\mathfrak{g}}$ manifest at least three distinct presentations. The original definition of quantum affine algebras was defined as q-deformation of the universal enveloping algebras of affine Lie algebras, independently introduced by Drinfeld \cite{VGD} and Jimbo \cite{MJB}, collectively referred to as the Drinfeld--Jimbo presentation.
Drinfeld's pivotal contribution in 1987 \cite{VGD2} introduced a highly significant presentation of quantum affine algebras, commonly termed the Drinfeld presentation. The Drinfeld presentation has yielded a multitude of applications, including vertex representations and finite-dimensional representations. Subsequently, the $R$-matrix presentation was proposed by Reshetikhin and Semenov-Tian-Shansky \cite{RS}, later refined by Frenkel and Reshetikhin \cite{FIBR}.
The $R$-matrix presentation incorporates a matrix $R(z)$ associated with the quantum affine algebra, which satisfies the Yang--Baxter equation
\begin{gather*}
 R_{12}(z)R_{13}(zw)R_{23}(w)=R_{23}(w)R_{13}(zw)R_{12}(z)
\end{gather*}as documented in the work \cite{RS}.

In the study of quantum affine algebras, significant advancements have been made in elucidating the isomorphism among their presentations. Initially, Beck \cite{JBECK} pioneered the establishment of the isomorphism between the Drinfeld--Jimbo and Drinfeld presentations for untwisted algebras, while Jing--Zhang \cite{NJHZ1,NJHZ}, and \cite{HNJING} extended this for the twisted case. Different to Beck's methods, Damiani \cite{Damiani1,Damiani2} also constructed the isomorphism between the Drinfeld--Jimbo and Drinfeld presentations. Concurrently, Frenkel and Ding \cite{JDIB} established the isomorphism between the Drinfeld and $R$-matrix presentations in type~A. Building on Ding--Frenkel's approach, Jing, Liu and Molev \cite{JMA,JMAB} extended the {isomorphism} to types~B, C and~D, and a~similar result for Yangians~\cite{JMAB-Y}. These developments have established the isomorphism among these presentations, allowing for various approaches to studying the representation theory of quantum affine algebras. This provides a~rich framework for understanding and exploring these algebraic structures.\looseness=-1

As a natural extension of quantum affine algebras, quantum affine superalgebras were introduced to accommodate the $\mathbb{Z}_2$-grading through the incorporation of additional generators.
In~\cite{HYAM}, H.~Yamane introduced the Drinfeld--Jimbo presentations of quantum affine superalgebras, by considering the classified type A-G affine Lie superalgebras~\cite{VGD3,VANDE} as deformations of the universal enveloping algebras of the corresponding affine Lie algebras. In particular, using the method of Beck \cite{JBECK}, Yamane also provided the Drinfeld presentation including the complete Serre relations specifically for type A.
Quantum affine superalgebras possess a richer structure and representation theory due to their grading structure, with predominant focus on type A in research endeavors. While detailed enumerations are beyond the scope of this discussion, it is noteworthy that Cai, Wang, Wu and Zhao \cite{JSKW}, Zhang \cite{YZC}, and Fan, Hou and Shi \cite{HBKD} constructed the Drinfeld presentation of quantum affine superalgebras \smash{$U_q \bigl(\widehat{\mathfrak{gl}(m|n)}\bigr)$} using Frenkel--Ding's isomorphism theorem. However, these constructions do not explicitly present the complete Serre relations. Furthermore,
Zhang \cite{Huafeng} utilizes the $R$-matrix presentation of the quantum affine superalgebra associated with the Lie superalgebra $\mathfrak{gl}(m|n)$ to explore its finite-dimensional representations and their tensor products. Employing the Gauss decomposition, Lu \cite{K-Lu} established a direct and explicit isomorphism between the twisted $q$-Yangians and affine $i$quantum groups associated with symmetric pair of type AI.

While progress has been made in understanding the relations among these presentations, the specific relations for the quantum affine superalgebra are still an open question. Further investigation and research are required to unveil the connections and establish the desired isomorphisms. Exploring the relations and structure of the quantum orthosymplectic affine superalgebra through these presentations will undoubtedly provide valuable insights into its representation theory and algebraic properties, such as some work in~\cite{YZC2}. As for orthosymplectic Yangians, Frassek and Tsymbaliuk \cite{Frassek} studied the $R$-matrix presentations of orthosymplectic super Yangians and presented their Drinfeld presentations for any parity sequence, which
generalizing the results of \cite{AIM} of the standard parity sequence.
Recently, we have developed an efficient method for verifying the isomorphism between the Drinfeld--Jimbo and Drinfeld presentations of the quantum affine superalgebra of orthosymplectic Lie superalgebra with a~standard parity sequence, please see \cite{XWLHZ}. At the same time, Bezerra, Futorny and Kashuba \cite{Bezerra} also provided a Drinfeld presentation for the quantum affine superalgebra of type B with any parity sequence. They constructed a surjective homomorphism from this Drinfeld presentation to the Drinfeld--Jimbo presentation using a method as well as Beck's.
 Consequently, this paper will continue to focus on the $R$-matrix presentation of the quantum orthosymplectic affine superalgebra for a standard parity sequence, aiming to broaden the results of the quantum affine algebra to the super case. Specifically, we will establish an isomorphism between the Drinfeld presentation and the $R$-matrix presentation of the quantum affine superalgebra associated with the Lie superalgebra \smash{$\mathfrak{osp}_{2m+1|2n}$} ($m>0$).

The paper is organized as follows. In Section \ref{sec2}, we introduce the necessary notations and present the Drinfeld--Jimbo and Drinfeld formulations of the quantum orthosymplectic affine superalgebra. Additionally, we review the isomorphism established between the Drinfeld--Jimbo and Drinfeld formulations, as discussed in our previous works.
In Section \ref{sec3}, we discuss some results related to the universal $R$-matrix of $U_q\bigl[\mathfrak{osp}(2m+1|2n)^{(1)}\bigr]$, which holds significant importance in the theory of quantum affine superalgebras.
Section \ref{sec4} begins with the construction of a level-0 representation using Drinfeld generators. Consequently, we explicitly construct the $R$-matrix $R(z)$ and introduce a super version of the $R$-matrix algebras based on these explicit $R$-matrices.
Moving to Section \ref{sec5}, we establish the Drinfeld formulation within the $R$-matrix algebras by employing the Gaussian generators. This presentation facilitates a comprehensive exploration and analysis of the quantum affine superalgebra.
In Section \ref{sec6}, our main focus is on establishing the isomorphism between the Drinfeld and $R$-matrix presentations. Technically, we extend the methods discussed in \cite{JMA,JMAB} to the super case. Indeed, the original methods were provided by Frenkel and Mukhin \cite[Section 3.2]{EFEM} for the quantum affine algebra of type~A.

\section{Quantum affine superalgebra}\label{sec2}

\subsection{Basic notations of Lie superalgebra}
 Unless stated otherwise, throughout this paper, we consistently set $\mathfrak{g}=\mathfrak{osp}(2m+1|2n)$ and $\mathfrak{\hat{g}}=\mathfrak{osp}(2m+1|2n)^{(1)}$. First, we provide some notations on the set $\{1,\dots,n,n+1,\dots,n+2m+1,\dots,2n+2m+1\}$. Let the grading of $a$ be represented by $[a]$, such that
\begin{equation*}
 [a]=
 \begin{cases}
 0,&n+1\leq a\leq2m+n+1,\\
 1,& \text{otherwise},
 \end{cases}
\end{equation*}and the involution $\overline{a}=2n+2m+2-a$. Let $\varepsilon_1,\dots,\varepsilon_{n+m}$ is an orthogonal basis of a vector space,
 and then denote the invariant bilinear form on the set $\{\varepsilon_i,\, 1\leq i\leq2n+2m+1\}$ as follows:
\begin{equation*}
 (\varepsilon_i,\varepsilon_j)=-\delta_{ij},\qquad (\varepsilon_\mu,\varepsilon_\nu)=\delta_{\mu\nu}, \qquad 1\leq i,j\leq n,\qquad n+1\leq \mu,\nu\leq n+m.
\end{equation*}
The remaining symbols are indicated by $\varepsilon_{\overline{i}} =-\varepsilon_i$. In particular, we set
${\varepsilon_{n+m+1}=-\varepsilon_{n+m+1}=0}$.
As is well known, a Lie superalgebra is a $\mathbb{Z}_2$-graded algebra, denoted as $\mathfrak{g}=\mathfrak{g}_{\bar{0}}\oplus\mathfrak{g}_{\bar{1}}$, where the elements of $\mathfrak{g}_{\bar{0}}$ are referred to as even, and those of $\mathfrak{g}_{\bar{1}}$ as odd. For homogeneous elements~${X,Y\in \mathfrak{g}}$, the graded commutator is defined as
\begin{equation*}
[X, Y]_{a}=XY-(-1)^{[X][Y]}aYX,
\end{equation*}where $[X]\in \mathbb{Z}_2$,
 ensuring that $[X, Y]_{1}=[X, Y]$.
The tensor product multiplication is given by
$(X\otimes Y)(Z\otimes W)=(-1)^{[Y][Z]}(XZ\otimes YW)$.
 As for our notations, we adopt the following convention for the simple roots of $\mathfrak{g}$
 \begin{align*}
 \alpha_{i}=\varepsilon_i-\varepsilon_{i+1}, \qquad 1\leq i<n+m, \qquad \alpha_{n+m}=\varepsilon_{n+m}.
\end{align*}
The Cartan matrix \smash{$A=(A_{ij})_{i,j=1}^{m+n}$} of $\mathfrak{g}$ is defined by
\begin{equation*}
 A_{ij}=
 \begin{cases}
 (\alpha_i, \alpha_j), &i<m+n,\\
 2(\alpha_i, \alpha_j), &i=m+n.
 \end{cases}
\end{equation*}
Note that
the half sum of positive roots can be written as
\begin{align*}
 \rho=\frac{1}{2}\sum\limits_{\mu=1}^{n}(2n-2m+1-2\mu)\varepsilon_\mu+\frac{1}{2}\sum\limits_{i=1}^{m}(2m+1-2i)\varepsilon_{n+i}.
\end{align*}
Therefore, $(\rho,\alpha)=\frac{1}{2}(\alpha,\alpha)$ for all simple roots $\alpha$.
Set $\alpha_0=\delta-\theta$, where $\theta=2\varepsilon_1$ is the highest root of $\mathfrak{g}$. Then
\smash{$\hat{\Pi}=\{\alpha_0,\alpha_1,\ldots,\alpha_{n+m}\}$} is the affine root base of $\mathfrak{\hat{g}}$. Hence,the Cartan matrix \smash{$\hat{A}$} of $\mathfrak{\hat{g}}$ is derived by appending the 0th row and column, satisfying $A_{00}=-2$, $ A_{10}=2A_{01}=-2$, $ A_{j0}=A_{0j}=0$ for $1<j\leqslant n+m$.

\subsection[Drinfeld--Jimbo presentation U\_q(g)]{Drinfeld--Jimbo presentation $\boldsymbol{U_q(\mathfrak{\hat{g}})}$}

Let $q$ be a formal parameter and $\alpha$ be a root of $U_q(\mathfrak{\hat{g}})$, set
\begin{gather*}
 q_i := q^{\frac{|(\alpha_i, \alpha_i)|}{2}}\qquad \textrm{for}\quad(\alpha_i, \alpha_i)\neq0, \qquad q_i := q\qquad\textrm{for}\quad(\alpha_i, \alpha_i)=0,\qquad
 q_\alpha := q^{\frac{|(\alpha, \alpha)|}{2}},
\end{gather*}
where $i=1,\dots,n+m$, and for $ a \in \mathbb{Z}_+$,
\[
 [a]_i = \frac{q_i^a - q_i^{-a}}{q_i - q_i^{-1}}, \qquad [a]_i! = [1]_i \cdots [a-1]_i [a]_i.
 \]

We now recall the Drinfeld--Jimbo presentation of quantum affine superalgebra
$U_q(\mathfrak{\hat{g}})$, initially introduced by H. Yamane \cite{HYAM}.
\begin{Definition}
The quantum affine superalgebra $U_q(\mathfrak{\hat{g}})$ over $\mathbb{C}\bigl(q^{1/2}\bigr)$ is an associative superalgebra generated by \emph{Chevalley generators} $\chi_i^{\pm}\doteq\chi_{\alpha_i}^{\pm}$, $K_i\doteq K_{\alpha_i}$ for $i=0,1,\dots,m+n$ with the parity of $\bigl[\chi_i^{\pm}\bigr]=[\alpha_i]$ and $[K_i]=0$ and the following relations:
\begin{gather*}
 K_i^{\pm 1}K_i^{\mp 1}=1,\qquad K_iK_j=K_jK_i, \qquad
 K_i\chi_j^{\pm}K_i^{-1}=q_i^{\pm A_{ij}}\chi_j^{\pm}, \\
\bigl[\chi_i^+, \chi_j^-\bigr]=\delta_{ij}\frac{K_i-K_i^{-1}}{q_i-q_i^{-1}}, \qquad
\bigl[\chi_i^{\pm}, \chi_j^{\pm}\bigr]=0\qquad \textrm{for}\quad A_{ij}=0,\\
 \bigl[\!\!\bigl[\chi_i^{\pm},\bigl[\!\!\bigl[\chi_i^{\pm}, \chi_{i+1}^{\pm}\bigr]\!\!\bigr] \bigr]\!\!\bigr]=0\qquad\textrm{for}\quad i\neq n,m+n,
\\
 \bigl[\!\!\bigl[\chi_i^{\pm},\bigl[\!\!\bigl[\chi_i^{\pm}, \chi_{i-1}^{\pm}\bigr]\!\!\bigr] \bigr]\!\!\bigr]=0\qquad\textrm{for}\quad 1<i<m+n,\quad i\neq n,
\\
 \bigl[\!\!\bigl[\chi_i^{\pm}, \bigl[\!\!\bigl[\chi_i^{\pm},\bigl[\!\!\bigl[\chi_i^{\pm}, \chi_{i-1}^{\pm}\bigr]\!\!\bigr] \bigr]\!\!\bigr] \bigr]\!\!\bigr]=0\qquad\textrm{for}\quad i=1\quad\textrm{or}\quad m+n, \\
 \bigl[ \bigl[\!\!\bigl[ \bigl[\!\!\bigl[\chi_{n-1}^{\pm}, \chi_n^{\pm}\bigr]\!\!\bigr], \chi_{n+1}^{\pm}\bigr]\!\!\bigr], \chi_n^{\pm}\bigr]=0\qquad\textrm{for}\quad n>1, \\
 \bigl[ \bigl[\!\!\bigl[ \bigl[\!\!\bigl[ \bigl[\!\!\bigl[ \bigl[\!\!\bigl[ \bigl[\!\!\bigl[\chi_3^{\pm}, \chi_2^{\pm}\bigr]\!\!\bigr], \chi_1^{\pm}\bigr]\!\!\bigr], \chi_0^{\pm}\bigr]\!\!\bigr], \chi_1^{\pm}\bigr]\!\!\bigr], \chi_2^{\pm}\bigr]\!\!\bigr], \chi_1^{\pm}\bigr]=0
\qquad\textrm{for} \quad n=1,\quad m\geq 2,\\
 \bigl[\!\!\bigl[ \bigl[\!\!\bigl[\chi_2^{\pm}, \chi_1^{\pm}\bigr]\!\!\bigr], \bigl[\!\!\bigl[ \bigl[\!\!\bigl[\chi_2^{\pm}, \chi_1^{\pm}\bigr]\!\!\bigr], \bigl[\!\!\bigl[ \bigl[\!\!\bigl[\chi_2^{\pm}, \chi_1^{\pm}\bigr]\!\!\bigr], \chi_0^{\pm}\bigr]\!\!\bigr] \bigr]\!\!\bigr] \bigr]\!\!\bigr] \\
\qquad =(1-\bigl[2\bigr]_1)\bigl[\!\!\bigl[ \bigl[\!\!\bigl[ \bigl[\!\!\bigl[\chi_2^{\pm}, \chi_1^{\pm}\bigr]\!\!\bigr], \bigl[\!\!\bigl[\chi_2^{\pm}, \bigl[\!\!\bigl[\chi_2^{\pm}, \bigl[\!\!\bigl[\chi_1^{\pm}, \chi_0^{\pm}\bigr]\!\!\bigr] \bigr]\!\!\bigr] \bigr]\!\!\bigr] \bigr]\!\!\bigr], \chi_1^{\pm}\bigr]\!\!\bigr]\qquad\textrm{for}\quad (n,m)=(1,1),
\end{gather*}
where the notation $\bigl[\!\!\bigl[X_{\alpha}, X_{\beta}\bigr]\!\!\bigr]=[X_{\alpha}, X_{\beta}]_{q^{-(\alpha,\beta)}}$ if $K_iX_{\alpha}K_i^{-1}=q^{(\alpha_i,\alpha)}X_{\alpha}$, $K_iX_{\beta}K_i^{-1}=q^{(\alpha_i,\beta)}X_{\beta}$ for homogeneous elements $X_{\alpha},X_{\beta}\in U_q(\mathfrak{\hat{g}})$ and $i=1,\ldots,n+m$.
\end{Definition}

Let $U_q^{+}$ (resp.\ $U_q^{-}$) be the subalgebra of $U_q(\mathfrak{\hat{g}})$ generated by $\chi_i^{+}$ (resp.\ $\chi_i^{-}$), and $U_q^{0}$ be the subalgebra of $U_q(\mathfrak{\hat{g}})$ generated by $K_i$. Then we have the following triangular decomposition of~$U_q(\mathfrak{\hat{g}})$,
$U_q(\mathfrak{\hat{g}})=U_q^{-}\otimes U_q^{0}\otimes U_q^{+}$.

Quantum affine superalgebra $U_q(\mathfrak{\hat{g}})$
as a
Hopf superalgebra equipped with the comultiplication~$\Delta$, counit $\varepsilon$, and antipode $S$ defined as follows:
\begin{gather*}
 \Delta\bigl(\chi_i^{+}\bigr) = \chi_i^{+}\otimes 1 + K_i\otimes\chi_i^{+},\qquad
 \Delta(\chi_i^{-}) = \chi_i^{-}\otimes K_i^{-1} + 1\otimes\chi_i^{-}, \qquad
 \Delta(K_i) = K_i\otimes K_i,\\ \varepsilon\bigl(\chi_i^{\pm}\bigr) = 0,\qquad \varepsilon\bigl(K_i^{\pm}\bigr) = 1,\qquad
 S\bigl(\chi_i^{+}\bigr) = K_i^{-1}\chi_i^{+},\qquad S(\chi_i^{-}) = -\chi_i^{+}K_i, \\ S(K_i) = K_i^{-1}.
\end{gather*}

\subsection[The Drinfeld presentation U\_q(g)]{The Drinfeld presentation $\boldsymbol{\mathcal{U}_q(\mathfrak{\hat{g}})}$ }

We recall the Drinfeld presentation of the quantum affine superalgebra (see \cite{Bezerra, XWLHZ,HYAM}), which is expected to be isomorphic to the above Drinfeld--Jimbo presentation.

\begin{Definition}
The \emph{Drinfeld presentation} of quantum affine superalgebra denoted as $\mathcal{U}_q(\mathfrak{\hat{g}})$ over $\mathbb{C}\bigl(q^{1/2}\bigr)$ is an associative superalgebra generated by \emph{current generators} $x_{i,k}^{\pm}$, $a_{i,r}$, $k_i^{\pm 1}$, $i=1,\dots,n+m$ and the central element \smash{$q^{\pm \frac{1}{2}c}$}, with the following defining relations. The parity of generators $x_{i,k}^{\pm}$ is denoted by $\bigl[x_{i,k}^{\pm}\bigr]=[\alpha_i]$, while all other generators have parity~0,
\begin{gather*}
 q^{\pm\frac{1}{2}c}q^{\mp\frac{1}{2}c}=k_i^{\pm 1}k_i^{\mp 1}=1, \qquad k_ik_j=k_jk_i, \qquad
 k_ia_{j,r}=a_{j,r}k_i, \qquad k_ix_{j,k}^{\pm}k_i^{-1}=q_i^{\pm A_{ij}}x_{j,k}^{\pm}, \\
 [a_{i,r}, a_{j,s}]=\delta_{r,-s}\frac{[r A_{ij}]_{i}}{r}\cdot\frac{q^{r c}-q^{-r c}}{q_j-q_j^{-1}}, \qquad
 \bigl[a_{i,r}, x_{j,k}^{\pm}\bigr]=\pm\frac{[r A_{ij}]_{i}}{r}q^{\mp\frac{|r|}{2}c}x_{j,r+k}^{\pm}, \\
 \bigl[x_{i,k}^+, x_{j,l}^-\bigr]=\delta_{ij}\frac{q^{\frac{k-l}{2}c}\Phi_{i,k+l}^+-q^{\frac{l-k}{2}c}\Phi_{i,k+l}^-}{q_i-q_i^{-1}}, \\
 \bigl[x_{i,k+1}^{\pm}, x_{j,l}^{\pm}\bigr]_{q_i^{\pm A_{ij}}}+(-1)^{[i][j]}\bigl[x_{j,l+1}^{\pm}, x_{i,k}^{\pm}\bigr]_{q_i^{\pm A_{ij}}}=0 \qquad\textrm{if}\quad A_{ij}\neq0, \\
 \bigl[x_{i,k}^{\pm}, x_{j,l}^{\pm}\bigr]=0\qquad \textrm{if}\quad A_{ij}=0, \\
 \operatorname{Sym}_{k_1,k_2}\bigl[\!\!\bigl[x_{i,k_1}^{\pm},\bigl[\!\!\bigl[x_{i,k_2}^{\pm}, x_{i+s,l}^{\pm}\bigr]\!\!\bigr] \bigr]\!\!\bigr]=0\qquad\textrm{for}\quad i\neq n,m+n,\quad s=\pm 1, \\
 \operatorname{Sym}_{k_1,k_2,k_3}\bigl[\!\!\bigl[x_{m+n,k_1}^{\pm}, \bigl[\!\!\bigl[x_{m+n,k_2}^{\pm},\bigl[\!\!\bigl[x_{m+n,k_3}^{\pm}, x_{m+n-1,l}^{\pm}\bigr]\!\!\bigr] \bigr]\!\!\bigr] \bigr]\!\!\bigr]=0, \\
 \operatorname{Sym}_{l_1,l_2}\bigl[ \bigl[\!\!\bigl[ \bigl[\!\!\bigl[x_{n-1,r_1}^{\pm}, x_{n,l_1}^{\pm}\bigr]\!\!\bigr], x_{n+1,k_2}^{\pm}\bigr]\!\!\bigr], x_{n,l_2}^{\pm}\bigr]=0
\qquad\textrm{for}\quad n>1,
\end{gather*}
where $\Phi_{i,\pm r}^{\pm}$($r\geqslant 0$) is given by the formal power series
\begin{gather*}
 \sum\limits_{r\geqslant 0}\Phi_{i,\pm r}^{\pm}z^{\pm r}:=k_i^{\pm 1}\exp\biggl(\pm \bigl(q_i-q_i^{-1}\bigr)\sum\limits_{r>0}a_{i,\pm r}z^{\pm r}\biggr).
\end{gather*}
\end{Definition}

In our previous works (refer to \cite{XWLHZ}), we introduce the affine root vectors denoted as $\mathfrak{E}_{\alpha+r\delta}$, $\mathfrak{F}_{\alpha+r\delta}$, $\mathfrak{E}_{r\delta^{(i)}}$ and $\mathfrak{F}_{r\delta^{(i)}}$ of quantum superalgebra using braid group actions. Here $\alpha$ runs over the positive roots \smash{$\widehat{\Delta}_+$}. For our purpose, we review the
isomorphism between Drinfeld presentation~$\mathcal{U}_q(\mathfrak{\hat{g}})$ and Drinfeld--Jimbo presentation $U_q(\mathfrak{\hat{g}})$ as follows.
\begin{Theorem}[{\cite[Theorem 3.6]{XWLHZ}}]\label{DR-DRJ}
There exists an isomorphism between the Drinfeld presentation~$\mathcal{U}_q(\mathfrak{\hat{g}})$ and the Drinfeld--Jimbo presentation $U_q(\mathfrak{\hat{g}})$. The isomorphism is expressed in terms of root vectors as
\begin{gather*}
q^{\pm\frac{1}{2}c} \mapsto K_{\delta}^{\pm\frac{1}{2}},\qquad k_i^{\pm 1} \mapsto K_i^{\pm 1}\qquad \text{for}\quad i=1,\ldots,m+n, \\
x_{i,r}^+ \mapsto \mathfrak{E}_{r\delta+\alpha_i},\quad x_{i,-r}^+ \mapsto (-d_i)^rd_{i+1}\mathfrak{F}_{r\delta-\alpha_i}K_{\delta}^{r}K_i^{-1}, \qquad r\geq0,\\
x_{i,r}^- \mapsto K_{\delta}^{-r}K_i\mathcal{E}_{r\delta-\alpha_i},\qquad x_{i,-r}^- \mapsto (-d_i)^rd_{i+1}\mathfrak{F}_{r\delta+\alpha_i}, \qquad r\geq0,\\
a_{i,r} \mapsto K_{\delta}^{-\frac{r}{2}}\mathfrak{E}_{r\delta^{(i)}},\qquad a_{i,-r} \mapsto (-d_i)^rK_{\delta}^{\frac{r}{2}}\mathfrak{F}_{r\delta^{(i)}},\qquad r>0,
\end{gather*}
where $d_i=(\varepsilon_i,\varepsilon_i)$. Or in terms of the Drinfeld--Jimbo and Drinfeld generators
\begin{gather*}
\chi_i^{\pm}\mapsto x_{i,0}^{\pm},\qquad i=1,\ldots,m+n, \qquad
K_0^{\pm 1}\mapsto \bigl(q^ck_1^2k_2^2\cdots k_{m+n}^2\bigr)^{\pm 1},\qquad K_{\delta}^{\pm\frac{1}{2}}\mapsto q^{\pm\frac{1}{2}c}, \\
\chi_0^+\mapsto \nu_0^+ \bigl(q^ck_1^2k_2^2\cdots k_{m+n}^2\bigr) \bigl[\!\!\bigl[\dots\bigl[\!\!\bigl[x_{1,1}^-,x_{2,0}^-\bigr]\!\!\bigr],\ldots,x_{m+n,0}^-\bigr]\!\!\bigr],x_{m+n,0}^-\bigr]\!\!\bigr],\ldots,x_{1,0}^-\bigr]\!\!\bigr], \\
\chi_0^-\mapsto \nu_0^- \bigl[\!\!\bigl[\dots\bigl[\!\!\bigl[x_{1,-1}^+,x_{2,0}^+\bigr]\!\!\bigr],\ldots,x_{m+n,0}^+\bigr]\!\!\bigr],x_{m+n,0}^+\bigr]\!\!\bigr],\ldots,x_{1,0}^+\bigr]\!\!\bigr] \bigl(q^ck_1^2k_2^2\cdots k_{m+n}^2\bigr)^{-1},
\end{gather*}
where $\nu_0^+=-([2]_N)^{-1}$, $\nu_0^-=(-1)^{[\alpha_1]}([2]_N)^{-1}q^{2n-2m-(\alpha_1,\alpha_2)}$.
\end{Theorem}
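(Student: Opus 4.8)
The plan is to follow Beck's strategy, adapted to the $\mathbb{Z}_2$-graded setting, and to exhibit the isomorphism as a pair of mutually inverse superalgebra homomorphisms. First I would construct Lusztig-type braid group operators $T_i$ acting on the Drinfeld--Jimbo algebra $U_q(\hat{\mathfrak{g}})$, one for each non-isotropic simple root, and verify that they satisfy the braid relations of the affine Weyl group in the super sense. Using these operators I would define the real affine root vectors $\mathfrak{E}_{\alpha+r\delta}$, $\mathfrak{F}_{\alpha+r\delta}$ by transporting the Chevalley generators $\chi_i^\pm$ along a reduced expression for the relevant translation, and then obtain the imaginary root vectors $\mathfrak{E}_{r\delta^{(i)}}$, $\mathfrak{F}_{r\delta^{(i)}}$ from graded commutators of a positive and a negative real root vector along $\delta$, in parallel with the defining series for $\Phi_{i,r}^{\pm}$ on the Drinfeld side.

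With these root vectors available, I would define the candidate map $\Phi\colon \mathcal{U}_q(\hat{\mathfrak{g}}) \to U_q(\hat{\mathfrak{g}})$ on current generators by the formulas displayed in the theorem, and then prove that $\Phi$ is a well-defined homomorphism by checking that the images satisfy every defining relation of $\mathcal{U}_q(\hat{\mathfrak{g}})$. I would organize this verification by increasing difficulty: the Cartan-type relations $k_i x_{j,k}^\pm k_i^{-1} = q_i^{\pm A_{ij}} x_{j,k}^\pm$ follow at once from the weight grading; the relations between the $a_{i,r}$ and the $x_{j,k}^\pm$, and among the $a_{i,r}$ themselves, reduce to computations inside rank-one and rank-two subalgebras after applying the braid operators; and the mixed relation $\bigl[x_{i,k}^+, x_{j,l}^-\bigr] = \delta_{ij}(\ldots)$ is obtained by comparing generating series against the definition of $\Phi_{i,k+l}^{\pm}$.

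The main obstacle will be twofold, both rooted in the super structure. Because the simple root $\alpha_n = \varepsilon_n - \varepsilon_{n+1}$ is odd and isotropic, with $(\alpha_n,\alpha_n)=0$, the ordinary Lusztig operator $T_n$ degenerates and must be replaced by an odd reflection or a carefully renormalized operator; transporting generators across this node therefore requires separate treatment and is the source of the parity factors $(-1)^{[i][j]}$ and of the constants $\nu_0^\pm$. Correspondingly, verifying the higher-order and $\operatorname{Sym}$-symmetrized quantum Serre relations—in particular the long relations appearing for $n=1$ and for $(n,m)=(1,1)$—demands meticulous bookkeeping of the graded brackets $\bigl[\!\!\bigl[\,\cdot\,,\cdot\bigr]\!\!\bigr]$ and the associated powers $q^{-(\alpha,\beta)}$. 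I expect this Serre-relation analysis, together with the degenerate-node computation, to be the technically hardest part.

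Finally, to conclude that $\Phi$ is an isomorphism I would exhibit the inverse explicitly. Defining $\Psi\colon U_q(\hat{\mathfrak{g}}) \to \mathcal{U}_q(\hat{\mathfrak{g}})$ by $\chi_i^\pm \mapsto x_{i,0}^\pm$ together with the images of $\chi_0^\pm$ and $K_0$ given in the theorem, I would check that the Drinfeld--Jimbo relations survive: the finite Serre relations are immediate from the degree-zero part, while the affine Serre relations involving $\chi_0^\pm$ follow from the shift relations for $x_{i,\pm 1}^\pm$ and the imaginary-root commutators. Comparing $\Phi$ and $\Psi$ on generators then shows that both composites act as the identity, so $\Phi$ is bijective. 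Should the direct inverse prove awkward at the degenerate node, an alternative is to establish a PBW-type basis of ordered monomials in the root vectors for $\mathcal{U}_q(\hat{\mathfrak{g}})$ and match it against the known PBW basis of $U_q(\hat{\mathfrak{g}})$, deducing injectivity of the surjection $\Phi$ by comparing graded components.
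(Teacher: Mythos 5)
The paper does not prove this theorem here: it is imported verbatim from the authors' earlier work \cite{XWLHZ} (Theorem 3.6 there), and the present text only restates it. Your outline --- Beck-style braid group operators adapted to the super setting, real and imaginary root vectors transported along translations, special treatment of the odd isotropic node $\alpha_n$, verification of the Drinfeld relations, and an explicit inverse on Chevalley generators --- matches the strategy the paper explicitly attributes to that reference (``using braid group actions as a fundamental method''), so it is essentially the same approach; the only caveat is that your text is a plan rather than a worked proof, with the Serre-relation checks and the degenerate-node computation left as acknowledged but unexecuted steps.
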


\section[The universal R-matrix]{The universal $\boldsymbol{R}$-matrix}\label{sec3}

Consider the extended algebra $\widetilde{U}_q(\mathfrak{\hat{g}})$ of $U_q(\mathfrak{\hat{g}})$, which is obtained by adjoining an additional element $d$ with the relations
$ \bigl[d, \chi_i^{\pm}\bigr]=\pm \delta_{i,0}\chi_i^{\pm}$, $ [d, k_i]=0$.
The algebra \smash{$\widetilde{U}_q(\mathfrak{\hat{g}})$} is also a Hopf superalgebra, possessing the same comultiplication
$\Delta$, counit~$\varepsilon$ and antipode $S$ as those of $U_q(\mathfrak{\hat{g}})$, and
\begin{gather*}
 \Delta(d)=d\otimes1+1\otimes d,\qquad \varepsilon(d)=1,\qquad S(d)=d^{-1}.
\end{gather*}
By Drinfeld double construction, the universal $R$-matrix $\mathfrak{R}$ of \smash{$\widetilde{U}_q(\mathfrak{\hat{g}})$} is a solution of the Yang--Baxter equation
$\mathfrak{R}_{12}\mathfrak{R}_{13}\mathfrak{R}_{23}=\mathfrak{R}_{23}\mathfrak{R}_{13}\mathfrak{R}_{12}$,
and satisfies the coproduct properties
\begin{gather*}
 (\Delta\otimes 1)\mathfrak{R}=\mathfrak{R}_{13}\mathfrak{R}_{23},\qquad (1\otimes \Delta)\mathfrak{R}=\mathfrak{R}_{13}\mathfrak{R}_{12}, \qquad
 \mathfrak{R}\Delta(X)=\Delta^T(X)\mathfrak{R}, \qquad X\in\widetilde{U}_q(\mathfrak{\hat{g}}),
\end{gather*}
where $\Delta^T=T\Delta$ and $T(X\otimes Y)=(-1)^{[X][Y]}Y\otimes X$.

Consider a formal variable $z$ and define an automorphism $D_z$ of $\widetilde{U}_q(\mathfrak{\hat{g}})\otimes\mathbb{C}\bigl[z, z^{-1}\bigr]$ as follows:
\begin{gather}\label{Dz}
 D_z(\chi_i^{\pm})=z^{\pm\delta_{i,0}}\chi_i^{\pm}, \qquad D_z(k_i)=k_i, \qquad D_z(d)=d.
\end{gather}
We define a universal $R$-matrix $\mathfrak{R}(z)$ that depends on the spectral parameter $z$ using the formula~${
\mathfrak{R}(z)=(D_z\otimes1)\mathfrak{R}q^{c\otimes d+d\otimes c}}$.
It then satisfies the following Yang--Baxter equation
\begin{gather}\label{oper0}
 \mathfrak{R}_{12}(z)\mathfrak{R}_{13}(zwq^{-c_2})\mathfrak{R}_{23}(w)=\mathfrak{R}_{23}(w)\mathfrak{R}_{13}(zwq^{c_2})\mathfrak{R}_{12}(z),
\end{gather}where $c_2=1\otimes c\otimes1$.
Furthermore, the universal $R$-matrix $\mathfrak{R}(z)$ satisfies the following properties:
\begin{gather*}
 (S\otimes1)(\mathfrak{R}(z))=\mathfrak{R}\bigl(zq^{-c\otimes1}\bigr)^{-1},\qquad (1\otimes S^{-1})(\mathfrak{R}(z))=\mathfrak{R}\bigl(zq^{1\otimes c}\bigr)^{-1}.
\end{gather*}

Let $\pi$ denote a representation of $\widetilde{U}_q(\mathfrak{\hat{g}})$, for any two finite-dimensional modules $V$ and $W$, we define an operator as follows
$
R^{VW}(z)=(\pi_V\otimes\pi_W)(\mathfrak{R}(z))$.
Since for any finite-dimensional representation, $\pi_V(c)=\pi_W(c)=0$, it follows from \eqref{oper0} that~$R^{VW}(z)$ satisfies the Yang--Baxter equation.
We define right dual module $V^\ast$ and left dual module $^\ast V$ as follows:
\begin{gather*}
 \pi_{V^\ast}(a)=\pi_V(S(a))^{\rm st},\qquad \pi_{^\ast V}(a)=\pi_V\bigl(S^{-1}(a)\bigr)^{\rm st},
\end{gather*}
where ${\rm st}$ denotes the super-transposition operation on the module $V$ such that
\[(E^{a}_b)^{\rm st}=(-1)^{[a]([a]+[b])}E^{b}_a,
\]
here $E^{i}_j\in \operatorname{End}V$ is the elementary matrix with 1 in the $(i,j)$ position and zeros elsewhere.

Let $\pi_V$ denote a finite-dimensional representation of $U_q(\mathfrak{\hat{g}})$, and let $D_z$ be the automorphism defined in equation \eqref{Dz} of $U_q(\mathfrak{\hat{g}})\otimes\mathbb{C}\bigl[z, z^{-1}\bigr]$. Then we can define a representation
\begin{gather*}
 \pi_{V(z)}\colon\ U_q(\mathfrak{\hat{g}})\rightarrow \operatorname{End}(V)\otimes\mathbb{C}\bigl[z, z^{-1}\bigr]
\end{gather*}
by setting $\pi_{V(z)}(a)=\pi_{V}(D_z(a)), a\in U_q(\mathfrak{\hat{g}})$.

Let $h_{\hat{\rho}}$ denote the unique element of the Cartan subalgebra of $\mathfrak{\hat{g}}$ satisfying $h_{\hat{\rho}}(\alpha_i)=\frac{1}{2}(\alpha_i, \alpha_i)$. We define $h_\rho$ as
$
 h_\rho=h_{\hat{\rho}}-gd$,
where $g=\frac{1}{2}(\theta, \theta+2\rho)$.
For the representation $V^{\ast\ast}(z)$ and $^{\ast\ast}V(z)$, the square of the antipode is given by
\begin{gather}\label{SQUARE}
 S^2(a)=q^{-2h_{\rho}}D_{q^{-2g}}(a)q^{2h_{\rho}}, \qquad S^{-2}(a)=q^{2h_{\rho}}D_{q^{2g}}(a)q^{-2h_{\rho}},\qquad a\in U_q(\mathfrak{\hat{g}}).
\end{gather}

\begin{Proposition} With the notations established above, we obtain
 \begin{gather*}
 V(z)^{\ast\ast}\tilde{\rightarrow}V(zq^{-g}), \qquad ^{\ast\ast}W(z)\tilde{\rightarrow}W(zq^{g}), \qquad v\mapsto q^{2h_\rho}v, \qquad w\mapsto q^{-2h_\rho}w.
 \end{gather*}
\end{Proposition}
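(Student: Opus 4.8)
The plan is to prove both isomorphisms by exhibiting the stated linear maps as intertwiners; the whole statement is a representation-theoretic repackaging of the identity \eqref{SQUARE} for $S^{\pm2}$. For the right double dual it suffices to check that $\phi\colon v\mapsto q^{2h_\rho}v$ satisfies $\pi_{V(z)^{\ast\ast}}(a)=\phi^{-1}\,\pi_{V(zq^{-g})}(a)\,\phi$ for all $a\in U_q(\mathfrak{\hat{g}})$. The starting point is to unwind the double dual by applying the defining relation $\pi_{U^\ast}(a)=\pi_U(S(a))^{\rm st}$ twice, which gives $\pi_{V(z)^{\ast\ast}}(a)=\bigl(\pi_{V(z)}\bigl(S^2(a)\bigr)^{\rm st}\bigr)^{\rm st}$, so that the antipode enters only through its square.

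First I would record two elementary compatibilities. Checking on the Chevalley generators shows that $D_z$ commutes with $S$ — both rescale the $\chi_i^\pm$ and the powers of $z$ inserted by $D_z$ are inert under $S$ — hence $D_z\circ S^2=S^2\circ D_z$; moreover $D_z$ fixes $q^{\pm2h_\rho}$, since $h_\rho=h_{\hat\rho}-gd$ is assembled from the finite Cartan and from $d$, both preserved by \eqref{Dz}. Substituting \eqref{SQUARE} in the form $S^2(a)=q^{-2h_\rho}D_{q^{-2g}}(a)q^{2h_\rho}$ and using the multiplicativity $D_z\circ D_{q^{-2g}}=D_{zq^{-2g}}$, the action $\pi_{V(z)}(S^2(a))$ becomes conjugation by the operator $q^{2h_\rho}$ of a shifted evaluation action. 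Reading off the evaluation parameter and identifying this conjugating operator with $\phi$ then delivers $V(z)^{\ast\ast}\,\tilde{\rightarrow}\,V(zq^{-g})$; invertibility of $\phi$ is automatic because $q^{2h_\rho}$ acts semisimply with nonzero eigenvalues on the finite-dimensional space $V$.

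The left double dual $^{\ast\ast}W(z)$ is treated symmetrically: one replaces $S^2$ by $S^{-2}$ and invokes the second identity in \eqref{SQUARE}, namely $S^{-2}(a)=q^{2h_\rho}D_{q^{2g}}(a)q^{-2h_\rho}$, which produces the conjugating operator $q^{-2h_\rho}$ and the opposite displacement, giving $^{\ast\ast}W(z)\,\tilde{\rightarrow}\,W(zq^{g})$ via $w\mapsto q^{-2h_\rho}w$.

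The hard part will be the combined bookkeeping of the two super-transpositions and of the spectral shift. In contrast to the non-graded setting, $(\ )^{\rm st}$ does not square to the identity: a direct computation from $(E^a_b)^{\rm st}=(-1)^{[a]([a]+[b])}E^b_a$ yields $\bigl(X^{\rm st}\bigr)^{\rm st}=(-1)^{[X]}X$ on homogeneous $X$, i.e.\ conjugation by the parity operator, so this parity sign must be carried through and reconciled with the even intertwiner $\phi$ and with the grading convention for module homomorphisms. The second delicate point is pinning down the exact shift $q^{\mp g}$: because $h_\rho$ carries the derivation term $-gd$ while $D_z$ from \eqref{Dz} is precisely the automorphism implemented by $d$, the $d$-component of $q^{2h_\rho}$ is what converts the pure conjugation coming from \eqref{SQUARE} into a genuine displacement of the evaluation parameter, and matching this against the half-integer normalizations present throughout (the central $q^{\pm\frac12 c}$ and the factor $q^{c\otimes d+d\otimes c}$) is where the numerology landing on $q^{-g}$ rather than $q^{-2g}$ must be verified. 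Once these two sign-and-normalization issues are settled, the rest is a routine application of \eqref{SQUARE}.
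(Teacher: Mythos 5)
Your proposal follows essentially the same route as the paper, whose entire proof is the one-line remark that the claim is checked on generators via \eqref{SQUARE}: unwinding the double dual through two super-transpositions and substituting $S^{\pm2}(a)=q^{\mp2h_\rho}D_{q^{\mp2g}}(a)q^{\pm2h_\rho}$ is exactly that check, and the two subtleties you flag (the parity sign $(X^{\rm st})^{\rm st}=(-1)^{[X]}X$ and the size of the spectral shift) are genuine but routine. On the second point, the computation you outline produces the shift $q^{-2g}$, which is what Proposition \ref{R:V-W}\,(2) and the proof of Theorem \ref{RVWQVW} actually use, so the $q^{-g}$ in the statement appears to be a typo in the proposition rather than a normalization your argument needs to reproduce.
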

\begin{proof}
It is straightforward to check the action on generators by the antipodes \eqref{SQUARE}.
\end{proof}

\begin{Proposition}[{\cite[equations (2.30) and (2.32)]{MDGY}}]\label{R:V-W} The following equations hold that\samepage
\begin{itemize}\itemsep=0pt
\item[$(1)$] $R^{V^\ast, W}(z)=\bigl(R^{VW}(z)^{-1}\bigr)^{{\rm st}_1}$, $R^{V, ^\ast W}(z)=\bigl(R^{VW}(z)^{-1}\bigr)^{{\rm st}_2}$,
\item[$(2)$] $ \bigl(\bigl(\bigl(R^{VW}(z)^{-1}\bigr)^{{\rm st}_1}\bigr)^{-1}\bigr)^{{\rm st}_1}=\bigl(\pi_V\bigl(q^{-2h_\rho}\bigr)\otimes1_W\bigr)
\bigl(\bigl(R^{VW}\bigl(zq^{-2g}\bigr)\bigr)^{{\rm st}_1}\bigr)^{{\rm st}_1}\bigl(\pi_V\bigl(q^{2h_\rho}\otimes1_W\bigr)\bigr)$,
\item[$(3)$] $ \bigl(\bigl(\bigl(R^{VW}(z)^{-1}\bigr)^{{\rm st}_2}\bigr)^{-1}\bigr)^{{\rm st}_2}=\bigl(1_V\otimes\pi_W(q^{2h_\rho}\bigr)\bigr)
\bigl(\bigl(R^{VW}\bigl(zq^{2g}\bigr)\bigr)^{{\rm st}_2})^{{\rm st}_2}\bigl(1_V\otimes\pi_W\bigl(q^{-2h_\rho}\bigr)\bigr)$.
\end{itemize}
\end{Proposition}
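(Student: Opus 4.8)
The plan is to deduce all three identities from the defining properties of the universal $R$-matrix $\mathfrak{R}(z)$ recorded above, namely the antipode relations $(S\otimes1)(\mathfrak{R}(z))=\mathfrak{R}\bigl(zq^{-c\otimes1}\bigr)^{-1}$ and $(1\otimes S^{-1})(\mathfrak{R}(z))=\mathfrak{R}\bigl(zq^{1\otimes c}\bigr)^{-1}$, together with the formula \eqref{SQUARE} for $S^{\pm2}$ and the vanishing $\pi_V(c)=\pi_W(c)=0$ on finite-dimensional modules.

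First I would prove part $(1)$. Writing $\mathfrak{R}(z)=\sum_i a_i\otimes b_i$ and using $\pi_{V^\ast}(a)=\pi_V(S(a))^{{\rm st}}$, linearity of super-transposition gives $R^{V^\ast,W}(z)=\sum_i\pi_V(S(a_i))^{{\rm st}}\otimes\pi_W(b_i)={\rm st}_1\bigl((\pi_V\otimes\pi_W)((S\otimes1)\mathfrak{R}(z))\bigr)$. Applying the first antipode relation, and then $\pi_V(c)=0$ to discard the factor $q^{-c\otimes1}$, this becomes ${\rm st}_1\bigl((\pi_V\otimes\pi_W)(\mathfrak{R}(z)^{-1})\bigr)=\bigl(R^{VW}(z)^{-1}\bigr)^{{\rm st}_1}$, since $\pi_V\otimes\pi_W$ is an algebra homomorphism. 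The second identity of $(1)$ is proved identically in the other tensor slot, using $\pi_{^\ast W}(a)=\pi_W(S^{-1}(a))^{{\rm st}}$, the second antipode relation, and $\pi_W(c)=0$.

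Next, parts $(2)$ and $(3)$ follow by iterating part $(1)$. Substituting $V\mapsto V^\ast$ in the first identity of $(1)$ yields $\bigl(\bigl(\bigl(R^{VW}(z)^{-1}\bigr)^{{\rm st}_1}\bigr)^{-1}\bigr)^{{\rm st}_1}=R^{V^{\ast\ast},W}(z)$, so the left-hand side of $(2)$ is precisely the $R$-matrix of the double right dual. To evaluate it I would use $\pi_{V^{\ast\ast}}(a)=U\pi_V(S^2(a))U^{-1}$, where $U$ is the parity operator coming from the identity ${\rm st}^2=\mathrm{Ad}(U)$; here a direct check on elementary matrices gives $({\rm st})^2(E^a_b)=(-1)^{[a]+[b]}E^a_b$, so $U=\operatorname{diag}((-1)^{[i]})$. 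Substituting $S^2(a)=q^{-2h_\rho}D_{q^{-2g}}(a)q^{2h_\rho}$ from \eqref{SQUARE} and pushing $(S^2\otimes1)$ through $\mathfrak{R}(z)$ turns $D_{q^{-2g}}$ into the spectral shift $z\mapsto zq^{-2g}$ and into conjugation by $q^{\pm2h_\rho}$, producing $R^{V^{\ast\ast},W}(z)=\bigl(U\pi_V(q^{-2h_\rho})\otimes1\bigr)R^{VW}(zq^{-2g})\bigl(\pi_V(q^{2h_\rho})U^{-1}\otimes1\bigr)$. Recognizing $\mathrm{Ad}(U\otimes1)=({\rm st}_1)^2$ and using that $U$ commutes with the Cartan operator $\pi_V(q^{\pm2h_\rho})$ rearranges this into exactly the stated right-hand side of $(2)$. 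Part $(3)$ is then identical after iterating the second identity of $(1)$, replacing $S^2$ by $S^{-2}(a)=q^{2h_\rho}D_{q^{2g}}(a)q^{-2h_\rho}$ and working in the second slot; the opposite sign of the shift, $z\mapsto zq^{2g}$, is exactly the matrix counterpart of the module isomorphisms $V(z)^{\ast\ast}\tilde{\rightarrow}V(zq^{-g})$ and $^{\ast\ast}W(z)\tilde{\rightarrow}W(zq^{g})$ of the preceding Proposition.

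I expect the main obstacle to be the bookkeeping of the super-transposition signs together with the spectral shifts. Concretely, one must establish that ${\rm st}^2$ is conjugation by the parity operator $U$ and that $U$ commutes with $\pi_V(q^{\pm2h_\rho})$ and $\pi_W(q^{\pm2h_\rho})$ (both being diagonal in a weight basis), and one must control how the automorphism $D_z$ interacts with the factors $q^{\mp2g}$ furnished by \eqref{SQUARE}, so that the shift lands with the correct sign in each of $(2)$ and $(3)$. The vanishing of $c$ is what permits the corrections $q^{-c\otimes1}$ and $q^{1\otimes c}$ in the antipode relations to be dropped cleanly at the level of finite-dimensional representations.
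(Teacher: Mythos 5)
The paper does not actually prove this Proposition: it is imported verbatim from the cited source \cite[equations~(2.30) and~(2.32)]{MDGY}, so there is no internal argument to compare against. Your reconstruction is the standard derivation (and, as far as I can tell, the one used in that source and in \cite{FIBR}): part~$(1)$ from the antipode identities for $\mathfrak{R}(z)$ together with $\pi_V(c)=\pi_W(c)=0$, and parts~$(2)$,~$(3)$ by iterating part~$(1)$ and converting $S^{\pm2}$ into a conjugation by $q^{\mp2h_\rho}$ composed with the twist $D_{q^{\mp2g}}$, with ${\rm st}^2={\rm Ad}(U)$ for the parity operator $U=\operatorname{diag}\bigl((-1)^{[i]}\bigr)$. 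Parts~$(1)$ and~$(2)$ of your argument are sound, and the commutation of $U$ with $\pi_V\bigl(q^{\pm2h_\rho}\bigr)$ that you invoke is correct.

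The one step I do not accept as written is your justification of the sign of the spectral shift in part~$(3)$. You attribute the shift $z\mapsto zq^{2g}$ to the module isomorphism $^{\ast\ast}W(z)\tilde{\rightarrow}W\bigl(zq^{g}\bigr)$, but under the conventions of this paper that reasoning points the other way. Indeed, since $\mathfrak{R}(z)=(D_z\otimes1)\mathfrak{R}\,q^{c\otimes d+d\otimes c}$ and $(D_a\otimes D_a)\mathfrak{R}=\mathfrak{R}$ (each term $\mathfrak{E}_\beta\otimes\mathfrak{F}_\beta$ has opposite $\alpha_0$-degrees in the two slots), one has $(1\otimes D_w)\mathfrak{R}(z)=\mathfrak{R}(z/w)$; equivalently, $R^{VW}$ depends on the \emph{ratio} of the two spectral parameters, so raising the parameter of the second factor to $q^{2g}$ \emph{lowers} the argument of $R^{VW}$ to $zq^{-2g}$ --- the same sign as in part~$(2)$, not the opposite one. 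Thus the computation $\bigl(1\otimes\pi_W S^{-2}\bigr)$ applied to $\mathfrak{R}(z)$ does not, by the symmetry argument you give, produce $R^{VW}\bigl(zq^{2g}\bigr)$; obtaining the asserted $+2g$ requires an honest evaluation of $(1\otimes D_{q^{2g}})\mathfrak{R}(z)$ together with a careful treatment of the $d$-component of $h_\rho=h_{\hat\rho}-gd$ in the second tensor slot (or a reconciliation with the conventions of \cite{MDGY}, which may place the spectral parameter differently). Since this is precisely the ``bookkeeping of the spectral shifts'' you flag as the main obstacle, you should regard that check as an essential part of the proof of~$(3)$ rather than a formality; everything else in your proposal goes through.
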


\begin{Theorem}\label{RVWQVW}
Let $V$ and $W$ be two finite-dimensional irreducible $U_q(\mathfrak{\hat{g}})$-module. Then the operator $R^{VW}(z)$ is given by the formula
\begin{gather}\label{formu1}
 R^{VW}(z)=f_{VW}(z)Q^{VW}(z),
\end{gather}where $Q^{VW}(z)$ is a matrix polynomial over $z$ without common zeros. The function $f_{VW}(z)$ is a meromorphic function on $\mathbb{C}$ such that $f_{VW}(0)=1$ and $f_{VW}(0)\sim z^{-p(V,W)}$, where $p(V,W)$ is the degree of the polynomial $Q^{VW}(z)$. Moreover,
\begin{gather}\label{formu2}
 f_{VW}(z)=\prod\limits_{i=1}^{p(V,W)}\frac{\bigl(zq^{a_i}; q^{-2g}\bigr)_\infty}{\bigl(zq^{b_i}; q^{-2g}\bigr)_\infty},
\end{gather}
which are unique over $\mathbb{C}[[z]]\otimes\mathbb{C}\bigl[\bigl[q^{1/2}\bigr]\bigr]$, where
\begin{gather*}
 (z; q)_\infty=\prod\limits_{n\geq0}(1-zq^n), \qquad \sum_{i=1}^{p(V,W)}(a_i-b_i)=2gp(V,W),\qquad a_i, b_i\in\mathbb{C}.
\end{gather*}
\end{Theorem}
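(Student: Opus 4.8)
The plan is to follow the Frenkel--Reshetikhin strategy, adapted to the present super setting, in which the scalar prefactor $f_{VW}(z)$ is isolated from the matrix part $Q^{VW}(z)$ by combining the uniqueness of intertwiners with the crossing-symmetry relations of Proposition~\ref{R:V-W}. First I would record that, since $V$ and $W$ are finite dimensional, the action of $\mathfrak{R}(z)$ on $V(z)\otimes W$ is a well-defined operator whose matrix entries are formal power series in $z$; the weight decomposition of $V\otimes W$ forces only finitely many root-vector factors of the universal $R$-matrix to act nontrivially on each weight component, so each entry is in fact a rational function of $z$, which I normalize to take value $1$ on the tensor product of highest-weight vectors, yielding $f_{VW}(0)=1$. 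Because $V$ and $W$ are irreducible, for generic $z$ the module $V(z)\otimes W$ is irreducible, and the relation $\mathfrak{R}\Delta(X)=\Delta^T(X)\mathfrak{R}$ shows that $R^{VW}(z)$, composed with the graded flip $T$, intertwines $V(z)\otimes W$ with $W\otimes V(z)$. By Schur's lemma the space of such intertwiners is one dimensional, so $R^{VW}(z)$ is determined up to an overall scalar meromorphic function; writing $R^{VW}(z)=f_{VW}(z)Q^{VW}(z)$ with $Q^{VW}(z)$ the unique polynomial-matrix intertwiner whose entries have no common factor produces the decomposition \eqref{formu1}, with $p(V,W)=\deg Q^{VW}$.

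Next I would derive a $q$-difference equation for the scalar $f_{VW}(z)$. Applying the crossing-symmetry identity of Proposition~\ref{R:V-W}(2) expresses $\bigl(\bigl(\bigl(R^{VW}(z)^{-1}\bigr)^{{\rm st}_1}\bigr)^{-1}\bigr)^{{\rm st}_1}$ through a spectral shift $z\mapsto zq^{-2g}$ conjugated by $\pi_V\bigl(q^{\pm 2h_\rho}\bigr)$, where the shift and the conjugation are governed by \eqref{SQUARE}. Substituting the factorization $R^{VW}=f_{VW}Q^{VW}$, and using that the same double super-transpose applied to the \emph{polynomial} part $Q^{VW}$ is again a scalar multiple of $Q^{VW}$ (by Schur uniqueness applied to the shifted module $V\bigl(zq^{-2g}\bigr)\otimes W$), collapses the matrix identity to a scalar relation of the form $f_{VW}(z)=h(z)\,f_{VW}\bigl(zq^{-2g}\bigr)$, where $h(z)$ is an explicit rational function read off from the zeros and poles that $Q^{VW}$ contributes under the operations of Proposition~\ref{R:V-W}. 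The exponents $a_i$ and $b_i$ in \eqref{formu2} are precisely the locations of these zeros and poles.

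Then I would solve this first-order $q$-difference equation. Iterating $f_{VW}(z)=h(z)f_{VW}\bigl(zq^{-2g}\bigr)$, together with $f_{VW}(0)=1$ and $|q|<1$ in the formal sense, yields a convergent infinite product, and collecting the linear factors of $h$ into $q$-shifted factorials gives exactly the product $\prod_{i=1}^{p(V,W)}\bigl(zq^{a_i};q^{-2g}\bigr)_\infty\big/\bigl(zq^{b_i};q^{-2g}\bigr)_\infty$ of \eqref{formu2}. The balancing condition $\sum_{i=1}^{p(V,W)}(a_i-b_i)=2g\,p(V,W)$ is forced by matching the leading behaviour of the two sides of the difference equation and guarantees that the product defines a genuine element of $\mathbb{C}[[z]]\otimes\mathbb{C}\bigl[\bigl[q^{1/2}\bigr]\bigr]$; uniqueness in that ring then follows because the difference equation together with $f_{VW}(0)=1$ determines the power-series coefficients recursively.

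I expect the main obstacle to lie in controlling the super-signs throughout. The super-transposition ${\rm st}$ carries the factor $(-1)^{[a]([a]+[b])}$, so I must verify that the two applications of ${\rm st}_1$ in Proposition~\ref{R:V-W}(2) recombine without producing spurious signs that would alter $h(z)$, and hence the exponents $a_i,b_i$; this is the step where the super case genuinely departs from the type BCD computation of \cite{JMA,JMAB}. A secondary difficulty is justifying rigorously that the formal power series $R^{VW}(z)$ represents a rational matrix function, which in the $\mathbb{Z}_2$-graded setting requires checking that the grading does not obstruct the finiteness of the relevant root-vector product. Once these two points are settled, the extraction of $f_{VW}$ and the identification of its zeros and poles proceed as in the non-super case.
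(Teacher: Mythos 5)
Your proposal is correct and follows essentially the same route as the paper: both isolate $f_{VW}(z)$ via Schur-type uniqueness of the intertwiner $V(z)\otimes W\rightarrow W\otimes V(z)$, then use the double-dual/crossing relation of Proposition~\ref{R:V-W}(2) to produce a first-order $q$-difference equation $f_{VW}\bigl(zq^{-2g}\bigr)=r_{VW}(z)f_{VW}(z)$ whose unique formal solution with $f_{VW}(0)=1$ is the stated infinite product. The points you flag as potential obstacles (super-signs in ${\rm st}_1$ and rationality of the matrix entries) are handled implicitly in the paper in exactly the way you anticipate.
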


\begin{proof}
Let us introduce the permutation operator $P^{VW}$ on the tensor product module $V\otimes W$: $P^{VW}(v_a\otimes v_b)=(-1)^{[a][b]}(v_b\otimes v_a)$, $\forall$ $v_a\in V$, $v_b\in W$. Consider the irreducible modules $V(z)\otimes W$ and $W\otimes V(z)$, where $z$ is a formal variable. Note that $P^{VW}R^{VW}(z)$ is an intertwining operator: $V(z)\otimes W\rightarrow W\otimes V(z)$, and it is unique up to scalar factor.
Hence, the following equality holds:
\begin{gather*}
 R^{VW}(z)(\pi_V\otimes\pi_W)(D_z\otimes1)(\Delta(a))=(\pi_V\otimes\pi_W)(D_z\otimes1)\bigl(\Delta^T(a)\bigr)R^{VW}(z)
\end{gather*}
for $a=\chi_i^{\pm}$, $K_i$, $i=0,\dots,n+m$. This equation, linear over $z$, $z^{-1}$, dictates that the factorized representation \eqref{formu1} and $Q^{VW}(z)$ are uniquely determined up to a constant. We determine this constant by imposing the condition $f_{VW}(0)=1$.

Let us consider intertwiners $V^{\ast\ast}(z)\otimes W\rightarrow W\otimes V^{\ast\ast}(z)$, where each such intertwining operator varies by a scalar multiplier determined by the irreducible modules $V(z)^{\ast\ast}\otimes W$ and $W\otimes V^{\ast\ast}(z)$. From the definition of $V^{\ast\ast}$, we derive the intertwining operator \smash{$P^{V^{\ast\ast},W} \bigl(\bigl(\bigl(Q^{VW}(z)^{-1}\bigr)^{{\rm st}_1}\bigr)^{-1}\bigr)^{{\rm st}_1}$}. Conversely, the isomorphism $V(z)^{\ast\ast}\cong V\bigl(zq^{-2g}\bigr)$ yields another intertwining operator given by
 \smash{$P^{V^{\ast\ast},W}\bigl(\pi_V\bigl(q^{-2h_\rho}\bigr)\otimes1_W\bigr)
\bigl(\bigl(Q^{VW}(z)\bigr)^{{\rm st}_1}\bigr)^{{\rm st}_1}\bigl(\pi_V\bigl(q^{2h_\rho}\otimes1_W\bigr)\bigr)$}. Thus, there exist rational functions~$r_{VW}(z)$ such that
\begin{gather}
 \bigl(\bigl(\bigl(Q^{VW}(z)^{-1}\bigr)^{{\rm st}_1}\bigr)^{-1}\bigr)^{{\rm st}_1}\nonumber\\
 \qquad=r_{VW}(z)\bigl(\pi_V\bigl(q^{-2h_\rho}\bigr)\otimes1_W\bigr)
\bigl(\bigl(Q^{VW}\bigl(zq^{-2g}\bigr)\bigr)^{{\rm st}_1}\bigr)^{{\rm st}_1}\bigl(\pi_V\bigl(q^{2h_\rho}\otimes1_W\bigr)\bigr).\label{formu3}
\end{gather}
Let $p(V,W)$ denote the degree of the polynomial $Q^{VW}(z)$, then
\begin{gather*}
 r_{VW}(0)=1,\qquad r_{VW}(z)\cong q^{p(V,W)2g},\qquad z\rightarrow\infty.
\end{gather*}
By Proposition \ref{R:V-W}\,(2) and \eqref{formu3}, we derive
\begin{gather}\label{formu4}
 f_{VW}\bigl(zq^{-2g}\bigr)= r_{VW}(z) f_{VW}(z),
\end{gather}
with $f_{VW}(0)=1$.
Let
\begin{gather*}
 r_{VW}(z)=\prod\limits_{i=1}^{p(V,W)}\frac{1-zq^{a_i}}{1-zq^{b_i}},
\end{gather*}
through a straightforward computation, equation \eqref{formu4} admits a unique solution over $\mathbb{C}[[z]]\otimes\mathbb{C}\bigl[\bigl[q^{1/2}\bigr]\bigr]$, in the form provided by equation \eqref{formu2}.
\end{proof}

\section[R-matrix algebras]{$\boldsymbol{R}$-matrix algebras}\label{sec4}
Utilizing the Drinfeld generators, we will formulate a level-$0$ representation of the quantum affine superalgebra dependent on the spectral parameter $z$. This representation encompasses a~vector representation when $z$ is regarded as a spectral constant. Consequently,
by this vector representation, we can explicitly derive an $R$-matrix denoted as $R(z)$, satisfying the Yang--Baxter equation
$R_{12}(z)R_{13}(zw)R_{23}(w)=R_{23}(w)R_{13}(zw)R_{12}(z)$. The explicit expression of the $R$-matrix
$R(z)$ enables us to investigate and analyze the quantum affine superalgebra, facilitating the construction of a super version of the $R$-matrix algebras. This super version corresponds to the non-super case of the $R$-matrix algebra of quantum affine algebra introduced by Reshetikhin--Semenov-Tian-Shansky (see~\cite{RS}).

\subsection[The explicit R-matrix of R(z)]{The explicit $\boldsymbol{R}$-matrix of $\boldsymbol{R(z)}$ }\label{sec4.1}

For the sake of convenience, we adopt the following notation
\smash{$\nu_i=\sum_{j=1}^{i}d_j$}, $ d_i=(\varepsilon_i,\varepsilon_i)$.
\begin{Proposition}[level-0 representation]\label{Level-0}
 Consider the graded vector space $V=\mathbb{C}^{2m+1|2n}$. The following map gives a representation $\pi_{V_{(z)}}$ of the quantum affine superalgebra $\mathcal{U}_q(\mathfrak{\hat{g}})$ on ${\operatorname{End}(V)\otimes\mathbb{C}\bigl[z, z^{-1}\bigr]}$:
\begin{gather*}
q^{c/2}\mapsto1,\qquad
 x_{i,k}^{-}\mapsto \bigl(zq^{\nu_i}\bigr)^kE^{i+1}_i-\bigl(zq^{2m-2n-\nu_i-1}\bigr)^kE^{\overline{i}}_{\overline{i+1}},\\
 x_{i,k}^{+}\mapsto \bigl(zq^{\nu_i}\bigr)^kE^{i}_{i+1}-\bigl(zq^{2m-2n-\nu_i-1}\bigr)^kE^{\overline{i+1}}_{\overline{i}},\\
 k_i\mapsto d_iq^{d_{i}}\bigl(E^{i}_{i}+E^{\overline{i+1}}_{\overline{i+1}}\bigr)+d_{i+1}q^{-d_{i+1}}\bigl(E^{i+1}_{i+1}+E^{\overline{i}}_{\overline{i}}\bigr)
 +\sum\limits_{s\neq i,\overline{i},i+1,\overline{i+1}}d_sE^{s}_{s},\\
 a_{i,k}\mapsto \frac{[k]_{q_i}}{k}\bigl( \bigl(zq^{\nu_i}\bigr)^k\bigl(d_iq^{-d_ik}E^{i}_{i}-d_{i+1}q^{d_{i+1}k}E^{i+1}_{i+1}\bigr)+
 \bigl(zq^{2m-2n-\nu_i-1}\bigr)^k\\
\phantom{ a_{i,k}\mapsto}{} \times\bigl(d_{i+1}q^{-d_{i+1}k}E^{\overline{i+1}}_{\overline{i+1}}-d_{i}q^{d_{i}k}E^{\overline{i}}_{\overline{i}}\bigr)\bigr)
\end{gather*}
for $1\leq i< m+n-1$, and
\begin{gather*}
 x_{n+m,k}^{-}\mapsto [2]_{q_{m+n}}^{1/2}\Bigl( (zq^{m-n})^kE^{n+m+1}_{n+m}-\bigl(zq^{m-n-1}\bigr)^kE^{\overline{n+m}}_{\overline{n+m+1}}\Bigr), \\
 x_{n+m,k}^{+}\mapsto [2]_{q_{m+n}}^{1/2}\Bigl( (zq^{m-n})^kE^{n+m}_{n+m+1}-\bigl(zq^{m-n-1}\bigr)^kE^{\overline{n+m+1}}_{\overline{n+m}}\Bigr), \\
 k_{n+m}\mapsto qE^{n+m}_{n+m}+q^{-1}E^{\overline{n+m}}_{\overline{n+m}}
 +\sum\limits_{s\neq n+m,\overline{n+m}}d_sE^{s}_{s},\\
 a_{n+m,k}\mapsto \frac{[2k]_{q_{n+m}}}{k}\Bigl( -\bigl(zq^{m-n-1}\bigr)^kE^{n+m}_{n+m}+\bigl((zq^{m-n})^k-\bigl(zq^{m-n-1}\bigr)^k\bigr)E^{n+m+1}_{n+m+1}\bigr)\\
 \phantom{ a_{n+m,k}\mapsto}{}+
 (zq^{m-n})^kE^{\overline{n+m}}_{\overline{n+m}}\Bigr).
\end{gather*}
\end{Proposition}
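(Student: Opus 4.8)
The plan is to verify directly that the images of all current generators satisfy the defining relations of $\mathcal{U}_q(\mathfrak{\hat{g}})$ listed in the Drinfeld presentation. The decisive simplification is that $q^{c/2}\mapsto 1$, so the central charge acts as $0$; consequently every factor $q^{\mp\frac{|r|}{2}c}$ equals $1$ and every factor $q^{rc}-q^{-rc}$ vanishes. In particular the Cartan relation $[a_{i,r},a_{j,s}]=\delta_{r,-s}\frac{[rA_{ij}]_i}{r}\cdot\frac{q^{rc}-q^{-rc}}{q_j-q_j^{-1}}$ collapses to $[a_{i,r},a_{j,s}]=0$. Since the images of $k_i$ and of all $a_{i,r}$ are diagonal matrices (polynomial in $z$), the relations $k_ik_j=k_jk_i$, $k_ia_{j,r}=a_{j,r}k_i$ and $[a_{i,r},a_{j,s}]=0$ then hold automatically. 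The relation $k_ix_{j,k}^{\pm}k_i^{-1}=q_i^{\pm A_{ij}}x_{j,k}^{\pm}$ is a weight computation: conjugating a matrix unit $E^a_b$ by the diagonal matrix $k_i$ multiplies it by the ratio of the corresponding diagonal entries, and I would check that this ratio equals $q_i^{\pm A_{ij}}$ for each unit occurring in the image of $x_{j,k}^{\pm}$, handling the barred indices and the distinguished node $n+m$ separately.

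I would organise the remaining verifications through generating functions. Setting $x_i^{\pm}(w)=\sum_k \pi_{V_{(z)}}\bigl(x_{i,k}^{\pm}\bigr)w^{-k}$, the prescribed dependence on $k$ through the powers $(zq^{\nu_i})^k$ and $(zq^{2m-2n-\nu_i-1})^k$ turns each current into a sum of two formal delta functions $\delta(x)=\sum_{k\in\mathbb{Z}}x^k$ supported at the spectral points $w=zq^{\nu_i}$ and $w=zq^{2m-2n-\nu_i-1}$ (with the obvious modification at $i=n+m$). In this language the relation $[a_{i,r},x_{j,k}^{\pm}]=\pm\frac{[rA_{ij}]_i}{r}x_{j,r+k}^{\pm}$ becomes the assertion that the difference of the two relevant diagonal entries of the diagonal matrix $a_{i,r}$, times the spectral coefficient of $x_{j,k}^{\pm}$, reproduces the coefficient of $x_{j,r+k}^{\pm}$ up to $\pm\frac{[rA_{ij}]_i}{r}$. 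This is a finite bookkeeping check using the explicit diagonal entries of $a_{i,r}$, the values of $A_{ij}$ and $d_i=(\varepsilon_i,\varepsilon_i)$, the definition $\nu_i=\sum_{j=1}^{i}d_j$, and the elementary identities $q_i=q$ for $i<m+n$ and $[-x]_q=-[x]_q$. The same delta-function picture makes $[x_{i,k}^{\pm},x_{j,l}^{\pm}]=0$ for $A_{ij}=0$ immediate, since the occurring matrix units then fail to compose, and it reduces the quadratic relation $[x_{i,k+1}^{\pm},x_{j,l}^{\pm}]_{q_i^{\pm A_{ij}}}+(-1)^{[i][j]}[x_{j,l+1}^{\pm},x_{i,k}^{\pm}]_{q_i^{\pm A_{ij}}}=0$ to a matrix identity evaluated at the relevant pairs of spectral points.

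The heart of the argument is $[x_{i,k}^+,x_{j,l}^-]=\delta_{ij}\frac{\Phi_{i,k+l}^+-\Phi_{i,k+l}^-}{q_i-q_i^{-1}}$. For $i\neq j$ the graded commutator of the explicit matrices vanishes because the products of the occurring matrix units are zero; for $i=j$ a short computation shows $[x_{i,k}^+,x_{i,l}^-]$ is diagonal with spectral coefficients $(zq^{\nu_i})^{k+l}$ and $(zq^{2m-2n-\nu_i-1})^{k+l}$ and with the parity sign $(-1)^{[\alpha_i]}$. I would then compute the right-hand side by using that $k_i$ and all $a_{i,r}$ are simultaneously diagonal, so $\Phi_i^{\pm}$ is diagonal and, on each weight space, the resummation $\exp\bigl((q_i-q_i^{-1})\sum_{r>0}\frac{[r]_{q_i}}{r}y^r\bigr)=\frac{1-q_i^{-1}y}{1-q_iy}$ converts the exponential of the Cartan currents into a rational function; expanding $\Phi_i^+$ and $\Phi_i^-$ in opposite domains and subtracting produces precisely the delta-function coefficients matching the left-hand side. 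This resummation, combined with the separate treatment of the two spectral points, the parity signs, and the special formulas at the nodes $i=m+n-1$ and $i=m+n$ (with their extra $[2]_{q_{m+n}}$ factors), is the main obstacle of the proof.

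Finally, I would treat the Serre-type relations, namely the $\operatorname{Sym}$ relations for $i\neq n,m+n$, the degree-three relation at $m+n$, and the relation at $n$. On the small module $V=\mathbb{C}^{2m+1|2n}$ the nested $q$-(super)commutators appearing in these expressions already vanish after evaluating the innermost brackets at their supporting spectral points, so I expect each to reduce to a handful of vanishing matrix products; I would verify this case by case for the generic, the $n+m$, and the $n$ nodes, taking care of the parities. Assembling the checks of all relations then establishes that the map is a homomorphism, i.e.\ a level-$0$ representation.
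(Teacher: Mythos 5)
Your proposal is correct and takes essentially the same approach as the paper, which dispenses with the proof entirely by stating that it is straightforward to check the action on the generators; your outline is simply a detailed execution plan for that direct verification (the delta-function reformulation, the resummation of the Cartan currents into $\frac{1-q_i^{-1}y}{1-q_iy}$, and the case-by-case matrix checks are all the standard ingredients of such a check).
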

\begin{proof}
It is straightforward to check the action on the generators.
\end{proof}

Notice that we have the equivalence $V(1)=V$ by setting $z=1$. Therefore, it gives rise to a~vector representation
 $\pi_V\colon \mathcal{U}_q(\mathfrak{\hat{g}})\rightarrow \operatorname{End}(V)$ from the above proposition.
Let
$R(z)\doteq R^{VV}(z)=(\pi_V\otimes\pi_V)\mathfrak{R}(z)$,
where $\mathfrak{R}(z)$ is the universal $R$-matrix of $\mathcal{U}_q(\mathfrak{\hat{g}})$ via the isomorphism Theorem~\ref{DR-DRJ}.

{For the formal variable $z$, denote
$z_{\pm}=zq^{\pm c/2}$, and introduce the $L$-operators in $\mathcal{U}_q(\mathfrak{\hat{g}})$ by the formulas}
\begin{gather}\label{L-operators}
 \mathfrak{L}^+(z)=(1\otimes \pi_V)\mathfrak{R}(z_-), \qquad \mathfrak{L}^-(z)=(1\otimes \pi_V)\mathfrak{R}_{21}\bigl(z_-^{-1}\bigr)^{-1}.
\end{gather}
Therefore, the Yang--Baxter equation implies the following proposition.

\begin{Proposition}\label{U(ENDV)} In $\mathcal{U}_q(\mathfrak{\hat{g}})\otimes \operatorname{End}V^{\otimes2}$, we have
\begin{align*}
 &R(z/w)\mathfrak{L}_1^{\pm}(z)\mathfrak{L}_2^{\pm}(w)=\mathfrak{L}_2^{\pm}(w)\mathfrak{L}_1^{\pm}(z) R(z/w),\\
 &R(z_+/w_-)\mathfrak{L}_1^{+}(z)\mathfrak{L}_2^{-}(w)=\mathfrak{L}_2^{-}(w)\mathfrak{L}_1^{+}(z) R(z_-/w_+).
\end{align*}
\end{Proposition}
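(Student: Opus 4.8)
The plan is to derive all three families of relations from the single universal Yang--Baxter equation \eqref{oper0}, by evaluating two of its three tensor slots in the vector representation $\pi_V$ while keeping the third slot as the abstract algebra $\mathcal{U}_q(\mathfrak{\hat{g}})$. Concretely, I would place \eqref{oper0} inside $\mathcal{U}_q(\mathfrak{\hat{g}})\otimes\operatorname{End}V\otimes\operatorname{End}V$, with one slot playing the role of the quantum slot (the algebra) and the remaining two being auxiliary copies of $V$, and then apply $\pi_V$ to the two auxiliary slots. Since $\pi_V$ is a level-$0$ representation we have $\pi_V(c)=0$, so every central correction $q^{\pm c_2}$ whose $c_2=1\otimes c\otimes 1$ lands in an auxiliary slot collapses to $1$; the central charge carried by the algebra slot, however, survives and is precisely what reappears as the shifts $z_\pm=zq^{\pm c/2}$ built into the $L$-operators \eqref{L-operators}.

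For the relation with two $\mathfrak{L}^+$'s I would take the algebra in the first slot and $V$ in the second and third. Evaluating $(1\otimes\pi_V\otimes\pi_V)$ on \eqref{oper0} and invoking the definition $\mathfrak{L}^+(z)=(1\otimes\pi_V)\mathfrak{R}(z_-)$, the two mixed factors $\mathfrak{R}_{12}$ and $\mathfrak{R}_{13}$ become $\mathfrak{L}_1^+$ and $\mathfrak{L}_2^+$ once their spectral parameters in \eqref{oper0} are matched to $z_-$ and $w_-$, while the purely auxiliary factor $\mathfrak{R}_{23}$ becomes $R^{VV}=R$; the common $q^{\pm c/2}$ in the algebra slot then cancels in the ratio of the two arguments, leaving $z/w$. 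The exact placement of $R$ (left versus right) and the direction of the ratio are fixed afterwards using the unitarity/symmetry of $R(z)$. The $\mathfrak{L}^-\mathfrak{L}^-$ case is entirely parallel, since the inverse $\mathfrak{R}_{21}(\cdot)^{-1}$ entering $\mathfrak{L}^-$ again solves a Yang--Baxter equation and both $L$-operators receive the same type of shift, so that the argument of $R$ is again the symmetric ratio $z/w$.

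For the mixed relation I would combine the two definitions in \eqref{L-operators}, evaluating one auxiliary factor through $\mathfrak{R}$ (for $\mathfrak{L}^+$) and the other through $\mathfrak{R}_{21}(\cdot)^{-1}$ (for $\mathfrak{L}^-$). To bring the inverse matrix into a form to which \eqref{oper0} applies, I would use the antipode identities $(S\otimes 1)(\mathfrak{R}(z))=\mathfrak{R}(zq^{-c\otimes 1})^{-1}$ and $(1\otimes S^{-1})(\mathfrak{R}(z))=\mathfrak{R}(zq^{1\otimes c})^{-1}$, which re-express the inverse $R$-matrix as a genuine universal $R$-matrix at an argument shifted by $q^{\pm c}$ in the algebra slot. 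In the mixed case the two $L$-operators therefore receive opposite shifts, which no longer cancel in the ratio and instead survive as precisely the asymmetry distinguishing the argument $z_+/w_-$ on the left from $z_-/w_+$ on the right.

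The genuinely delicate part, rather than the routine substitution, is the simultaneous bookkeeping of two things: the central-charge-dependent spectral shifts $z_\pm,w_\pm$ as they are carried through \eqref{oper0} and through the antipode identities, and the Koszul signs intrinsic to the graded setting. The latter enter through the graded flip $T(X\otimes Y)=(-1)^{[X][Y]}Y\otimes X$, the permutation operator $P^{VW}$, and the super-transposition appearing in the construction of $R$, and they must be threaded consistently across every factor of the triple tensor product. It is exactly this sign bookkeeping that lifts the non-super computation of Jing--Liu--Molev \cite{JMA,JMAB} to the present orthosymplectic super setting, while the central-charge shifts are what force the asymmetric arguments in the mixed $\mathfrak{L}^+\mathfrak{L}^-$ relation.
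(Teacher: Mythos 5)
Your proposal is correct and is essentially the paper's own argument: the paper derives the proposition directly by specializing the universal Yang--Baxter equation \eqref{oper0} in two of its three tensor slots via $\pi_V$, using $\pi_V(c)=0$ to kill the central corrections in the auxiliary slots while the surviving $q^{\mp c_2}$ (with the algebra in the middle slot) produces the asymmetric arguments $z_\pm/w_\mp$ in the mixed relation, exactly as you describe. The only cosmetic difference is that you route the inverse in $\mathfrak{L}^-(z)$ through the antipode identities rather than inverting \eqref{oper0} directly, which is an equivalent bookkeeping choice.
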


Next, we aim to provide an explicit $R$-matrix
$R(z)$ in the form of a matrix polynomial.
 To achieve this, we begin by considering the polynomials $Q^{VV}$ in Theorem \ref{RVWQVW} for $V=W=\mathbb{C}^{2m+1|2n}$ as follows: for $\zeta=q^{2m-2n-1}$,
\begin{gather*}
 Q^{VV}(z)=\bigl(\bigl(1-q^{2}\bigr)(z-\zeta)zP+\bigl(q^2-1\bigr)(z-1)(z-\zeta)Q+q(z-1)(z-\zeta)\nonumber\\
\phantom{ Q^{VV}(z)=}{}\times \biggl\{I+
 \bigl(q^{\frac{1}{2}}-q^{-\frac{1}{2}}\bigr)\sum\limits_{a\neq \overline{a}}(-1)^{[a]}E^a_a\otimes\hat{\sigma}^a_a+\bigl(q-q^{-1}\bigr)\sum\limits_{a> b}(-1)^{[b]}E^a_b\otimes \hat{\sigma}^b_a\biggr\}\biggr).
\end{gather*}
The graded operators of above are given as follows:
\[
 P=\sum\limits_{a,b}(-1)^{[b]}E^a_b\otimes E^b_a,\qquad
 Q=\sum\limits_{a,b}(-1)^{[a][b]}\xi_a\xi_bq^{(\rho,\varepsilon_a- \varepsilon_b)}E^a_b\otimes E^{\bar{a}}_{\bar{b}},
\]
and
$\hat{\sigma}^a_b=E^a_b-(-1)^{[a]([a]+[b])}\xi_a\xi_bE^{\bar{b}}_{\bar{a}}$, $
 \hat{\sigma}^a_a=q^{1/2(\varepsilon_a, \varepsilon_a)}E^a_a-q^{-1/2(\varepsilon_a, \varepsilon_a)}E^{\bar{a}}_{\bar{a}}$,
 where \[
 \xi_a=
 \begin{cases}
 1,&[a]=0,\\
 (-1)^a,& 1\leq a\leq n,\\
 -(-1)^a,& \overline{n}\leq a\leq \overline{1}.
 \end{cases}
\]

\begin{Remark}
Let $\widetilde{Q}(z)=PQ^{VV}(z)$. According to \cite{WGMJ}, $\widetilde{Q}(z)$ satisfies the inversion relation
\begin{gather*}
 \widetilde{Q}(z)\widetilde{Q}\bigl(z^{-1}\bigr)=(z-\zeta)\bigl(z-q^2\bigr)\bigl(z^{-1}-\zeta\bigr)\bigl(z^{-1}-q^2\bigr)\times I.
\end{gather*}
\end{Remark}

Let $t$ represent the matrix involution super-transposition defined by
$
\bigl(E_j^i\bigr)^t=(-1)^{[i][j]+[j]}\xi_{\overline{i}}\xi_{\overline{j}}\allowbreak\times\smash{ E_{\overline{i}}^{\overline{j}}}$,
 and define the diagonal matrix
 \begin{gather*}
 D=\operatorname{diag}\bigl[q^{a_1},\dots,q^{a_n},q^{a_{n+1}},\dots, q^{a_{n+m+1}}=q^{a_{\overline{(n+m+1)}}},\dots,
 q^{a_{\overline{n+1}}},q^{a_{\overline{n}}},\dots,q^{a_{\overline{1}}}\bigr],
 \end{gather*}
 where
\begin{gather*}
 a_{\overline{(n+m+1)}}=a_{n+m+1}=0,\qquad a_i=-a_{\overline{i}} \qquad i\neq n+m+1,\qquad (\rho, \varepsilon_i-\varepsilon_j)=a_{\bar{i}}-a_{\bar{j}}.
\end{gather*}
Denote $t_s$ as the transposition with the $s$-th tensor space.
Consequently, we can demonstrate that
\begin{gather}\label{d5}
 Q=D_1^{-1}P^{t_1}D_1, \qquad P^{t_1}D_1=P^{t_1}D_2^{-1}, \qquad D_2P^{t_1}=D_1^{-1}P^{t_1}.
\end{gather}

Furthermore, in accordance with the reference \cite{MKMJ}, we introduce a new $R$-matrix as follows:
\begin{align}\label{R1}
 \widetilde{R}(z)={}&\frac{\bigl(q-q^{-1}\bigr)zP}{\bigl(q-q^{-1}z\bigr)}-\frac{\bigl(q-q^{-1}\bigr)z(z-1)Q}{\bigl(q-q^{-1}z\bigr)(z-\zeta)}-\frac{(z-1)}{\bigl(q-q^{-1}z\bigr)}
\nonumber\\
 &\times\biggl\{I+\bigl(q^{\frac{1}{2}}-q^{-\frac{1}{2}}\bigr)\sum\limits_{a\neq \overline{a}}(-1)^{[a]}E^a_a\otimes\hat{\sigma}^a_a+\bigl(q-q^{-1}\bigr)\sum\limits_{a> b}(-1)^{[b]}E^a_b\otimes \hat{\sigma}^b_a\biggr\}.
\end{align}
Consider the function $g(z)=f(z)\bigl(z-q^2\bigr)(z-\zeta)$. Consequently, we obtain the expression
\begin{gather}\label{rq(z)}
 R(z)=f(z)Q^{VV}(z)=g(z)\widetilde{R}(z).
\end{gather}
It is worth noting that we can compute
\begin{gather}
\label{d7}
 \widetilde{R}(z)D_1\widetilde{R}(z\zeta)^{t_1}D_1^{-1}=y(z)=\frac{\bigl(z-q^{-2}\bigr)(z\zeta-1)}{(1-z)\bigl(1-q^{-2}\zeta z\bigr)}.
\end{gather}
By performing similar calculations on $Q^{VV}(z)$ and combining with equation \eqref{rq(z)}, we derive
\begin{gather}\label{d6}
R(z)D_1R(z\zeta)^{t_1}D_1^{-1}=q^2\zeta^2,
\end{gather}
and
\[
 f(z)f(z\zeta)=\frac{1}{\bigl(1-zq^2\bigr)\bigl(1-zq^{-2}\bigr)\bigl(1-z\zeta^{-1}\bigr)(1-z\zeta)}.
\]
Moreover, based on Theorem \ref{RVWQVW}, the meromorphic function $f(z)$ takes the form
\begin{gather*}
 f(z)\doteq \prod\limits_{i=0}^{\infty}\frac{\bigl(1-z\zeta^{2i}\bigr)\bigl(1-zq^{-2}\zeta^{2i+1}\bigr)\bigl(1-zq^{2}\zeta^{2i+1}\bigr)\bigl(1-z\zeta^{2i+2}\bigr)} {\bigl(1-z\zeta^{2i-1}\bigr)\bigl(1-z\zeta^{2i+1}\bigr)\bigl(1-zq^{2}\zeta^{2i}\bigr)\bigl(1-zq^{-2}\zeta^{2i}\bigr)},
\end{gather*}
Hence, the explicit form of the $R$-matrix $R(z)$ is as follows:
\begin{align*}
 R(z)={}&f(z)\bigl(\bigl(1-q^{2}\bigr)(z-\zeta)zP+\bigl(q^2-1\bigr)z(z-1)Q+q(z-1)(z-\zeta)\\
 &\times\biggl\{I+
\bigl(q^{\frac{1}{2}}-q^{-\frac{1}{2}}\bigr)\sum\limits_{a\neq \overline{a}}(-1)^{[a]}E^a_a\otimes\hat{\sigma}^a_a+\bigl(q-q^{-1}\bigr)\sum\limits_{a> b}(-1)^{[b]}E^a_b\otimes \hat{\sigma}^b_a\biggr\}\biggr).
\end{align*}

\subsection[The superalgebras U(R) and U(R)]{The superalgebras $\boldsymbol{U(R)}$ and $\boldsymbol{U\bigl(\widetilde{R}\bigr)}$ }

\begin{Definition}\quad
\begin{enumerate}\itemsep=0pt
\item[(1)] The associative superalgebra $U(R)$ over $\mathbb{C}\bigl(q^{1/2}\bigr)$ is generated by an invertible central element $q^{c/2}$ and elements $l_{ij}^{\pm}[\mp p]$, where the indices satisfy $ 1\leq i,j\leq2n+2m+1$ and~${p\in\mathbb{Z}_+}$,
 subject to the following relations:
\begin{gather}
 l^+_{ii}[0]l^-_{ii}[0]=l^-_{ii}[0] l^+_{ii}[0]=1, \qquad l^+_{ij}[0]=l^-_{ij}[0]=0,\qquad \text{for} \quad i>j,\nonumber\\
 R(z/w)L_1^{\pm}(z)L_2^{\pm}(w)=L_2^{\pm}(w)L_1^{\pm}(z) R(z/w),\nonumber\\
 R(z_+/w_-)L_1^{+}(z)L_2^{-}(w)=L_2^{-}(w)L_1^{+}(z) R(z_-/w_+).\label{a1}
\end{gather}
\item[(2)] The associative superalgebra $U\bigl(\widetilde{R}\bigr)$ over $\mathbb{C}\bigl(q^{1/2}\bigr)$ is generated by an invertible central element \smash{$q^{c/2}$} and elements $l_{ij}^{\pm}[\mp p]$, where the indices satisfy $ 1\leq i,j\leq2n+2m+1$ and {$p\in\mathbb{Z}_+$,}
 following the same relations as \eqref{a1}, and
\begin{align}
 \label{d2}
 &\widetilde{R}(z/w)\mathcal{L}_1^{\pm}(z)\mathcal{L}_2^{\pm}(w)=\mathcal{L}_2^{\pm}(w)\mathcal{L}_1^{\pm}(z) \widetilde{R}(z/w),\\
 \label{d3}
 &\widetilde{R}(z_+/w_-)\mathcal{L}_1^{+}(z)\mathcal{L}_2^{-}(w)=\mathcal{L}_2^{-}(w)\mathcal{L}_1^{+}(z) \widetilde{R}(z_-/w_+),
\end{align}
here $L_i^{\pm}(z)\in \operatorname{End} \mathbb{C}^{2m+1|2n}\otimes \operatorname{End} \mathbb{C}^{2m+1|2n}\otimes
 U(R)$ \big(resp.\ $\mathcal{L}_i^{\pm}(z)\in \operatorname{End} \mathbb{C}^{2m+1|2n}\otimes \operatorname{End} \mathbb{C}^{2m+1|2n}\allowbreak\otimes
 U\bigl(\widetilde{R}\bigr)$\big), $i=1,2$, written by
\begin{gather*}
 L_1^{\pm}(z)=\sum\limits_{i,j=1}E^i_j\otimes1\otimes l_{ij}^{\pm}(z),\qquad \biggl(\text{resp}.\quad \mathcal{L}_1^{\pm}(z)=\sum\limits_{i,j=1}E^i_j\otimes1\otimes l_{ij}^{\pm}(z)\biggr), \\
 L_2^{\pm}(z)=\sum\limits_{i,j=1}1\otimes E^i_j\otimes l_{ij}^{\pm}(z), \qquad \biggl(\text{resp}.\quad \mathcal{L}_2^{\pm}(z)=\sum\limits_{i,j=1}1\otimes E^i_j\otimes l_{ij}^{\pm}(z)\biggr),
\end{gather*}
with
\begin{equation*}
 l_{ij}^{\pm}(z)=\sum\limits_{p=0}l_{ij}^{\pm}[\mp p]z^{\pm p}.
\end{equation*}
\end{enumerate}
\end{Definition}

Note that $\widetilde{R}(z)$ possesses two properties:
\begin{itemize}\itemsep=0pt
 \item[(1)] $P_{12}\widetilde{R}_{12}(z)P_{12}=\widetilde{R}_{21}(z)$,
 \item[(2)] $\widetilde{R}_{12}\left(\frac{z}{w}\right)\times
 \widetilde{R}_{21}\left(\frac{w}{z}\right)=1$.
\end{itemize}
Based on the aforementioned properties of $\widetilde{R}(z)$, we obtain
\[
 \widetilde{R}(z_-/w_+)\mathcal{L}_1^{-}(z)\mathcal{L}_2^{+}(w)=\mathcal{L}_2^{+}(w)\mathcal{L}_1^{-}(z)\widetilde{R}(z_+/w_-).
\]
\begin{Remark}\label{RM2}\quad
\begin{itemize}\itemsep=0pt
 \item[(1)] When $n=0$, we consider the $R$-matrix algebra associated with the quantum affine algebra $U_q\bigl(\widehat{\mathfrak{o}_{2m+1}}\bigr)$.
 In this case, the $R$-matrix $R_{q^{-1}}(z)$ $\bigl(q\rightarrow q^{-1}\bigr)$ coincides with the $R$-matrix defined in reference \cite{MJB2}.
 \item[(2)] The defining relations satisfied by the series $l_{ij}^{\pm}(z)$, $1\leq i,j \leq m+n$ coincide with those~for the quantum affine superalgebra \smash{$U_q\bigl(\widehat{\mathfrak{gl}(n|m)}\bigr)$} in \cite{HBKD} and also in \cite{YZC}.
\end{itemize}
\end{Remark}

Let $U^{\pm}\bigl(\widetilde{R}\bigr)$ be the subalgebras of $U\bigl(\widetilde{R}\bigr)$ generated by the coefficients of all the series $l_{ij}^{\pm}(z)$.
\begin{Proposition}\label{Apr2}
In superalgebras $U(R)$ and $U^{\pm}\bigl(\widetilde{R}\bigr)$, there exist elements $c^{\pm}(z)\in U(R)$ and $\widetilde{c}^{\pm}(z)\in U^{\pm}\bigl(\widetilde{R}\bigr)$ such as
\begin{gather}
 DL^{\pm}(z\zeta)^tD^{-1}L^{\pm}(z)=L^{\pm}(z)DL^{\pm}(z\zeta)^tD^{-1}=c^{\pm}(z),\nonumber\\
 D\mathcal{L}^{\pm}(z\zeta)^tD^{-1}\mathcal{L}^{\pm}(z)=\mathcal{L}^{\pm}(z)D\mathcal{L}^{\pm}(z\zeta)^tD^{-1}=\widetilde{c}^{\pm}(z), \label{d9}
\end{gather}
and all coefficients of the series $c^{\pm}(z)$ and $\widetilde{c}^{\pm}(z)$ belong to the center in $U(R)$ and $U^{\pm}\bigl(\widetilde{R}\bigr)$, respectively.
\end{Proposition}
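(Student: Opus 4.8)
The plan is to reduce the statement, separately for each sign and for each of the two algebras, to three claims: that the two matrix products in the displayed identities coincide, that their common value $c^\pm(z)$ (resp. $\widetilde c^\pm(z)$) is a scalar matrix, and that its coefficients are central. Throughout I write $\overline{L^\pm(u)}=DL^\pm(u)^tD^{-1}$, so that the two identities to be proved read $\overline{L^\pm(z\zeta)}\,L^\pm(z)=L^\pm(z)\,\overline{L^\pm(z\zeta)}=c^\pm(z)$, and likewise for $\mathcal L^\pm$. The crossing relations \eqref{d6} and \eqref{d7} are the main tools, since they are exactly the identities that let the $R$-matrices cancel at the end.

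First I would establish the scalar nature together with the equality of the two orderings. The crossing-unitarity relation \eqref{d6}, $R(z)D_1R(z\zeta)^{t_1}D_1^{-1}=q^2\zeta^2$, shows that the partial super-transpose $D_1R(z\zeta)^{t_1}D_1^{-1}$ is, up to the scalar $q^2\zeta^2$, a one-sided inverse of $R(z)$; together with the decomposition \eqref{d5} $Q=D_1^{-1}P^{t_1}D_1$ this identifies the contraction $\overline{L^\pm(z\zeta)}L^\pm(z)$ with the rank-one ($Q$-)contraction of the $L$-operators. Evaluating the defining relation \eqref{a1} at the fusion point $z/w=\zeta$, where $R$ degenerates to a multiple of the rank-one operator $Q$, forces $Q_{12}L_1^\pm L_2^\pm Q_{12}$ to be proportional to $Q_{12}$; since $Q$ has rank one the proportionality factor is a scalar, which is precisely $c^\pm(z)$, and this simultaneously yields that $c^\pm(z)$ is scalar and that the two orderings agree. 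The same computation for $\widetilde R$ uses \eqref{d7}, with $y(z)$ replacing the constant $q^2\zeta^2$, together with the symmetry $P_{12}\widetilde R_{12}(z)P_{12}=\widetilde R_{21}(z)$ and the unitarity $\widetilde R_{12}(z/w)\widetilde R_{21}(w/z)=1$ recorded just before the proposition.

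Next I would prove centrality. Applying the partial transpose $t_1$ and conjugating by $D_1$ in the $\pm\pm$ relation of \eqref{a1} (resp. in \eqref{d2}), and then using \eqref{d6} (resp. \eqref{d7}) to replace the transposed $R$-factor by a multiple of $R^{-1}$, produces a transposed exchange relation governing $\overline{L_1^\pm(u)}$ and $L_2^\pm(w)$. Combining the original $\pm\pm$ relation, which pushes $L_2^\pm(w)$ through the factor $L^\pm(z)$, with this transposed relation, which pushes it through $\overline{L^\pm(z\zeta)}$, the $R$-matrices generated on the two passes cancel by crossing, so that $c_1^\pm(z)=L_1^\pm(z)\overline{L_1^\pm(z\zeta)}$ commutes with $L_2^\pm(w)$; as $c^\pm(z)$ is scalar in the first space, this says exactly that its coefficients commute with all $l_{ij}^\pm(w)$. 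For $c^\pm(z)\in U(R)$ the analogous manipulation starting from the mixed relation in \eqref{a1}, and its companion for the opposite order, gives commutation with $L_2^\mp(w)$ as well, hence centrality in all of $U(R)$; for $\widetilde c^\pm(z)$ one works inside $U^\pm\bigl(\widetilde R\bigr)$, where only the $\pm\pm$ relation \eqref{d2} is required.

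The hard part will be the sign bookkeeping in the super setting: the partial super-transposition $t_1$ reverses the order of the space-$1$ matrix factors while leaving the algebra-valued entries in place, and each interchange of homogeneous entries carries a Koszul sign, so one must verify that the transposed exchange relation and the cancellation of the $R$-factors emerge with the correct parities. These are precisely the points at which the argument departs from the non-super case of Jing, Liu and Molev \cite{JMA,JMAB}; modulo the signs, the structure of the computation is identical to theirs, and the crossing relations \eqref{d6} and \eqref{d7} are arranged so that all $R$-dependence cancels.
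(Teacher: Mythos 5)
Your proposal follows essentially the same route as the paper's proof: fusing the defining relation at $z/w=\zeta$ where $\widetilde R$ degenerates to the rank-one operator $Q=D_1^{-1}P^{t_1}D_1$, using the one-dimensionality of the image of $P^{t_1}$ to extract the scalar $\widetilde c^{\pm}(z)$ and the equality of the two orderings, and then proving centrality by partially transposing the exchange relations and cancelling the $R$-factors via the crossing identities \eqref{d6} and \eqref{d7}. The approach and the key lemmas coincide with the paper's argument.
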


\begin{proof}{Considering the $R$-matrix $\widetilde{R}(z)$ in \eqref{R1},}
multiplying both sides of the defining relation~\eqref{d2} by $z/w-\zeta$ and setting $z/w=\zeta$, we obtain
\begin{gather*}
Q\mathcal{L}_1^{\pm}(z\zeta)\mathcal{L}_2^{\pm}(z)=\mathcal{L}_2^{\pm}(z)\mathcal{L}_1^{\pm}(z\zeta)Q.
\end{gather*}
It is noteworthy that $P^{t_1}\mathcal{L}_1^{\pm}(z\zeta)=P^{t_1}\mathcal{L}_2^{\pm}(z\zeta)^t$ and $\mathcal{L}_1^{\pm}(z\zeta)P^{t_1}=\mathcal{L}_2^{\pm}(z\zeta)^tP^{t_1}$.
Thus, it follows from \eqref{d5}
\begin{align*}
 &D_1^{-1}P^{t_1}D_1\mathcal{L}_1^{\pm}(z\zeta)\mathcal{L}_2^{\pm}(z)=\mathcal{L}_2^{\pm}(z)\mathcal{L}_1^{\pm}(z\zeta)D_1^{-1}P^{t_1}D_1,\\
 &D_1^{-1}P^{t_1}\mathcal{L}_1^{\pm}(z\zeta)D_2^{-1}\mathcal{L}_2^{\pm}(z)=\mathcal{L}_2^{\pm}(z)D_2\mathcal{L}_1^{\pm}(z\zeta)P^{t_1}D_1,\\
 &D_1^{-1}P^{t_1}\mathcal{L}_2^{\pm}(z\zeta)^tD_2^{-1}\mathcal{L}_2^{\pm}(z)=\mathcal{L}_2^{\pm}(z)D_2\mathcal{L}_2^{\pm}(z\zeta)^tP^{t_1}D_1,\\
 &P^{t_1}\mathcal{L}_2^{\pm}(z\zeta)^tD_2^{-1}\mathcal{L}_2^{\pm}(z)D_1^{-1}=D_1\mathcal{L}_2^{\pm}(z)D_2\mathcal{L}_2^{\pm}(z\zeta)^tP^{t_1},\\
 &P^{t_1}D_1^{-1}\mathcal{L}_2^{\pm}(z\zeta)^tD_2^{-1}\mathcal{L}_2^{\pm}(z)=\mathcal{L}_2^{\pm}(z)D_2\mathcal{L}_2^{\pm}(z\zeta)^tD_1P^{t_1},\\
 &P^{t_1}D_2\mathcal{L}_2^{\pm}(z\zeta)^tD_2^{-1}\mathcal{L}_2^{\pm}(z)=\mathcal{L}_2^{\pm}(z)D_2\mathcal{L}_2^{\pm}(z\zeta)^tD_2^{-1}P^{t_1}.
\end{align*}
Given that the image of the operator $P^{t_1}$ in $\operatorname{End}\bigl(\mathbb{C}^{2m+1|2n}\bigr)^{\otimes2}$ is one-dimensional, we have
\begin{gather*}
 P^{t_1}D_2\mathcal{L}_2^{\pm}(z\zeta)^tD_2^{-1}\mathcal{L}_2^{\pm}(z)=\mathcal{L}_2^{\pm}(z)D_2\mathcal{L}_2^{\pm}(z\zeta)^tD_2^{-1}P^{t_1}=\widetilde{c}^{\pm}(z)P^{t_1},
\end{gather*}
where we take the trace of the first copy of $\operatorname{End}\mathbb{C}^{2m+1|2n}$, yielding equations \eqref{d9}.

To demonstrate that $\widetilde{c}^+(z)$ is central, we only need to verify the case of $\widetilde{c}^+(z)$ and $\mathcal{L}_2^-(w)$, as the other cases follow similarly. It is worth noting that by applying the partial transposition to both sides of \eqref{d3}, we have
 \begin{gather*}
 \mathcal{L}_1^{+}(z\zeta)^t\widetilde{R}(z_+\zeta/w_-)^{t_1}\mathcal{L}_2^{-}(w)=\mathcal{L}_2^{-}(w) \widetilde{R}(z_-\zeta/w_+)^{t_1}\mathcal{L}_1^{+}(z\zeta)^t.
 \end{gather*}Thus
 \begin{align*}\widetilde{c}^+(z)\mathcal{L}_2^{-}(w)&=
 D_1\mathcal{L}_1^{+}(z\zeta)^tD_1^{-1}\mathcal{L}_1^{+}(z)\mathcal{L}_2^-(w)\\
 &= D_1\mathcal{L}_1^{+}(z\zeta)^tD_1^{-1}\widetilde{R}(z_+/w_-)^{-1}\mathcal{L}_2^-(w)\mathcal{L}_1^{+}(z)\widetilde{R}(z_-/w_+)\\
 &=D_1\mathcal{L}_1^{+}(z\zeta)^t\widetilde{R}(z_+\zeta/w_-)^{t_1}\mathcal{L}_2^{-}(w)D_1^{-1}\mathcal{L}_1^{+}(z)\widetilde{R}(z_-/w_+)\cdot\frac{1}{y(z)}\\
 &=D_1\mathcal{L}_2^{-}(w)\widetilde{R}(z_-\zeta/w_+)^{t_1}\mathcal{L}_1^{+}(z\zeta)^tD_1^{-1}\mathcal{L}_1^{+}(z)\widetilde{R}(z_-/w_+)\cdot\frac{1}{y(z)}\\
 &=\mathcal{L}_2^{-}(w)D_1\widetilde{R}(z_-\zeta/w_+)^{t_1}D_1^{-1}\widetilde{R}(z_-/w_+)\cdot\frac{1}{y(z)}c^+(z)=\mathcal{L}_2^{-}(w)\widetilde{c}^+(z),
 \end{align*}
 where we have used \eqref{d7}.
The proof for $c^{\pm}(z)$ follows a similar pattern as that for $\widetilde{c}^{\pm}(z)$. It is important to note that relation \eqref{d6} ensures that $c^{\pm}(z)$ are central within the entire superalgebra~$U(R)$.
\end{proof}

Introduce a Heisenberg algebra $\mathcal{H}_q(m+n)$ related to the superalgebras $U(R)$ and $U\bigl(\widetilde{R}\bigr)$. The Heisenberg algebra $\mathcal{H}_q(m+n)$ is generated by the elements $\beta_p$ ($p\in\mathbb{Z}\setminus\{0\}$) and the central element $q^c$, which satisfy the following relation
$ [\beta_p, \beta_s]=\delta_{p,-s}\vartheta_p$, $ p\geq1$,
where the elements $\vartheta_p$ are defined by the expansion
\[
\exp\biggl(\sum_{p=1}\vartheta_pz^p\biggr)=\frac{g(zq^{-c})}{g(zq^c)},
\]
and
\begin{gather*}
 g(zq^{c}/w) \exp\biggl(\sum_{p=1}\vartheta_p z^p\biggr) \cdot \exp\biggl(\sum_{s=1}\vartheta_{-s}w^{-s}\biggr)\\
 \qquad=g(zq^{-c}/w)\exp\biggl(\sum_{s=1}\vartheta_{-s}w^{-s}\biggr) \cdot \exp\biggl(\sum_{p=1}\vartheta_pz^p\biggr).
\end{gather*}
Therefore, we immediately have the following proposition.
\begin{Proposition}\label{Apr3}
There exists a homomorphism $U\bigl(\widetilde{R}\bigr)\mapsto\mathcal{H}_q(m+n)\otimes_{\mathbb{C}[q^c, q^{-c}]} U(R)$ defined by
\begin{gather*}
\mathcal{L}^+(z)\mapsto \exp\biggl(\sum_{p=1}\vartheta_{-p}z^{-p}\biggr) \cdot L^+(z),\qquad \mathcal{L}^-(z)\mapsto \exp\biggl(\sum_{p=1}\vartheta_{p}z^{p}\biggr) \cdot L^-(z).
\end{gather*}
\end{Proposition}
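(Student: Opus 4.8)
The plan is to show that the assignment $\Phi$ is an algebra homomorphism by verifying that the images of the generators $\mathcal{L}^{\pm}(z)$ satisfy every defining relation of $U\bigl(\widetilde{R}\bigr)$. Write $\psi^{+}(z)=\exp\bigl(\sum_{p\geq1}\vartheta_{-p}z^{-p}\bigr)$ and $\psi^{-}(z)=\exp\bigl(\sum_{p\geq1}\vartheta_{p}z^{p}\bigr)$ for the Heisenberg prefactors, so that $\Phi\bigl(\mathcal{L}^{\pm}(z)\bigr)=\psi^{\pm}(z)L^{\pm}(z)$. The organising principle is the factorization $R(z)=g(z)\widetilde{R}(z)$ from \eqref{rq(z)}: the matrix data of the two presentations differ only by the scalar $g$, and the sole purpose of the exponential prefactors is to reinstate precisely this scalar discrepancy in the relations. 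First I would dispose of the normalization relations. Since each $\psi^{\pm}(z)$ has constant term $1$, the zero modes of $\Phi\bigl(\mathcal{L}^{\pm}(z)\bigr)$ coincide with those of $L^{\pm}(z)$, so $l^{+}_{ii}[0]l^{-}_{ii}[0]=1$ and $l^{\pm}_{ij}[0]=0$ for $i>j$ transfer verbatim from $U(R)$.

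Next I would treat the relations of the same type \eqref{d2}. Because $R(z/w)=g(z/w)\widetilde{R}(z/w)$, the common scalar $g(z/w)$ cancels and the $U(R)$-relation $R(z/w)L_1^{\pm}(z)L_2^{\pm}(w)=L_2^{\pm}(w)L_1^{\pm}(z)R(z/w)$ is equivalent to its $\widetilde{R}$-shaped counterpart. The point is that $\psi^{\pm}(z)$ and $\psi^{\pm}(w)$ are built from modes of the same sign, whose Heisenberg brackets vanish, so these prefactors commute with each other; being scalars in the $\mathcal{H}_q(m+n)$-factor they also commute with the whole $U(R)$-factor. Hence they factor out identically on both sides and cancel, reducing \eqref{d2} for the images to a relation already valid in $U(R)$.

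The crux is the mixed relation \eqref{d3}. Substituting $R=g\widetilde{R}$ into the $U(R)$-relation \eqref{a1} and dividing by $g(z_+/w_-)$ converts it into the $\widetilde{R}$-shaped identity $\widetilde{R}(z_+/w_-)L_1^{+}(z)L_2^{-}(w)=\frac{g(z_-/w_+)}{g(z_+/w_-)}\,L_2^{-}(w)L_1^{+}(z)\widetilde{R}(z_-/w_+)$, which carries the scalar discrepancy $g(z_-/w_+)/g(z_+/w_-)$, where $z_{\pm}/w_{\mp}=(z/w)q^{\pm c}$. By contrast $\psi^{+}(z)$ and $\psi^{-}(w)$ are formed from opposite-sign modes and do \emph{not} commute; reordering them produces exactly the scalar encoded in the defining relations of $\mathcal{H}_q(m+n)$, namely $\exp\bigl(\sum_{p\geq1}\vartheta_p z^p\bigr)=g(zq^{-c})/g(zq^{c})$ together with the rule $g(zq^{c}/w)\psi^{-}(z)\psi^{+}(w)=g(zq^{-c}/w)\psi^{+}(w)\psi^{-}(z)$. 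I would then pull all prefactors to the left on both sides and check that the reordering scalar is precisely what absorbs the $g$-discrepancy, yielding \eqref{d3} for the images.

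The hard part will be this final scalar matching. It is not a formal triviality: the discrepancy $g(z_-/w_+)/g(z_+/w_-)$ and the Heisenberg reordering factor are a priori expansions of the scalar $g$ in opposite regimes of $z/w$, and their coincidence rests on the (crossing-)unitarity of the normalized $R$-matrix, i.e. on the property $\widetilde{R}_{12}(z/w)\widetilde{R}_{21}(w/z)=1$ together with the crossing relation \eqref{d6}, which together control the unitarity scalar $g(u)g(u^{-1})$ and force the two expressions to agree. Carrying out this verification carefully—keeping track of the $q^{\pm c}$ shifts and of the expansion directions dictated by the fact that $L^{+}(z)$ and $L^{-}(z)$ are series in $z$ and $z^{-1}$ respectively—is the technical heart of the proof. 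Once this identity is in place, the normalization, same-type, and mixed relations are all satisfied by the images, and $\Phi$ is a well-defined homomorphism.
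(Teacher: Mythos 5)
Your proposal is correct and follows exactly the route the paper intends: the paper offers no argument beyond ``we immediately have the following proposition,'' because the Heisenberg algebra $\mathcal{H}_q(m+n)$ is defined precisely so that its second displayed relation is the reordering identity needed to absorb the discrepancy $g(z_-/w_+)/g(z_+/w_-)$ in the mixed relation, while the same-sign relations and normalizations transfer trivially since $R=g\widetilde{R}$ and same-sign prefactors commute. Your verification (including the careful point that the reordering scalar must be matched against the $g$-ratio in the correct expansion regime) is simply the content that makes the paper's ``immediately'' true.
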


\subsection{Quasideterminants and Gauss decomposition }\label{sec4.3}
Let $A=[a_{ij}]$ be an $N\times N$ matrix, where $N=2m+2n+1$. Denote $A^{ij}$ as the matrix obtained from $A$ by deleting the $i$-th row and $j$-th column. Suppose $A^{ij}$ is invertible. The $ij$-th quasideterminant of $A$ is defined as follows:
\begin{equation*}
 |A|_{ij}=a_{ij}-r^j_i\bigl(A^{ij}\bigr)^{-1}c^i_j,
\end{equation*}
where $r^j_i$ is the row matrix obtained from the $i$-th row of $A$ by deleting $a_{ij}$, and $c^i_j$ is the column matrix obtained from the $j$-th column of $A$ by deleting $a_{ij}$ (please refer to \cite{IMG,DKBM}). For example, the quasideterminants of $A=[a_{ij}]_{2\times2}$ are
\begin{gather*}
 |A|_{11}= a_{11}-a_{12}{a_{22}}^{-1}a_{21}, \qquad |A|_{12}= a_{12}-a_{11}{a_{21}}^{-1}a_{21},\\
 |A|_{21}= a_{21}-a_{22}{a_{12}}^{-1}a_{11}, \qquad |A|_{22}= a_{22}-a_{21}{a_{11}}^{-1}a_{12}.
\end{gather*}
Furthermore, we denote the quasideterminant $|A|_{ij}$ that boxes the entry $a_{ij}$ as
\begin{align*}
|A|_{ij}=
\left|
 \begin{matrix}
 a_{11} & \cdots & a_{1j} & \cdots & a_{1N} \\
 & \vdots & & \vdots & \\
 a_{i1} & \cdots & \boxed{a_{ij}} & \cdots & a_{iN} \\
 & \vdots & & \vdots & \\
 a_{N1} & \cdots & a_{Nj} & \cdots & a_{NN} \\
 \end{matrix}
\right|.
\end{align*}

Now, we introduce the Gaussian generators in the super $R$-matrix algebras. In $U\bigl(\widetilde{R}\bigr)$, set the universal quasideterminant formulas as below
\begin{gather*}
\mathfrak{h}_i^{\pm}(z)=\left|
 \begin{matrix}
 l_{11}^{\pm}(z) & \dots & l_{1i-1}^{\pm}(z) & l_{1i}^{\pm}(z) \\
 \vdots & \ddots & \vdots & \vdots \\
 l_{i-11}^{\pm}(z) & \dots & l_{i-1i-1}^{\pm}(z) & l_{i-1i}^{\pm}(z) \\
 l_{i1}^{\pm}(z) & \dots &l_{ii-1}^{\pm}(z) & \boxed{l_{ii}^{\pm}(z)}
 \end{matrix}\right|, \\
 i=1,\dots,n,\dots,n+2m+1,\dots,2m+2n+1,
\end{gather*}
where
\begin{gather*}\mathfrak{e}_{ij}^{\pm}(z)=\mathfrak{h}_i^{\pm}(z)^{-1}\left|
 \begin{matrix}
 l_{11}^{\pm}(z) & \dots & l_{1i-1}^{\pm}(z) & l_{1j}^{\pm}(z) \\
 \vdots & \ddots & \vdots & \vdots \\
 l_{i-11}^{\pm}(z) & \dots & l_{i-1i-1}^{\pm}(z) & l_{i-1j}^{\pm}(z) \\
 l_{i1}^{\pm}(z) & \dots &l_{ii-1}^{\pm}(z) & \boxed{l_{ij}^{\pm}(z)}
 \end{matrix}\right|,
\\
\mathfrak{f}_{ji}^{\pm}(z)=\left|
 \begin{matrix}
 l_{11}^{\pm}(z) & \dots & l_{1i-1}^{\pm}(z) & l_{1i}^{\pm}(z) \\
 \vdots & \ddots & \vdots & \vdots \\
 l_{i-11}^{\pm}(z) & \dots & l_{i-1i-1}^{\pm}(z) & l_{i-1i}^{\pm}(z) \\
 l_{j1}^{\pm}(z) & \dots &l_{ji-1}^{\pm}(z) & \boxed{l_{ji}^{\pm}(z)}
 \end{matrix}\right|\mathfrak{h}_i^{\pm}(z)^{-1}.
\end{gather*}

Denote the matrices as follows:
\begin{align*}\widetilde{F}^{\pm}(z)=\left(
 \begin{matrix}
 1 & 0 & \cdots & 0 \\
 \mathfrak{f}_{21}^{\pm}(z) & 1 & \cdots & 0 \\
 \vdots & \vdots & \ddots & \vdots \\
 \mathfrak{f}_{N1}^{\pm}(z) & \mathfrak{f}_{N2}^{\pm}(z) & \cdots & 1
 \end{matrix}\right),\qquad
 \widetilde{E}^{\pm}(z)=\left(\begin{matrix}
 1 & \mathfrak{e}_{12}^{\pm}(z) & \cdots & \mathfrak{e}_{1N}^{\pm}(z) \\
 0 & 1 & \cdots & \mathfrak{e}_{2N}^{\pm}(z) \\
 \vdots & \vdots & \ddots & \vdots \\
 0& 0 & \cdots & 1
 \end{matrix}\right),
\end{align*}
and \smash{$\widetilde{H}^{\pm}(z)=\operatorname{diag}\bigl(\mathfrak{h}_1^{\pm}(z),\dots,\mathfrak{h}_{N}^{\pm}(z)\bigr)$}.
Then
$\mathcal{L}^{\pm}(z)$ has unique Gauss decomposition
$\mathcal{L}^{\pm}(z)=\widetilde{F}^{\pm}(z)\widetilde{H}^{\pm}(z)\widetilde{E}^{\pm}(z)$.
Since $\widetilde{H}^{\pm}(z)$, $\widetilde{F}^{\pm}(z)$ and $\widetilde{E}^{\pm}(z)$ are invertible, we can
write the inversions of $\mathcal{L}^{\pm}(z)$ as follows:
\begin{align*}
 \mathcal{L}^{\pm}(z)^{-1}={}&\left(
 \begin{matrix}
 1 & -\mathfrak{e}_{12}^{\pm}(z) & &* \\
 \ddots & \ddots& &\\
 & & & -\mathfrak{e}_{N-1,N}^{\pm}(z)\\
 0 & & &1
 \end{matrix}
 \right)\left(
 \begin{matrix}
 \mathfrak{h}_1^{\pm}(z)^{-1} & & 0 \\
 & \ddots & \\
 0 & & h_N^{\pm}(z)^{-1}
 \end{matrix}
 \right) \nonumber\\
 & \times\left(
 \begin{matrix}
 1 & & & 0 \\
 -\mathfrak{f}_{21}^{\pm}(z) & & & \\
 \ddots & \ddots & & \\
 *& & -\mathfrak{f}_{N-1,N}^{\pm}(z) & 1
 \end{matrix}
 \right).
\end{align*}
In $U(R)$, we denote the entries $h_i^{\pm}(z)$, \smash{$e_{ij}^{\pm}(z)$}, \smash{$f_{ji}^{\pm}(z)$} to express the respective triangular matrices $F^{\pm}(z)$, $E^{\pm}(z)$, and the diagonal matrices $H^{\pm}(z)$, which are used in the same terms as the formal series $l_{ij}^{\pm}(z)$. Thus, we have
$
 L^{\pm}(z)=F^{\pm}(z)H^{\pm}(z)E^{\pm}(z)$.

\begin{Proposition}\label{Apr4}
From Proposition {\rm\ref{Apr3}}, we can express the homomorphism $U\bigl(\widetilde{R}\bigr)\mapsto\mathcal{H}_q(m+n)\otimes_{\mathbb{C}[q^c, q^{-c}]} U(R)$ as follows:
\begin{align*}
 & \mathfrak{e}_{ij}^{\pm}(z)\mapsto e_{ij}^{\pm}(z),\qquad \mathfrak{f}_{ji}^{\pm}(z)\mapsto f_{ji}^{\pm}(z),\qquad
 \mathfrak{h}_i^{\pm}(z)\mapsto \exp\biggl(\sum_{p=1}\vartheta_{\mp p}z^{\mp p}\biggr) \cdot h_i^{\pm}(z).
 \end{align*}
\end{Proposition}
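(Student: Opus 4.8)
The plan is to propagate the Gauss decomposition through the homomorphism of Proposition~\ref{Apr3}. Write $\psi$ for that homomorphism and set $\gamma^{\pm}(z)=\exp\bigl(\sum_{p\geq1}\vartheta_{\mp p}z^{\mp p}\bigr)$, so that Proposition~\ref{Apr3} reads $\psi(\mathcal{L}^{\pm}(z))=\gamma^{\pm}(z)L^{\pm}(z)$; on matrix entries this says $\psi(l_{ij}^{\pm}(z))=\gamma^{\pm}(z)\,l_{ij}^{\pm}(z)$, where the right-hand $l_{ij}^{\pm}(z)$ are the generators of $U(R)$. The crucial structural feature is that $\gamma^{\pm}(z)$ is an \emph{invertible central} series: its coefficients $\vartheta_{\pm p}$ are power series in the central element $q^{c}$, so $\gamma^{\pm}(z)$ is even and central in $\mathcal{H}_q(m+n)\otimes_{\mathbb{C}[q^c,q^{-c}]}U(R)$.

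First I would record the behaviour of quasideterminants under central rescaling. If $A=[a_{ij}]$ has entries in an associative superalgebra and $\lambda$ is an invertible central scalar, then the defining formula $|A|_{ij}=a_{ij}-r_i^{j}\bigl(A^{ij}\bigr)^{-1}c_j^{i}$ yields $|\lambda A|_{ij}=\lambda\,|A|_{ij}$, since the deleted row $r_i^j$ and column $c_j^i$ each acquire a factor $\lambda$ while $\bigl(\lambda A^{ij}\bigr)^{-1}=\lambda^{-1}\bigl(A^{ij}\bigr)^{-1}$ contributes $\lambda^{-1}$. Applying this with $A=\mathcal{L}^{\pm}(z)$ and $\lambda=\gamma^{\pm}(z)$ to the quasideterminant defining $\mathfrak{h}_i^{\pm}(z)$ gives at once $\psi(\mathfrak{h}_i^{\pm}(z))=\gamma^{\pm}(z)\,h_i^{\pm}(z)$, which is the asserted image of the diagonal Gaussian generators.

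Next I would treat the off-diagonal generators. Each $\mathfrak{e}_{ij}^{\pm}(z)$ is $\mathfrak{h}_i^{\pm}(z)^{-1}$ times a single quasideterminant, and each $\mathfrak{f}_{ji}^{\pm}(z)$ is a single quasideterminant times $\mathfrak{h}_i^{\pm}(z)^{-1}$. Applying $\psi$ and the homogeneity just established, the numerator quasideterminant scales by $\gamma^{\pm}(z)$ while $\psi\bigl(\mathfrak{h}_i^{\pm}(z)^{-1}\bigr)=\gamma^{\pm}(z)^{-1}h_i^{\pm}(z)^{-1}$; because $\gamma^{\pm}(z)$ is central the two factors cancel, leaving $\psi(\mathfrak{e}_{ij}^{\pm}(z))=e_{ij}^{\pm}(z)$ and $\psi(\mathfrak{f}_{ji}^{\pm}(z))=f_{ji}^{\pm}(z)$. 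An equivalent and perhaps cleaner formulation is to absorb the scalar directly into the decomposition: from $\psi(\mathcal{L}^{\pm}(z))=\gamma^{\pm}(z)F^{\pm}(z)H^{\pm}(z)E^{\pm}(z)=F^{\pm}(z)\bigl(\gamma^{\pm}(z)H^{\pm}(z)\bigr)E^{\pm}(z)$ one sees that the central factor is carried entirely by the diagonal block, and the uniqueness of the Gauss decomposition then forces the stated images of all three families of generators.

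The step needing the most care is the homogeneity of the quasideterminant in the graded noncommutative setting: it holds precisely because $\gamma^{\pm}(z)$ is even, central and invertible, so it may be pulled across the matrix inverse $\bigl(A^{ij}\bigr)^{-1}$ and through the graded products of entries without any sign or reordering corrections. I would therefore make explicit that $\gamma^{\pm}(z)$ is a series in $q^{c}$ alone (hence central and of parity $0$), and recall that the well-definedness and invertibility of the principal quasideterminants entering $\mathfrak{h}_i^{\pm}(z)$, $\mathfrak{e}_{ij}^{\pm}(z)$ and $\mathfrak{f}_{ji}^{\pm}(z)$ are already guaranteed by the uniqueness of the Gauss decompositions of $\mathcal{L}^{\pm}(z)$ and $L^{\pm}(z)$ established above.
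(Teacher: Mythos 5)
Your proposal is correct and is essentially the argument the paper intends: the paper's proof is the one-line remark that the claim ``follows from the formulas of Gaussian generators,'' and your write-up simply makes explicit the two facts underlying that remark, namely the homogeneity $|\lambda A|_{ij}=\lambda|A|_{ij}$ of quasideterminants under an invertible scalar commuting with all entries, and the resulting cancellation of that scalar in the ratios defining $\mathfrak{e}_{ij}^{\pm}(z)$ and $\mathfrak{f}_{ji}^{\pm}(z)$. The only point worth noting is that centrality of $\gamma^{\pm}(z)$ in the full tensor product is more than is needed; it suffices that the Heisenberg factor commutes with the $U(R)$ factor, which your argument already uses.
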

\begin{proof}
It follows from the formulas of Gaussian generators.
\end{proof}

\subsection{The homomorphism theorem }
Consider the superalgebra $U\bigl(\widetilde{R}^{(m|n-l)}\bigr)$ for $l<n$ (resp.\ $U\bigl(\widetilde{R}^{(m+n-l|0)}\bigr)$ for $l\geq n$) corresponding to the $R$-matrices \smash{$\widetilde{R}^{(m|n-l)}(z)$} (resp.\ \smash{$\widetilde{R}^{(m+n-l|0)}(z)$}), which possess generators \smash{$l_{ij}^{\pm}[\mp p]$} for $l<i,j <\overline{l}$ and $p=0,1,2,\dots$.
It is worth noting that when $l=0$, $\widetilde{R}^{(m|n)}(z)=\widetilde{R}(z)$. In this section, we will describe the connection of the superalgebras between $U\bigl(\widetilde{R}\bigr)$ and $U\bigl(\widetilde{R}^{(m|n-l)}\bigr)$ for $l<n$ (resp.\ \smash{$U\bigl(\widetilde{R}^{(m+n-l|0)}\bigr)$} for $l\geq n$).

\begin{Theorem}\label{th1}
The mapping
\begin{align*}
l_{ij}^{\pm}(z)\mapsto
\left|
 \begin{matrix}
 l_{11}^{\pm}(z) & l_{1j}^{\pm}(z) \\
 l_{i1}^{\pm}(z) &\boxed{l_{ij}^{\pm}(z)} \\
 \end{matrix}
 \right|, \qquad i,j \neq1,\overline{1},
\end{align*}
defines a homomorphism $U\bigl(\widetilde{R}^{(m|n-1)}\bigr)\rightarrow U\bigl(\widetilde{R}^{(m|n)}\bigr)$.
\end{Theorem}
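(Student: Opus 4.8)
The plan is to check directly that the elements attached to the generators of $U\bigl(\widetilde{R}^{(m|n-1)}\bigr)$ obey the defining relations of that algebra. Expanding the $2\times2$ quasideterminant, the stated map is
\[
l_{ij}^{\pm}(z)\longmapsto \widehat l_{ij}^{\pm}(z):=l_{ij}^{\pm}(z)-l_{i1}^{\pm}(z)\,l_{11}^{\pm}(z)^{-1}\,l_{1j}^{\pm}(z),\qquad 1<i,j<\overline 1,
\]
where $l_{11}^{\pm}(z)$ is invertible because its constant term $l_{11}^{\pm}[0]$ is. These are precisely the Schur complements of $\mathcal L^{\pm}(z)$ with respect to the entry $l_{11}^{\pm}(z)$, restricted to the inner index range. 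Collecting them into $\widehat{\mathcal L}^{\pm}(z)=\sum_{1<i,j<\overline 1}E^{i}_{j}\otimes \widehat l_{ij}^{\pm}(z)$, it suffices to verify that $\widehat{\mathcal L}^{\pm}(z)$ satisfies \eqref{d2} and \eqref{d3} with $\widetilde R$ replaced by $\widetilde R^{(m|n-1)}$, together with the initial conditions $\widehat l_{ii}^{+}[0]\widehat l_{ii}^{-}[0]=1$ and $\widehat l_{ij}^{\pm}[0]=0$ for $i>j$. The latter are immediate from the lower-triangularity of the $l_{ij}^{\pm}[0]$ and the quasideterminant formula.

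To produce the reduced relations I would take suitable matrix components of \eqref{d2} and \eqref{d3} and eliminate the index-$1$ entries $l_{i1}^{\pm}$, $l_{1j}^{\pm}$, $l_{11}^{\pm}$; equivalently, isolate the index $1$ by the one-step block Gauss factorisation
\[
\mathcal L^{\pm}(z)=\left(\begin{matrix}1&0\\ \mathbf c^{\pm}(z)&I\end{matrix}\right)\left(\begin{matrix}l_{11}^{\pm}(z)&0\\0&\widehat{\mathcal L}^{\pm}_{\circ}(z)\end{matrix}\right)\left(\begin{matrix}1&\mathbf r^{\pm}(z)\\0&I\end{matrix}\right),
\]
where $\widehat{\mathcal L}^{\pm}_{\circ}(z)$ is the full Schur complement over all indices $>1$ and $\widehat{\mathcal L}^{\pm}(z)$ is its restriction to $1<i,j<\overline 1$. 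Substituting this into \eqref{d2}, \eqref{d3} and cancelling the unipotent factors against $\widetilde R(u)$ reduces the problem to an $RLL$-relation for $\widehat{\mathcal L}^{\pm}_{\circ}(z)$ governed by the compression $R^{[1]}(u)$ of $\widetilde R(u)$ to the indices $\{2,\dots,\overline 1\}$. The theorem then becomes the statement that, after restricting the free indices to the range $1<i,j<\overline 1$, this effective $R$-matrix may be replaced by $\widetilde R^{(m|n-1)}(u)$.

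The decisive point is this identification. From the explicit formula \eqref{R1}, the operators $P$, $I$ and the two correction sums restrict verbatim to their $\mathfrak{osp}(2m+1|2(n-1))$ analogues; moreover one checks that the half-sums of positive roots agree, $(\rho,\varepsilon_a-\varepsilon_b)=(\rho',\varepsilon_a-\varepsilon_b)$ for all inner $a,b$, so that even the operator $Q$ restricts to the inner operator $Q^{(m|n-1)}$. What does \emph{not} match is the scalar in the $Q$-coefficient: the compression carries $\zeta=q^{2m-2n-1}$, whereas $\widetilde R^{(m|n-1)}$ requires $\zeta'=q^{2m-2n+1}=q^{2}\zeta$. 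The task is therefore to show that taking the Schur complement with respect to the single index $1$, together with dropping the index $\overline 1$, shifts this scalar from $\zeta$ to $q^{2}\zeta$. Here I would exploit the rank-one (singlet) structure of $Q$: since $Q$ projects onto the distinguished vector $\sum_{a}\xi_{a}q^{(\rho,\varepsilon_a)}e_{a}\otimes e_{\bar a}$, eliminating its $a=1$ and $a=\overline 1$ components through the index-$1$ Schur complement reorganises the singlet on the inner space and is exactly what produces the $q^{2}$-shift. The crossing and centrality relations \eqref{d6} and \eqref{d9} of Proposition \ref{Apr2}, which tie the index $\overline 1$ to the index $1$ at the shifted argument $z\zeta$, are the tools I expect to make this reorganisation rigorous.

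The main obstacle is precisely this last step: controlling the orthosymplectic $Q$-term, which couples $1$ with $\overline 1$, so as to establish simultaneously that the inner-block relations close (no $\widehat l_{i\overline 1}^{\pm}$ or $\widehat l_{\overline 1 j}^{\pm}$ leaks into the relations among the $\widehat l_{ij}^{\pm}$ with $i,j<\overline 1$) and that the effective crossing parameter shifts correctly. This coupling is also what makes the cancellation of the Gauss factors nontrivial, in contrast with the $P$- and $I$-parts, which behave as in type $A$. It is the feature absent from the $\mathfrak{gl}$-type reduction of Remark \ref{RM2}\,(2), and it is where the explicit $R$-matrix \eqref{R1} and Proposition \ref{Apr2} are genuinely needed; the accompanying bookkeeping of the $\mathbb Z_{2}$-graded signs throughout the conjugation of the $RLL$-relations is routine but must be carried out with care.
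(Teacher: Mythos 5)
Your reduction of the statement to showing that the Schur complements $\widehat l_{ij}^{\pm}(z)=l_{ij}^{\pm}(z)-l_{i1}^{\pm}(z)l_{11}^{\pm}(z)^{-1}l_{1j}^{\pm}(z)$ satisfy the $RLL$ relations for $\widetilde R^{(m|n-1)}$ is the right framing, and your observation about the parameter shift $\zeta\mapsto q^{2}\zeta$ correctly identifies where the orthosymplectic structure enters. But the proposal stops exactly at the point where the proof has to happen. The step you describe as "substituting the block Gauss factorisation into \eqref{d2}, \eqref{d3} and cancelling the unipotent factors against $\widetilde R(u)$" is not a legitimate operation as stated: the entries $\mathbf c^{\pm}(z)$, $\mathbf r^{\pm}(z)$ of the unipotent factors are noncommutative series that do not pass through the $R$-matrix or through the other copy of $\mathcal L^{\pm}$ in any simple way, so there is no well-defined "compression $R^{[1]}(u)$" produced by conjugation. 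And the decisive identification — that the effective $R$-matrix governing the $\widehat l_{ij}^{\pm}$ is $\widetilde R^{(m|n-1)}$ with crossing parameter $q^{2}\zeta$ — is only asserted, with a heuristic appeal to the rank-one structure of $Q$ and the relations of Proposition \ref{Apr2}; you explicitly call it "the main obstacle" rather than resolving it. As written, the argument establishes the initial conditions and nothing else.

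The paper's proof supplies precisely the missing mechanism, by a fusion construction rather than elimination. One defines quantum minors $l^{\pm\,1i}_{1j}(z)=\langle 1,i|\widehat R(q^{-2})\mathcal L_1^{\pm}(z)\mathcal L_2^{\pm}(zq^{2})|1,j\rangle$; Lemma \ref{le1} shows via the exchange relation at $z/w=q^{-2}$ that these equal $-l_{11}^{\pm}(zq^{-2})\,s_{ij}^{\pm}(zq^{-2})$, i.e., they recover your Schur complement up to an invertible prefactor (equation \eqref{12}). The $RLL$ relation for the fused operator $\mathfrak L^{\pm}(z)=\sum E^i_j\otimes l^{\pm 1i}_{1j}(z)$ then follows from repeated Yang--Baxter moves, and the key computation is Lemma \ref{le2}: the product of six $R$-matrices sandwiched between $\langle 1,k,1,l|$ and $|1,i,1,j\rangle$ collapses to a scalar $C(z,w)$ times $\widetilde R^{(m|n-1)}$ — this is where the $\zeta\to q^{2}\zeta$ shift and the non-leakage of the indices $1,\overline 1$ are actually verified, by explicit computation with the matrix \eqref{R1}. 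If you want to salvage your route, you would essentially have to reprove Lemma \ref{le2} in disguise; without it, or an equivalent, the proposal has a genuine gap.
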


To proving Theorem \ref{th1}, we first establish some preliminary results. For $v\in \operatorname{End}\bigl(\mathbb{C}^{2m+1|2n}\bigr)^{\otimes t}$ and fixed $a_i,b_i\in\mathbb{Z}_+$, $i=1,\dots,t$, define the operators on \smash{$\bigl(\mathbb{C}^{2m+1|2n}\bigr)^{\otimes t}$} as follows:
\begin{align*}
 &|b_1,\dots, b_t\rangle\cdot v\doteq \sum\limits_{x_i} E_{x_1}^{b_1}\otimes \dots\otimes E_{x_t}^{b_t}\cdot v ,\qquad \langle a_1,\dots, a_t| \cdot v\doteq \sum\limits_{x_i}v\cdot E^{x_1}_{a_1}\otimes \dots\otimes E^{x_t}_{a_t},
\end{align*}
where $x_i\in\mathbb{Z}$ are indeterminate. So that $E_{a_1}^{b_1}\otimes \dots\otimes E_{a_t}^{b_t}=|b_1,\dots, b_t\rangle \langle a_1,\dots, a_t|$.

 Now, we treat elements of the tensor product algebra
\smash{$\operatorname{End}\bigl(\mathbb{C}^{2m+1|2n}\bigr)^{\otimes t} \otimes U\bigl(\widetilde{R}\bigr)\bigl[\bigl[z,z^{-1}\bigr]\bigr]$} as operators on the space \smash{$\bigl(\mathbb{C}^{2m+1|2n}\bigr)^{\otimes t}$}, with coefficients in \smash{$U\bigl(\widetilde{R}^{(m|n)}\bigr)$}.
Therefore, for an element of the form
\begin{equation*}
 X = \sum\limits_{a_i,b_i} E^{a_1}_{b_1} \otimes \cdots \otimes E^{a_t}_{b_t} \otimes X^{a_1 \cdots a_t}_{b_1 \cdots b_t},
\end{equation*}
we adopt the following standard notation
\smash{$
 X^{a_1 \cdots a_t}_{b_1 \cdots b_t} = \langle a_1, \ldots, a_t | X | b_1, \ldots, b_t \rangle$}.
The operators $\langle a_1, \ldots, a_t | X$ and $Y | b_1, \ldots, b_t \rangle$ are defined in the usual manner.
In fact, for fixed $b_1, \ldots, b_t \in \mathbb{Z}_+$ and \smash{$X \in \operatorname{End}\bigl(\mathbb{C}^{2m+1|2n}\bigr)^{\otimes t} \otimes U\bigl(\widetilde{R}\bigr)\bigl[\bigl[z,z^{-1}\bigr]\bigr]$}, we have
\begin{equation*}
 X | b_1, \ldots, b_t \rangle = \sum\limits_{x_i} E_{x_1}^{b_1} \otimes \cdots \otimes E_{x_t}^{b_t} \otimes X_{x_1, \ldots, x_t}^{b_1, \ldots, b_t} | x_1, \ldots, x_t \rangle,
\end{equation*}
where $x_i \in \mathbb{Z}$ are indeterminate.

Introduce the quasideterminant
\begin{align*}s^{\pm}_{ij}(z)=
 \left|
 \begin{matrix}
 l_{11}^{\pm}(z) & l_{1j}^{\pm}(z) \\
 l_{i1}^{\pm}(z) & \boxed{l_{ij}^{\pm}(z)} \\
 \end{matrix}
 \right|= l_{ij}^{\pm}(z)- l_{i1}^{\pm}(z) l_{11}^{\pm}(z)^{-1} l_{1j}^{\pm}(z),
\end{align*}and let the quantum minors series \smash{$l^{\pm a_1a_2}_{b_1b_2}(z)$} with coefficients in \smash{$U\bigl(\widetilde{R}^{(m|n)}\bigr)$} as
\[
 l^{\pm a_1a_2}_{b_1b_2}(z)=\big\langle a_1, a_2|\widehat{R}\bigl(q^{-2}\bigr)\mathcal{L}_1^{\pm}(z)\mathcal{L}_2^{\pm}\bigl(zq^{2}\bigr)|b_1, b_2\big\rangle,
\]
where $a_i, b_i\in\{1,\dots,2m+2n+1\}$, and set
\smash{$\widehat{R}(z)=\frac{q-q^{-1}z}{z-1}\widetilde{R}(z)$}.

\begin{Lemma}\label{le1}
For any $1<i,j <\overline{1}$, we have
\begin{equation}\label{12}
 s^{\pm}_{ij}(z)=- l_{11}^{\pm}\bigl(zq^{-2}\bigr)^{-1}l^{\pm 1i}_{1j}\bigl(zq^{-2}\bigr).
\end{equation}Moreover,
\begin{gather}\label{13}
 \bigl[l_{11}^{\pm}(z),s^{\pm}_{ij}(z)\bigr]=0, \\
 \label{14}
 \frac{z_{\pm}-w_{\mp}}{q^{-1}z_{\pm}-qw_{\mp}}l_{11}^{\pm}(z)s^{\mp}_{ij}(w)=\frac{z_{\mp}
 -w_{\pm}}{q^{-1}z_{\mp}-qw_{\pm}}s^{\mp}_{ij}(w)l_{11}^{\pm}(z).
\end{gather}
\end{Lemma}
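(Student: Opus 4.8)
The three identities \eqref{12}, \eqref{13}, \eqref{14} all concern the quasideterminant $s^{\pm}_{ij}(z)$, so the plan is to first reduce everything to the quantum minor $l^{\pm a_1 a_2}_{b_1 b_2}(z)$, whose commutation properties are dictated directly by the defining relations \eqref{a1}. First I would establish \eqref{12}. Writing out $s^{\pm}_{ij}(z) = l_{ij}^{\pm}(z) - l_{i1}^{\pm}(z)\, l_{11}^{\pm}(z)^{-1}\, l_{1j}^{\pm}(z)$ and expanding $l^{\pm 1i}_{1j}(z)=\langle 1,i|\widehat{R}(q^{-2})\mathcal{L}_1^{\pm}(z)\mathcal{L}_2^{\pm}(zq^{2})|1,j\rangle$, the key input is the explicit form of $\widehat{R}(q^{-2})$ obtained from \eqref{R1}. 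At the specialization $z=q^{-2}$ the factor $(q-q^{-1}z)/(z-1)$ together with the $P$-term of $\widetilde{R}$ produces a rank-relation: the operator $\widehat{R}(q^{-2})$ should act as (a multiple of) the combination $I - P$ (or its graded analogue) on the relevant two-dimensional subspace spanned by the indices $1,i$ and $1,j$, with the $Q$-term dropping out for these index ranges $1<i,j<\overline{1}$. Matching the $\langle 1,i|\cdots|1,j\rangle$ matrix entry against the quasideterminant expansion then yields \eqref{12} up to the scalar $-l_{11}^{\pm}(zq^{-2})^{-1}$ and the argument shift $z\mapsto zq^{-2}$; the shift is exactly what the factor $\mathcal{L}_2^{\pm}(zq^2)$ forces.

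For \eqref{13} I would use the $RLL$ relation $R(z/w)L_1^{\pm}(z)L_2^{\pm}(w)=L_2^{\pm}(w)L_1^{\pm}(z)R(z/w)$, extract the matrix entries coupling the index $1$ with the pair $(i,j)$, and read off a commutator. Because both series carry the same sign $\pm$ and $l_{11}^{\pm}$ sits in the ``highest'' diagonal slot while $s^{\pm}_{ij}$ is a Gaussian-type off-block entry built from $l_{ij}^{\pm}$ with the first row and column removed, the relevant $R$-matrix coefficients should collapse so that the would-be commutator vanishes identically; this is the standard mechanism by which Gauss/quasideterminant entries commute with the corner entry $l_{11}$. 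For \eqref{14}, which mixes the two signs, I would instead invoke the cross relation \eqref{a1}, namely $R(z_+/w_-)L_1^{+}(z)L_2^{-}(w)=L_2^{-}(w)L_1^{+}(z)R(z_-/w_+)$ (and its $\widetilde{R}$-counterpart), again selecting the entries that couple the first index with $(i,j)$. Here the spectral-parameter-dependent scalar coefficients $(z_\pm - w_\mp)/(q^{-1}z_\pm - q w_\mp)$ arise precisely from the ratio of the leading $P$- and $I$-terms of $R(z_+/w_-)$ versus $R(z_-/w_+)$ at the relevant entries.

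The main obstacle will be the bookkeeping in \eqref{12}: correctly tracking the graded signs $(-1)^{[a]}$, $(-1)^{[a][b]}$ and the factors $\xi_a, \xi_b$, $q^{(\rho,\varepsilon_a-\varepsilon_b)}$ coming from $P$, $Q$ and the $\hat\sigma$-terms of $\widetilde{R}$ when specializing at $z=q^{-2}$, and verifying that the $Q$-contribution really is annihilated on the index range $1<i,j<\overline{1}$ (so that the involution-pair indices $\overline{1}$ never enter). Once \eqref{12} is in hand, \eqref{13} and \eqref{14} follow more mechanically by substituting the quantum-minor expression and applying the corresponding $RLL$ relation, so I would treat \eqref{12} as the crux and the other two as corollaries obtained by transporting the known commutation relations for $l_{11}^{\pm}$ and $l^{\pm 1i}_{1j}$ through \eqref{12}.
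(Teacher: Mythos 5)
Your overall strategy---reduce everything to the quantum minor $l^{\pm 1i}_{1j}(z)$, exploit the special form of $\widehat{R}\bigl(q^{-2}\bigr)$ on the rows $\langle 1,i|$ with $1<i<\overline{1}$ (where the $Q$- and $\hat\sigma$-contributions do indeed vanish), and then transport commutation relations through \eqref{12}---is the same as the paper's. As written, however, the plan has two concrete gaps.

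First, for \eqref{12} the matching you describe does not close without an additional exchange relation. Expanding the minor gives $l^{\pm 1i}_{1j}\bigl(zq^{-2}\bigr)=-l_{11}^{\pm}\bigl(zq^{-2}\bigr)l_{ij}^{\pm}(z)-(-1)^{[i]}q^{-1}l_{i1}^{\pm}\bigl(zq^{-2}\bigr)l_{1j}^{\pm}(z)$, whereas the quasideterminant is $l_{ij}^{\pm}(z)-l_{i1}^{\pm}(z)l_{11}^{\pm}(z)^{-1}l_{1j}^{\pm}(z)$ with all arguments equal to $z$. To pull the factor $-l_{11}^{\pm}\bigl(zq^{-2}\bigr)$ out to the left of the second summand one must first prove $l_{11}^{\pm}\bigl(zq^{-2}\bigr)l_{i1}^{\pm}(z)=-(-1)^{[i]}q^{-1}l_{i1}^{\pm}\bigl(zq^{-2}\bigr)l_{11}^{\pm}(z)$, which the paper obtains by taking the $\langle 1,i|\cdots|1,1\rangle$ entry of the $RLL$ relation \eqref{d2} and specializing $z/w=q^{-2}$. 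This reordering is the actual crux of \eqref{12} and is absent from your plan; without it the spectral arguments of the $l$'s do not line up, and no amount of sign bookkeeping repairs that.

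Second, for \eqref{13} and \eqref{14} you propose to ``read off a commutator'' from a single $RLL$ (resp.\ cross) relation. That cannot work as stated, because $s^{\mp}_{ij}(w)$, equivalently the minor $l^{\mp 1i}_{1j}(w)$, is quadratic in the generators: commuting $l_{11}^{\pm}(z)$ past it requires applying the exchange relation twice and using the Yang--Baxter equation to reorder the resulting $R$-matrices, i.e., a fused identity in $\operatorname{End}\bigl(\mathbb{C}^{2m+1|2n}\bigr)^{\otimes 3}$ of the form $\widetilde{R}_{01}(z_{\pm}/w_{\mp})\widetilde{R}_{02}\bigl(z_{\pm}q^{-2}/w_{\mp}\bigr)\mathcal{L}_0^{\pm}(z)\widehat{R}_{12}\bigl(q^{-2}\bigr)\mathcal{L}_1^{\mp}(w)\mathcal{L}_2^{\mp}\bigl(wq^2\bigr)=\cdots$. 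Consequently the scalar in \eqref{14} is not the ratio of the leading $P$- and $I$-terms of a single $R$-matrix but a product of two such coefficients, evaluated at $z/w$ and at $zq^{-2}/w$, which only after the shift $w\mapsto wq^{-2}$ forced by \eqref{12} collapses to $(z_{\pm}-w_{\mp})/\bigl(q^{-1}z_{\pm}-qw_{\mp}\bigr)$. Your closing sentence treats the commutation of $l_{11}^{\pm}$ with the quantum minor as ``known,'' but deriving it is precisely the content of these two identities; supplying the triple-tensor computation is what would turn the plan into a proof.
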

\begin{proof}
By the definition of quantum minors, we have
\begin{equation*}
 l^{\pm 1i}_{1j}(z)=\big\langle 1, i|\widehat{R}\bigl(q^{-2}\bigr)\mathcal{L}_1^{\pm}(z)\mathcal{L}_2^{\pm}\bigl(zq^2\bigr)|1, j\big\rangle=-l_{11}^{\pm}(z)l_{ij}^{\pm}\bigl(zq^{2}\bigr)-(-1)^{[i]}q^{-1}l_{i1}^{\pm}(z)l_{1j}^{\pm}\bigl(zq^{2}\bigr).
\end{equation*}
Then from relation \eqref{d2}, we find that
\begin{gather*}
\big\langle 1, i|\widetilde{R}(z/w)\mathcal{L}_1^{\pm}(z)\mathcal{L}_2^{\pm}(w)|1, 1\big\rangle=\big\langle 1, i|\mathcal{L}_2^{\pm}(w)\mathcal{L}_1^{\pm}(z)\widetilde{R}(z/w)|1, 1\big\rangle,
\end{gather*}
a direct calculation gives
\begin{gather*}
 -(z/w-1)l_{11}^{\pm}(z)l_{i1}^{\pm}(w)+\bigl(q-q^{-1}\bigr)z/wl_{i1}^{\pm}(z)l_{11}^{\pm}(w)=-\bigl(qz/w-q^{-1}\bigr)l_{i1}^{\pm}(w)l_{11}^{\pm}(z).
\end{gather*}
Let $z/w=q^{-2}$, we have
\begin{equation*}
 l_{11}^{\pm}\bigl(zq^{-2}\bigr)l_{i1}^{\pm}(z)=-(-1)^{[i]}q^{-1}l_{i1}^{\pm}\bigl(zq^{-2}\bigr)l_{11}^{\pm}(z),
\end{equation*}
and hence
\begin{align*}
 l^{\pm 1i}_{1j}\bigl(zq^{-2}\bigr)&=-l_{11}^{\pm}\bigl(zq^{-2}\bigr)l_{ij}^{\pm}(z)-(-1)^{[i]}q^{-1}l_{i1}^{\pm}\bigl(zq^{-2}\bigr)l_{1j}^{\pm}(z) \\
 &=-l_{11}^{\pm}\bigl(zq^{-2}\bigr)l_{ij}^{\pm}(z)-(-1)^{[i]}q^{-1}l_{i1}^{\pm}\bigl(zq^{-2}\bigr)l_{11}^{\pm}(z)\bigl(l_{11}^{\pm}(z)\bigr)^{-1}l_{1j}^{\pm}(z)\\
 &=-l_{11}^{\pm}\bigl(zq^{-2}\bigr)l_{ij}^{\pm}(z)+l_{11}^{\pm}\bigl(zq^{-2}\bigr)l_{i1}^{\pm}(z)\bigl(l_{11}^{\pm}(z)\bigr)^{-1}l_{1j}^{\pm}(z)=-l_{11}^{\pm}\bigl(zq^{-2}\bigr)s^{\pm}_{ij}(z),
\end{align*}
that is, $s^{\pm}_{ij}(z)=- l_{11}^{\pm}\bigl(zq^{-2}\bigr)^{-1}l^{\pm 1i}_{1j}\bigl(zq^{-2}\bigr)$,
which implies \eqref{12}.

For the remaining two equations, we only verify \eqref{14} since \eqref{13} is similar.
First, we have the following equation:
\begin{align*}
 &\big\langle 1, 1, i|\widetilde{R}_{01}(z_{\pm}/w_{\mp})\widetilde{R}_{02}(z_{\pm}q^{-2}/w_{\mp})\mathcal{L}_0^{\pm}(z)\widehat{R}_{12}\bigl(q^{-2}\bigr)\mathcal{L}_1^{\mp}(w)L_2^{\mp}\bigl(wq^2\bigr)|1, 1, j\big\rangle\\
 &\qquad=\big\langle 1, 1, i|\widehat{R}_{12}\bigl(q^{-2}\bigr)\mathcal{L}_1^{\mp}(w)\mathcal{L}_2^{\mp}\bigl(wq^2\bigr)\mathcal{L}_0^{\pm}(z)\widetilde{R}_{02}\bigl(z_{\mp}q^{-2}/w_{\pm}\bigr)\widetilde{R}_{01}(z_{\pm}/w_{\mp})|1, 1, j\big\rangle,
\end{align*}
which is derived by the Yang--Baxter equation and relation \eqref{d2}. After a easy calculation, we get
\begin{equation*}
 \frac{q^{-2}z_{\pm}-w_{\mp}}{q^{-3}z_{\pm}-qw_{\mp}}l_{11}^{\pm}(z)l^{\mp 1i}_{1j}(w)=\frac{q^{-2}z_{\mp}-w_{\pm}}{q^{-3}z_{\mp}-qw_{\pm}}l^{\mp 1i}_{1j}(w)l_{11}^{\pm}(z).
\end{equation*}{Therefore,
\begin{equation*}
 \frac{q^{-2}z_{\pm}-q^{-2}w_{\mp}}{q^{-3}z_{\pm}-q^{-1}w_{\mp}}l_{11}^{\pm}(z)l_{11}^{\mp}\bigl(q^{-2}w\bigr)s^{\mp }_{ij}(w)=\frac{q^{-2}z_{\mp}-q^{-2}w_{\pm}}{q^{-3}z_{\mp}-q^{-1}w_{\pm}}l_{11}^{\mp}\bigl(q^{-2}w\bigr)s^{\mp }_{ij}(w)l_{11}^{\pm}(z).
\end{equation*}This implies the equation \eqref{14}. }
\end{proof}

Consider the tensor product algebra \smash{$\operatorname{End}\bigl(\mathbb{C}^{2m+1|2n}\bigr)^{\otimes 4}\otimes U\bigl(\widetilde{R}^{(m|n)}\bigr)$} for $1 < i, j < \overline{1}$.
The following lemma can be derived through direct calculation, similar to \cite[equations (3.25)--(3.31)]{JMA}.

\begin{Lemma}\label{le2}
For $1<i,j <\overline{1}$, we have
\begin{gather*}
 \widehat{R}_{12}^{(m|n)}\bigl(q^{-2}\bigr)\widehat{R}_{34}^{(m|n)}\bigl(q^{-2 }\bigr)\widetilde{R}_{14}^{(m|n)}\bigl(zq^{-2}\bigr)\widetilde{R}_{24}^{(m|n)}(z)\widetilde{R}_{13}^{(m|n)}(z)\widetilde{R}_{23}^{(m|n)}(w)|1, i, 1, j\rangle \nonumber\\
 \qquad=C(z)\widehat{R}_{12}^{(m|n)}\bigl(q^{-2}\bigr)\widehat{R}_{34}^{(m|n)}\bigl(q^{-2}\bigr)\widetilde{R}_{24}^{(m|n-1)}(z)|1,i, 1, j\rangle,
 \end{gather*}
 and
 \begin{gather*}
 \langle1, i, 1, j|\widetilde{R}_{23}^{(m|n)}(w)\widetilde{R}_{13}^{(m|n)}(z)\widetilde{R}_{24}^{(m|n)}(z)\widetilde{R}_{14}^{(m|n)}\bigl(zq^{-2}\bigr)\widehat{R}_{12}^{(m|n)}
 \bigl(q^{-2}\bigr)\widehat{R}_{34}^{(m|n)}\bigl(q^{-2}\bigr)\nonumber\\
 \qquad=C(z)\langle1, i, 1, j|\widetilde{R}_{24}^{(m|n-1)}(z)\widehat{R}_{12}^{(m|n)}\bigl(q^{-2}\bigr)\widehat{R}_{34}^{(m|n)}\bigl(q^{-2}\bigr),
\end{gather*}
where
\[C(z,w)
=\frac{(w-1)(z-q^{2})-(q-q^{-1})^2z}{(wq^{-2}-1)(z-q^4)}.
\]
\end{Lemma}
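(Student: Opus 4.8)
The plan is to prove the first (``ket'') identity in detail and to obtain the second (``bra'') identity by running the same argument from right to left, using the symmetry $P_{12}\widetilde{R}_{12}(z)P_{12}=\widetilde{R}_{21}(z)$ together with the unitarity $\widetilde{R}_{12}(z/w)\widetilde{R}_{21}(w/z)=1$ recorded just before the lemma. Conceptually the statement is a \emph{fusion} identity: the two evaluations $\widehat{R}_{12}(q^{-2})$ and $\widehat{R}_{34}(q^{-2})$ behave as rescaled projectors in the spaces $(1,2)$ and $(3,4)$, and once the four-fold product of $\widetilde{R}$-matrices is sandwiched between them and evaluated on $|1,i,1,j\rangle$ the whole chain must collapse onto the single factor $\widetilde{R}_{24}^{(m|n-1)}(z)$, all remaining factors contributing only the scalar $C(z,w)$. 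One should keep in mind throughout that deleting one odd index shifts the structure constant $\zeta=q^{2m-2n-1}$ to $\zeta q^{2}=q^{2m-2n+1}$ and changes the Weyl vector $\rho$; the computation must reproduce exactly these shifts for $\widetilde{R}^{(m|n-1)}$ to emerge.

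The first ingredient I would establish is the behaviour of $\widehat{R}(q^{-2})$ at the distinguished point. Writing $\widehat{R}(z)=\frac{q-q^{-1}z}{z-1}\widetilde{R}(z)$ and substituting \eqref{R1} gives
\begin{gather*}
 \widehat{R}(z)=\frac{(q-q^{-1})zP}{z-1}-\frac{(q-q^{-1})zQ}{z-\zeta}-\biggl\{I+\bigl(q^{\frac12}-q^{-\frac12}\bigr)\sum_{a\neq\overline{a}}(-1)^{[a]}E^a_a\otimes\hat{\sigma}^a_a+\bigl(q-q^{-1}\bigr)\sum_{a>b}(-1)^{[b]}E^a_b\otimes\hat{\sigma}^b_a\biggr\}.
\end{gather*}
Evaluating on the two-space vector $|1,i\rangle$ with $1<i<\overline{1}$, the structure of $Q$ and of the diagonal $\hat{\sigma}$-sum requires $i\in\{1,\overline{1}\}$ for a nonzero contribution, so both annihilate $|1,i\rangle$; meanwhile $P$ and the single surviving off-diagonal term send $|1,i\rangle$ to a multiple of $|i,1\rangle$. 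Thus $\widehat{R}_{12}(q^{-2})|1,i\rangle$ is an explicit two-term combination of $|1,i\rangle$ and $|i,1\rangle$, and similarly for $\widehat{R}_{34}(q^{-2})|1,j\rangle$. These relations are the super analogues of the projector identities used in \cite{JMA}.

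The main computation is then to push the chain $\widetilde{R}_{14}(zq^{-2})\widetilde{R}_{24}(z)\widetilde{R}_{13}(z)\widetilde{R}_{23}(w)$ through the two projector factors. I would proceed in two stages. First, using the Yang--Baxter equation for $\widetilde{R}$ (valid since $R$, and hence $\widetilde{R}$, solves it), I reorganize the product so that the factors acting in the pairs $(1,4)$, $(1,3)$ can be braided against $\widehat{R}_{12}(q^{-2})$ and those in $(2,3)$, $(2,4)$ against $\widehat{R}_{34}(q^{-2})$. Second, I substitute the explicit $\widetilde{R}$ from \eqref{R1} into the reorganized product and evaluate on $|1,i,1,j\rangle$; the constraint that spaces $1$ and $3$ both carry the extremal index $1$ forces almost all $P$- and $Q$-terms to vanish, so that after applying the projectors only the component proportional to $\widetilde{R}_{24}^{(m|n-1)}(z)$ survives. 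The factor $\widetilde{R}_{23}(w)$, acting on a middle--extremal pair, contributes only a scalar in $w$, which explains why no $w$ appears on the right-hand side. Reading off the common prefactor yields $C(z,w)=\frac{(w-1)(z-q^{2})-(q-q^{-1})^{2}z}{(wq^{-2}-1)(z-q^{4})}$, and the second identity follows by transposing the argument.

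The hard part will be the disciplined bookkeeping of the $\mathbb{Z}_2$-grading. Every application of $P$, $Q$ and of the tensor-product rule $(X\otimes Y)(Z\otimes W)=(-1)^{[Y][Z]}XZ\otimes YW$ carries signs $(-1)^{[a]}$, $(-1)^{[a][b]}$ and the factors $\xi_a\xi_b$ and $q^{(\rho,\varepsilon_a-\varepsilon_b)}$ built into $Q$; these must be propagated correctly so that the reduced matrix emerges with \emph{its} structure constant $\zeta q^{2}$ and \emph{its} Weyl vector rather than those of the ambient algebra. A second delicate point is to confirm that the scalar $C(z,w)$ is genuinely independent of the middle indices $i,j$: the operator identity packages many scalar equalities, one for each surviving matrix entry, in the spirit of \cite[equations (3.25)--(3.31)]{JMA}, and one must verify that they all produce the same $C(z,w)$. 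Once the grading signs are under control, each individual equality reduces to a routine rational-function identity in $z$, $w$ and $q$.
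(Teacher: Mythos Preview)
Your plan is essentially the same as the paper's, which records only that the lemma ``can be derived through direct calculation, similar to \cite[equations (3.25)--(3.31)]{JMA}''; your outline of that direct calculation, together with the explicit check that $Q$ and the $\hat{\sigma}$-diagonal kill $|1,i\rangle$ for $1<i<\overline{1}$ and your emphasis on tracking the $\mathbb{Z}_2$-signs and the shift $\zeta\mapsto\zeta q^{2}$, is exactly what the super extension requires. One small imprecision: $\widetilde{R}_{23}(w)|i,1\rangle$ is a two-term combination of $|i,1\rangle$ and $|1,i\rangle$, not literally a scalar, and the Yang--Baxter reorganization you mention is not strictly needed since $\widehat{R}_{12}$ and $\widehat{R}_{34}$ already sit on the left; the collapse to $\widetilde{R}_{24}^{(m|n-1)}(z)$ happens only after the projectors act, but this does not affect the validity of your approach.
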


\begin{Remark}Note that, the $R$-matrix $\widetilde{R}(z)$ has a singularity at $z=q^{2}$. However, in Lemma~\ref{le2} and the proof of Lemma~\ref{le1}, we have not considered the singularity of the $R$-matrix. Indeed, If we take into account the singularity of the $R$-matrix, similar to the non-super case discussed in \cite{JMA} and \cite{JMAB}, as well as in the context of super orthosymplectic Yangians \cite{AIM}, we have to redefine the following relation
$l^{\pm a_1a_2}_{b_1b_2}(z)=\langle a_1, a_2|\mathcal{L}_2^{\pm}\bigl(zq^{-2}\bigr)\mathcal{L}_1^{\pm}(z)\widehat{R}\bigl(q^{2}\bigr)|b_1, b_2\rangle$,
where $a_i, b_i\in\{1,\dots,2m+2n+1\}$. A similar calculation leads to the result
\smash{$s^{\pm}_{ij}(z)=l^{\pm 1i}_{1j}\bigl(zq^{2}\bigr)l_{11}^{\pm}\bigl(zq^{2}\bigr)^{-1}$},
and the relations of \eqref{13}--\eqref{14}. Moreover, in Lemma~\ref{le2}, we find that
\begin{gather*}
 C(z,w)
=\frac{(wq-q^{-1})(zq-q^{-1})}{(w-1)(z-1)},
\end{gather*}when we replace $q^{-2}$ with $q^2$.

This means that, whether it is $\widehat{R}\bigl(q^{2}\bigr)$ or $\widehat{R}\bigl(q^{-2}\bigr)$, both can be regarded as a constant coefficient of \smash{$\operatorname{End}\bigl(\mathbb{C}^{2m+1|2n}\bigr)\otimes U\bigl(\widetilde{R}^{(m|n)}\bigr)$} that satisfies the Yang--Baxter equation and the defining relation~\eqref{d2} over~$\mathbb{C}\bigl(q^{1/2}\bigr)$.
\end{Remark}

Under the aforementioned constructions, we now proceed to prove Theorem \ref{th1}.

\begin{proof}[Proof of Theorem \ref{th1}]
By utilizing the Yang--Baxter equation and the defining relations in~\smash{$U\bigl(\widetilde{R}\bigr)$}, we derive the following equality:
\begin{gather*}
 \widetilde{R}_{23}^{(m|n)}\left(\frac{zq^2}{w}\right)\widetilde{R}_{13}^{(m|n)}\left(\frac{z}{w}\right)\widetilde{R}_{24}^{(m|n)}
 \left(\frac{z}{w}\right)\widetilde{R}_{14}^{(m|n)}\left(\frac{z}{wq^2}\right)\widehat{R}_{12}^{(m|n)}\bigl(q^{-2}\bigr) \\
 \qquad\times \mathcal{L}^{\pm}_1(z)\mathcal{L}^{\pm}_2\bigl(zq^2\bigr)\widehat{R}_{34}^{(m|n)}\bigl(q^{-2}\bigr)\mathcal{L}^{\pm}_3(w)\mathcal{L}^{\pm}_4\bigl(wq^{2}\bigr) \\
\qquad\phantom{\times}{}= \widehat{R}_{34}^{(m|n)}\bigl(q^{-2}\bigr)\mathcal{L}^{\pm}_3(w)\mathcal{L}^{\pm}_4\bigl(wq^{2}\bigr)\widehat{R}_{12}^{(m|n)}\bigl(q^{-2}\bigr) \\
\qquad\phantom{\times=}{}\times \mathcal{L}^{\pm}_1(z)\mathcal{L}^{\pm}_2\bigl(zq^2\bigr)\widetilde{R}_{14}^{(m|n)}\left(\frac{z}{wq^2}\right)\widetilde{R}_{24}^{(m|n)}
 \left(\frac{z}{w}\right)
 \widetilde{R}_{13}^{(m|n)}\left(\frac{z}{w}\right)\widetilde{R}_{23}^{(m|n)}\left(\frac{zq^2}{w}\right).
\end{gather*}
Assuming that $1 < i, j, k, l < \overline{1}$, by Lemma \ref{le2}, we obtain
\begin{gather}
 \langle 1, k, 1, l |\widetilde{R}_{24}^{(m|n-1)}\left(\frac{z}{w}\right)\widehat{R}_{12}^{(m|n)}\bigl(q^{-2}\bigr)
 \mathcal{L}^{\pm}_1(z)\mathcal{L}^{\pm}_2\bigl(zq^2\bigr)\widehat{R}_{34}^{(m|n)}\bigl(q^{-2}\bigr)\mathcal{L}^{\pm}_3(w)\mathcal{L}^{\pm}_4\bigl(wq^{2}\bigr)| 1, i, 1, j\rangle \nonumber\\
\qquad = \langle 1, k, 1, l |\widehat{R}_{34}^{(m|n)}\bigl(q^{-2}\bigr)\mathcal{L}^{\pm}_3(w)\mathcal{L}^{\pm}_4\bigl(wq^{2}\bigr)\widehat{R}_{12}^{(m|n)}\bigl(q^{-2}\bigr)\nonumber\\
\phantom{\qquad =}{}\times
 \mathcal{L}^{\pm}_1(z)\mathcal{L}^{\pm}_2\bigl(zq^2\bigr)\widetilde{R}_{24}^{(m|n-1)}\left(\frac{z}{w}\right)| 1, i, 1, j\rangle.\label{qq2}
\end{gather}
Set
\begin{gather*}
 \mathfrak{L}^{\pm}(z)=\sum\limits_{i,j\neq1,\bar{1}}E^i_j\otimes l^{\pm1i}_{1j}(z)\in \operatorname{End}\mathbb{C}^{2m+1|2n}\otimes U\bigl(\widetilde{R}\bigr),
\end{gather*}
then \eqref{qq2} is equivalent to
\begin{gather}\label{22}
 \widetilde{R}_{24}^{(m|n-1)}\left(\frac{z}{w}\right) \mathfrak{L}_2^{\pm}(z)\mathfrak{L}_4^{\pm}(w)=\mathfrak{L}_4^{\pm}(w)\mathfrak{L}_2^{\pm}(z)\widetilde{R}_{24}^{(m|n-1)}\left(\frac{z}{w}\right).
\end{gather}
Let
\smash{$ S^{\pm}(z)=\sum_{i,j\neq1,\bar{1}}E^i_j\otimes s_{ij}^{\pm}(z)$}.
By Lemma \ref{le1},
$S^{\pm}(z)=-l_{11}^{\pm}\bigl(zq^{-2}\bigr)^{-1}\mathfrak{L}^{\pm}\bigl(zq^{-2}\bigr)$.
Hence, \eqref{22} implies that
\begin{gather}\label{23}
 \widetilde{R}^{(m|n-1)}\left(\frac{z}{w}\right) S_1^{\pm}(z)S_2^{\pm}(w)=S_2^{\pm}(w)S_1^{\pm}(z) \widetilde{R}^{(m|n-1)}\left(\frac{z}{w}\right).
\end{gather}
Similarly, we also have
\begin{gather}\label{24}
 \widetilde{R}^{(m|n-1)}\left(\frac{zq^{\pm c}}{w}\right) S_1^{\pm}(z)S_2^{\mp}(w)=S_2^{\mp}(w)S_1^{\pm}(z) \widetilde{R}^{(m|n-1)}\left(\frac{zq^{\mp c}}{w}\right).
\end{gather}
Now, \eqref{23} and \eqref{24} provide the proof of Theorem \ref{th1}.
\end{proof}

We highlight certain consequences of Theorem \ref{th1}, which can be verified using similar methods as in the non-super case (see \cite[Theorem 3.9]{JMA}, and \cite[Theorem 3.7]{JMAB}). Similar results also apply to Yangians under the orthosymplectic superalgebras (see \cite[Corollary 3.2]{AIM}). Moreover, these consequences follow from the Sylvester theorem for quasideterminants (see \cite{IMG,DKBM}).

\begin{Theorem}\label{th2}
The mapping
\begin{align}\label{25}
\psi_l\colon\ l_{ij}^{\pm}(z)\mapsto
\left|
 \begin{matrix}
 l_{11}^{\pm}(z) & \ldots & l_{1l}^{\pm}(z) & l_{1j}^{\pm}(z) \\
 \ldots & \ldots & \ldots & \ldots \\
 l_{l1}^{\pm}(z) & \ldots & l_{ll}^{\pm}(z) & l_{lj}^{\pm}(z) \\
 l_{i1}^{\pm}(z) & \ldots & l_{il}^{\pm}(z) & \boxed{l_{ij}^{\pm}(z)} \\
 \end{matrix}
\right|,
\end{align}defines a homomorphism
\begin{equation*}
 U\bigl(\widetilde{R}^{(m|n-l)}\bigr)\rightarrow U\bigl(\widetilde{R}^{(m|n)}\bigr)\qquad \text{for} \quad l<n,
\end{equation*}and another homomorphism
\begin{equation*}
 U\bigl(\widetilde{R}^{(m+n-l|0)}\bigr)\rightarrow U\bigl(\widetilde{R}^{(m|n)}\bigr)\qquad \text{for} \quad l\geq n,
\end{equation*}
where the generators $l_{ij}^{\pm}(z)$ of the superalgebras \smash{$U\bigl(\widetilde{R}^{(m|n-l)}\bigr)$} or \smash{$U\bigl(\widetilde{R}^{(m+n-l|0)}\bigr)$} are indexed by \smash{$l+1\leq i,j \leq\overline{(l+1)}$}.
\end{Theorem}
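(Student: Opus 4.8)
The plan is to prove Theorem \ref{th2} by induction on $l$, taking Theorem \ref{th1} as the single-step reduction and invoking the Sylvester theorem for quasideterminants to identify the iterated single-step reduction with the map $\psi_l$ of \eqref{25}. The base case $l=1$ is exactly Theorem \ref{th1} after matching the labelling of generators. For the inductive step I would assume that
\[
\psi_{l-1}\colon\ U\bigl(\widetilde{R}^{(m|n-l+1)}\bigr)\to U\bigl(\widetilde{R}^{(m|n)}\bigr)
\]
is a homomorphism, where the source has generators $l_{ij}^{\pm}(z)$ indexed by $l\leq i,j\leq\overline{l}$ and the map is given by the $l\times l$ quasideterminant with pivot block $\{1,\dots,l-1\}$.

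The key point is that the nested orthosymplectic $R$-matrix $\widetilde{R}^{(m|n-l+1)}(z)$ has the same structural form as $\widetilde{R}(z)$, but on the smaller graded space whose first coordinate is indexed by $l$. Hence Theorem \ref{th1}, applied verbatim to $U\bigl(\widetilde{R}^{(m|n-l+1)}\bigr)$ with all indices shifted so that $l$ plays the role of $1$, yields a homomorphism
\[
\phi\colon\ U\bigl(\widetilde{R}^{(m|n-l)}\bigr)\to U\bigl(\widetilde{R}^{(m|n-l+1)}\bigr),\qquad
l_{ij}^{\pm}(z)\mapsto
\left|
\begin{matrix}
 l_{ll}^{\pm}(z) & l_{lj}^{\pm}(z)\\
 l_{il}^{\pm}(z) & \boxed{l_{ij}^{\pm}(z)}
\end{matrix}
\right|
\]
for $l+1\leq i,j\leq\overline{(l+1)}$. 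I would then form the composition $\psi_{l-1}\circ\phi$, which is automatically a homomorphism as a composite of two homomorphisms, so that no defining relation needs to be checked by hand.

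It remains only to prove $\psi_{l-1}\circ\phi=\psi_l$. Since $\psi_{l-1}$ is an algebra map and the $2\times 2$ quasideterminant equals $l_{ij}^{\pm}-l_{il}^{\pm}\bigl(l_{ll}^{\pm}\bigr)^{-1}l_{lj}^{\pm}$, writing $\widetilde{\ell}_{ab}^{\pm}:=\psi_{l-1}\bigl(l_{ab}^{\pm}(z)\bigr)$ for the $l\times l$ quasideterminant with pivot $\{1,\dots,l-1\}$ gives
\[
\psi_{l-1}\bigl(\phi(l_{ij}^{\pm}(z))\bigr)
=\widetilde{\ell}_{ij}^{\pm}-\widetilde{\ell}_{il}^{\pm}\bigl(\widetilde{\ell}_{ll}^{\pm}\bigr)^{-1}\widetilde{\ell}_{lj}^{\pm}
=\left|
\begin{matrix}
 \widetilde{\ell}_{ll}^{\pm} & \widetilde{\ell}_{lj}^{\pm}\\
 \widetilde{\ell}_{il}^{\pm} & \boxed{\widetilde{\ell}_{ij}^{\pm}}
\end{matrix}
\right|.
\]
By the Sylvester theorem for quasideterminants (see \cite{IMG,DKBM}), a quasideterminant with pivot block $\{1,\dots,l\}$ equals the $2\times 2$ quasideterminant of the quasiminors built over the pivot block $\{1,\dots,l-1\}$; thus the right-hand side is precisely $\psi_l\bigl(l_{ij}^{\pm}(z)\bigr)$, and $\psi_l=\psi_{l-1}\circ\phi$ is a homomorphism. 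For $l\geq n$ I would run the identical induction, observing only that once the reduction passes the index $n$ the odd coordinates are exhausted and the nested $R$-matrix becomes the purely even (orthogonal) $\widetilde{R}^{(m+n-l|0)}(z)$; the single-step reduction is again governed by Theorem \ref{th1} in its non-super specialization, matching \cite{JMA,JMAB}, and the Sylvester identity identifies the composite with $\psi_l$ as before.

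The main obstacle I anticipate is the bookkeeping required to apply Theorem \ref{th1} to the nested subalgebra: one must confirm that $\widetilde{R}^{(m|n-l+1)}(z)$ carries the same normalization and the two properties used in Section \ref{sec4}, so that the defining relations of $U\bigl(\widetilde{R}^{(m|n-l+1)}\bigr)$ have the same shape as those of $U\bigl(\widetilde{R}\bigr)$, and that the invertibility of the pivot quasideterminants $\widetilde{\ell}_{ll}^{\pm}$—needed both for $\phi$ and for the Sylvester identity—is ensured by the unique Gauss decomposition of $\mathcal{L}^{\pm}(z)$ established in Section \ref{sec4.3}. Verifying the Sylvester identity itself is routine, since quasideterminants are defined sign-freely and the identity is a purely associative-algebra statement, hence valid in the super setting with no extra signs.
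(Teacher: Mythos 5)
Your proposal is correct and follows essentially the same route the paper takes: it presents Theorem \ref{th2} as a consequence of the single-step reduction of Theorem \ref{th1}, iterated via the composition property $\psi_k\circ\psi_l=\psi_{k+l}$ and identified with the quasideterminant formula \eqref{25} through the Sylvester theorem of \cite{IMG,DKBM}, exactly as in the non-super references \cite{JMA,JMAB}. Your explicit induction, the remark that the steps with $l\geq n$ use the non-super (type B) specialization of the single-step reduction, and the attention to invertibility of the pivot quasideterminants merely flesh out what the paper leaves implicit.
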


\begin{Remark}\label{RM3}
In the case of $n=0$, there exists a homomorphism theorem related to the non-super $R$-matrix algebra for type B, as documented in \cite[Theorem 3.7]{JMAB}. Hence, our result is encompassed therein.
\end{Remark}

The mapping \eqref{25} possesses the following consistency property, see \cite{IGA}. Denote $\psi^{2m+2n+1}_l$ as the map $\psi_l$ in Theorem \ref{th2}, where we establish the equality
\begin{equation*}
 \psi^{2m+2n+1}_k\circ\psi^{2m+2n+1-2k}_l=\psi^{2m+2n+1}_{k+l}.
\end{equation*}

\begin{Corollary}\label{co1}
Under the assumptions of Theorem {\rm\ref{th2}}, the following relations hold:
\begin{gather*}
 \bigl[l_{ab}^{\pm}(z), \psi_l\bigl( l_{ij}^{\pm}(w)\bigr)\bigr] = 0, \qquad
 \frac{z_{\pm}-w_{\mp}}{q^{-1}z_{\pm}-qw_{\mp}}l_{ab}^{\pm}(z)\psi_l( l_{ij}^{\mp}(w))=\frac{z_{\mp}-w_{\pm}}{q^{-1}z_{\mp}-qw_{\pm}}\psi_l( l_{ij}^{\mp}(w))l_{ab}^{\pm}(z)
\end{gather*}
for $1\leq a,b\leq l$, and $l+1\leq i,j\leq \overline{(l+1)}$.
\end{Corollary}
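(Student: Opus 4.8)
The plan is to establish both families of relations by induction on $l$, with the base case $l=1$ being exactly Lemma \ref{le1}: for $a=b=1$ one has $\psi_1\bigl(l_{ij}^{\pm}(w)\bigr)=s_{ij}^{\pm}(w)$, so that \eqref{13} gives the equal-sign commutativity and \eqref{14} the opposite-sign quasi-commutation. The inductive engine is the consistency property $\psi_1\circ\psi^{2m+2n-1}_{l-1}=\psi_l$, which together with the Sylvester theorem for quasideterminants \cite{IMG,DKBM} rewrites $\psi_l\bigl(l_{ij}^{\pm}(w)\bigr)$ as a polynomial in the two-fold quasideterminants $s_{ab}^{\pm}(w)=\psi_1\bigl(l_{ab}^{\pm}(w)\bigr)$ with $a,b\geq 2$; explicitly, $\psi_l\bigl(l_{ij}^{\pm}(w)\bigr)=\psi_1\bigl(\psi^{2m+2n-1}_{l-1}\bigl(l_{ij}^{\pm}(w)\bigr)\bigr)$.

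Granting the statement for $\psi^{2m+2n-1}_{l-1}$ over the once-reduced algebra $U\bigl(\widetilde{R}^{(m|n-1)}\bigr)$, I would first push it forward. The inductive hypothesis says that the reduced corner generators commute (resp.\ quasi-commute) with $\psi^{2m+2n-1}_{l-1}\bigl(l_{ij}^{\pm}(w)\bigr)$; since $\psi_1$ is an algebra homomorphism sending a reduced generator to $s_{ab}^{\pm}(z)$ and $\psi^{2m+2n-1}_{l-1}\bigl(l_{ij}^{\pm}(w)\bigr)$ to $\psi_l\bigl(l_{ij}^{\pm}(w)\bigr)$, it follows that $s_{ab}^{\pm}(z)$ for $2\leq a,b\leq l$ commutes, resp.\ quasi-commutes, with $\psi_l\bigl(l_{ij}^{\pm}(w)\bigr)$. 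It remains to pass from the Schur-complement generators $s_{ab}^{\pm}$ back to the genuine corner generators $l_{ab}^{\pm}$. Using the defining quasideterminant relation in the form $l_{ab}^{\pm}(z)=s_{ab}^{\pm}(z)+l_{a1}^{\pm}(z)l_{11}^{\pm}(z)^{-1}l_{1b}^{\pm}(z)$ for $a,b\geq 2$, this reduces the problem to the three ingredients $l_{11}^{\pm}$, $l_{1b}^{\pm}$ and $l_{a1}^{\pm}$. The first is handled by Lemma \ref{le1} itself, while for $l_{1b}^{\pm}$ and $l_{a1}^{\pm}$ with $2\leq a,b\leq l$ I would repeat the matrix-element computation of Lemma \ref{le1}: extracting the $(1,b)$- and $(a,1)$-entries of the relations \eqref{d2}--\eqref{d3} and specializing at $z/w=q^{-2}$ yields that these first-row and first-column generators commute, resp.\ quasi-commute, with every $s_{ij}^{\pm}(w)$, hence with the polynomial $\psi_l\bigl(l_{ij}^{\pm}(w)\bigr)$. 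Combining the three ingredients closes the induction; the opposite-sign case is identical except that the scalar factors of \eqref{14} are carried through each step.

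The hard part will be twofold. First, the super-signs: the operators $P$, $Q$ and $\widehat{R}$ carry the Koszul factors $(-1)^{[a][b]}$ and $(-1)^{[b]}$, and one must verify that the graded Sylvester identity holds with exactly these signs so that the iterated quasideterminant produced by $\psi_1\circ\psi_{l-1}$ coincides on the nose with the direct quasideterminant defining $\psi_l$; this is the point at which the non-super arguments of \cite{JMA,JMAB} require the most careful adaptation, and where the off-diagonal corner generators $l_{1b}^{\pm}$, $l_{a1}^{\pm}$ make the bookkeeping delicate. Second, one must check that the induction survives the transition at $l=n$, where the source changes from $U\bigl(\widetilde{R}^{(m|n-l)}\bigr)$ to $U\bigl(\widetilde{R}^{(m+n-l|0)}\bigr)$: at that index the removed row and column cross from the symplectic to the orthogonal block, so the parities $[a]$ and the diagonal weights $d_a$ switch, and one has to confirm that the specialized relations \eqref{13}--\eqref{14} remain valid across this change before invoking the consistency property $\psi^{2m+2n+1}_k\circ\psi^{2m+2n+1-2k}_l=\psi^{2m+2n+1}_{k+l}$.
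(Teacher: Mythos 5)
Your skeleton---base case $l=1$ from Lemma~\ref{le1}, induction via the consistency property $\psi_1\circ\psi_{l-1}=\psi_l$, and pushing the inductive hypothesis through the homomorphism $\psi_1$ to obtain commutation of $\psi_l\bigl(l_{ij}^{\pm}(w)\bigr)$ with $s_{ab}^{\pm}(z)$ for $2\le a,b\le l$---is sound as far as it goes. The problem is the step that is supposed to close the induction. Writing $l_{ab}^{\pm}=s_{ab}^{\pm}+l_{a1}^{\pm}\bigl(l_{11}^{\pm}\bigr)^{-1}l_{1b}^{\pm}$ leaves you needing the relations of the Corollary for the mixed-index generators $l_{1b}^{\pm}(z)$ and $l_{a1}^{\pm}(z)$ with $2\le a,b\le l$, and you propose to obtain these from the stronger assertion that such series commute (resp.\ quasi-commute) with \emph{every} $s_{ij}^{\pm}(w)$. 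That assertion is false. Since $\psi_l\bigl(l_{ij}^{\pm}(w)\bigr)=\psi_1\bigl(\psi_{l-1}\bigl(l_{ij}^{\pm}(w)\bigr)\bigr)$ is a series in the coefficients of $s_{kr}^{\pm}(w)$ with $k\in\{2,\dots,l,i\}$ and $r\in\{2,\dots,l,j\}$, the index $b$ itself occurs among the $k$'s, and $l_{1b}^{\pm}(z)$ does not commute with $s_{br}^{\pm}(w)$: already in the degenerate $\mathfrak{gl}$ picture underlying Remark~\ref{RM2}\,(2), $t_{12}(u)$ fails to commute with the quasideterminant $t_{22}(v)-t_{21}(v)t_{11}(v)^{-1}t_{12}(v)$, which is a central quantum determinant times an inverse of $t_{11}$, precisely because $t_{12}(u)$ does not commute with $t_{11}$.

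So the commutation of $l_{1b}^{\pm}(z)$ with $\psi_l\bigl(l_{ij}^{\pm}(w)\bigr)$ for $i,j\ge l+1$ is not a term-by-term consequence of Lemma~\ref{le1}-type two-row computations; it holds only because of cancellations across the whole quasideterminant, it is itself an instance of the Corollary (the case $a=1$, $2\le b\le l$), and your induction offers no smaller statement to reduce it to---the same residual case reappears if one instead composes as $\psi_{l-1}\circ\psi_1$. This is exactly the point at which the cited proofs (\cite[Corollary~3.10]{JMA}, \cite[Corollary~3.8]{JMAB}) work globally, extending the fused $R$-matrix identity from the proof of Theorem~\ref{th2} by an auxiliary tensor factor carrying the indices $a,b\le l$ and using the projection property of $\widehat{R}\bigl(q^{-2}\bigr)$ from Lemma~\ref{le2} (in type~A one can instead invoke commutation identities for higher quantum minors). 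Without such an input your argument does not close. Your remarks about the Koszul signs in the graded Sylvester identity and about the transition at $l=n$ point at genuine bookkeeping issues, but they are secondary to this structural gap.
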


\begin{proof}
The proof is exactly as \cite[Corollary 3.10]{JMA} or \cite[Corollary 3.8]{JMAB}.
\end{proof}

Assuming $l\leq m+n$, let \smash{$\widetilde{F}^{\pm (l)}(z)$}, \smash{$\widetilde{E}^{\pm (l)}(z)$}, and \smash{$\widetilde{H}^{\pm (l)}(z)$} be defined as follows:
\begin{align*}
\widetilde{F}^{\pm (l)}(z) &= \left(
 \begin{matrix}
 1 & 0 & \cdots & 0 \\
 \mathfrak{f}_{l+2,l+1}^{\pm}(z) & 1 & \cdots & 0 \\
 \vdots & \ddots & \ddots & \vdots \\
 \mathfrak{f}_{(\overline{l+1}),l+1}^{\pm}(z) & \cdots & \mathfrak{f}_{(\overline{l+1}),(\overline{l+2})}^{\pm}(z) & 1
 \end{matrix}\right),\\
 \widetilde{E}^{\pm (l)}(z) &= \left(\begin{matrix}
 1 & \mathfrak{e}_{l+1,l+2}^{\pm}(z) & \cdots & \mathfrak{e}_{l+1,(\overline{l+1})}^{\pm}(z) \\
 0 & 1 & \ddots& \vdots \\
 \vdots & \vdots & \ddots &\mathfrak{e}_{(\overline{l+2}),(\overline{l+1})}^{\pm}(z) \\
 0& 0 & \cdots & 1
 \end{matrix}\right),
\end{align*}
and \smash{$\widetilde{H}^{\pm (l)}(z) = \operatorname{diag}\bigl(\mathfrak{h}_{l+1}^{\pm}(z),\ldots,\mathfrak{h}_{(l+1)'}^{\pm}(z)\bigr)$}. Define the product of these matrices as
\begin{equation*}
 \mathcal{L}^{\pm (l)}(z)=\widetilde{F}^{\pm (l)}(z)\widetilde{H}^{\pm (l)}(z)\widetilde{E}^{\pm (l)}(z).
\end{equation*}
Note that $ \mathcal{L}^{\pm (0)}(z)=\mathcal{L}^{\pm }(z)$.

The following properties observed in \cite[Proposition 4.2]{JMA} extend to the super case in a similar manner.
\begin{Proposition}\label{pr2}
The series \smash{$ l^{\pm (l)}_{ij}(z)$} coincides with the image of the generator series $ l^{\pm }_{ij}(z)$ of~\smash{$U\bigl(\widetilde{R}^{(m|n)}\bigr)$} for \smash{$l+1\leq i,j \leq \overline{(l+1)}$} under the homomorphism
\smash{$
 l^{\pm (l)}_{ij}(z)=\psi_l\bigl(l^{\pm }_{ij}(z)\bigr)$}.
\end{Proposition}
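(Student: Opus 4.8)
The plan is to deduce the identity from the hereditary behaviour of the Gauss decomposition under passage to a quasi-Schur complement, exactly as in the non-super argument of \cite[Proposition 4.2]{JMA}, with the Sylvester theorem for quasideterminants as the key technical input. Throughout set $N=2m+2n+1$.

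First I would record the block form of the Gauss decomposition. Partitioning $\{1,\dots,N\}$ into the head $\{1,\dots,l\}$ and its complement, I write $\mathcal{L}^{\pm}(z)$ in $2\times2$ block form with head block $A^{\pm}(z)$, off-diagonal blocks $B^{\pm}(z)$, $C^{\pm}(z)$ and tail block $D^{\pm}(z)$. Multiplying out $\mathcal{L}^{\pm}(z)=\widetilde{F}^{\pm}(z)\widetilde{H}^{\pm}(z)\widetilde{E}^{\pm}(z)$ block by block shows that $A^{\pm}(z)=F_1^{\pm}H_1^{\pm}E_1^{\pm}$ while the lower-right blocks $F_2^{\pm}$, $H_2^{\pm}$, $E_2^{\pm}$ of $\widetilde{F}^{\pm}$, $\widetilde{H}^{\pm}$, $\widetilde{E}^{\pm}$ satisfy $F_2^{\pm}H_2^{\pm}E_2^{\pm}=D^{\pm}(z)-C^{\pm}(z)A^{\pm}(z)^{-1}B^{\pm}(z)=:S^{\pm}(z)$. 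Thus the tail factors, whose entries are precisely the Gaussian generators $\mathfrak{f}_{ji}^{\pm}(z)$, $\mathfrak{h}_i^{\pm}(z)$, $\mathfrak{e}_{ij}^{\pm}(z)$ with $i,j\geq l+1$, assemble into the unique Gauss decomposition of the quasi-Schur complement $S^{\pm}(z)$. This is a purely multiplicative computation and holds verbatim over the superalgebra.

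Next I would identify $S^{\pm}(z)$ with the image of $\psi_l$. By the defining formula for the quasideterminant, the $(i,j)$-entry of $S^{\pm}(z)$ equals $l_{ij}^{\pm}(z)-r_i^{\pm}A^{\pm}(z)^{-1}c_j^{\pm}$, which is exactly the quasideterminant $\psi_l(l_{ij}^{\pm}(z))$ of \eqref{25} for $l+1\leq i,j\leq\overline{(l+1)}$; here the Sylvester theorem for quasideterminants (equivalently the consistency property $\psi^{N}_k\circ\psi^{N-2k}_l=\psi^{N}_{k+l}$ recorded above) guarantees that the quasideterminants building the Gauss factors of $S^{\pm}(z)$ are compatible with those of $\mathcal{L}^{\pm}(z)$. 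Finally, since $\widetilde{F}^{\pm(l)}(z)$, $\widetilde{H}^{\pm(l)}(z)$, $\widetilde{E}^{\pm(l)}(z)$ are the restrictions of the tail factors to the middle indices $l+1,\dots,\overline{(l+1)}$, and since the triangular shape forces the $(i,j)$-entry of the product (for middle $i,j$) to involve only Gaussian generators indexed by $\leq\min(i,j)\leq\overline{(l+1)}$, the entry $l_{ij}^{\pm(l)}(z)$ of $\mathcal{L}^{\pm(l)}(z)=\widetilde{F}^{\pm(l)}(z)\widetilde{H}^{\pm(l)}(z)\widetilde{E}^{\pm(l)}(z)$ coincides with $S^{\pm}_{ij}(z)$. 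Combining with the previous step gives $l_{ij}^{\pm(l)}(z)=\psi_l(l_{ij}^{\pm}(z))$.

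The algebraic skeleton above is identical to the non-super case, so I expect the main obstacle to be bookkeeping rather than structure: one must verify that the $\mathbb{Z}_2$-grading is carried consistently through the quasideterminant and Schur-complement formulas — in particular that the parities $[i],[j]$ are correctly tracked through $A^{\pm}(z)^{-1}$ and the products $r_i^{\pm}A^{\pm}(z)^{-1}c_j^{\pm}$ — so that the graded Sylvester identity holds with the correct Koszul signs and the truncation to the middle block $l+1\leq i,j\leq\overline{(l+1)}$, rather than to the whole tail, remains clean.
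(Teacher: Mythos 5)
Your argument is correct and is essentially the same one the paper relies on: the paper simply defers to \cite[Proposition 4.2]{JMA}, whose proof is exactly this block Gauss decomposition computation identifying the tail factors with the Gauss factors of the quasi-Schur complement, whose entries are by definition the quasideterminants $\psi_l\bigl(l_{ij}^{\pm}(z)\bigr)$. Your remarks on the truncation to the middle block and on the absence of extra Koszul signs (the quasideterminant formulas involve only products of the $l_{ij}^{\pm}(z)$ inside the superalgebra) are the right points to check, and they go through as you expect.
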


Therefore, we immediately derive the following corollary from Proposition \ref{pr2}.
\begin{Corollary}\label{co2}In $U\bigl(\widetilde{R}^{(m|n)}\bigr)$, we have
\begin{align*}
 &\widetilde{R}^{(m|n-l)}(z/w)\mathcal{L}_1^{\pm(l)}(z)\mathcal{L}_2^{\pm(l)}(w)=\mathcal{L}_2^{\pm(l)}(w)\mathcal{L}_1^{\pm(l)}(z) \widetilde{R}^{(m|n-l)}(z/w),\\
&\widetilde{R}^{(m|n-l)}(z_+/w_-)\mathcal{L}_1^{(l)}(z)\mathcal{L}_2^{-(l)}(w)=\mathcal{L}_2^{-(l)}(w)\mathcal{L}_1^{(l)}(z) \widetilde{R}^{(m|n-l)}(z_-/w_+)
\end{align*}for $l< n$, and
\begin{align*}
 &\widetilde{R}^{(m+n-l|0)}(z/w)\mathcal{L}_1^{\pm(l)}(z)\mathcal{L}_2^{\pm(l)}(w)=\mathcal{L}_2^{\pm(l)}(w)\mathcal{L}_1^{\pm(l)}(z) \widetilde{R}^{(m+n-l|0)}(z/w),\\
&\widetilde{R}^{(m+n-l|0)}(z_+/w_-)\mathcal{L}_1^{(l)}(z)\mathcal{L}_2^{-(l)}(w)=\mathcal{L}_2^{-(l)}(w)\mathcal{L}_1^{(l)}(z) \widetilde{R}^{(m+n-l|0)}(z_-/w_+)
\end{align*}for $l\geq n$.
\end{Corollary}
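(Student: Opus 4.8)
\textbf{Proof proposal for Corollary~\ref{co2}.}
The plan is to obtain the four displayed relations by transporting the \emph{defining} relations of the smaller $R$-matrix algebras along the homomorphism $\psi_l$ of Theorem~\ref{th2}. The first observation is that, for each choice of sign and each case, the identity to be proved is \emph{verbatim} one of the defining relations \eqref{d2}--\eqref{d3} of the source algebra $U\bigl(\widetilde{R}^{(m|n-l)}\bigr)$ (when $l<n$) or $U\bigl(\widetilde{R}^{(m+n-l|0)}\bigr)$ (when $l\geq n$), except that the generating matrix is replaced by $\mathcal{L}^{\pm(l)}(z)$ and the ambient algebra is now $U\bigl(\widetilde{R}^{(m|n)}\bigr)$. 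Hence it suffices to push the source relations forward through $\psi_l$.

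First I would record the crucial link, which is Proposition~\ref{pr2}. Writing $\Lambda^{\pm}(z)=\sum_{i,j}E^i_j\otimes l_{ij}^{\pm}(z)$ for the generating matrix of the source algebra, with indices $l+1\leq i,j\leq\overline{(l+1)}$, Proposition~\ref{pr2} gives $l^{\pm(l)}_{ij}(z)=\psi_l\bigl(l_{ij}^{\pm}(z)\bigr)$, and therefore $\mathcal{L}^{\pm(l)}(z)=(\mathrm{id}\otimes\psi_l)\Lambda^{\pm}(z)$. Applying this slotwise yields $\mathcal{L}^{\pm(l)}_a(z)=(\mathrm{id}\otimes\psi_l)\Lambda^{\pm}_a(z)$ for $a=1,2$, where $\psi_l$ acts only on the auxiliary algebra tensor factor and leaves the two copies of $\operatorname{End}\bigl(\mathbb{C}^{2m+1|2n}\bigr)$ untouched.

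Next I would apply the homomorphism $\mathrm{id}\otimes\mathrm{id}\otimes\psi_l$ to the defining relations of the source algebra. Because $\psi_l$ is an algebra homomorphism, it respects the products $\Lambda^{\pm}_1(z)\Lambda^{\pm}_2(w)$ and $\Lambda^{+}_1(z)\Lambda^{-}_2(w)$ occurring in \eqref{d2}--\eqref{d3}; because the entries of $\widetilde{R}^{(m|n-l)}(z)$ (resp.\ $\widetilde{R}^{(m+n-l|0)}(z)$) lie in $\mathbb{C}\bigl(q^{1/2}\bigr)$ they are fixed by $\psi_l$; and because the central element $q^{c/2}$ is preserved, the shifted variables $z_{\pm}$, $w_{\mp}$ pass through unchanged. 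Carrying $\psi_l$ through both sides therefore converts each source relation into exactly the corresponding identity of the statement, handling the cases $l<n$ and $l\geq n$ uniformly since Theorem~\ref{th2} supplies the homomorphism in both regimes.

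The content of this corollary lies entirely in its prerequisites, so I do not expect a genuine obstacle; the single point demanding care is the explicit justification that $\psi_l$ traverses a matrix-valued $RLL$ relation without altering the $\operatorname{End}\bigl(\mathbb{C}^{2m+1|2n}\bigr)^{\otimes2}$ factors and without altering the scalar $R$-matrix, acting solely on the algebra slot. This is immediate from the homomorphism property combined with Proposition~\ref{pr2}, but it should be stated explicitly to confirm that the $R$-matrices appearing in the source and target relations are one and the same constant matrix over $\mathbb{C}\bigl(q^{1/2}\bigr)$.
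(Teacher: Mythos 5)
Your proposal is correct and follows essentially the same route as the paper: the authors derive Corollary~\ref{co2} immediately from Proposition~\ref{pr2}, i.e., from the identification $l^{\pm(l)}_{ij}(z)=\psi_l\bigl(l^{\pm}_{ij}(z)\bigr)$, by pushing the defining $RLL$ relations of $U\bigl(\widetilde{R}^{(m|n-l)}\bigr)$ (resp.\ $U\bigl(\widetilde{R}^{(m+n-l|0)}\bigr)$) forward through the homomorphism of Theorem~\ref{th2}, exactly as you do. Your write-up merely makes explicit the (correct) observation that $\psi_l$ acts only on the algebra tensor slot and fixes the scalar $R$-matrix and $q^{c/2}$, which the paper leaves implicit.
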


\section[Drinfeld presentations in U(R) and U(R)]{Drinfeld presentations in $\boldsymbol{U(R)}$ and $\boldsymbol{U\bigl(\widetilde{R}\bigr)}$ }\label{sec5}

Based on the definition of Gaussian generators from Section \ref{sec4.3},
 we first investigate the relations among these generators by applying the Gauss decomposition in the superalgebras $U(R)$ and~$U\bigl(\widetilde{R}\bigr)$.
Furthermore, the central elements mentioned in Proposition \ref{Apr2} are explicitly expressed in terms of forms of Gaussian generators. Finally,
utilizing these established relations, it is found that in super $R$-matrix algebras $U(R)$ and $U\bigl(\widetilde{R}\bigr)$, their Drinfeld presentation arises.

\subsection{Relations of Gaussian generators}

\begin{Proposition}\label{pr3}
Suppose that \smash{$l+1\leq j, k, s\leq \overline{(l+1)}$} and $j\neq \overline{s}$. Then in \smash{$U\bigl(\widetilde{R}^{(m|n)}\bigr)$}, we have the following relations.
If $s>j$,
\begin{align}
\bigl[\mathfrak{e}_{lj}^{\pm}(z), l_{ks}^{\mp(l)}(w)\bigr]={}&(-1)^{[j]+[l]}\left\{\frac{\bigl(q-q^{-1}\bigr)w_{\pm}}{z_{\mp}-w_{\pm}}l_{kj}^{\mp(l)}(w)\mathfrak{e}_{ls}^{\pm}(z) -\frac{\bigl(q-q^{-1}\bigr)z_{\mp}}{z_{\mp}-w_{\pm}}l_{kj}^{\mp(l)}(w)\mathfrak{e}_{ls}^{\mp}(w)\right\},\nonumber\\
\bigl[\mathfrak{e}_{lj}^{\pm}(z), l_{ks}^{\pm(l)}(w)\bigr]={}&(-1)^{[j]+[l]}\nonumber\\
&\times\left\{\frac{\bigl(q-q^{-1}\bigr)w}{z-w}l_{kj}^{\pm(l)}(w)\mathfrak{e}_{ls}^{\pm}(z) -\frac{\bigl(q-q^{-1}\bigr)z}{z-w}l_{kj}^{\pm(l)}(w)\mathfrak{e}_{ls}^{\pm}(w)\right\}.\label{GS9}
\end{align}
If $s<j$,
\begin{align*}
 &\bigl[\mathfrak{e}_{lj}^{\pm}(z), l_{ks}^{\mp(l)}(w)\bigr]=(-1)^{[j]+[l]}\frac{\bigl(q-q^{-1}\bigr)z_{\mp}}{z_{\mp}-w_{\pm}}\bigl\{l_{kj}^{\mp(l)}(w)\mathfrak{e}_{ls}^{\pm}(z) -l_{kj}^{\mp(l)}(w)\mathfrak{e}_{ls}^{\mp}(w)\bigr\},\\
 &\bigl[\mathfrak{e}_{lj}^{\pm}(z), l_{ks}^{\pm(l)}(w)\bigr]=(-1)^{[j]+[l]}\frac{\bigl(q-q^{-1}\bigr)z}{z-w}\bigl\{l_{kj}^{\pm(l)}(w)\mathfrak{e}_{ls}^{\pm}(z) -l_{kj}^{\pm(l)}(w)\mathfrak{e}_{ls}^{\pm}(w)\bigr\}.
\end{align*}
If $s=j$ and $[j]=0$,
\begin{align}
 &\mathfrak{e}_{lj}^{\pm}(z)l_{kj}^{\mp(l)}(w)=\frac{q^{-1}z_{\mp}-qw_{\pm}}{z_{\mp}-w_{\pm}}l_{kj}^{\mp(l)}(w)\mathfrak{e}_{lj}^{\pm}(z) -\frac{\bigl(q-q^{-1}\bigr)z_{\mp}}{z_{\mp}-w_{\pm}}l_{kj}^{\mp(l)}(w)\mathfrak{e}_{lj}^{\mp}(w),\nonumber\\
 &\mathfrak{e}_{lj}^{\pm}(z)l_{kj}^{\pm(l)}(w)=\frac{q^{-1}z-qw}{z-w}l_{kj}^{\pm(l)}(w)\mathfrak{e}_{lj}^{\pm}(z) -\frac{\bigl(q-q^{-1}\bigr)z}{z-w}l_{kj}^{\pm(l)}(w)\mathfrak{e}_{1j}^{\pm}(w).\label{GS11}
\end{align}
If $s=j$ and $[j]=1$,
\begin{align*}
 &\mathfrak{e}_{lj}^{\pm}(z)l_{kj}^{\mp(l)}(w)=\frac{\bigl(q-q^{-1}\bigr)z_{\mp}}{z_{\mp}-w_{\pm}}\bigl\{l_{kj}^{\mp(l)}(w)\mathfrak{e}_{lj}^{\pm}(z) -l_{kj}^{\mp(l)}(w)\mathfrak{e}_{lj}^{\mp}(w)\bigr\},\\
 &\mathfrak{e}_{lj}^{\pm}(z)l_{kj}^{\pm(l)}(w)=\frac{\bigl(q-q^{-1}\bigr)z}{z-w}\bigl\{l_{kj}^{\mp(l)}(w)\mathfrak{e}_{lj}^{\pm}(z) -l_{kj}^{\pm(l)}(w)\mathfrak{e}_{lj}^{\pm}(w)\bigr\}.
\end{align*}
\end{Proposition}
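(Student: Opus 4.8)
The plan is to derive every relation from the reduced $RLL$ relations of Corollary \ref{co2} by extracting matrix components and then rewriting the entries $l_{ks}^{\pm(l)}(w)$ in terms of Gaussian generators through the Gauss decomposition $\mathcal{L}^{\pm(l)}(z)=\widetilde{F}^{\pm(l)}(z)\widetilde{H}^{\pm(l)}(z)\widetilde{E}^{\pm(l)}(z)$. The starting observation is that, because $\widetilde{F}^{\pm(l)}(z)$ is lower unitriangular, the top row of $\mathcal{L}^{\pm(l)}(z)$ equals the top row of $\widetilde{H}^{\pm(l)}(z)\widetilde{E}^{\pm(l)}(z)$; hence the top-row entries factor as $l_{l+1,j}^{\pm(l)}(z)=\mathfrak{h}_{l+1}^{\pm}(z)\,\mathfrak{e}_{l+1,j}^{\pm}(z)$, so that $\mathfrak{e}_{l+1,j}^{\pm}(z)=\mathfrak{h}_{l+1}^{\pm}(z)^{-1}l_{l+1,j}^{\pm(l)}(z)$ (this first-row $E$-generator is the one denoted $\mathfrak{e}_{lj}^{\pm}(z)$ in the statement). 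It therefore suffices to produce the commutation relations between the raw entry $l_{l+1,j}^{\pm(l)}(z)$ and $l_{ks}^{\pm(l)}(w)$, and then to conjugate by $\mathfrak{h}_{l+1}^{\pm}(z)^{-1}$.

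First I would isolate the $(l+1,k\,;\,j,s)$ component of each reduced $RLL$ relation in Corollary \ref{co2}, i.e.\ apply $\langle l+1,k|$ on the left and $|j,s\rangle$ on the right in the sense of the bra--ket operators introduced before Lemma \ref{le1}. Since $l+1$ is the minimal index of the reduced block, only a few entries of $\langle l+1,k|\widetilde{R}^{(m|n-l)}(z/w)$ survive: the contributions of the identity term, of the permutation operator $P$, and of the lower-triangular sum $\sum_{a>b}$ in \eqref{R1}; the rank-one operator $Q$ drops out precisely because the hypothesis $j\neq\overline{s}$ forbids the pairing of an index with its conjugate. Reading off the explicit scalar coefficients $\frac{(q-q^{-1})z}{q-q^{-1}z}$, $\frac{z-1}{q-q^{-1}z}$ and the $(q-q^{-1})$-weighted off-diagonal entries from $\widetilde{R}^{(m|n-l)}(z)$ then yields quadratic identities among the series $l_{l+1,?}^{\pm(l)}$ and $l_{k?}^{\pm(l)}$. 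The position of $s$ relative to $j$ determines which off-diagonal entry of $\widetilde{R}$ is active, which is exactly the origin of the four cases $s>j$, $s<j$, $s=j$ with $[j]=0$, and $s=j$ with $[j]=1$; the parity $[j]$ selects whether the diagonal $\hat{\sigma}$-term (present only for $[a]=0$) contributes, separating the last two cases.

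Having obtained the relations for the raw entries, I would multiply through by $\mathfrak{h}_{l+1}^{\pm}(z)^{-1}$ to replace $l_{l+1,j}^{\pm(l)}(z)$ by $\mathfrak{e}_{l+1,j}^{\pm}(z)$. For this I need that $\mathfrak{h}_{l+1}^{\pm}(z)$ commutes, or quasi-commutes up to the rational prefactors appearing in \eqref{14}, with the entries $l_{ks}^{\pm(l)}(w)$ it must pass through; these auxiliary relations are the level-$l$ analogues of \eqref{13}--\eqref{14} in Lemma \ref{le1}, transported by $\psi_l$ via Proposition \ref{pr2} and Corollary \ref{co1}. For the opposite-sign relations I would run the same extraction on the mixed reduced relation of Corollary \ref{co2}, namely $\widetilde{R}^{(m|n-l)}(z_+/w_-)\mathcal{L}_1^{+(l)}(z)\mathcal{L}_2^{-(l)}(w)=\mathcal{L}_2^{-(l)}(w)\mathcal{L}_1^{+(l)}(z)\widetilde{R}^{(m|n-l)}(z_-/w_+)$ and its $-/+$ counterpart, which is what introduces the spectral shifts $z_{\mp}$, $w_{\pm}$ in the stated formulas.

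The main obstacle will be the sign bookkeeping rather than any conceptual difficulty: every application of a bra--ket operator, every transposition inside the graded tensor product, and the super-commutator $[X,Y]=XY-(-1)^{[X][Y]}YX$ itself generate Koszul signs, and these must be combined with the intrinsic signs $(-1)^{[b]}$ and $(-1)^{[a]([a]+[b])}$ built into $P$, $Q$ and the operators $\hat{\sigma}^a_b$ in \eqref{R1} to produce the uniform prefactor $(-1)^{[j]+[l]}$. A careful verification that the $Q$-contribution vanishes under $j\neq\overline{s}$, together with correct tracking of the spectral-parameter shifts in the mixed case, is the delicate part; modulo these sign computations the argument is a direct super-analogue of the calculation in \cite[Section~3]{JMA} and \cite[Section~3]{JMAB}.
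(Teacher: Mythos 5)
Your toolkit---extracting matrix components of the $RLL$ relations and feeding in the Gauss decomposition---is the right one, but as written the plan proves a shifted statement and omits the step that does most of the work. The identification ``$\mathfrak{e}_{l+1,j}^{\pm}(z)=\mathfrak{h}_{l+1}^{\pm}(z)^{-1}l_{l+1,j}^{\pm(l)}(z)$ is the $\mathfrak{e}_{lj}^{\pm}(z)$ of the statement'' is incorrect: in the proposition $\mathfrak{e}_{lj}^{\pm}$ is the row-$l$ Gaussian generator, determined by $l_{lj}^{\pm(l-1)}(z)=\mathfrak{h}_{l}^{\pm}(z)\mathfrak{e}_{lj}^{\pm}(z)$, while $l_{ks}^{\pm(l)}$ sits one reduction level deeper. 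This is visible in the base case the paper actually computes ($l=1$: relations between $\mathfrak{e}_{1j}^{\pm}$ and $l_{ks}^{\pm(1)}$) and in the later use of \eqref{GS9} with $l=m+n-1$, $j=m+n$, where $l_{ks}^{\mp(l)}(w)=\mathfrak{h}_{m+n}^{\mp}(w)\mathfrak{e}_{m+n}^{\mp}(w)$. Extracting the $(l+1,k;j,s)$ component of the level-$l$ relation in Corollary \ref{co2} and factoring its top row yields commutators of $\mathfrak{e}_{l+1,j}^{\pm}$ with $l_{ks}^{\pm(l)}$, i.e., the proposition with the superscript lowered by one; it cannot by itself produce the stated cross-level relations.

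Second, even with the indices corrected, the single component identity you extract is not yet the answer. Taking $\langle 1,k|\cdots|j,s\rangle$ of the defining relation gives \eqref{GS1}, which involves the \emph{unreduced} series $l_{ks}^{\mp}(w)$, $l_{1s}^{\mp}(w)$ and carries the extra term $C^{\pm}\bigl(E^1_k\otimes E^k_1\bigr)l_{kj}^{\pm}(z)l_{1s}^{\mp}(w)$ coming from the permutation part of $\widetilde{R}$. The paper's proof eliminates this by substituting $l_{ks}^{\mp}(w)=l_{ks}^{\mp(1)}(w)+\mathfrak{f}_{k1}^{\mp}(w)\mathfrak{h}_{1}^{\mp}(w)\mathfrak{e}_{1s}^{\mp}(w)$ and invoking three further component identities (for the index pairs $(1j;k1)$, $(1j;11)$ and $(1j;1s)$) so that everything is rewritten in terms of $l_{kj}^{\mp(1)}$, $l_{ks}^{\mp(1)}$, $l_{11}^{\pm}$ and $\mathfrak{e}_{1s}^{\pm}$; only then does the quasi-commutation of $l_{11}^{\pm}(z)$ with $l_{ks}^{\mp(1)}(w)$ from Corollary \ref{co1} allow the $l_{11}^{\pm}(z)$ factors to be cancelled. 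Your step ``multiply through by $\mathfrak{h}_{l+1}^{\pm}(z)^{-1}$'' has no counterpart of this reduction, so the cross term survives. Your observations that the $Q$-part of $\widetilde{R}$ drops out when $j\neq\overline{s}$ and that the four cases are governed by which entries of $\widetilde{R}$ act on $|j,s\rangle$ are correct, and passing from $l=1$ to general $l$ by the homomorphisms of Proposition \ref{pr2} is exactly what the paper does.
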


\begin{proof}

For convenience, we denote $C^+\bigl(E^i_j\otimes E^k_s\bigr)$ and $C^-\bigl(E^i_j\otimes E^k_s\bigr)$ associated with the parameter $z_+/w_-$ and $z_-/w_+$, respectively, to be the coefficients of the position \smash{$E^i_j\otimes E^k_s$} in \smash{$\widetilde{R}(z)$}.

Let $l=1$, if $j\neq s$, then by the defining relations, we have
\begin{gather}
 C^{\pm}\bigl(E^1_1\otimes E^k_k\bigr)l_{1j}^{\pm}(z)l_{ks}^{\mp}(w)+ C^{\pm}\bigl(E^1_k\otimes E^k_1\bigr)l_{kj}^{\pm}(z)l_{1s}^{\mp}(w) \nonumber\\ \qquad =l_{ks}^{\mp}(w)l_{1j}^{\pm}(z)C^{\mp}\bigl(E^j_j\otimes E^s_s\bigr)+l_{kj}^{\mp}(w)l_{1s}^{\pm}(z)C^{\mp}\bigl(E^s_j\otimes E^j_s\bigr).\label{GS1}
\end{gather}
Since \smash{$l_{ks}^{\mp}(w)=l_{ks}^{\mp(l)}(w)+\mathfrak{f}_{k1}^{\mp}(w)\mathfrak{h}_{1}^{\mp}(w)\mathfrak{e}_{1s}^{\mp}(w)$}, the left-hand side of \eqref{GS1} can be written as
\begin{gather*}
 C^{\pm}\bigl(E^1_1\otimes E^k_k\bigr)l_{1j}^{\pm}(z)l_{ks}^{\mp(1)}(w)+ C^{\pm}\bigl(E^1_1\otimes E^k_k\bigr)l_{1j}^{\pm}(z)\mathfrak{f}_{k1}^{\mp}(w)\mathfrak{h}_{1}^{\mp}(w)\mathfrak{e}_{1s}^{\mp}(w)\nonumber\\
 \qquad{}+ C^{\pm}\bigl(E^1_k\otimes E^k_1\bigr)l_{kj}^{\pm}(z)l_{1s}^{\mp}(w).
\end{gather*}On the other hand,
\begin{gather*}
 C^{\pm}\bigl(E^1_1\otimes E^k_k\bigr)l_{1j}^{\pm}(z)l_{k1}^{\mp}(w)+ C^{\pm}\bigl(E^1_k\otimes E^k_1\bigr)l_{kj}^{\pm}(z)l_{11}^{\mp}(w)\nonumber\\
\qquad = l_{k1}^{\mp}(w)l_{1j}^{\pm}(z)C^{\mp}(E^j_j\otimes E^1_1)+l_{kj}^{\mp}(w)l_{11}^{\pm}(z)C^{\mp}\bigl(E^1_j\otimes E^j_1\bigr).
\end{gather*}
Thus, the left-hand side of \eqref{GS1} is equal to
\begin{gather*}
 C^{\pm}\bigl(E^1_1\otimes E^k_k\bigr)l_{1j}^{\pm}(z)l_{ks}^{\mp(1)}(w)+ C^{\pm}\bigl(E^j_j\otimes E^1_1\bigr)f_{k1}^{\mp}(w)l_{11}^{\mp}(w)l_{1j}^{\pm}(z)\mathfrak{e}_{1s}^{\mp}(w)\\
 \qquad{}+
 C^{\mp}\bigl(E^1_j\otimes E^j_1\bigr)l_{kj}^{\mp}(w)l_{11}^{\pm}(z)\mathfrak{e}_{1s}^{\mp}(w).
\end{gather*}Note that
\begin{gather*}
 C^{\pm}\bigl(E^1_1\otimes E^1_1\bigr)l_{1j}^{\pm}(z)l_{11}^{\mp}(w)= l_{11}^{\mp}(w)l_{1j}^{\pm}(z)C^{\mp}\bigl(E^j_j\otimes E^1_1\bigr)+l_{1j}^{\mp}(w)l_{11}^{\pm}(z)C^{\mp}\bigl(E^1_j\otimes E^j_1\bigr),
\end{gather*}
it can transform the left-hand side of \eqref{GS1} as
\begin{gather*}
 C^{\pm}\bigl(E^1_1\otimes E^k_k\bigr)l_{1j}^{\pm}(z)l_{ks}^{\mp(1)}(w)+ C^{\pm}\bigl(E^1_1\otimes E^1_1\bigr)f_{k1}^{\mp}(w)l_{1j}^{\pm}(z)\mathfrak{e}_{1s}^{\mp}(w)\\
 \qquad{}+
 C^{\mp}\bigl(E^1_j\otimes E^j_1\bigr)l_{kj}^{\mp(1)}(w)l_{11}^{\pm}(z)\mathfrak{e}_{1s}^{\mp}(w).
\end{gather*}
Furthermore, we find that
\begin{gather*}
 C^{\pm}\bigl(E^1_1\otimes E^1_1\bigr)l_{1j}^{\pm}(z)l_{1s}^{\mp}(w)= l_{1s}^{\mp}(w)l_{1j}^{\pm}(z)C^{\mp}\bigl(E^j_j\otimes E^s_s\bigr)+l_{1j}^{\mp}(w)l_{1s}^{\pm}(z)C^{\mp}\bigl(E^s_j\otimes E^j_s\bigr).
\end{gather*}
Bring it to the left-hand side of \eqref{GS1} and rearranging this equation, we obtain
\begin{gather}
 C^{\pm}\bigl(E^1_1\otimes E^k_k\bigr)l_{1j}^{\pm}(z)l_{ks}^{\mp(1)}(w)- C^{\mp}\bigl(E^j_j\otimes E^s_s\bigr)l_{ks}^{\mp(1)}(w)l_{1j}^{\pm}(z) \nonumber\\ \qquad=C^{\mp}\bigl(E^s_j\otimes E^j_s\bigr)l_{kj}^{\mp(1)}(w)l_{11}^{\pm}(z)\mathfrak{e}_{1s}^{\pm}(z)-C^{\mp}\bigl(E^1_j\otimes E^j_1\bigr)l_{kj}^{\mp(1)}(w)l_{11}^{\pm}(z)\mathfrak{e}_{1s}^{\mp}(w).\label{GS2}
\end{gather}
Similarly,
\begin{gather*}
 C^{\pm}\bigl(E^1_1\otimes E^k_k\bigr)l_{1j}^{\pm}(z)l_{ks}^{\pm(1)}(w)- C^{\pm}\bigl(E^j_j\otimes E^s_s\bigr)l_{ks}^{\pm(1)}(w)l_{1j}^{\pm}(z) \\ \qquad =C^{\mp}\bigl(E^s_j\otimes E^j_s\bigr)l_{kj}^{\pm(1)}(w)l_{11}^{\pm}(z)\mathfrak{e}_{1s}^{\pm}(z)-C^{\mp}\bigl(E^1_j\otimes E^j_1\bigr)l_{kj}^{\pm(1)}(w)l_{11}^{\pm}(z)\mathfrak{e}_{1s}^{\pm}(w).
\end{gather*}

If $j=s$, the same argument gives that
\begin{gather*} 
 C^{\pm}\bigl(E^1_1\otimes E^k_k\bigr)l_{1j}^{\pm}(z)l_{kj}^{\mp(1)}(w) \\
 \qquad=C^{\mp}\bigl(E^j_j\otimes E^j_j\bigr)l_{kj}^{\mp(1)}(w)l_{11}^{\pm}(z)\mathfrak{e}_{1j}^{\pm}(z)-C^{\mp}\bigl(E^1_j\otimes E^j_1\bigr)l_{kj}^{\mp(1)}(w)l_{11}^{\pm}(z)\mathfrak{e}_{1j}^{\mp}(w),
\\
 C^{\pm}\bigl(E^1_1\otimes E^k_k\bigr)l_{1j}^{\pm}(z)l_{kj}^{\pm(1)}(w) \\
\qquad =C^{\mp}\bigl(E^j_j\otimes E^j_j\bigr)l_{kj}^{\mp(1)}(w)l_{11}^{\pm}(z)\mathfrak{e}_{1j}^{\pm}(z)-C^{\mp}\bigl(E^1_j\otimes E^j_1\bigr)l_{kj}^{\mp(1)}(w)l_{11}^{\pm}(z)\mathfrak{e}_{1j}^{\pm}(w).
\end{gather*}
Moreover, Corollary \ref{co1} implies that
\begin{gather}
 \frac{z_{\pm}-w_{\mp}}{q^{-1}z_{\pm}-qw_{\mp}}l_{11}^{\pm}(z)l_{kj}^{\mp(1)}(w)=
 \frac{z_{\mp}-w_{\pm}}{q^{-1}z_{\mp}-qw_{\pm}}l_{kj}^{\mp(1)}(w)l_{11}^{\pm}(z),\nonumber\\
 \frac{z_{\pm}-w_{\mp}}{q^{-1}z_{\pm}-qw_{\mp}}l_{11}^{\pm}(z)l_{ks}^{\mp(1)}(w)=
 \frac{z_{\mp}-w_{\pm}}{q^{-1}z_{\mp}-qw_{\pm}}l_{ks}^{\mp(1)}(w)l_{11}^{\pm}(z).\label{GS5}
\end{gather}
From the $R$-matrix $\widetilde{R}(z)$, we list the coefficients as below
\begin{gather*}
 C^{\pm}\bigl(E^1_1\otimes E^k_k\bigr)=\frac{z_{\pm}-w_{\mp}}{q^{-1}z_{\pm}-qw_{\mp}}, \qquad
 C^{\pm}\bigl(E^1_j\otimes E^j_1\bigr)=(-1)^{[j]}\frac{\bigl(q^{-1}-q\bigr)z_{\pm}}{q^{-1}z_{\pm}-qw_{\mp}},
\\
 C^{\pm}\bigl(E^s_j\otimes E^j_s\bigr)=
 \begin{cases}
\displaystyle (-1)^{[j]}\frac{(q^{-1}-q)w_{\mp}}{q^{-1}z_{\pm}-qw_{\mp}},&s>j,\vspace{1mm}\\
 \displaystyle(-1)^{[j]}\frac{(q^{-1}-q)z_{\pm}}{q^{-1}z_{\pm}-qw_{\mp}},&s<j, \end{cases}
\\
 C^{\pm}\bigl(E^j_j\otimes E^s_s\bigr)=
 \begin{cases}
 \displaystyle \frac{z_{\pm}-w_{\mp}}{q^{-1}z_{\pm}-qw_{\mp}},&j\neq s,\\
 1,&j=s\quad \text{and} \quad[j]=0,\\
 \displaystyle \frac{qz_{\pm}-q^{-1}w_{\mp}}{q^{-1}z_{\pm}-qw_{\mp}},&j=s\quad \text{and} \quad[j]=1. \end{cases}
\end{gather*}
Therefore, after checking the coefficients of \eqref{GS2}--\eqref{GS5}, we can derive all the relations for the case $l=1$. The general case immediately follows from Proposition \ref{pr2}.
\end{proof}

As a consequence, we have the following result.
\begin{Proposition}\label{pr4}
Suppose that \smash{$l+1\leq j, k, s\leq \overline{(l+1)}$} and $j\neq \overline{k}$. Then in $U\bigl(\widetilde{R}^{(m|n)}\bigr)$, we have the following relations.
If $k>j$,
\begin{gather*}
 \bigl[\mathfrak{f}_{jl}^{\pm}(z), l_{ks}^{\mp(l)}(w)\bigr]=(-1)^{[j]+[l]}\left\{\frac{\bigl(q-q^{-1}\bigr)w_{\pm}}{z_{\mp}-w_{\pm}}\mathfrak{f}_{kl}^{\mp}(w)l_{js}^{\mp(l)}(w) -\frac{\bigl(q-q^{-1}\bigr)z_{\mp}}{z_{\mp}-w_{\pm}}\mathfrak{f}_{kl}^{\pm}(z)l_{js}^{\mp(l)}(w)\right\},\\
 \bigl[\mathfrak{f}_{jl}^{\pm}(z), l_{ks}^{\pm(l)}(w)\bigr]=(-1)^{[j]+[l]}\left\{\frac{\bigl(q-q^{-1}\bigr)w}{z-w}\mathfrak{f}_{kl}^{\pm}(w)l_{js}^{\pm(l)}(w) -\frac{\bigl(q-q^{-1}\bigr)z}{z-w}\mathfrak{f}_{kl}^{\pm}(z)l_{js}^{\pm(l)}(w)\right\}.
\end{gather*}
If $k<j$,
\begin{gather*}
 \bigl[\mathfrak{f}_{jl}^{\pm}(z), l_{ks}^{\mp(l)}(w)\bigr]=(-1)^{[j]+[l]}\frac{\bigl(q-q^{-1}\bigr)w_{\mp}}{z_{\mp}-w_{\pm}}\bigl\{\mathfrak{f}_{kl}^{\mp}(w)l_{js}^{\mp(l)}(w) -\mathfrak{f}_{kl}^{\pm}(z)l_{js}^{\mp(l)}(w)\bigr\},\\
 \bigl[\mathfrak{f}_{jl}^{\pm}(z), l_{ks}^{\pm(l)}(w)\bigr]=(-1)^{[j]+[l]}\frac{\bigl(q-q^{-1}\bigr)w}{z-w}\bigl\{\mathfrak{f}_{kl}^{\pm}(w)l_{js}^{\pm(l)}(w) -\mathfrak{f}_{kl}^{\pm}(z)l_{js}^{\pm(l)}(w)\bigr\}.
\end{gather*}
If $k=j$ and $[j]=0$,
\begin{gather*}
 \mathfrak{f}_{jl}^{\pm}(z)l_{js}^{\mp(l)}(w)=\frac{z_{\pm}-w_{\mp}}{q^{-1}z_{\pm}-qw_{\mp}}l_{js}^{\mp(l)}(w)\mathfrak{f}_{jl}^{\pm}(z) +\frac{\bigl(q-q^{-1}\bigr)w_{\mp}}{q^{-1}z_{\pm}-qw_{\mp}}\mathfrak{f}_{jl}^{\mp}(w)l_{js}^{\mp(l)}(w),\\
 \mathfrak{f}_{jl}^{\pm}(z)l_{js}^{\pm(l)}(w)=\frac{z-w}{q^{-1}z-qw}l_{js}^{\pm(l)}(w)\mathfrak{f}_{jl}^{\pm}(z) +\frac{\bigl(q-q^{-1}\bigr)w}{q^{-1}z-qw}\mathfrak{f}_{jl}^{\pm}(w)l_{js}^{\pm(l)}(w).
\end{gather*}
If $k=j$ and $[j]=1$,
\begin{gather*}
 \mathfrak{f}_{jl}^{\pm}(z)l_{js}^{\mp(l)}(w)=\frac{\bigl(q-q^{-1}\bigr)w_{\mp}}{q^{-1}z_{\pm}-qw_{\mp}}\bigl\{l_{js}^{\mp(l)}(w)\mathfrak{f}_{jl}^{\pm}(z) +\mathfrak{f}_{jl}^{\mp}(w)l_{js}^{\mp(l)}(w)\bigr\},\\
 \mathfrak{f}_{jl}^{\pm}(z)l_{js}^{\pm(l)}(w)=\frac{\bigl(q-q^{-1}\bigr)w}{q^{-1}z-qw}\bigl\{l_{js}^{\pm(l)}(w)\mathfrak{f}_{jl}^{\pm}(z) +\mathfrak{f}_{jl}^{\pm}(w)l_{js}^{\pm(l)}(w)\bigr\}.
\end{gather*}
\end{Proposition}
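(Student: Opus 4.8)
The plan is to deduce Proposition~\ref{pr4} from Proposition~\ref{pr3} by means of a transpose anti-automorphism, since the two families of relations are manifestly dual: throughout Proposition~\ref{pr3} the upper-triangular Gaussian generators $\mathfrak{e}$ together with the product pattern $l_{kj}^{\mp(l)}\mathfrak{e}_{ls}$ are replaced by the lower-triangular generators $\mathfrak{f}$ with the reversed pattern $\mathfrak{f}_{kl}l_{js}^{\mp(l)}$, and the constraint $j\neq\overline{s}$ is replaced by $j\neq\overline{k}$. Concretely, I would introduce the super-anti-automorphism $\omega$ of $U\bigl(\widetilde{R}^{(m|n)}\bigr)$ determined on generators by transposition of the $L$-operators, $\omega\bigl(l_{ij}^{\pm}(z)\bigr)=l_{ji}^{\pm}(z)$ (up to a parity-dependent sign), combined with order reversal in the algebra.

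First I would verify that $\omega$ is well defined, i.e.\ that it preserves the defining relations \eqref{d2}--\eqref{d3}. This reduces to the transposition symmetry of $\widetilde{R}(z)$: applying a super-transposition in both auxiliary spaces and invoking the symmetry $P_{12}\widetilde{R}_{12}(z)P_{12}=\widetilde{R}_{21}(z)$ together with the relations \eqref{d5} relating $Q$, $P^{t_1}$ and $D$, one checks that the transposed RTT relation is again of the required form. Next, applying $\omega$ to the Gauss decomposition $\mathcal{L}^{\pm}(z)=\widetilde{F}^{\pm}(z)\widetilde{H}^{\pm}(z)\widetilde{E}^{\pm}(z)$ and using that transposition exchanges the upper- and lower-unitriangular factors, I would show that $\omega$ interchanges $\mathfrak{e}_{ij}^{\pm}(z)\leftrightarrow\mathfrak{f}_{ji}^{\pm}(z)$ while fixing each $\mathfrak{h}_i^{\pm}(z)$. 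Finally, the transposition invariance of the quasideterminant formula \eqref{25} shows that $\omega$ is compatible with the level-reduction homomorphism $\psi_l$ of Proposition~\ref{pr2}, so that $\omega\bigl(l_{ks}^{\pm(l)}(w)\bigr)=l_{sk}^{\pm(l)}(w)$.

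With these ingredients I would apply $\omega$ to each of the four cases of Proposition~\ref{pr3}. Since $\omega$ reverses graded products, a super-commutator $[A,B]$ is carried to $-[\omega(A),\omega(B)]$, so graded commutators are preserved up to an overall sign; the parity prefactor $(-1)^{[j]+[l]}$ is preserved because $\omega$ respects parity. After relabelling the dummy indices $s\leftrightarrow k$, the conditions $s>j$ and $s<j$ become $k>j$ and $k<j$, the constraint $j\neq\overline{s}$ becomes $j\neq\overline{k}$, and each right-hand side $l_{kj}^{\mp(l)}(w)\mathfrak{e}_{ls}(\cdot)$ is converted into $\mathfrak{f}_{kl}(\cdot)l_{js}^{\mp(l)}(w)$, reproducing exactly Proposition~\ref{pr4}. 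The main obstacle is the first verification: one must confirm that $\omega$ genuinely respects the super-RTT relation, i.e.\ that the orthosymplectic $\widetilde{R}(z)$ is invariant (up to the $P$-flip) under the super-transposition, correctly tracking the parity-dependent coefficients $\xi_a$ and the signs produced when $\omega$ reverses products of odd elements. Should this symmetry argument prove delicate, the same relations follow by repeating the matrix-element computation of Proposition~\ref{pr3} verbatim, but extracting the first-\emph{column} generators $\mathfrak{f}_{k1}^{\pm}$ from the decomposition $l_{ks}^{\mp}(w)=l_{ks}^{\mp(1)}(w)+\mathfrak{f}_{k1}^{\mp}(w)\mathfrak{h}_1^{\mp}(w)\mathfrak{e}_{1s}^{\mp}(w)$ rather than the first-row generators $\mathfrak{e}_{1s}^{\pm}$, with the general level $l$ again following from Proposition~\ref{pr2} and Corollary~\ref{co1}.
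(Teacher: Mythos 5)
Your proposal is essentially sound, but it is worth separating its two halves. The paper itself gives no written proof of Proposition~\ref{pr4}: it is stated ``as a consequence'' of Proposition~\ref{pr3}, and the intended argument is precisely your fallback --- rerun the matrix-element computation of Proposition~\ref{pr3}, this time peeling off the first-column factor in $l_{ks}^{\mp}(w)=l_{ks}^{\mp(1)}(w)+\mathfrak{f}_{k1}^{\mp}(w)\mathfrak{h}_1^{\mp}(w)\mathfrak{e}_{1s}^{\mp}(w)$ and reading off the coefficients $C^{\pm}$ of $\widetilde{R}$, then passing to general $l$ via Proposition~\ref{pr2} and Corollary~\ref{co1}. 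Your primary route through a transpose anti-automorphism $\omega$ is a genuinely different and more conceptual packaging, and if it goes through it explains \emph{why} the $\mathfrak{e}$- and $\mathfrak{f}$-relations are mirror images rather than reproving them. However, the burden you flag at the end is real and is the crux: you must verify that the super-transposition in both auxiliary spaces carries $\widetilde{R}(z)$ to itself (or to $\widetilde{R}_{21}(z)=P\widetilde{R}(z)P$), and the lower-triangular term $\sum_{a>b}(-1)^{[b]}E^a_b\otimes\hat{\sigma}^b_a$ together with the $\xi_a$-dependent $Q$-term makes this far from a formality; moreover, because $\omega$ reverses products, the mixed relation \eqref{d3} transforms with the spectral arguments $z_+/w_-$ and $z_-/w_+$ interchanged, so you must invoke the unitarity and flip properties of $\widetilde{R}$ (listed after the definition of $U\bigl(\widetilde{R}\bigr)$) to recognize the transformed relation as an instance of the defining relations --- this step is silently assumed in your write-up. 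With those two checks supplied, the $\omega$-argument is complete and arguably cleaner than the paper's; without them, your fallback computation is the safe (and intended) proof, and I would accept the proposal on that basis.
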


In the following, we consistently define
\begin{gather*}
 \mathfrak{e}_{i}^{\pm}(z)\doteq\mathfrak{e}_{i,i+1}^{\pm}(z), \qquad \mathfrak{f}_{i}^{\pm}(z)\doteq\mathfrak{f}_{i+1,i}^{\pm}(z).
\end{gather*}

Let \smash{$\widetilde{c}^{\pm(m|n-l)}(z)$} \big(resp.\ \smash{$\widetilde{c}^{\pm(m+n-l|0)}(z)$}\big) denote the central elements in \smash{$U^{\pm}\bigl(\widetilde{R}^{(m|n-l)}\bigr)$} if $l<n$ \big(resp.\ \smash{$U^{\pm}\bigl(\widetilde{R}^{(m+n-l|0)}\bigr)$} if $l\geq n$\big). Note that \smash{$\widetilde{c}^{\pm}(z)=\widetilde{c}^{\pm(m|n)}(z)$}.
By Proposition \ref{Apr2}, we find that
\begin{gather*}
 D\mathcal{L}^{\pm}(z\zeta)^tD^{-1}={\mathcal{L}^{\pm}(z)}^{-1}\widetilde{c}^{\pm(m|n)}(z).
\end{gather*}
Taking the $(2m+2n+1, 2m+2n+1)$-entry on both sides of the above equation and using the Gauss decomposition, we obtain
\begin{gather}\label{For2}
 \mathfrak{h}_1^{\pm}(z\zeta)={\mathfrak{h}_{\overline{1}}^{\pm}(z)}^{-1}\widetilde{c}^{\pm}(z).
\end{gather}

\begin{Lemma}\label{le3}
In $U\bigl(\widetilde{R}^{(m|n)}\bigr)$, the following equations hold:
\begin{align}
&\mathfrak{h}_i^{\pm}(z)\mathfrak{e}_i^{\pm}(z)=q^{-1}\mathfrak{e}_i^{\pm}\bigl(q^2z\bigr)\mathfrak{h}_i^{\pm}(z),\qquad 1\leq i\leq n,\label{For3} \\
&\mathfrak{f}_i^{\pm}(z)\mathfrak{h}_i^{\pm}(z)=q\mathfrak{h}_i^{\pm}(z)\mathfrak{f}_i^{\pm}\bigl(q^2z\bigr),\qquad 1\leq i\leq n, \nonumber\\
&\mathfrak{h}_{j}^{\pm}(z)\mathfrak{e}_{j}^{\pm}(z)=q\mathfrak{e}_{j}^{\pm}\bigl(q^{-2}z\bigr)\mathfrak{h}_{n+j}^{\pm}(z),\qquad n+1\leq j\leq m+n, \label{For4}\\
&\mathfrak{f}_{j}^{\pm}(z)\mathfrak{h}_{j}^{\pm}(z)=q^{-1}\mathfrak{h}_{j}^{\pm}(z) \mathfrak{f}_{j}^{\pm}\bigl(q^{-2}z\bigr),
\qquad n+1\leq j\leq m+n.\nonumber
\end{align}
\end{Lemma}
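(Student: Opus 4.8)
The stated relations are local, involving only the adjacent indices $i$ and $i+1$, so the plan is to reduce them to a rank-one computation and extract them from the defining relation \eqref{d2} restricted to the two-dimensional coordinate block indexed by $i$ and $i+1$, in the spirit of the proof of Lemma~\ref{le1}. The starting point is that $\mathfrak{h}_i^{\pm}$, $\mathfrak{e}_i^{\pm}=\mathfrak{e}_{i,i+1}^{\pm}$ and $\mathfrak{f}_i^{\pm}=\mathfrak{f}_{i+1,i}^{\pm}$ are the leading Gauss data of the reduced operator $\mathcal{L}^{\pm(i-1)}(z)$: the Gauss decomposition gives $l_{ii}^{\pm(i-1)}(z)=\mathfrak{h}_i^{\pm}(z)$, $l_{i,i+1}^{\pm(i-1)}(z)=\mathfrak{h}_i^{\pm}(z)\mathfrak{e}_i^{\pm}(z)$ and $l_{i+1,i}^{\pm(i-1)}(z)=\mathfrak{f}_i^{\pm}(z)\mathfrak{h}_i^{\pm}(z)$. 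By Corollary~\ref{co2} this operator satisfies the defining relations of the appropriately reduced algebra, and by Proposition~\ref{pr2} together with the consistency $\psi^{N}_k\circ\psi^{N-2k}_l=\psi^{N}_{k+l}$ it suffices to prove each identity for the leading index, i.e.\ we may assume $i$ is the smallest index --- an odd one for the range $1\le i\le n$ and an even one for $n+1\le j\le m+n$. Everything is thereby localized to the three entries $l_{ii},l_{i,i+1},l_{i+1,i}$, governed by the block of $\widetilde{R}$ on $i,i+1$.

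On this block the summand $Q$ and every term of $\hat{\sigma}$ carrying a barred index drop out, since $\overline{i},\overline{i+1}>i+1$; the relevant coefficients $C^{\pm}(E^a_b\otimes E^c_d)$ of $\widetilde{R}$ can then be read off exactly as in the proof of Proposition~\ref{pr3}. Taking the matrix coefficient $\langle i,i|\cdots|i,i+1\rangle$ of \eqref{d2} produces a three-term identity among the products $l_{ii}^{\pm}(z)l_{i,i+1}^{\pm}(w)$, $l_{i,i+1}^{\pm}(w)l_{ii}^{\pm}(z)$ and $l_{ii}^{\pm}(w)l_{i,i+1}^{\pm}(z)$. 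Specializing $z/w$ to the zero of the coefficient of $l_{ii}^{\pm}(z)l_{i,i+1}^{\pm}(w)$ collapses this to a two-term relation linking $l_{i,i+1}^{\pm}$ and $l_{ii}^{\pm}$ at two arguments differing by $q^{\pm2}$; substituting the Gauss factorization above, cancelling the common factor $\mathfrak{h}_i^{\pm}$, and shifting the spectral variable then yields \eqref{For3}--\eqref{For4}. That distinguished value is $q^{-2}$ in the odd range and $q^{2}$ in the even range, which is precisely the effect of $d_i=(\varepsilon_i,\varepsilon_i)=\mp1$; it is what interchanges the scalar $q^{-1}$ with $q$ and the shift $q^{2}$ with $q^{-2}$. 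The two $\mathfrak{f}$-relations follow in the same way from the coefficient $\langle i,i+1|\cdots|i,i\rangle$ and the entry $l_{i+1,i}^{\pm}$, or from the symmetry $P_{12}\widetilde{R}_{12}(z)P_{12}=\widetilde{R}_{21}(z)$.

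The part demanding the most care will be the graded-sign bookkeeping: the factors $(-1)^{[i]}$ occurring in the block entries of $\widetilde{R}$ and in the products $L_1^{\pm}L_2^{\pm}$, $L_2^{\pm}L_1^{\pm}$ are exactly what separate the two families, so a single sign slip would swap \eqref{For3} and \eqref{For4}. The one genuinely non-generic situation is the endpoint $j=m+n$ of \eqref{For4}, where $i+1$ is the central index with $\varepsilon_{n+m+1}=0$ and $\overline{i+1}=i+1$; there the $Q$- and barred $\hat{\sigma}$-terms no longer vanish on the block, so this node should be verified directly from the explicit $L$-operators of the level-$0$ representation in Proposition~\ref{Level-0}. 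Finally, as explained in the Remark following Lemma~\ref{le2}, the whole computation stays away from the singularity of $\widetilde{R}$ at $z=q^{2}$, so no renormalization of the quasideterminant minors is required for these particular relations.
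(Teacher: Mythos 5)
Your proposal follows essentially the same route as the paper: reduce to the leading index via Theorem~\ref{th2}, Proposition~\ref{pr2} and Corollary~\ref{co2}, take the matrix coefficient $\langle s,s|\cdots|s,s+1\rangle$ of the defining relation~\eqref{d2} to obtain a three-term identity among $l_{ss}^{\pm}(z)l_{s,s+1}^{\pm}(w)$, $l_{s,s+1}^{\pm}(w)l_{ss}^{\pm}(z)$ and $l_{ss}^{\pm}(w)l_{s,s+1}^{\pm}(z)$, specialize $z/w$ to $q^{\mp2}$, and cancel $\mathfrak{h}_s^{\pm}$ using $[\mathfrak{h}_s^{\pm}(z),\mathfrak{h}_s^{\pm}(w)]=0$ --- this is exactly the paper's computation. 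Two small corrections: in the even range the coefficient of $l_{ss}^{\pm}(z)l_{s,s+1}^{\pm}(w)$ is identically $1$ and has no zero, so one must first clear the common denominator $q^{-1}z-qw$ and then set $z/w=q^{2}$ so that the regular term drops out; and your treatment of $j=m+n$ is both unnecessary and invalid as stated, since the $Q$- and barred $\hat{\sigma}$-contributions to this particular matrix element vanish even there (they would require $\overline{s}\in\{s,s+1\}$, not $\overline{s+1}=s+1$), so the uniform argument already covers that node, whereas checking the identity in the level-0 representation of Proposition~\ref{Level-0} would not prove it in $U\bigl(\widetilde{R}\bigr)$ because that representation is not faithful.
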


\begin{proof}
By Theorem \ref{th2} and Proposition \ref{pr2}, we consider the $R$-matrices $\widetilde{R}^{(m|n-i+1)}$ with $1\leq i\leq n$ and $\widetilde{R}^{(m+n-j+1|0)}$ with $n+1\leq j\leq m+n$, corresponding to the superalgebras $U\bigl(\widetilde{R}^{(m|n-i+1)}\bigr)$ (for~${1\leq i\leq n}$) and \smash{$U\bigl(\widetilde{R}^{(m+n-j+1|0)}\bigr)$} (for $n+1\leq j\leq m+n$), respectively. Let $s=i$ or $j$, then $s+1\neq \overline{s}$. Applying the Gauss decomposition and defining relation \eqref{d2}, we have
\begin{align*}
 & C(E^s_s\otimes E^s_s) \mathfrak{h}_s^{\pm}(z)\mathfrak{h}_s^{\pm}(w)\mathfrak{e}_s^{\pm}(w)\\
 &\qquad =\mathfrak{h}_s^{\pm}(w)
 \mathfrak{h}_s^{\pm}(z)e_s^{\pm}(z)C\bigl(E^{s+1}_s\otimes E^s_{s+1}\bigr) + \mathfrak{h}_s^{\pm}(w)\mathfrak{e}_s^{\pm}(w)\mathfrak{h}_s^{\pm}(z)C\bigl(E^s_s\otimes E^{s+1}_{s+1}\bigr).
\end{align*}
By the super case of \cite[relations (33)--(34)]{JSKW}, \cite[Definition~2]{HBKD} for type A, and the non-super case for type B \cite[Theorem 4.29]{JMAB}, we have the commutation relations between $\mathfrak{h}_s^{\pm}(z)$ and $\mathfrak{h}_s^{\pm}(w)$ (also see Section~\ref{sec5.2}). Therefore, let $w=q^{2}z$ for $s=i$ or $w=q^{-2}z$ for $s=j$. By examining the coefficients of the above equation, we can obtain the relations involving the generators $\mathfrak{h}_s^{\pm}(z)$ and $\mathfrak{e}_s^{\pm}(z)$, and the others are similar.
\end{proof}

\begin{Lemma}\label{le4}In
$U\bigl(\widetilde{R}^{(m|n)}\bigr)$, we have
\begin{gather}\label{For5}
 \mathfrak{e}^{\pm}_{\overline{(i+1)}}(z)=-\mathfrak{e}^{\pm}_{i}\bigl(q^{2i}z\zeta\bigr), \qquad 1\leq i\leq n,\\
 \label{For6}
 \mathfrak{e}^{\pm}_{\overline{(j+1)}}(z)=-\mathfrak{e}^{\pm}_{j}\bigl(q^{2m+2n-2j-1}z\bigr), \qquad n+1\leq j\leq m+n-1,
\end{gather}
and
\begin{gather}\label{For7}
 \mathfrak{f}^{\pm}_{\overline{(i+1)}}(z)=-\mathfrak{f}^{\pm}_{i}\bigl(q^{2i}z\zeta\bigr), \qquad 1\leq i\leq n,\\
 \label{For8}
 \mathfrak{f}^{\pm}_{\overline{(j+1)}}(z)=-\mathfrak{f}^{\pm}_{j}\bigl(q^{2m+2n-2j-1}z\bigr), \qquad n+1\leq j\leq m+n-1.
\end{gather}
\end{Lemma}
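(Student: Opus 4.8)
The plan is to extract all four relations from the symmetry relation \eqref{d9} of Proposition \ref{Apr2}, in exactly the way \eqref{For2} was obtained from its $(2m+2n+1,2m+2n+1)$-entry: I rewrite the second identity in \eqref{d9} as $D\mathcal{L}^{\pm}(z\zeta)^tD^{-1}=\mathcal{L}^{\pm}(z)^{-1}\widetilde c^{\pm}(z)$ and compare its $(\overline 2,\overline 1)$- and $(\overline 1,\overline 2)$-entries. These two off-diagonal corners should produce the base cases of \eqref{For5}, \eqref{For7} (for the first simple root, $i=1$); the general index is then reached by running the identical argument for the reduced operators $\mathcal{L}^{\pm(l)}(z)$ and transporting the result through the homomorphism $\psi_l$ of Theorem \ref{th2}.

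First I would compute both sides of the two corners. On the right, the Gauss factorisation $\mathcal{L}^{\pm}(z)^{-1}=\widetilde E^{\pm}(z)^{-1}\widetilde H^{\pm}(z)^{-1}\widetilde F^{\pm}(z)^{-1}$, together with the fact that the last row of $\widetilde E^{\pm-1}$ and the last column of $\widetilde F^{\pm-1}$ are trivial, gives that the $(\overline 2,\overline 1)$-entry equals $-\mathfrak e^{\pm}_{\overline 2}(z)\,\mathfrak h^{\pm}_{\overline 1}(z)^{-1}\widetilde c^{\pm}(z)$ and the $(\overline 1,\overline 2)$-entry equals $-\mathfrak h^{\pm}_{\overline 1}(z)^{-1}\mathfrak f^{\pm}_{\overline 2}(z)\,\widetilde c^{\pm}(z)$. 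On the left, the super-transposition $t$ maps these two slots to the entries $l^{\pm}_{12}(z\zeta)=\mathfrak h^{\pm}_1(z\zeta)\mathfrak e^{\pm}_1(z\zeta)$ and $l^{\pm}_{21}(z\zeta)=\mathfrak f^{\pm}_1(z\zeta)\mathfrak h^{\pm}_1(z\zeta)$ respectively, each multiplied by the scalar $q^{a_{\overline 2}-a_{\overline 1}}(-1)^{[\,\cdot\,]}\xi_{\overline 1}\xi_{\overline 2}$ coming from $t$ and the conjugation by $D$. Thus the two corners already separate cleanly into an $\mathfrak e$--$\mathfrak e$ and an $\mathfrak f$--$\mathfrak f$ relation.

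Next I would eliminate the $\mathfrak h$'s. Using the already established \eqref{For2} to rewrite $\mathfrak h^{\pm}_{\overline 1}(z)^{-1}\widetilde c^{\pm}(z)$ as $\mathfrak h^{\pm}_1(z\zeta)$, both corners acquire a common factor $\mathfrak h^{\pm}_1(z\zeta)$; moving it across $\mathfrak e^{\pm}_1(z\zeta)$ (resp. $\mathfrak f^{\pm}_1(z\zeta)$) by \eqref{For3} (resp. its $\mathfrak f$-counterpart) of Lemma \ref{le3} produces the $q^{2}$-shift and lets the factor cancel, leaving $\mathfrak e^{\pm}_{\overline 2}(z)=-\mathfrak e^{\pm}_1(q^{2}z\zeta)$ and $\mathfrak f^{\pm}_{\overline 2}(z)=-\mathfrak f^{\pm}_1(q^{2}z\zeta)$. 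To obtain the general index I replace $\mathcal{L}^{\pm}$ by the reduced $L$-operator $\mathcal{L}^{\pm(l)}$, which by Corollary \ref{co2} satisfies the defining relations of $U\bigl(\widetilde R^{(m|n-l)}\bigr)$ for $l<n$ (resp. $U\bigl(\widetilde R^{(m+n-l|0)}\bigr)$ for $l\ge n$) with its own parameter $\zeta^{(l)}$, so that Proposition \ref{Apr2} applies verbatim to it. Its first simple root is the $i=(l+1)$-th one, and Proposition \ref{pr2} identifies the reduced Gaussian series with $\mathfrak e^{\pm}_i,\mathfrak f^{\pm}_i,\mathfrak e^{\pm}_{\overline{i+1}},\mathfrak f^{\pm}_{\overline{i+1}}$. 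For $1\le i\le n$ the first root is odd, \eqref{For3} supplies the $q^{2}$-shift and $q^{2}\zeta^{(l)}=q^{2i}\zeta$, giving \eqref{For5}, \eqref{For7}; for $n+1\le j\le m+n-1$ the first root is even, \eqref{For4} supplies a $q^{-2}$-shift and $q^{-2}\zeta^{(l)}=q^{2m+2n-2j-1}$, giving \eqref{For6}, \eqref{For8}.

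The hard part will be the scalar and shift bookkeeping on the transposed side: one must verify that the accumulated prefactor $q^{a_{\overline 2}-a_{\overline 1}}(-1)^{[\,\cdot\,]}\xi_{\overline 1}\xi_{\overline 2}$ combines with the $q^{\mp 1}$ from Lemma \ref{le3} to give exactly $-1$, so that both the minus sign and the precise power of $q$ in \eqref{For5}--\eqref{For8} come out correctly. Since the values of $\xi_a$ and of the parity $[a]$ differ on the symplectic range $1\le a\le n$ and the orthogonal range $n+1\le a$, this verification must be carried out separately for the two cases, and it is exactly this difference that is responsible for the two distinct shift formulas \eqref{For5} and \eqref{For6}. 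A secondary point, that the near-diagonal Gaussian generators are stable under passage to the reduced block, i.e.\ $\mathfrak e^{\pm(l)}_i=\mathfrak e^{\pm}_i$ and $\mathfrak e^{\pm(l)}_{\overline{i+1}}=\mathfrak e^{\pm}_{\overline{i+1}}$ (and likewise for $\mathfrak f$), is guaranteed by the Sylvester-type consistency of quasideterminants recorded in Proposition \ref{pr2} and in the corollary following Theorem \ref{th2}.
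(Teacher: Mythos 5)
Your proposal is correct and follows essentially the same route as the paper: the paper likewise applies the transposition symmetry of Proposition \ref{Apr2} to the reduced operators $\mathcal{L}^{\pm(l)}(z)$ via Proposition \ref{pr2}, reads off the $(\overline{s},\overline{s})$- and $(\overline{s+1},\overline{s})$-entries (the reduced analogues of your $(\overline{2},\overline{1})$- and $(\overline{1},\overline{2})$-corners together with \eqref{For2}), and then cancels the $\mathfrak{h}$-factor using \eqref{For3}--\eqref{For4} to produce the $q^{\pm2}$ shift and the minus sign. The only cosmetic difference is that you do the base case on the full matrix and transport by $\psi_l$, whereas the paper writes the reduced identity directly for each index.
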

\begin{proof}
By Propositions \ref{Apr2} and \ref{pr2}, for any $1\leq i\leq n$ and $n+1\leq j\leq m+n-1$, we have
\begin{gather}\label{For9}
 {\mathcal{L}^{\pm (i-1)}(z)}^{-1}\widetilde{c}^{\pm (m|n-i+1)}(z)=D^{(m|n-i+1)}{\mathcal{L}^{\pm(i)}\bigl(q^{2i-2}z\zeta\bigr)}^{t}{\bigl(D^{(m|n-i+1)}\bigr)}^{-1},\\
 \label{For10}
 {\mathcal{L}^{\pm (j-1)}(z)}^{-1}\widetilde{c}^{\pm (m+n-j+1|0)}(z)\nonumber\\
 \qquad=D^{(m+n-j+1|0)}{\mathcal{L}^{\pm(j-1)}\bigl(q^{-2j+2}zq^{2m+2n-1}\bigr)}^{t}{\bigl(D^{(m+n-j+1|0)}\bigr)}^{-1},
\end{gather}
where
\begin{gather*}
 D^{(m|n-i+1)}=\operatorname{diag}[q^{a_{i}},\dots,q^{a_{\overline{i}}}],\qquad
 D^{(m+n-j+1|0)}=\operatorname{diag}[q^{a_{j}},\dots,q^{a_{\overline{j}}}].
\end{gather*}Let $s=i$ or $j$ and consider the $(\overline{s},\overline{s})$ and $(\overline{s+1},\overline{s})$-entries on both sides of \eqref{For9} and \eqref{For10}, respectively, we find that
\begin{gather*}
 \mathfrak{h}_i^{\pm}\bigl(q^{2i-2}z\zeta\bigr)={\mathfrak{h}_{\overline{i}}^{\pm}(z)}^{-1}\widetilde{c}^{\pm(m|n-i+1)}(z),\\
 -\mathfrak{e}_{\overline{i+1}}^{\pm}(z){h_{\overline{i}}^{\pm}(z)}^{-1}\widetilde{c}^{\pm(m|n-i+1)}(z)
 =q\mathfrak{h}_i^{\pm}\bigl(q^{2i-2}z\zeta\bigr)\mathfrak{e}_i^{\pm}\bigl(q^{2i-2}z\zeta\bigr),
\end{gather*}
and
\begin{gather*}
 \mathfrak{h}_{j}^{\pm}\bigl(zq^{2m+2n-2j+1}\bigr)={\mathfrak{h}_{\overline{j}}^{\pm}(z)}^{-1}\widetilde{c}^{\pm(m+n-j+1|0)}(z),\\
 -\mathfrak{e}_{\overline{j+1}}^{\pm}(z){\mathfrak{h}_{\overline{j}}^{\pm}(z)}^{-1}\widetilde{c}^{\pm(m+n-j+1|0)}(z)
 =q^{-1}\mathfrak{h}_{j}^{\pm}\bigl(zq^{2m+2n-2j+1}\bigr)\mathfrak{e}_{j}^{\pm}\bigl(zq^{2m+2n-2j+1}\bigr).
\end{gather*}
As a consequence, we obtain
 \begin{gather*}
 -\mathfrak{e}_{\overline{i+1}}^{\pm}(z)\mathfrak{h}_i^{\pm}\bigl(q^{2i-2}z\zeta\bigr)
 =q\mathfrak{h}_i^{\pm}\bigl(q^{2i-2}z\zeta\bigr)\mathfrak{e}_i^{\pm}\bigl(q^{2i-2}z\zeta\bigr),
\end{gather*}and
 \begin{gather*}
 -\mathfrak{e}_{\overline{j+1}}^{\pm}(z)\mathfrak{h}_{j}^{\pm}\bigl(zq^{2m+2n-2j+1}\bigr)
 =q^{-1}\mathfrak{h}_i^{\pm}\bigl(zq^{2m+2n-2j+1}\bigr)\mathfrak{e}_{j}^{\pm}\bigl(zq^{2m+2n-2j+1}\bigr).
\end{gather*}
Now \eqref{For3} and \eqref{For4} imply our claims of \eqref{For5} and \eqref{For6}, while \eqref{For7} and \eqref{For8} are similar.
\end{proof}

\begin{Proposition}\label{pr5}
In the superalgebra $U\bigl(R^{(m|n)}\bigr)$ and $U\bigl(\widetilde{R}^{(m|n)}\bigr)$, we have
\begin{gather*}
c^{\pm(m|n)}(z)=\prod\limits_{i=1}^n\frac{h_i^{\pm}\bigl(z\zeta q^{2i-2}\bigr)}{h_i^{\pm}\bigl(z\zeta q^{2i}\bigr)}\prod\limits_{i=1}^m\frac{h_{n+i}^{\pm}\bigl(z q^{2m-2i+1}\bigr)}{h_{n+i}^{\pm}\bigl(zq^{2m-2i-1}\bigr)}h_{m+n+1}^{\pm}(z)h_{m+n+1}^{\pm}\bigl(q^{-1}z\bigr),\\
\widetilde{c}^{\pm(m|n)}(z)=\prod\limits_{i=1}^n\frac{\mathfrak{h}_i^{\pm}\bigl(z\zeta q^{2i-2}\bigr)}{\mathfrak{h}_i^{\pm}\bigl(z\zeta q^{2i}\bigr)}\prod\limits_{i=1}^m\frac{\mathfrak{h}_{n+i}^{\pm}\bigl(z q^{2m-2i+1}\bigr)}{\mathfrak{h}_{n+i}^{\pm}\bigl(zq^{2m-2i-1}\bigr)}\mathfrak{h}_{m+n+1}^{\pm}(z)\mathfrak{h}_{m+n+1}^{\pm}\bigl(q^{-1}z\bigr).
\end{gather*}
\end{Proposition}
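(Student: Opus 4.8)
The plan is to first establish the formula for the central series $\widetilde{c}^{\pm(m|n)}(z)$ in $U\bigl(\widetilde{R}^{(m|n)}\bigr)$ by descending induction along the chain of embeddings $\psi_l$ of Theorem~\ref{th2}, and then to transfer the result to $c^{\pm(m|n)}(z)$ in $U\bigl(R^{(m|n)}\bigr)$. Writing $\zeta^{(l)}=\zeta q^{2l}$ for the parameter attached to the reduced algebra $U\bigl(\widetilde{R}^{(m|n-l)}\bigr)$, I would start from the identities already extracted inside the proof of Lemma~\ref{le4}, namely
\begin{gather*}
 \widetilde{c}^{\pm(m|n-i+1)}(z)=\mathfrak{h}_{\overline i}^{\pm}(z)\,\mathfrak{h}_i^{\pm}\bigl(z\zeta q^{2i-2}\bigr),\qquad 1\le i\le n,\\
 \widetilde{c}^{\pm(m+n-j+1|0)}(z)=\mathfrak{h}_{\overline j}^{\pm}(z)\,\mathfrak{h}_j^{\pm}\bigl(zq^{2m+2n-2j+1}\bigr),\qquad n+1\le j\le m+n,
\end{gather*}
the first of which reduces to \eqref{For2} when $i=1$. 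The key point enabling the induction is the nesting (Sylvester) property of quasideterminants underlying Theorem~\ref{th2} and Proposition~\ref{pr2}: the Gaussian generators $\mathfrak{h}_i^{\pm}$ of the reduced operator $\mathcal{L}^{\pm(l)}$ coincide with those of the full $\mathcal{L}^{\pm}$ for $i\ge l+1$, so all the $\mathfrak{h}_i^{\pm}$ occurring at different levels are literally the same series.

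The heart of the argument is a single-step recursion: for $0\le l\le n-1$ I claim
\begin{gather*}
 \widetilde{c}^{\pm(m|n-l)}(z)=\frac{\mathfrak{h}_{l+1}^{\pm}\bigl(z\zeta q^{2l}\bigr)}{\mathfrak{h}_{l+1}^{\pm}\bigl(z\zeta q^{2l+2}\bigr)}\,\widetilde{c}^{\pm(m|n-l-1)}(z),
\end{gather*}
and for $n\le l\le m+n-1$ the analogous relation with ratio $\mathfrak{h}_{l+1}^{\pm}\bigl(zq^{2m+2n-2l-1}\bigr)/\mathfrak{h}_{l+1}^{\pm}\bigl(zq^{2m+2n-2l-3}\bigr)$, the smaller central series being read through the embedding $\psi_1$ applied to the level-$l$ algebra. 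Combining the displayed level-$l$ identity with its level-$(l+1)$ counterpart, and cancelling the common factor $\mathfrak{h}_{l+1}^{\pm}\bigl(z\zeta^{(l)}\bigr)$ using that the $\mathfrak{h}_i^{\pm}$ commute among themselves, this recursion is equivalent to the boundary relation $\mathfrak{h}_{\overline{l+1}}^{\pm}(z)\,\mathfrak{h}_{l+1}^{\pm}\bigl(z\zeta^{(l+1)}\bigr)=\widetilde{c}^{\pm(m|n-l-1)}(z)$. I would prove this by taking the appropriate diagonal entry of the defining matrix identity $\mathcal{L}^{\pm(l)}(z)D^{(l)}\mathcal{L}^{\pm(l)}\bigl(z\zeta^{(l)}\bigr)^t\bigl(D^{(l)}\bigr)^{-1}=\widetilde{c}^{\pm(m|n-l)}(z)\,I$ for the reduced algebra (Proposition~\ref{Apr2} applied at level $l$ via Corollary~\ref{co2}) and manipulating it with the Gaussian commutation relations of Lemma~\ref{le3} and Propositions~\ref{pr3}--\ref{pr4}.

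Granting the recursion, I would telescope from $l=0$ downward: the fermionic steps $l=0,\dots,n-1$ contribute $\prod_{i=1}^n\mathfrak{h}_i^{\pm}\bigl(z\zeta q^{2i-2}\bigr)/\mathfrak{h}_i^{\pm}\bigl(z\zeta q^{2i}\bigr)$ after reindexing $i=l+1$, the bosonic steps $l=n,\dots,m+n-1$ contribute $\prod_{i=1}^m\mathfrak{h}_{n+i}^{\pm}\bigl(zq^{2m-2i+1}\bigr)/\mathfrak{h}_{n+i}^{\pm}\bigl(zq^{2m-2i-1}\bigr)$, and the induction terminates at the rank-one algebra $U\bigl(\widetilde{R}^{(0|0)}\bigr)$, whose central series is the middle factor $\mathfrak{h}_{m+n+1}^{\pm}(z)\mathfrak{h}_{m+n+1}^{\pm}\bigl(q^{-1}z\bigr)$ coming from the $1\times1$ specialization of Proposition~\ref{Apr2} with parameter $q^{-1}$. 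This assembles exactly into the stated formula for $\widetilde{c}^{\pm(m|n)}(z)$.

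Finally, the formula for $c^{\pm(m|n)}(z)$ in $U(R)$ follows either by repeating the same telescoping verbatim in $U(R)$ — since Proposition~\ref{Apr2}, Lemma~\ref{le4} and the homomorphism theorem all have $U(R)$ analogues — or by applying the homomorphism of Propositions~\ref{Apr3}--\ref{Apr4}, under which $\mathfrak{h}_i^{\pm}(z)\mapsto\exp\bigl(\sum_{p\ge1}\vartheta_{\mp p}z^{\mp p}\bigr)h_i^{\pm}(z)$ and $\widetilde{c}^{\pm}(z)\mapsto(\text{Heisenberg factor})\,c^{\pm}(z)$; a direct count of the numerator and denominator arguments in the product shows the Heisenberg exponentials cancel exactly against those of $\widetilde{c}^{\pm}$, leaving the same identity with every $\mathfrak{h}_i^{\pm}$ replaced by $h_i^{\pm}$. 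I expect the single-step recursion — concretely the boundary identity $\mathfrak{h}_{\overline{l+1}}^{\pm}(z)\mathfrak{h}_{l+1}^{\pm}\bigl(z\zeta^{(l+1)}\bigr)=\widetilde{c}^{\pm(m|n-l-1)}(z)$ — to be the main obstacle, as it is the only step requiring genuine use of the $R$-matrix relations rather than reindexing of already-known identities.
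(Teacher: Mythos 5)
Your proposal follows essentially the same route as the paper's proof: the identical one-step recursion $\widetilde{c}^{\pm(m|n-l)}(z)=\mathfrak{h}_{l+1}^{\pm}\bigl(z\zeta q^{2l}\bigr)\mathfrak{h}_{l+1}^{\pm}\bigl(z\zeta q^{2l+2}\bigr)^{-1}\widetilde{c}^{\pm(m|n-l-1)}(z)$, extracted from a diagonal entry of the crossing relation via the Gauss decomposition, Lemma~\ref{le3} and the $\bigl[\mathfrak{e},\mathfrak{f}\bigr]$ commutator, followed by telescoping and transfer to $U(R)$ through Proposition~\ref{Apr4}. The only cosmetic difference is the base case: the paper stops the reduction at the rank-one algebra $U\bigl(\widetilde{R}^{(1|0)}\bigr)$ and computes $\widetilde{c}^{\pm(1|0)}(z)$ there directly using the type-B relations of Remark~\ref{RM2}, rather than pushing to the degenerate $1\times1$ algebra $U\bigl(\widetilde{R}^{(0|0)}\bigr)$ as you do.
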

\begin{proof}
Considering the matrix $ \mathcal{L}^{\pm (0)}(z)=\mathcal{L}^{\pm }(z)$ and taking the $(\overline{2},\overline{2})$-entry of \eqref{For9}, by the Gauss decomposition we find that
\begin{gather*}
 \mathfrak{h}_2^{\pm}(z\zeta)+\mathfrak{f}_1^{\pm}(z\zeta)\mathfrak{h}_1^{\pm}(z\zeta)\mathfrak{e}_1^{\pm}(z\zeta)=
 \bigl\{{\mathfrak{h}_{\overline{2}}^{\pm}(z)}^{-1}+\mathfrak{e}_{\overline{1}}^{\pm}(z){\mathfrak{h}_{\overline{1}}^{\pm}(z)}^{-1}
 {\mathfrak{f}_{\overline{1}}^{\pm}(z)}^{-1}\bigr\}\widetilde{c}^{\pm(m|n)}(z).
\end{gather*}Proposition \ref{Apr2} and Lemma \ref{le3} together with \eqref{For2} imply
\begin{gather*}
 {\mathfrak{h}_{\overline{2}}^{\pm}(z)}^{-1}\widetilde{c}^{\pm(m|n)}(z)=\mathfrak{h}_2^{\pm}(z\zeta)+\mathfrak{f}_1^{\pm}(z\zeta)\mathfrak{h}_1^{\pm}(z\zeta)
 \mathfrak{e}_1^{\pm}(z\zeta)-\mathfrak{e}_1^{\pm}\bigl(q^{2}z\zeta\bigr)\mathfrak{h}_1^{\pm}(z\zeta)\mathfrak{f}_1^{\pm}(z\zeta).
\end{gather*}
As the proof of \cite{HBKD} (also see \cite{JSKW} or \cite{YZC}), we have
\begin{gather*}
\bigl[\mathfrak{e}_1^{\pm}(z), \mathfrak{f}_1^{\pm}(w)\bigr]=\frac{\bigl(q-q^{-1}\bigr)z}{z-w}\bigl(\mathfrak{h}_2^{\pm}(w){\mathfrak{h}_1^{\pm}(w)}^{-1}-\mathfrak{h}_2^{\pm}(z)
{\mathfrak{h}_1^{\pm}(z)}^{-1}\bigr),
\end{gather*}together with \eqref{For3}, we deduce that
\begin{gather*}
 {\mathfrak{h}_{\overline{2}}^{\pm}(z)}^{-1}\widetilde{c}^{\pm(m|n)}(z)=\mathfrak{h}_2^{\pm}\bigl(q^{2}z\zeta\bigr)
 {\mathfrak{h}_1^{\pm}\bigl(q^{2}z\zeta\bigr)}^{-1}\mathfrak{h}_1^{\pm}(z\zeta).
\end{gather*}
On one hand, $\widetilde{c}^{\pm(m|n-1)}(z)=\mathfrak{h}_{\overline{2}}^{\pm}(z)\mathfrak{h}_2^{\pm}\bigl(q^{2}z\zeta\bigr)$, so that
\begin{gather*}
 \widetilde{c}^{\pm(m|n)}(z)=
 {\mathfrak{h}_1^{\pm}\bigl(q^{2}z\zeta\bigr)}^{-1}\mathfrak{h}_1^{\pm}(z\zeta)\widetilde{c}^{\pm(m|n-1)}(z).
\end{gather*}
Repeat this process for $\widetilde{c}^{\pm(m|n-l)}(z)$, and when $l>n$, relations \eqref{For4} are used. Thus we only need to know the formulas for $\widetilde{c}^{\pm(1|0)}(z)$. Considering the superalgebra
 $U\bigl(\widetilde{R}^{(1|0)}\bigr)$ (it should be noted that the required relations can be obtained in \cite{JMAB} based on Remark \ref{RM2}\,(1)), then the same argument allows us to conclude that
\begin{gather*}
 {\mathfrak{h}_{m+n+1}^{\pm}(z)}^{-1}\widetilde{c}^{\pm(1|0)}(z)=\mathfrak{h}_{m+n+1}^{\pm}\bigl(q^{-1}z\bigr)
 {\mathfrak{h}_{m+n}^{\pm}\bigl(q^{-1}z\bigr)}^{-1}\mathfrak{h}_{m+n}^{\pm}(qz),
\end{gather*}which implies that the formula $\widetilde{c}^{\pm(m|n)}(z)$ holds. The formula $c^{\pm(m|n)}(z)$ follows from Proposition~\ref{Apr4}.
\end{proof}

\subsection{Relations of Drinfeld generators}\label{sec5.2}

Now we illustrate the Drinfeld generators and relations in $U\bigl(\widetilde{R}\bigr)$ and $U(R)$ by setting
\begin{gather*}
 \widetilde{X}^{-}_i(z)=\mathfrak{e}^+_{i}(z_+)-\mathfrak{e}^-_{i}(z_-),\qquad
 \widetilde{X}^{+}_i(z)=\mathfrak{f}^+_{i}(z_-)-\mathfrak{f}^-_{i}(z_+),\\
 X^{-}_i(z)=e^+_{i}(z_+)-e^-_{i}(z_-),\qquad
 X^{+}_i(z)=f^+_{i}(z_-)-f^-_{i}(z_+),
\end{gather*}
and the $\delta$-function
$ \delta(z)=\sum_{p\in\mathbb{Z}}z^p$.
Note that the $R$-matrix of \smash{$U_q\bigl(\widehat{\mathfrak{gl}(n|m)}\bigr)$} is
\begin{align*}
 \overline{R}(z)={}&\frac{q^{-1}-zq}{q-q^{-1}z}\sum\limits_{a=1 }^{n}E^a_a\otimes E^a_a+\sum\limits_{a=n+1 }^{n+m}E^a_a\otimes E^a_a-\frac{z-1}{q-q^{-1}z}\sum\limits_{a\neq b}(-1)^{[a][b]}E^a_a\otimes E^b_b\nonumber\\
 &+\frac{q-q^{-1}}{q-q^{-1}z}\sum\limits_{a>b}(-1)^{[b]}E^a_b\otimes E^b_a+\frac{\bigl(q-q^{-1}\bigr)z}{q-q^{-1}z}\sum\limits_{a<b }(-1)^{[b]}E^a_b\otimes E^b_a.
\end{align*}
Compare to the $R$-matrix $\widetilde{R}(z)$ and by Remark \ref{RM2} along with the quasideterminant formulas, in the same way as presented in \cite{JSKW,HBKD} and \cite{YZC} (where the original method was provided by \cite{JDIB} for the non-super case), we can arrive at the following proposition.
\begin{Proposition} In the superalgebra $U\bigl(\widetilde{R}^{(m|n)}\bigr)$, we have
\begin{gather*}
 \mathfrak{h}_i^{\pm}(z)\mathfrak{h}_i^{\epsilon}(w)=\mathfrak{h}_i^{\epsilon}(w)\mathfrak{h}_i^{\pm}(z),\qquad n+1\leq i\leq n+m,\\
 \mathfrak{h}_j^{\pm}(z)\mathfrak{h}_j^{\pm}(w)=\mathfrak{h}_j^{\pm}(w)\mathfrak{h}_j^{\pm}(z),\qquad 1\leq j\leq n,\\
 \frac{qz_+-q^{-1}w_-}{q^{-1}z_+-qw_-}\mathfrak{h}_j^{\pm}(z)\mathfrak{h}_j^{\mp}(w)=\frac{qz_--q^{-1}w_+}{q^{-1}z_--qw_+}\mathfrak{h}_j^{\mp}(w)\mathfrak{h}_j^{\pm}(z),\qquad 1\leq j\leq n,\\
 \frac{z_{\pm}-w_{\mp}}{q^{-1}z_{\pm}-qw_{\mp}}\mathfrak{h}_i^{\pm}(z)\mathfrak{h}_j^{\mp}(w)=\frac{z_{\mp}-w_{\pm}}{q^{-1}z_{\mp}-qw_{\pm}}\mathfrak{h}_j^{\mp}(w)\mathfrak{h}_i^{\pm}(z), \qquad \qquad 1\leq i<j\leq m+n,\\
 {\mathfrak{h}_i^{\pm}(z)}^{-1}\widetilde{X}^-_i(w)\mathfrak{h}_i^{\pm}(z)=\frac{q^{-1}z_{\mp}-qw}{z_{\mp}-w}\widetilde{X}^-_i(w),\qquad n+1\leq i\leq n+m-1,\\
 {\mathfrak{h}_{i+1}^{\pm}(z)}^{-1}\widetilde{X}^-_i(w)\mathfrak{h}_{i+1}^{\pm}(z)=\frac{qz_{\mp}-q^{-1}w}{z_{\mp}-w}\widetilde{X}^-_i(w),\qquad n+1\leq i\leq n+m-1,\\
 \mathfrak{h}_i^{\pm}(z)\widetilde{X}^+_i(w){\mathfrak{h}_i^{\pm}(z)}^{-1}=\frac{q^{-1}z_{\pm}-qw}{z_{\pm}-w}\widetilde{X}^+_i(w),\qquad n+1\leq i\leq n+m-1,\\
 \mathfrak{h}_{i+1}^{\pm}(z)\widetilde{X}^+_i(w){\mathfrak{h}_{i+1}^{\pm}(z)}^{-1}=\frac{qz_{\pm}-q^{-1}w}{z_{\pm}-w}\widetilde{X}^+_i(w),\qquad n+1\leq i\leq n+m-1,\\
 {\mathfrak{h}_j^{\pm}(z)}^{-1}\widetilde{X}^-_j(w)\mathfrak{h}_j^{\pm}(z)=\frac{qz_{\mp}-q^{-1}w}{z_{\mp}-w}\widetilde{X}^-_j(w),\qquad 1\leq j\leq n,\\
 {\mathfrak{h}_{j+1}^{\pm}(z)}^{-1}\widetilde{X}^-_j(w)\mathfrak{h}_{j+1}^{\pm}(z)=\frac{q^{-1}z_{\mp}-qw}{z_{\mp}-w}\widetilde{X}^-_j(w),\qquad 1\leq j\leq n,
\\
 \mathfrak{h}_j^{\pm}(z)\widetilde{X}^+_j(w){\mathfrak{h}_j^{\pm}(z)}^{-1}=\frac{qz_{\pm}-q^{-1}w}{z_{\pm}-w}\widetilde{X}^+_j(w),\qquad 1\leq j\leq n,\\
 \mathfrak{h}_{j+1}^{\pm}(z)\widetilde{X}^+_j(w){\mathfrak{h}_{j+1}^{\pm}(z)}^{-1}=\frac{q^{-1}z_{\pm}-qw}{z_{\pm}-w}\widetilde{X}^+_j(w),\qquad 1\leq j\leq n,\\
 \bigl(q^{\mp1}z-q^{\pm1}w\bigr)\widetilde{X}_i^{\pm}(z)\widetilde{X}_i^{\pm}(w)=\bigl(q^{\pm1}z-q^{\mp1}w\bigr)\widetilde{X}_i^{\pm}(w)\widetilde{X}_i^{\pm}(z),\qquad n+1\leq i\leq n+m-1,\\
 \bigl(q^{\mp1}z-q^{\pm1}w\bigr)\widetilde{X}_{i-1}^{\pm}(z)\widetilde{X}_{i}^{\pm}(w)=(z-w)\widetilde{X}_{i}^{\pm}(w)\widetilde{X}_{i-1}^{\pm}(z),\qquad n+1\leq i\leq n+m-1,\\
 \bigl(q^{\pm1}z-q^{\mp1}w\bigr)\widetilde{X}_j^{\pm}(z)\widetilde{X}_j^{\pm}(w)=\bigl(q^{\mp1}z-q^{\pm1}w\bigr)\widetilde{X}_j^{\pm}(w)X_j^{\pm}(z),\qquad 1\leq j\leq n-1,\\
 \bigl(q^{\pm1}z-q^{\mp1}w\bigr)\widetilde{X}_{j-1}^{\pm}(z)X_{j}^{\pm}(w)=(z-w)\widetilde{X}_{j}^{\pm}(w)\widetilde{X}_{j-1}^{\pm}(z),\qquad 1\leq j\leq n,\\
 \widetilde{X}_{n}^{\pm}(z)\widetilde{X}_{n}^{\pm}(w)=-\widetilde{X}_{n}^{\pm}(w)\widetilde{X}_{n}^{\pm}(z),
\end{gather*}
together with
\begin{align*}
 \bigl[\widetilde{X}_i^{+}(z), \widetilde{X}_j^{-}(w)\bigr]={}&\bigl(q-q^{-1}\bigr)\delta_{ij}\\
 &\times\left(\delta\left(\frac{zq^{-c}}{w}\right)\mathfrak{h}_{i+1}^+(w_+)
 \mathfrak{h}_i^+(w_+)^{-1}-\delta\left(\frac{zq^{c}}{w}\right)\mathfrak{h}_{i+1}^-(z_+)\mathfrak{h}_i^-(z_+)^{-1}\right)
\end{align*}
for $1\leq i\leq m+n-1$. The commutation relations for $\epsilon=\pm$ are as follows:
\begin{align*}
 &\widetilde{X}_i^{\pm}(z)\widetilde{X}_j^{\pm}(w)=\widetilde{X}_j^{\pm}(w)\widetilde{X}_i^{\pm}, \qquad 1\leq i,j \leq m+n-1,\qquad |i-j|>1,\\
 &\mathfrak{h}_i^{\pm}(z)\widetilde{X}_j^{\epsilon}(w)=\widetilde{X}_j^{\epsilon}(w)\mathfrak{h}_i^{\pm}(z), \qquad 1\leq i \leq m+n,\qquad 1\leq j \leq m+n-1,\qquad |i-j|>1.
\end{align*}
\end{Proposition}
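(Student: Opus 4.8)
The plan is to follow the Ding--Frenkel strategy of \cite{JDIB} in its super incarnation from \cite{JSKW,HBKD,YZC}, extracting each relation from the defining $RLL$-relations \eqref{d2}--\eqref{d3} after substituting the Gauss decomposition $\mathcal{L}^{\pm}(z)=\widetilde{F}^{\pm}(z)\widetilde{H}^{\pm}(z)\widetilde{E}^{\pm}(z)$. The essential reduction comes from the homomorphism theorem (Theorem~\ref{th2}) together with Proposition~\ref{pr2}: since $l_{ij}^{\pm(l)}(z)=\psi_l\bigl(l_{ij}^{\pm}(z)\bigr)$ and each $\psi_l$ is an algebra homomorphism, every relation among $\mathfrak{h}_i^{\pm}$, $\mathfrak{e}_i^{\pm}$, $\mathfrak{f}_i^{\pm}$ whose indices cluster near a fixed node can be computed inside a small-rank subalgebra $U\bigl(\widetilde{R}^{(m|n-l)}\bigr)$ or $U\bigl(\widetilde{R}^{(m+n-l|0)}\bigr)$, where the Gaussian generators in question sit in the top-left corner. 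This localizes each identity to a rank-one or rank-two computation, exactly as in the passage from the case $l=1$ to the general case in Proposition~\ref{pr3}.

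First I would dispose of all relations indexed by $1\le i,j\le m+n$ that do not feel the orthosymplectic boundary. By Remark~\ref{RM2}(2) the series $l_{ij}^{\pm}(z)$ for $1\le i,j\le m+n$ satisfy precisely the defining relations of $U_q\bigl(\widehat{\mathfrak{gl}(n|m)}\bigr)$, so the Cartan commutations among the $\mathfrak{h}_i^{\pm}$, the conjugation relations of $\mathfrak{h}_i^{\pm}$ on $\widetilde{X}_j^{\epsilon}$, and the quadratic current relations among the $\widetilde{X}_i^{\pm}$ with $|i-j|\le 1$ are inherited verbatim from the type-$A$ super case of \cite{JSKW,HBKD,YZC}. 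In particular, the fermionic relation $\widetilde{X}_n^{\pm}(z)\widetilde{X}_n^{\pm}(w)=-\widetilde{X}_n^{\pm}(w)\widetilde{X}_n^{\pm}(z)$ at the isotropic odd node is exactly the $\mathfrak{gl}$ odd-simple-root relation, since $[n]=1$, $[n+1]=0$ makes $\widetilde{X}_n^{\pm}$ an odd current. The locality relations for $|i-j|>1$ follow at once because the relevant $\widetilde{R}$-entries act as scalars there.

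For the remaining identities I would carry out the coefficient extraction explicitly. The $\mathfrak{h}$--$\widetilde{X}$ conjugation formulas follow by combining the $\mathfrak{e}$--$l$ and $\mathfrak{f}$--$l$ relations already established in Propositions~\ref{pr3} and~\ref{pr4} with the Gauss-decomposition identities relating the nearest off-diagonal $\mathcal{L}$-entries to the products $\mathfrak{h}_i^{\pm}\mathfrak{e}_i^{\pm}$ and $\mathfrak{f}_i^{\pm}\mathfrak{h}_i^{\pm}$, the rational prefactors being read off from the list of $R$-matrix coefficients $C^{\pm}\bigl(E^a_b\otimes E^c_d\bigr)$ recorded in the proof of Proposition~\ref{pr3}. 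The quadratic relations among neighbouring $\widetilde{X}_i^{\pm}$ arise the same way from the $(i,i+1)\otimes(i+1,i+2)$ matrix coefficients of \eqref{d2}--\eqref{d3}.

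The hard step is the mixed commutator $\bigl[\widetilde{X}_i^{+}(z),\widetilde{X}_j^{-}(w)\bigr]$. Writing $\widetilde{X}_i^{+}(z)=\mathfrak{f}_i^{+}(z_-)-\mathfrak{f}_i^{-}(z_+)$ and $\widetilde{X}_j^{-}(w)=\mathfrak{e}_j^{+}(w_+)-\mathfrak{e}_j^{-}(w_-)$, one evaluates the four resulting brackets from the same-type relation $\bigl[\mathfrak{e}_i^{\pm}(z),\mathfrak{f}_i^{\pm}(w)\bigr]=\frac{(q-q^{-1})z}{z-w}\bigl(\mathfrak{h}_{i+1}^{\pm}(w)\mathfrak{h}_i^{\pm}(w)^{-1}-\mathfrak{h}_{i+1}^{\pm}(z)\mathfrak{h}_i^{\pm}(z)^{-1}\bigr)$ already used in Proposition~\ref{pr5}, together with its mixed-type analogue governed by the half-shifted parameters $z_{\pm}=zq^{\pm c/2}$. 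For $i\ne j$ the brackets cancel by locality; for $i=j$ the kernels $\frac{(q-q^{-1})z}{z-w}$ combine across the $+$ and $-$ sectors into the two $\delta$-functions $\delta\bigl(zq^{-c}/w\bigr)$ and $\delta\bigl(zq^{c}/w\bigr)$, using the standard fact that the two region-expansions of $\frac{z}{z-w}$ differ by the formal delta function $\delta(z/w)$. The main obstacle is precisely this bookkeeping: keeping the super signs $(-1)^{[i][j]}$ consistent, pairing the half-shifts $z_{\pm},w_{\mp}$ so that the poles align at $zq^{\mp c}=w$, and verifying that the surviving residues reproduce the stated factors $\mathfrak{h}_{i+1}^{+}(w_+)\mathfrak{h}_i^{+}(w_+)^{-1}$ and $\mathfrak{h}_{i+1}^{-}(z_+)\mathfrak{h}_i^{-}(z_+)^{-1}$.
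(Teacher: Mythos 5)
Your proposal is correct and follows essentially the same route as the paper: the paper itself offers no written proof beyond observing (via Remark~\ref{RM2}(2)) that the series $l_{ij}^{\pm}(z)$ with $1\leq i,j\leq m+n$ satisfy the defining relations of $U_q\bigl(\widehat{\mathfrak{gl}(n|m)}\bigr)$, so that all relations in this proposition — which involve only $\mathfrak{h}_i^{\pm}$ for $i\leq m+n$ and $\widetilde{X}_j^{\pm}$ for $j\leq m+n-1$ — are inherited from the type-A super Ding--Frenkel analysis of \cite{JSKW,HBKD,YZC}. Your additional detail on the delta-function bookkeeping in the mixed commutator is consistent with, and more explicit than, what the paper records.
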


Let $m=1, n=0$, then there is an $R$-matrix $R^{(1|0)}(z)$ associated with the Lie superalgebra~\smash{${\mathfrak{osp}_{3|0}}$} ($\cong \mathfrak{o}_3$). By Remark \ref{RM2}\,(1), considering the decomposition of the Gaussian generators~$\mathfrak{h}_k^{\pm}(z)$,~$\mathfrak{e}_i^{\pm}(z)$,~$\mathfrak{f}_j^{\pm}(w)$ in terms of the series $l^{\pm}_{ij}(z)$, we can perform the same calculations as in the non-super case of type~B (cf.\ \cite[Lemmas 4.8--4.11]{JMAB}). This yields the following relations directly.

\begin{Proposition}In the superalgebra $U\bigl(\widetilde{R}^{(1|0)}\bigr)$, it holds that
\begin{gather*}
 \mathfrak{h}_1^{\pm}(z)\mathfrak{h}_1^{\pm}(w)=\mathfrak{h}_1^{\pm}(w)\mathfrak{h}_1^{\pm}(z),\qquad
 \mathfrak{h}_1^{\pm}(z)\mathfrak{h}_1^{\mp}(w)=\mathfrak{h}_1^{\mp}(w)\mathfrak{h}_1^{\pm}(z),\\
 \mathfrak{h}_1^{\pm}(z)\mathfrak{h}_2^{\pm}(w)=\mathfrak{h}_2^{\pm}(w)\mathfrak{h}_1^{\pm}(z),\qquad
 \frac{z_{\pm}-w_{\mp}}{q^{-1}z_{\pm}-qw_{\mp}}
 \mathfrak{h}_1^{\pm}(z)\mathfrak{h}_2^{\mp}(w)=\frac{z_{\mp}-w_{\pm}}{q^{-1}z_{\mp}-qw_{\pm}}
 \mathfrak{h}_2^{\mp}(w)\mathfrak{h}_1^{\pm}(z),\\
 \mathfrak{h}_2^{\pm}(z)\mathfrak{h}_2^{\pm}(w)=\mathfrak{h}_2^{\pm}(w)\mathfrak{h}_2^{\pm}(z),\\
 \frac{\bigl(qz_{\pm}-q^{-1}w_{\mp}\bigr)\bigl(q^{-\frac{1}{2}}z_{\pm}-q^{\frac{1}{2}}w_{\mp}\bigr)}{\bigl(q^{-1}z_{\pm}-qw_{\mp}\bigr)
 \bigl(q^{\frac{1}{2}}z_{\pm}-q^{-\frac{1}{2}}w_{\mp}\bigr)}\mathfrak{h}_2^{\pm}(z)\mathfrak{h}_2^{\mp}(w)\\
 \qquad
 =\frac{\bigl(qz_{\mp}-q^{-1}w_{\pm}\bigr)\bigl(q^{-\frac{1}{2}}z_{\mp}-q^{\frac{1}{2}}w_{\pm}\bigr)}{\bigl(q^{-1}z_{\pm}-qw_{\pm}\bigr)
 \bigl(q^{\frac{1}{2}}z_{\mp}-q^{-\frac{1}{2}}w_{\pm}\bigr)}\mathfrak{h}_2^{\mp}(w)\mathfrak{h}_2^{\pm}(z),\\
 {\mathfrak{h}_1^{\pm}(z)}^{-1}\widetilde{X}^-_1(w)\mathfrak{h}_1^{\pm}(z)=\frac{q^{-1}z_{\mp}-qw}{z_{\mp}-w}\widetilde{X}^-_1(w),\\
 \mathfrak{h}_1^{\pm}(z)\widetilde{X}^+_1(w){\mathfrak{h}_1^{\pm}(z)}^{-1}=\frac{q^{-1}z_{\pm}-qw}{z_{\pm}-w}\widetilde{X}^+_1(w),
\\
 {\mathfrak{h}_{2}^{\pm}(z)}^{-1}\widetilde{X}_{1}^-(w)\mathfrak{h}_{2}^{\pm}(z)=\frac{\bigl(q^{-1}z_{\mp}-w\bigr)(z_{\mp}-w)}
 {\bigl(z_{\mp}-q^{-1}w\bigr)\bigl(q^{-1}z_{\mp}-qw\bigr)}\widetilde{X}_{1}^-(w),\\
 \label{dr9}
 \mathfrak{h}_{2}^{\pm}(z)\widetilde{X}_{1}^+(w){\mathfrak{h}_{2}^{\pm}(z)}^{-1}=\frac{\bigl(q^{-1}z_{\pm}-w\bigr)(z_{\pm}-w)}
 {\bigl(z_{\pm}-q^{-1}w\bigr)\bigl(q^{-1}z_{\pm}-qw\bigr)}\widetilde{X}_{1}^+(w),\\
\label{dr10}
 \bigl(q^{\mp1}z-q^{\pm1}w\bigr)\widetilde{X}_1^{\pm}(z)\widetilde{X}_1^{\pm}(w)=\bigl(q^{\pm1}z-q^{\mp1}w\bigr)\widetilde{X}_1^{\pm}(w)\widetilde{X}_1^{\pm}(z),\\
 \bigl[\widetilde{X}_1^{+}(z), \widetilde{X}_1^{-}(w)\bigr]=\bigl(q^{1/2}-q^{-1/2}\bigr)\\
 \phantom{ \bigl[\widetilde{X}_1^{+}(z), \widetilde{X}_1^{-}(w)\bigr]=}{}\times\left(\delta\left(\frac{zq^{-c}}{w}\right)h_{2}^+(w_+)h_1^+(w_+)^{-1}
 -\delta\left(\frac{zq^{c}}{w}\right)h_{2}^-(z_+)h_1^-(z_+)^{-1}\right).
\end{gather*}
\end{Proposition}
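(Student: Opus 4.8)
The plan is to exploit the fact that for $m=1$, $n=0$ the underlying space $V=\mathbb{C}^{3|0}=\mathbb{C}^3$ is purely even. Every index $a\in\{1,2,3\}$ satisfies $[a]=0$, so all sign factors $(-1)^{[a]}$, $(-1)^{[a][b]}$ and the super-transpositions degenerate to their classical analogues, and $U\bigl(\widetilde{R}^{(1|0)}\bigr)$ becomes the ordinary $R$-matrix algebra attached to $U_q\bigl(\widehat{\mathfrak{o}_3}\bigr)$. By Remark \ref{RM2}(1) its $R$-matrix agrees, after $q\to q^{-1}$, with the type-B $R$-matrix of \cite{MJB2} used in \cite{JMAB}. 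The first step is therefore to write down the explicit $3\times3$ form of $\widetilde{R}^{(1|0)}(z)$ from \eqref{R1}, record its entries $C^{\pm}(E^a_b\otimes E^c_d)$ (with $\overline{1}=3$, $\overline{2}=2$, $\overline{3}=1$), and confirm this reduction, so that the computations of \cite[Lemmas 4.8--4.11]{JMAB} apply verbatim.

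The second step is to substitute the Gauss decomposition $\mathcal{L}^{\pm}(z)=\widetilde{F}^{\pm}(z)\widetilde{H}^{\pm}(z)\widetilde{E}^{\pm}(z)$ into the defining relations \eqref{d2}--\eqref{d3} and to read off each stated relation by evaluating on the covectors $\langle a_1,a_2|$ and vectors $|b_1,b_2\rangle$, precisely as in the derivations of Propositions \ref{pr3} and \ref{pr4}. Diagonal pairs $\langle a,a|\cdots|b,b\rangle$ yield the commutation relations among $\mathfrak{h}_1^{\pm}$ and $\mathfrak{h}_2^{\pm}$; mixed index pairs give the conjugation of $\mathfrak{h}_1^{\pm}$ and $\mathfrak{h}_2^{\pm}$ on the currents $\widetilde{X}_1^{\pm}$ (built from $\mathfrak{e}_1^{\pm}$ and $\mathfrak{f}_1^{\pm}$); and off-diagonal pairs produce the quadratic $(q^{\mp1}z-q^{\pm1}w)$-relation for $\widetilde{X}_1^{\pm}$. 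The mixed commutator $\bigl[\widetilde{X}_1^{+}(z),\widetilde{X}_1^{-}(w)\bigr]$ is obtained by combining the mixed relation \eqref{d3} with its $-/+$ counterpart: the spectral shifts $z_{\pm}=zq^{\pm c/2}$ force the right-hand side to localize, producing the two $\delta$-functions $\delta(zq^{\mp c}/w)$ together with the short-root prefactor $q^{1/2}-q^{-1/2}$ and the diagonal ratios $\mathfrak{h}_2^{\pm}(\mathfrak{h}_1^{\pm})^{-1}$.

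The main obstacle is the pair of relations attached to the fixed middle index $\overline{2}=2$: the $\mathfrak{h}_2^{\pm}$--$\mathfrak{h}_2^{\mp}$ relation with the two-factor coefficient $\frac{(qz_{\pm}-q^{-1}w_{\mp})(q^{-1/2}z_{\pm}-q^{1/2}w_{\mp})}{(q^{-1}z_{\pm}-qw_{\mp})(q^{1/2}z_{\pm}-q^{-1/2}w_{\mp})}$ and the $\mathfrak{h}_2^{\pm}$--$\widetilde{X}_1^{\pm}$ relation with coefficient $\frac{(q^{-1}z_{\mp}-w)(z_{\mp}-w)}{(z_{\mp}-q^{-1}w)(q^{-1}z_{\mp}-qw)}$. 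These products of two linear factors, carrying the $q^{\pm1/2}$ of the short root, are not visible in any single matrix entry; they emerge only after one eliminates the auxiliary generators $\mathfrak{e}_2^{\pm},\mathfrak{f}_2^{\pm}$ in favour of $\mathfrak{e}_1^{\pm},\mathfrak{f}_1^{\pm}$ through the folding identity at $\overline{2}=2$ -- derived just as \eqref{For5}--\eqref{For8} in Lemma \ref{le4} by taking suitable entries of the central relation $D\mathcal{L}^{\pm}(z\zeta)^{t}D^{-1}=\mathcal{L}^{\pm}(z)^{-1}\widetilde{c}^{\pm}(z)$ of Proposition \ref{Apr2} -- and then removes the central factor $\widetilde{c}^{\pm}(z)$ using \eqref{For2} and the explicit formula of Proposition \ref{pr5}. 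Verifying that the two linear factors fuse into the displayed rational coefficients while the central element cancels is the delicate point; once it is settled for node $2$, every remaining relation follows from the index-by-index bookkeeping of Propositions \ref{pr3}--\ref{pr4} specialized to $V=\mathbb{C}^3$, matching the $\mathfrak{o}_3$ computations of \cite{JMAB}.
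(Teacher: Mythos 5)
Your proposal takes essentially the same route as the paper: for $n=0$ the algebra is purely even, Remark~\ref{RM2}\,(1) identifies $\widetilde{R}^{(1|0)}(z)$ with the non-super type-B $R$-matrix of \cite{MJB2} (up to $q\to q^{-1}$), and the relations are then extracted from the Gauss decomposition and the defining $RLL$ relations exactly as in \cite[Lemmas~4.8--4.11]{JMAB} --- which is all the paper's own justification amounts to. Your extra detail on the middle-index relations, namely deriving the folding at $\overline{2}=2$ and the $q^{\pm1/2}$-coefficients from the central relation $D\mathcal{L}^{\pm}(z\zeta)^{t}D^{-1}=\mathcal{L}^{\pm}(z)^{-1}\widetilde{c}^{\pm}(z)$, is a correct account of how those cited computations actually proceed.
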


Moreover, by Corollary \ref{co1}, we have the following Propositions immediately.
\begin{Proposition}In the algebra $U\bigl(\widetilde{R}^{(m|n)}\bigr)$, it holds that
\begin{align*}
&\mathfrak{h}_i^{\pm}(z)\mathfrak{h}_{m+n+1}^{\pm}(w)=\mathfrak{h}_{m+n+1}^{\pm}(w)\mathfrak{h}_i^{\pm}(z),\qquad i\leq m+n,\\
&\frac{z_{\pm}-w_{\mp}}{q^{-1}z_{\pm}-qw_{\mp}}\mathfrak{h}_i^{\pm}(z)\mathfrak{h}_{m+n+1}^{\mp}(w)=\frac{z_{\mp}-w_{\pm}}{q^{-1}z_{\mp}-qw_{\pm}}\mathfrak{h}_{m+n+1}^{\mp}(w)\mathfrak{h}_i^{\pm}(z),\qquad i\leq m+n.
\end{align*}
\end{Proposition}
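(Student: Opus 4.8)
The plan is to obtain both identities directly from Corollary~\ref{co1} specialized to $l=m+n$, combined with the quasideterminant description of the diagonal Gaussian generators. For $l=m+n$ the admissible indices in Corollary~\ref{co1} are $1\le a,b\le m+n$ and $m+n+1\le i,j\le\overline{(m+n+1)}$; since $\overline{m+n+1}=2n+2m+2-(m+n+1)=m+n+1$, the only possibility is $i=j=m+n+1$. Moreover, by Proposition~\ref{pr2} the image $\psi_{m+n}\bigl(l^{\pm}_{m+n+1,m+n+1}(w)\bigr)=l^{\pm(m+n)}_{m+n+1,m+n+1}(w)$ is precisely the quasideterminant defining $\mathfrak{h}_{m+n+1}^{\pm}(w)$, because the $\psi_{m+n}$-image of the boxed entry $l^{\pm}_{m+n+1,m+n+1}$ is exactly the $(m+n+1)$-st diagonal quasideterminant. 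Hence Corollary~\ref{co1} already yields, for all $1\le a,b\le m+n$, the commutation $\bigl[l_{ab}^{\pm}(z),\mathfrak{h}_{m+n+1}^{\pm}(w)\bigr]=0$ and the quasi-commutation
\[
\frac{z_{\pm}-w_{\mp}}{q^{-1}z_{\pm}-qw_{\mp}}\,l_{ab}^{\pm}(z)\,\mathfrak{h}_{m+n+1}^{\mp}(w)
=\frac{z_{\mp}-w_{\pm}}{q^{-1}z_{\mp}-qw_{\pm}}\,\mathfrak{h}_{m+n+1}^{\mp}(w)\,l_{ab}^{\pm}(z).
\]

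Next I would use that for $i\le m+n$ the generator $\mathfrak{h}_i^{\pm}(z)$ is the $ii$-quasideterminant of the top-left $i\times i$ block of $L^{\pm}(z)$, so it lies in the subalgebra generated by the entries $l^{\pm}_{ab}(z)$ with $1\le a,b\le i\le m+n$ together with their inverses. For the same-sign identity this finishes the argument immediately: each such entry commutes with $\mathfrak{h}_{m+n+1}^{\pm}(w)$, hence so does any noncommutative polynomial in these entries and their inverses, and in particular the quasideterminant $\mathfrak{h}_i^{\pm}(z)$. This gives $\mathfrak{h}_i^{\pm}(z)\mathfrak{h}_{m+n+1}^{\pm}(w)=\mathfrak{h}_{m+n+1}^{\pm}(w)\mathfrak{h}_i^{\pm}(z)$ for all $i\le m+n$.

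For the opposite-sign identity I would rewrite the second displayed relation as $l_{ab}^{\pm}(z)\,\mathfrak{h}_{m+n+1}^{\mp}(w)=\lambda(z,w)\,\mathfrak{h}_{m+n+1}^{\mp}(w)\,l_{ab}^{\pm}(z)$, where
\[
\lambda(z,w)=\frac{(z_{\mp}-w_{\pm})(q^{-1}z_{\pm}-qw_{\mp})}{(z_{\pm}-w_{\mp})(q^{-1}z_{\mp}-qw_{\pm})}
\]
is a scalar independent of the indices $a,b$. The key observation is then that an inverse factor $l_{ab}^{\pm}(z)^{-1}$ satisfies the analogous relation with $\lambda(z,w)$ replaced by $\lambda(z,w)^{-1}$, and that every monomial in the quasideterminant expansion of $\mathfrak{h}_i^{\pm}(z)$ has net degree $+1$, i.e.\ contains exactly one more plain entry than inverse entry. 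Consequently, moving $\mathfrak{h}_{m+n+1}^{\mp}(w)$ through any such monomial collects a product of factors $\lambda$ and $\lambda^{-1}$ whose net effect is a single factor of $\lambda(z,w)$. Applying this to every monomial gives $\mathfrak{h}_i^{\pm}(z)\,\mathfrak{h}_{m+n+1}^{\mp}(w)=\lambda(z,w)\,\mathfrak{h}_{m+n+1}^{\mp}(w)\,\mathfrak{h}_i^{\pm}(z)$, which is exactly the asserted relation.

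The main obstacle is this last homogeneity argument: one must verify carefully that the diagonal quasideterminant $\mathfrak{h}_i^{\pm}(z)$ is net-degree-one in the entries $l^{\pm}_{ab}(z)$, so that all but one factor of the index-independent scalar $\lambda(z,w)$ cancels. This is precisely the mechanism by which the generator-level relations of Corollary~\ref{co1} transfer verbatim to the level of the Gaussian generator $\mathfrak{h}_i^{\pm}(z)$; once the homogeneity and the inverse-scalar rule are established, both displayed identities follow at once, and this is what the phrase ``by Corollary~\ref{co1}, we have \dots\ immediately'' refers to.
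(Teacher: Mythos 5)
Your proposal is correct and follows the same route as the paper, which derives this proposition ``immediately'' from Corollary~\ref{co1} with $l=m+n$ (so that $\psi_{m+n}\bigl(l^{\pm}_{m+n+1,m+n+1}(w)\bigr)=\mathfrak{h}_{m+n+1}^{\pm}(w)$ is the only admissible image). Your explicit justification of the transfer from the entries $l_{ab}^{\pm}(z)$ to the quasideterminant $\mathfrak{h}_i^{\pm}(z)$ --- conjugation by $\mathfrak{h}_{m+n+1}^{\mp}(w)$ rescales every generator by the same index-independent scalar, and quasideterminants are homogeneous of degree one under such a rescaling --- is exactly the mechanism the authors leave implicit.
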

\begin{Proposition}In the superalgebra $U\bigl(\widetilde{R}^{(m|n)}\bigr)$, we have the commutation relations as follows:
\begin{align*}
&\mathfrak{e}_i^{\pm}(z)\mathfrak{h}_{m+n+1}^{\mp}(w)=\mathfrak{h}_{m+n+1}^{\mp}(w)\mathfrak{e}_i^{\pm}(z),\\ &\mathfrak{e}_i^{\pm}(z)\mathfrak{h}_{m+n+1}^{\pm}(w)=\mathfrak{h}_{m+n+1}^{\pm}(w)\mathfrak{e}_i^{\pm}(z),\qquad i\leq m+n,\\
&\mathfrak{f}_i^{\pm}(z)\mathfrak{h}_{m+n+1}^{\mp}(w)=\mathfrak{h}_{m+n+1}^{\mp}(w)\mathfrak{f}_i^{\pm}(z),\\ &\mathfrak{f}_i^{\pm}(z)\mathfrak{h}_{m+n+1}^{\pm}(w)=\mathfrak{h}_{m+n+1}^{\pm}(w)\mathfrak{f}_i^{\pm}(z),\qquad i\leq m+n,\\
&\mathfrak{e}_i^{\pm}(z)\mathfrak{f}_{m+n}^{\mp}(w)=\mathfrak{f}_{m+n}^{\mp}(w)\mathfrak{e}_i^{\pm}(z),\qquad \mathfrak{e}_i^{\pm}(z)\mathfrak{f}_{m+n}^{\pm}(w)=\mathfrak{f}_{m+n}^{\pm}(w)\mathfrak{e}_i^{\pm}(z),\qquad i\leq m+n-1,\\
&\mathfrak{e}_{m+n}^{\pm}(z)\mathfrak{f}_i^{\mp}(w)=\mathfrak{f}_i^{\mp}(w)\mathfrak{e}_{m+n}^{\pm}(z),\qquad \mathfrak{e}_{m+n}^{\pm}(z)\mathfrak{f}_i^{\pm}(w)=\mathfrak{f}_i^{\pm}(w)\mathfrak{e}_{m+n}^{\pm}(z), \qquad i\leq m+n-1,\\
&\mathfrak{e}_{m+n}^{\pm}(z)\mathfrak{e}_i^{\mp}(w)=\mathfrak{e}_i^{\mp}(w)\mathfrak{e}_{m+n}^{\pm}(z),\qquad \mathfrak{e}_{m+n}^{\pm}(z)\mathfrak{e}_i^{\pm}(w)=\mathfrak{e}_i^{\pm}(w)\mathfrak{e}_{m+n}^{\pm}(z), \qquad i\leq m+n-2,\\
&\mathfrak{f}_i^{\pm}(z)\mathfrak{f}_{m+n}^{\mp}(w)=\mathfrak{f}_{m+n}^{\mp}(w)\mathfrak{f}_i^{\pm}(z),\qquad \mathfrak{f}_i^{\pm}(z)\mathfrak{f}_{m+n}^{\pm}(w)=\mathfrak{f}_{m+n}^{\pm}(w)\mathfrak{f}_i^{\pm}(z),\qquad i\leq m+n-2.
\end{align*}
\end{Proposition}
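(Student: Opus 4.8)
The plan is to reduce every relation to Corollary \ref{co1} by localizing each Gaussian generator to a definite band of matrix indices and then choosing the cut $l$ so that the two generators involved sit on opposite sides of that cut. Concretely, the quasideterminant formulas express $\mathfrak{e}_i^{\pm}(z)=\mathfrak{e}_{i,i+1}^{\pm}(z)$ and $\mathfrak{f}_i^{\pm}(z)=\mathfrak{f}_{i+1,i}^{\pm}(z)$ as noncommutative rational functions of the entries $l_{ab}^{\pm}(z)$ with $1\leq a,b\leq i+1$, while by Proposition \ref{pr2} the middle generator $\mathfrak{h}_{m+n+1}^{\pm}(w)$ is the image $\psi_{m+n}\bigl(l_{m+n+1,m+n+1}^{\pm}(w)\bigr)$ of a single entry of the trivial $1\times1$ block $\mathcal{L}^{\pm(m+n)}(w)$, and likewise $\mathfrak{e}_{m+n}^{\pm}(w)$, $\mathfrak{f}_{m+n}^{\pm}(w)$ are recovered from the sub-block $\mathcal{L}^{\pm(m+n-1)}(w)$, i.e.\ from $\psi_{m+n-1}$-images of generators indexed by $\{m+n,m+n+1,m+n+2\}$.

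Having made these identifications, I would apply Corollary \ref{co1} with $l=m+n$ for the relations against $\mathfrak{h}_{m+n+1}^{\pm}$ and with $l=m+n-1$ for the relations against $\mathfrak{e}_{m+n}^{\pm}$ and $\mathfrak{f}_{m+n}^{\pm}$. For the equal-sign relations Corollary \ref{co1} gives $\bigl[l_{ab}^{\pm}(z),\psi_l\bigl(l_{ij}^{\pm}(w)\bigr)\bigr]=0$ outright, and since the relevant $\mathfrak{e}_i^{\pm}(z),\mathfrak{f}_i^{\pm}(z)$ are built only from $l_{ab}^{\pm}(z)$ with $a,b\leq l$, commutativity passes immediately to the Gaussian generators. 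For the opposite-sign relations, Corollary \ref{co1} only yields a $q$-commutation carrying the scalar factor $\frac{z_{\pm}-w_{\mp}}{q^{-1}z_{\pm}-qw_{\mp}}$; here the key point is that every entry $l_{ab}^{\pm}(z)$ occurring in a given Gaussian generator carries the \emph{same} factor against $\psi_l\bigl(l_{ij}^{\mp}(w)\bigr)$, and since $\mathfrak{e}_{ij}^{\pm}=\bigl(\mathfrak{h}_i^{\pm}\bigr)^{-1}\times(\text{minor})$ has matching total weight in numerator and denominator, the spectral factors telescope and cancel, leaving plain commutativity as claimed.

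The main obstacle is the bookkeeping at the boundary, namely the adjacent cases such as $\mathfrak{e}_i^{\pm}$ against $\mathfrak{f}_{m+n}^{\pm}$ with $i$ close to $m+n$, where the two index bands touch and pure index separation through Corollary \ref{co1} is no longer available. For these I would fall back on the explicit cross relations already recorded in Propositions \ref{pr3} and \ref{pr4}, together with the established $\bigl[\mathfrak{e}_i,\mathfrak{f}_j\bigr]=0$ structure for $i\neq j$, so that the residual terms produced by $\psi_l$ vanish. I expect verifying the exact cancellation of the spectral factors in the opposite-sign cases, and pinning down the precise index thresholds at these boundaries, to be the only genuinely delicate part; all remaining cases then follow verbatim from Corollary \ref{co1} exactly as in \cite[Corollary 3.10]{JMA} and \cite[Corollary 3.8]{JMAB}.
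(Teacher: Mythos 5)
Your proposal is correct and takes essentially the same route as the paper, which proves this proposition simply by invoking Corollary \ref{co1}: one separates indices by writing one factor as a $\psi_l$-image and the other in terms of $l^{\pm}_{ab}$ with $a,b\le l$, and for the opposite-sign relations the spectral factors cancel precisely because each Gaussian generator $\mathfrak{e}_{ij}^{\pm}$, $\mathfrak{f}_{ji}^{\pm}$ is of total weight zero in the entries $l^{\pm}_{ab}(z)$ and their inverses. The one caveat you rightly anticipate is the adjacent case: for the first four relations the printed range $i\le m+n$ includes $i=m+n$, where index separation genuinely fails and where, by the $\mathfrak{h}_{m+n+1}^{\pm}$--$\widetilde{X}_{m+n}^{\pm}$ relations of Theorem \ref{th3}, commutativity does not actually hold, so your argument (like the paper's) yields these only for $i\le m+n-1$, and your planned fallback to Propositions \ref{pr3} and \ref{pr4} is exactly what is needed for the remaining touching cases such as $\mathfrak{e}_{m+n-1}^{\pm}$ against $\mathfrak{f}_{m+n}^{\mp}$.
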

\begin{Proposition}In the superalgebra $U\bigl(\widetilde{R}^{(m|n)}\bigr)$, we have the non-commutation relations as follows:
\begin{gather*}
 \bigl(q^{-1}z_{\mp} -qw_{\pm}\bigr)\mathfrak{e}_{m+n-1}^{\pm}(z)\mathfrak{e}_{m+n}^{\mp}(w)\\
 \qquad=(z_{\mp}-w_{\pm})\mathfrak{e}_{m+n}^{\mp}(w)\mathfrak{e}_{m+n-1}^{\pm}(z)
+\bigl(q^{-1}-q\bigr)w_{\pm} \mathfrak{e}_{m+n-1,m+n+1}^{\pm}(z)\\
\phantom{ \qquad=}{}- \bigl(q^{-1}-q\bigr)z_{\mp}\mathfrak{e}_{m+n-1}^{\mp}(w)\mathfrak{e}_{m+n}^{\mp}(w)-\bigl(q^{-1}-q\bigr)z_{\mp}\mathfrak{e}_{m+n-1,m+n+1}^{\mp}(w),
\\
 \bigl(q^{-1}z- qw\bigr)\mathfrak{e}_{m+n-1}^{\pm}(z)\mathfrak{e}_{m+n}^{\pm}(w)\\
 \qquad=(z-w)\mathfrak{e}_{m+n}^{\pm}(w)\mathfrak{e}_{m+n-1}^{\pm}(z)
+\bigl(q^{-1}-q\bigr)w\mathfrak{e}_{m+n-1,m+n+1}^{\pm}(z)
\\
\phantom{ \qquad=}{}-\bigl(q^{-1}-q\bigr)z\mathfrak{e}_{m+n-1}^{\pm}(w)\mathfrak{e}_{m+n}^{\pm}(w)-\bigl(q^{-1}-q\bigr)z\mathfrak{e}_{m+n-1,m+n+1}^{\pm}(w),
\\
 (z_{\pm}-w_{\mp} )\mathfrak{f}_{m+n-1}^{\pm}(z)\mathfrak{f}_{m+n}^{\mp}(w)\\
 \qquad=\bigl(q^{-1}z_{\pm}-qw_{\mp}\bigr)\mathfrak{f}_{m+n}^{\mp}(w)\mathfrak{f}_{m+n-1}^{\pm}(z)
+\bigl(q^{-1}-q\bigr)w_{\mp}\mathfrak{f}_{m+n+1,m+n-1}^{\pm}(w)\\
\phantom{ \qquad=}{}-\bigl(q^{-1}-q\bigr)w_{\mp}\mathfrak{f}_{m+n}^{\mp}(w)\mathfrak{f}_{m+n-1}^{\mp}(w) -\bigl(q^{-1}-q\bigr)z_{\pm}\mathfrak{f}_{m+n-1,m+n+1}^{\pm}(z),
\\
 (z-w) f_{m+n-1}^{\pm}(z)\mathfrak{f}_{m+n}^{\mp}(w)\\
\qquad =\bigl(q^{-1}z-qw\bigr)\mathfrak{f}_{m+n}^{\mp}(w)f_{m+n-1}^{\pm}(z)
+\bigl(q^{-1}-q\bigr)w\mathfrak{f}_{m+n-1,m+n+1}^{\pm}(w)
\\
 \phantom{ \qquad=}{}-\bigl(q^{-1}-q\bigr)w\mathfrak{f}_{m+n}^{\mp}(w)\mathfrak{f}_{m+n-1}^{\mp}(w)-\bigl(q^{-1}-q\bigr)z\mathfrak{f}_{m+n-1,m+n+1}^{\pm}(z).
\end{gather*}
\end{Proposition}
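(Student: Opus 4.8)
The plan is to read these relations off the defining $RLL$ relations after localizing the computation to the ``type-$B$ corner'' formed by the special short node $m+n$ and the self-conjugate index $m+n+1$, for which $\overline{m+n+1}=m+n+1$. All the Gaussian generators occurring in the statement are indexed by elements of $\{m+n-1,m+n,m+n+1\}$, with $m+n$ and $m+n+1$ always bosonic, so the relations are concentrated near this corner. First I would apply the homomorphism theorem (Theorem~\ref{th2}) together with Proposition~\ref{pr2} to pass to the reduced matrix $\mathcal{L}^{\pm(l)}(z)$ with $l=m+n-2$; by Corollary~\ref{co2} this matrix satisfies the $RLL$ relations for the reduced $R$-matrix attached to the rank-two corner. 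Since Proposition~\ref{pr2} identifies the reduced generators with $\psi_l$-images of the generators of $U\bigl(\widetilde{R}^{(m|n)}\bigr)$, it suffices to establish the relations in this rank-two reduced algebra and then lift them back.

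In the reduced algebra I would proceed exactly as in the proofs of Propositions~\ref{pr3} and~\ref{pr4}: substitute the Gauss decomposition $\mathcal{L}^{\pm(l)}=\widetilde{F}^{\pm(l)}\widetilde{H}^{\pm(l)}\widetilde{E}^{\pm(l)}$ into the relations~\eqref{d2}--\eqref{d3}, extract the matrix entries $\langle a,b|\,\cdot\,|c,d\rangle$ for $a,b,c,d\in\{m+n-1,m+n,m+n+1\}$, and compare the coefficients $C^{\pm}\bigl(E^i_j\otimes E^k_s\bigr)$ of $\widetilde{R}(z)$ at the relevant positions. The decisive feature is that a transition from index $m+n-1$ to $m+n+1$ must pass through the self-conjugate index $m+n+1$: the $Q$-term and the diagonal $\hat{\sigma}^a_a$-contributions of $\widetilde{R}(z)$ in~\eqref{R1} then act nontrivially, so that the commutator of $\mathfrak{e}_{m+n-1}^{\pm}(z)$ and $\mathfrak{e}_{m+n}^{\mp}(w)$ cannot close on the two simple generators alone. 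This is precisely what forces the long generator $\mathfrak{e}_{m+n-1,m+n+1}^{\pm}$ onto the right-hand side, while the quadratic terms $\mathfrak{e}_{m+n-1}^{\mp}(w)\mathfrak{e}_{m+n}^{\mp}(w)$ arise from re-expanding the off-diagonal $l$-entries through their Gauss decomposition; tracking the $I$-, $P$-, $Q$- and $\hat{\sigma}$-coefficients of $\widetilde{R}(z)$ yields the scalar factors $(q^{-1}-q)w_{\pm}$, $(q^{-1}-q)z_{\mp}$ and the spectral factors $(q^{-1}z_{\mp}-qw_{\pm})$, $(z_{\mp}-w_{\pm})$.

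The main obstacle is the bookkeeping created by the self-conjugate middle index: unlike the generic nodes in Propositions~\ref{pr3}--\ref{pr4}, here the entry extraction couples $l_{ij}$ with $l_{\overline{i}\,\overline{j}}$, and one must carefully disentangle the resulting quasideterminant identities to isolate $\mathfrak{e}_{m+n-1,m+n+1}^{\pm}$ with the correct coefficients. In the case where the neighbour $m+n-1$ is bosonic this disentanglement is structurally identical to the non-super type-$B$ computation of \cite[Lemmas~4.8--4.11]{JMAB}, and that argument transfers once the reduction of the first paragraph is in place; the super signs are accounted for throughout by the $(-1)^{[\,\cdot\,]}$ and $\xi$ factors already built into $\widetilde{R}(z)$, so the remaining case is handled by the same entry comparison. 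Finally, the relations among the lower-triangular generators $\mathfrak{f}$ follow by applying the identical argument to the $\widetilde{F}^{\pm(l)}$ part of the Gauss decomposition, and Proposition~\ref{pr2} lifts all the relations from the reduced algebra back to $U\bigl(\widetilde{R}^{(m|n)}\bigr)$.
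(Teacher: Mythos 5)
Your route would work, but it is genuinely different from, and considerably heavier than, the paper's argument. You propose to go back to the $RLL$ relations of the reduced matrix $\mathcal{L}^{\pm(m+n-2)}$ and redo the entry-extraction of Propositions \ref{pr3}--\ref{pr4} at the type-$B$ corner; the paper instead treats the statement as an immediate corollary of Proposition \ref{pr3} already proved: it specializes \eqref{GS9} and \eqref{GS11} to $l=m+n-1$, $k=j=m+n$, $s=m+n+1$, observes via the Gauss decomposition that $l^{\mp(m+n-1)}_{m+n,m+n+1}(w)=\mathfrak{h}_{m+n}^{\mp}(w)\mathfrak{e}_{m+n}^{\mp}(w)$ and $l^{\mp(m+n-1)}_{m+n,m+n}(w)=\mathfrak{h}_{m+n}^{\mp}(w)$, and then cancels the common factor $\mathfrak{h}_{m+n}^{\mp}(w)$ between the two resulting identities (the $\mathfrak{f}$-relations follow symmetrically from Proposition \ref{pr4}). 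What the paper's route buys is a two-line derivation with no new $R$-matrix bookkeeping; what yours buys is self-containedness at the cost of duplicating Proposition \ref{pr3}. One caveat on your narrative: the appearance of the long generator $\mathfrak{e}_{m+n-1,m+n+1}^{\pm}$ is \emph{not} caused by the $Q$-term of \eqref{R1} or by the self-conjugacy of the index $m+n+1$. Since $j=m+n\neq\overline{s}=m+n+1$, the hypotheses of Proposition \ref{pr3} are satisfied and the $Q$-term never enters; the term $\mathfrak{e}_{m+n-1,m+n+1}^{\pm}$ arises by the same generic mechanism as the term $l_{kj}^{\mp(l)}(w)\mathfrak{e}_{ls}^{\pm}(z)$ in \eqref{GS9} (the $P$- and $\hat\sigma$-contributions), here with $s=m+n+1$. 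If you carry out the entry comparison honestly you will still land on the stated coefficients, but you should not organize the computation around the $Q$-term, and choosing $l=m+n-1$ rather than $l=m+n-2$ for the inner reduced matrix keeps the relevant entries monomial in the Gaussian generators.
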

\begin{proof}
Indeed, by \eqref{GS9} and \eqref{GS11} we have
\begin{align*}
 &\mathfrak{e}_{m+n-1}^{\pm}(z)\mathfrak{h}_{m+n}^{\mp}(w)\mathfrak{e}_{m+n}^{\mp}(w)-\mathfrak{h}_{m+n}^{\mp}(w)\mathfrak{e}_{m+n}^{\mp}(w)\mathfrak{e}_{m+n-1}^{\pm}(z)\\
&\qquad=\frac{\bigl(q^{-1}-q\bigr)w_{\pm}}{z_{\mp}-w_{\pm}}\mathfrak{h}_{m+n}^{\mp}(w)\mathfrak{e}_{m+n-1,m+n+1}^{\pm}(z)
-\frac{\bigl(q^{-1}-q\bigr)z_{\mp}}{z_{\mp}-w_{\pm}}\mathfrak{h}_{m+n}^{\mp}(w)\mathfrak{e}_{m+n-1,m+n+1}^{\mp}(w),
\end{align*}and
\begin{align*}
 \mathfrak{e}_{m+n-1}^{\pm}(z)\mathfrak{h}_{m+n}^{\mp}(w)={}&\frac{q^{-1}z_{\mp}-qw_{\pm}}{z_{\mp}-w_{\pm}}\mathfrak{}h_{m+n}^{\mp}(w)\mathfrak{e}_{m+n-1}^{\pm}(z)\\
&-\frac{\bigl(q-q^{-1}\bigr)z_{\mp}}{z_{\mp}-w_{\pm}}\mathfrak{h}_{m+n}^{\mp}(w)\mathfrak{e}_{m+n-1}^{\mp}(w).
\end{align*}
Hence, those two equations give the claims of relations $\mathfrak{e}_{m+n-1}^{\pm}(z)\mathfrak{e}_{m+n}^{\mp}(w)$, and the others are similar.
\end{proof}

Now, from the above results and applying Theorem \ref{th2} and Proposition \ref{pr2}, we conclude the following theorem.
\begin{Theorem}\label{th3}
$(1)$ In the super $R$-matrix algebra $U\bigl(\widetilde{R}^{(m|n)}\bigr)$, it satisfies the following relations with the series $\mathfrak{h}_i(z)$ for $i=1,\dots,m+n+1$, and \smash{$\widetilde{X}_j^{\pm}(z)$} for $j=1,\dots,m+n$:
\begin{gather*}
 \mathfrak{h}_i^{\pm}(z)\mathfrak{h}_j^{\pm}(w)=\mathfrak{h}_j^{\pm}(w)\mathfrak{h}_i^{\pm}(z), \\
 \frac{qz_{\pm}-q^{-1}w_{\mp}}{q^{-1}z_{\pm}-qw_{\mp}}\mathfrak{h}_i^{\pm}(z)\mathfrak{h}_i^{\mp}(w)
 =\frac{qz_{\mp}-q^{-1}w_{\pm}}{q^{-1}z_{\mp}-qw_{\pm}}\mathfrak{h}_i^{\mp}(w)\mathfrak{h}_i^{\pm}(z)\qquad
 \text{for} \quad i\leq n,\\
 \mathfrak{h}_i^{\pm}(z)\mathfrak{h}_i^{\mp}(w)=\mathfrak{h}_i^{\mp}(w)\mathfrak{h}_i^{\pm}(z)\qquad
 \text{for} \quad n+1\leq i\leq n+m,\\
 \frac{z_{\pm}-w_{\mp}}{q^{-1}z_{\pm}-qw_{\mp}}\mathfrak{h}_i^{\pm}(z)\mathfrak{h}_j^{\mp}(w)=\frac{z_{\mp}-w_{\pm}}{q^{-1}z_{\mp}-qw_{\pm}} \mathfrak{h}_j^{\mp}(w)\mathfrak{h}_i^{\pm}(z) \qquad
 \text{for} \quad i<j,
\\
 \frac{\bigl(qz_{\pm}-q^{-1}w_{\mp}\bigr)(z_{\pm}-qw_{\mp})}{\bigl(q^{-1}z_{\pm}-qw_{\mp}\bigr)
 (qz_{\pm}-w_{\mp})} \mathfrak{h}_{m+n+1}^{\pm}(z)\mathfrak{h}_{m+n+1}^{\mp}(w)\\
 \qquad =\frac{\bigl(qz_{\mp}-q^{-1}w_{\pm}\bigr)(z_{\mp}-qw_{\pm})}{\bigl(q^{-1}z_{\pm}-qw_{\pm}\bigr)
 (qz_{\mp}-w_{\pm})}\mathfrak{h}_{m+n+1}^{\mp}(w)\mathfrak{h}_{m+n+1}^{\pm}(z).
\end{gather*}
The relations involving $\mathfrak{h}_i^{\pm}(z)$ and $\widetilde{X}_j^{\pm}(w)$ are
\begin{gather*}
 \mathfrak{h}_i^{\pm}(z)\widetilde{X}_j^-(w)=\frac{z_{\mp}-w}{q^{-(\varepsilon_i,\alpha_j)}z_{\mp}-q^{(\varepsilon_i, \alpha_j)}w}\widetilde{X}_j^-(w)\mathfrak{h}_i^{\pm}(z)\qquad
 \text{for} \quad i\neq m+n+1,\\
 \mathfrak{h}_i^{\pm}(z)\widetilde{X}_j^+(w)=\frac{q^{-(\varepsilon_i,\alpha_j)}z_{\pm}-q^{(\varepsilon_i,\alpha_j)}w}{z_{\pm}-w}\widetilde{X}_j^+(w) \mathfrak{h}_i^{\pm}(z)\qquad
 \text{for} \quad i\neq m+n+1,\\
 \mathfrak{h}_{m+n+1}^{\pm}(z)\widetilde{X}_{m+n}^-(w)=\frac{\bigl(q^{-1}z_{\mp}-w\bigr)(z_{\mp}-w)}{\bigl(z_{\mp}-q^{-1}w\bigr)\bigl(q^{-1}z_{\mp}-qw\bigr)}
\widetilde{X}_{m+n}^-(w)\mathfrak{h}_{m+n+1}^{\pm}(z),\\
 \mathfrak{h}_{m+n+1}^{\pm}(z)\widetilde{X}_{m+n}^+(w)=\frac{\bigl(z_{\pm}-q^{-1}w\bigr)\bigl(q^{-1}z_{\pm}-qw\bigr)}
 {\bigl(q^{-1}z_{\pm}-w\bigr)(z_{\pm}-w)}\widetilde{X}_{m+n}^+(w)\mathfrak{h}_{m+n+1}^{\pm}(z),
\end{gather*}
and
\begin{gather*}
\mathfrak{h}_{m+n+1}^{\pm}(z)\widetilde{X}_i^{\epsilon}(w)=\widetilde{X}_i^{\epsilon}(w)\mathfrak{h}_{m+n+1}^{\pm}(z)\qquad
 \text{for} \quad 1\leq i\leq m+n-1,\quad \epsilon=\pm .
\end{gather*}
The relations involving $\widetilde{X}_j^{\pm}(z)$ are
\begin{gather*}
 \bigl(z-wq^{\pm (\alpha_i, \alpha_j)}\bigr)\widetilde{X}_i^{\pm}\bigl(zq^i\bigr)\widetilde{X}_j^{\pm}\bigl(wq^j\bigr)=\bigl(zq^{\pm (\alpha_i, \alpha_j)}-w\bigr)\widetilde{X}_j^{\pm}\bigl(wq^j\bigr)\widetilde{X}_i^{\pm}\bigl(zq^i\bigr)
\end{gather*}
for $1\leq i,j \leq m+n$ and $(i,j)\neq(n,n)$, together with
 \[
 \widetilde{X}_{n}^{\pm}(z)\widetilde{X}_n^{\pm}(w)=-\widetilde{X}_n^{\pm}(w)\widetilde{X}_n^{\pm}(z),
\]
and
\begin{gather*}
 \bigl[\widetilde{X}_i^{+}(z), \widetilde{X}_j^{-}(w)\bigr]\\
 \qquad=\delta_{ij}\bigl(q_i-q_i^{-1}\bigr)\left(\delta\left(\frac{zq^{-c}}{w}\right)\mathfrak{h}_{i+1}^+(w_+)
 \mathfrak{h}_i^+(w_+)^{-1}-\delta\left(\frac{zq^{c}}{w}\right)\mathfrak{h}_{i+1}^-(z_+)\mathfrak{h}_i^-(z_+)^{-1}\right)
\end{gather*}
for $1\leq i,j\leq m+n$.
The Serre relations are for $\epsilon=\pm$,
\begin{gather*}
\operatorname{Sym}_{z_1,z_2}\bigl[\!\!\bigl[\widetilde{X}_i^{\pm}(z_1),\bigl[\!\!\bigl[\widetilde{X}_i^{\pm}(z_2), \widetilde{X}_i^{\pm}(w)\bigr]\!\!\bigr] \bigr]\!\!\bigr]=0\qquad\textrm{if}\quad i\neq n,m+n, \\
\operatorname{Sym}_{z_1,z_2,z_3}\bigl[\!\!\bigl[\widetilde{X}_{m+n}^{\pm}(z_1), \bigl[\!\!\bigl[\widetilde{X}_{m+n}^{\pm}(z_2),\bigl[\!\!\bigl[\widetilde{X}_{m+n}^{\pm}(z_3), \widetilde{X}_{m+n-1}^{\pm}(w)\bigr]\!\!\bigr] \bigr]\!\!\bigr] \bigr]\!\!\bigr]=0, \\
\operatorname{Sym}_{z_1,z_2}\bigl[ \bigl[\!\!\bigl[ \bigl[\!\!\bigl[\widetilde{X}_{n-1}^{\pm}(z_1), \widetilde{X}_{n}^{\pm}(w_1)\bigr]\!\!\bigr], \widetilde{X}_{n+1}^{\pm}(z_2)\bigr]\!\!\bigr], \widetilde{X}_{n}^{\pm}(w_2)\bigr]=0
\qquad\textrm{if}\quad n>1.
\end{gather*}

$(2)$ In the super $R$-matrix algebra $U\bigl(R^{(m|n)}\bigr)$, it satisfies the following relations with the series~$h_i(z)$ for $i=1,\dots,m+n+1$, and $X_j^{\pm}(z)$ for $j=1,\dots,m+n$:
\begin{gather*}
 h_i^{\pm}(z)h_j^{\pm}(w)=h_j^{\pm}(w)h_i^{\pm}(z), \\
 g\bigl((zq^c/w)^{\pm1}\bigr)\frac{qz_{\pm}-q^{-1}w_{\mp}}{q^{-1}z_{\pm}-qw_{\mp}}h_i^{\pm}(z)h_i^{\mp}(w)
 =g\bigl((zq^{-c}/w)^{\pm1}\bigr)\frac{qz_{\mp}-q^{-1}w_{\pm}}{q^{-1}z_{\mp}-qw_{\pm}}h_i^{\mp}(w)h_i^{\pm}(z)\\
 \text{for} \quad
 i\leq n,\\
 g\bigl((zq^{c}/w)^{\pm1}\bigr) h_i^{\pm}(z)h_i^{\mp}(w)=g\bigl((zq^{-c}/w)^{\pm1}\bigr)h_i^{\mp}(w)h_i^{\pm}(z), \qquad\textrm{for}\quad n+1\leq i\leq n+m,\\
 g\bigl((zq^{c}/w)^{\pm1}\bigr)\frac{z_{\pm}-w_{\mp}}{q^{-1}z_{\pm}-qw_{\mp}}h_i^{\pm}(z)h_j^{\mp}(w)=g\bigl((zq^{-c}/w)^{\pm1}\bigr)
 \frac{z_{\mp}-w_{\pm}}{q^{-1}z_{\mp}-qw_{\pm}}h_j^{\mp}(w)h_i^{\pm}(z)\\
 \text{for} \quad
 i<j,
\\
 g\bigl((zq^{c}/w)^{\pm1}\bigr)\frac{\bigl(qz_{\pm}-q^{-1}w_{\mp}\bigr)(q^{-1/2}z_{\pm}-q^{1/2}w_{\mp})}{\bigl(q^{-1}z_{\pm}-qw_{\mp}\bigr)
 (q^{1/2}z_{\pm}-q^{-1/2}w_{\mp})}h_{m+n+1}^{\pm}(z)h_{m+n+1}^{\mp}(w)\\
 \qquad=g\bigl((zq^{-c}/w)^{\pm1}\bigr)\frac{\bigl(qz_{\mp}-q^{-1}w_{\pm}\bigr)(q^{-1/2}z_{\mp}-q^{1/2}w_{\pm})}{\bigl(q^{-1}z_{\pm}-qw_{\pm}\bigr)
 (q^{1/2}z_{\mp}-q^{-1/2}w_{\pm})}h_{m+n+1}^{\mp}(w)h_{m+n+1}^{\pm}(z),
\end{gather*}
and the remaining relations as same as $U\bigl(\widetilde{R}^{(m|n)}\bigr)$ by replacing the generators $\mathfrak{h}_i(z)$, $\widetilde{X}_j^{\pm}(z)$ as~$h_i(z)$,~$X_j^{\pm}(z)$.
\end{Theorem}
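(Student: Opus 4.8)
The plan is to verify each family of relations by reducing it to a low-rank block already treated in the propositions preceding this theorem and then propagating it with the embedding homomorphisms. The backbone is Theorem~\ref{th2} together with Proposition~\ref{pr2}: since $\psi_l\bigl(l^{\pm}_{ij}(z)\bigr)=l^{\pm(l)}_{ij}(z)$ and the Gaussian generators $\mathfrak{h}_i^{\pm},\mathfrak{e}_i^{\pm},\mathfrak{f}_i^{\pm}$ of the subalgebra $U\bigl(\widetilde{R}^{(m|n-l)}\bigr)$ (resp.\ $U\bigl(\widetilde{R}^{(m+n-l|0)}\bigr)$) are the images of the corresponding generators of the full algebra, any relation among generators carrying consecutive labels can be transported to $U\bigl(\widetilde{R}^{(m|n)}\bigr)$ once it is known for the relevant rank-one or rank-two block. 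Concretely, I would first harvest the relations among adjacent generators directly from the propositions above (the general list in $U\bigl(\widetilde{R}^{(m|n)}\bigr)$, the type-$B$ rank-one list in $U\bigl(\widetilde{R}^{(1|0)}\bigr)$, and the non-commutation relations near the short root), and then invoke Corollary~\ref{co1} to obtain commutativity of far-apart generators $l_{ab}^{\pm}(z)$ with the embedded $\psi_l\bigl(l_{ij}^{\mp}(w)\bigr)$; this yields both the $|i-j|>1$ commutation relations and all the relations that involve $\mathfrak{h}_{m+n+1}^{\pm}$.

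For the quadratic $\widetilde{X}$--$\widetilde{X}$ relations in their symmetric Drinfeld form I would rescale the spectral parameters $z\mapsto zq^i$, $w\mapsto wq^j$, so that the factors $\bigl(q^{\mp1}z-q^{\pm1}w\bigr)$ and $(z-w)$ appearing in the propositions collapse into $\bigl(z-wq^{\pm(\alpha_i,\alpha_j)}\bigr)$; the Cartan data $(\alpha_i,\alpha_j)$ and $(\varepsilon_i,\alpha_j)$ are read off from the simple-root conventions of Section~\ref{sec2}, so the uniform expressions in the theorem are a bookkeeping repackaging of the case-by-case formulas. The \emph{folding} at the middle of the Dynkin diagram, relating $\widetilde{X}_j$ at a barred index back to an unbarred one, is governed by Lemma~\ref{le4}, whose identities $\mathfrak{e}_{\overline{(i+1)}}^{\pm}(z)=-\mathfrak{e}_i^{\pm}(\,\cdot\,)$ and their $\mathfrak{f}$-analogues guarantee that no independent relations arise beyond those listed. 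The mixed commutator $\bigl[\widetilde{X}_i^{+}(z),\widetilde{X}_j^{-}(w)\bigr]$ with its $\delta$-function right-hand side follows for $i=j$ from the rank-one computation and for $i\neq j$ from the cross relations; I would assemble the Cartan factor $\mathfrak{h}_{i+1}^{\pm}\mathfrak{h}_i^{\pm-1}$ from the Gauss decomposition using Lemma~\ref{le3}.

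The statement for $U\bigl(R^{(m|n)}\bigr)$ is then deduced from that for $U\bigl(\widetilde{R}^{(m|n)}\bigr)$ through the homomorphism of Propositions~\ref{Apr3} and~\ref{Apr4}. Since $\mathfrak{e}_{ij},\mathfrak{f}_{ji}$ map identically to $e_{ij},f_{ji}$ while $\mathfrak{h}_i^{\pm}(z)$ acquires the scalar Heisenberg factor $\exp\bigl(\sum\vartheta_{\mp p}z^{\mp p}\bigr)$, the $X$--$X$ and $X$--$h$ relations are unchanged, because the Heisenberg factor lies in $\mathcal{H}_q(m+n)$ and commutes past every $U(R)$-element, hence cancels. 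In the $h$--$h$ relations the two Heisenberg factors do not commute between the arguments $z$ and $w$, and moving them past one another produces exactly the ratios $g\bigl((zq^{\pm c}/w)^{\pm1}\bigr)$ that decorate the $U(R)$ relations; tracking these factors through the defining expansion $\exp\bigl(\sum\vartheta_pz^p\bigr)=g\bigl(zq^{-c}\bigr)/g\bigl(zq^{c}\bigr)$ reproduces the stated formulas.

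The main obstacle will be the Serre relations. Unlike the quadratic and Cartan-type relations, they are not visible at the level of a single $RLL$ identity, so I expect to derive them by restricting via $\psi_l$ to rank-two subalgebras $U\bigl(\widetilde{R}^{(m'|n')}\bigr)$ of small rank, establishing the Serre relation inside each such block by direct computation with the explicit $R$-matrix of Section~\ref{sec4.1}---mirroring the Ding--Frenkel and Jing--Liu--Molev strategy for types A and B---and then transporting it back through the embedding. The delicate point is the mixed Serre relation at the junction of the symplectic and orthogonal parts, namely the $\operatorname{Sym}$ relation centered at index $n$, where the graded sign conventions and the folding of Lemma~\ref{le4} must be handled at once; confirming that the graded symmetrization yields the stated vanishing rather than an extra correction term is where the verification is most likely to be subtle.
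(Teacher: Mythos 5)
Your treatment of everything except the Serre relations matches the paper's: the paper itself records that the quadratic, Cartan-type and mixed relations follow from the propositions of Section~\ref{sec5.2} together with Theorem~\ref{th2}, Proposition~\ref{pr2}, Corollary~\ref{co1} and Lemmas~\ref{le3}--\ref{le4}, and your rescaling of the spectral parameters to package the case-by-case formulas into $\bigl(z-wq^{\pm(\alpha_i,\alpha_j)}\bigr)$, your use of the folding identities, and your extraction of the $g$-factors in part $(2)$ from the Heisenberg factors of Propositions~\ref{Apr3}--\ref{Apr4} are exactly the intended reductions.

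The divergence --- and the gap --- is in the Serre relations, which are the only part the paper actually proves in the displayed proof. You propose to verify them by direct computation with the explicit $R$-matrix inside rank-two blocks obtained via $\psi_l$, and you yourself flag the mixed relation at the odd node $n$ as the point most likely to fail. The paper does something quite different: it expands $\widetilde{X}_i^{\pm}(zq^{-\nu_i})$ and $\mathfrak{h}_{i+1}^{\pm}(zq^{-\nu_i})\mathfrak{h}_i^{\pm}(zq^{-\nu_i})^{-1}$ into modes $\dot{x}_{i,p}^{\pm}$ and $\dot{a}_{i,\ell}$, derives from the already-established relations the commutator $\bigl[\dot{a}_{i,\pm s},\dot{x}_{j,k}^{\pm}\bigr]=\pm\frac{[sA_{ij}]_i}{s}q^{\mp|s|c/2}\dot{x}_{j,s+k}^{\pm}$, and then runs the Levendorski\u{\i}-type induction of \cite{SZLO} on the integers $r_i$, $s_j$: the imaginary-root modes shift degrees, so once a Serre relation is known at a base degree (where it reduces to the finite-type case) it propagates to all degrees, with the details delegated to \cite[Section~5.1]{XWLHZ} and \cite[Section~4]{LYZ}. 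Your plan omits this mechanism entirely. A direct $RLL$-level verification of the symmetrized relations --- in particular the $\operatorname{Sym}_{z_1,z_2,z_3}$ relation at the short root and the four-fold bracket at the odd node --- is not what Ding--Frenkel or Jing--Liu--Molev do either, and there is no evidence it is tractable; as written, the hardest part of the theorem is left unproved.
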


\begin{proof} Here, we only need to prove the Serre relations. Since the Serre relations in both \smash{$U\bigl(\widetilde{R}^{(m|n)}\bigr)$} and $U\bigl(R^{(m|n)}\bigr) $ have the same forms, we will focus only on the relations for the superalgebra \smash{$U\bigl(\widetilde{R}^{(m|n)}\bigr)$}. Set
\begin{align*}
 &\widetilde{X}_i^{\pm}(zq^{-\nu_i})=\bigl(q_i-q_i^{-1}\bigr)\dot{x}_i^{\pm}(z)=\bigl(q_i-q_i^{-1}\bigr)\sum_{p\in\mathbb{Z}}\dot{x}_{i,p}^{\pm}z^{p},\\
 &\mathfrak{h}_{i+1}^{\pm}(zq^{-\nu_i})\mathfrak{h}_{i}^{\pm}(zq^{-\nu_i})^{-1}=\Phi^{\pm}_{i}(z)=K_i^{\pm 1}\exp\bigl(q_i-q_i^{-1}\bigr)\sum\limits_{\ell\geq 1}\dot{a}_{i,\pm \ell}z^{\pm \ell}.
\end{align*}
Therefore, the Serre relations take the forms
\begin{align*}
 &\operatorname{Sym}_{r_1,r_2}[\![\dot{x}_{i,r_1}^{\epsilon},[\![\dot{x}_{i,r_2}^{\epsilon}, \dot{x}_{i\pm 1,s}^{\epsilon}]\!] ]\!]=0\qquad\textrm{if}\quad i\neq n,m+n, \\
&\operatorname{Sym}_{r_1,r_2,r_3}[\![\dot{x}_{m+n,r_1}^{\epsilon}, [\![\dot{x}_{m+n,r_2}^{\epsilon},[\![\dot{x}_{m+n,r_3}^{\epsilon}, \dot{x}_{m+n-1,s_1}^{\epsilon}]\!] ]\!] ]\!]=0, \\
&\operatorname{Sym}_{s_1,s_2}[ [\![ [\![\dot{x}_{n-1,r_1}^{\epsilon}, \dot{x}_{n,s_1}^{\epsilon}]\!], \dot{x}_{n+1,r_2}^{\epsilon}]\!], \dot{x}_{n,s_2}^{\epsilon}]=0
\qquad\textrm{if}\quad n>1,
\end{align*}
where $r_i$ and $s_j$ are arbitrary integers. Furthermore, by the defining relations,
 we can derive that
\begin{gather*}
 \bigl[\dot{a}_{i,\pm s}, \dot{x}_{j,k}^{\pm}\bigr]=\pm\frac{[sA_{ij}]_{i}}{s}q^{\mp|s|c/2}\dot{x}_{j,s+k}^{\pm}.
\end{gather*}
Now, as the original methods of \cite{SZLO}, the Serre relations can be proved by an induction on the integers $r_i$ and $s_j$, and more details please see \cite[Section 5.1]{XWLHZ} and \cite[Section 4]{LYZ}.
\end{proof}

\section{Isomorphism theorem }\label{sec6}

In this section, the superalgebra $\mathcal{A}_q$ is defined using the Drinfeld generators obtained from $U(R)$. The statement suggests the existence of a homomorphism, denoted as $AR$, from $\mathcal{A}_q$ to $U(R)$. Additionally, the quantum affine superalgebra $\mathcal{U}_q(\mathfrak{\hat{g}})$ can be viewed as a quotient algebra of $\mathcal{A}_q$. This implies that $\mathcal{U}_q(\mathfrak{\hat{g}})$ can be embedded into $\mathcal{A}_q$. By leveraging the $L$-operators of $\mathcal{U}_q(\mathfrak{\hat{g}})$ and the vector representation from Section \ref{sec4.1}, an inverse map of the given homomorphism $AR$ can be established. This suggests that $\mathcal{A}_q$ is actually isomorphic to $U(R)$.

\subsection[The superalgebra A\_q]{The superalgebra $\boldsymbol{\mathcal{A}_q}$}

\begin{Definition}
Let $\mathcal{A}_q$ be the superalgebra generated by the generators $h_i(z)$ ($i=1,\dots,\allowbreak m+n+1$), and $X_j^{\pm}(z)$ ($j=1,\dots,m+n$) with the same relations in $U\bigl(R^{(m|n)}\bigr) $.
\end{Definition}

Combining the generators $x_{i,p}^{\pm}$ in $\mathcal{U}_q(\mathfrak{\hat{g}})$ with the formal power series
\begin{gather*}
 x_i^{\pm}(z)=\sum_{p\in\mathbb{Z}}x_{i,p}^{\pm}z^{p},
\end{gather*}
and defining
\begin{gather}\label{homo2}
	\Phi^{\pm}_i(z)=\sum\limits_{m=0}\Phi_{i,\pm r}^{\pm}z^{\pm r}=K_i^{\pm 1} \exp\biggl(\bigl(q_i-q_i^{-1}\bigr)\sum\limits_{\ell\geq 1}a_{i,\pm \ell}z^{\pm \ell}\biggr).
\end{gather}

\begin{Proposition}\label{Apr0} The coefficients of
 $c^{\pm}(z)$ as the form in Proposition {\rm\ref{pr5}} are central elements of algebra $\mathcal{A}_q$. Moreover,
 the map such that
\begin{align*}
 &q^{c/2}\mapsto q^{c/2},\qquad
 (-1)^{[\alpha_i]} x_i^{\pm}(z)\mapsto\bigl(q_i-q_i^{-1}\bigr)^{-1}X_i^{\pm}(zq^{-\nu_i}),\qquad 1\leq i\leq m+n,\\
 &\Phi^{\pm}_{i}(z)\mapsto h_{i+1}^{\pm}(zq^{-\nu_i})h_{i}^{\pm}(zq^{-\nu_i})^{-1},\qquad 1\leq i\leq m+n,
\end{align*}
define an embedding $\tau\colon \mathcal{U}_q(\mathfrak{\hat{g}})\rightarrow\mathcal{A}_q$.
\end{Proposition}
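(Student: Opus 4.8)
The plan is to prove the statement in three stages: centrality of the coefficients of $c^{\pm}(z)$, well-definedness of $\tau$ as a homomorphism, and injectivity. First I would establish centrality. Since $\mathcal{A}_q$ is defined by the relations of $U\bigl(R^{(m|n)}\bigr)$ recorded in Theorem \ref{th3}(2), and $c^{\pm}(z)$ is written purely through the diagonal series $h_i^{\pm}$ in Proposition \ref{pr5}, this is the intrinsic counterpart of Proposition \ref{Apr2}. Concretely, I would commute $c^{\pm}(z)$ past each generator: against $h_j^{\pm}(w)$ it commutes by the first relation of Theorem \ref{th3}(2), while against $h_j^{\mp}(w)$ and against $X_j^{\pm}(w)$ each factor $h_i^{\pm}(a)h_i^{\pm}(b)^{-1}$ contributes a rational scalar, and the particular spectral points $z\zeta q^{2i-2}$, $z\zeta q^{2i}$, $zq^{2m-2i+1}$, $zq^{2m-2i-1}$, $z$, $q^{-1}z$ appearing in Proposition \ref{pr5} are arranged so that these scalars telescope to $1$. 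Hence all coefficients of $c^{\pm}(z)$ are central.

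Next I would verify that $\tau$ respects every defining relation of the Drinfeld presentation $\mathcal{U}_q(\mathfrak{\hat g})$. The key observation is that the reparametrization built into $\tau$ — the shift $z\mapsto zq^{-\nu_i}$, the normalization $(q_i-q_i^{-1})^{-1}$ and the sign $(-1)^{[\alpha_i]}$ — is exactly the substitution $\widetilde X_i^{\pm}(zq^{-\nu_i})=(q_i-q_i^{-1})\dot x_i^{\pm}(z)$ and $\mathfrak{h}_{i+1}^{\pm}(z)\mathfrak{h}_i^{\pm}(z)^{-1}=\Phi_i^{\pm}(z)$ used in the proof of Theorem \ref{th3}. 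Under this substitution the relations of $\mathcal{A}_q$ in Theorem \ref{th3}(2) become term-for-term the Drinfeld relations: the $h$--$h$ relations yield the commutativity of the $k_i$ together with the Heisenberg relations $[a_{i,r},a_{j,s}]$; the $h$--$X$ relations yield $k_ix_{j,k}^{\pm}k_i^{-1}=q_i^{\pm A_{ij}}x_{j,k}^{\pm}$ and $[a_{i,r},x_{j,k}^{\pm}]$; the $[X^+,X^-]$ relation yields $[x_{i,k}^+,x_{j,l}^-]$ with $\Phi^{\pm}$ on the right-hand side; the $X^{\pm}$--$X^{\pm}$ relations yield the $q$-bracket relations; and the Serre relations coincide in form. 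I would present the Cartan relation, the $\pm$-bracket relation and one Serre relation as representative checks and declare the rest analogous, since the matching is precisely the one already carried out in Section \ref{sec5.2}.

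Finally I would prove injectivity by constructing a one-sided inverse. Because the images $\tau(\Phi_i^{\pm}(z))=\mathfrak{h}_{i+1}^{\pm}(zq^{-\nu_i})\mathfrak{h}_i^{\pm}(zq^{-\nu_i})^{-1}$ and $\tau(X_i^{\pm})$ satisfy exactly the Drinfeld relations, I would define a homomorphism $\sigma$ from the subalgebra of $\mathcal{A}_q$ generated by the $\tau$-images back to $\mathcal{U}_q(\mathfrak{\hat g})$, sending these ratio series to $\Phi_i^{\pm}(z)$ and the rescaled currents to $x_i^{\pm}(z)$, so that $\sigma\circ\tau=\mathrm{id}$; this is the precise sense in which $\mathcal{U}_q(\mathfrak{\hat g})$ is realized as a quotient of $\mathcal{A}_q$. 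The main obstacle is that $\mathcal{A}_q$ is strictly larger than the image of $\tau$: it carries the extra diagonal series $h_{m+n+1}^{\pm}$ and the central series $c^{\pm}(z)$, which are not hit by $\tau$ (only the consecutive ratios $\mathfrak{h}_{i+1}/\mathfrak{h}_i$ are). Thus one cannot simply assign each individual $h_i$ an image in $\mathcal{U}_q(\mathfrak{\hat g})$, and the well-definedness of $\sigma$ on the image must instead be argued through the triangular decomposition $\mathcal{U}_q(\mathfrak{\hat g})=\mathcal{U}_q^-\mathcal{U}_q^0\mathcal{U}_q^+$ supplied by Theorem \ref{DR-DRJ}. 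Here $\tau$ respects the splitting of $\mathcal{A}_q$ into the subalgebras generated by the $X^-$, by the ratios $\mathfrak{h}_{i+1}/\mathfrak{h}_i$, and by the $X^+$, and I would check injectivity on each factor; the delicate case is the Cartan/Heisenberg factor, where I must confirm that the ratio series generate a copy of $\mathcal{U}_q^0$ that remains free inside the larger diagonal algebra of $\mathcal{A}_q$.
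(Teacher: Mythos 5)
Your first two steps (centrality of the coefficients of $c^{\pm}(z)$ and the verification that $\tau$ preserves the Drinfeld relations) are fine and correspond to what the paper dismisses as ``obvious''; the telescoping argument for centrality and the term-for-term matching with Theorem \ref{th3}(2) are exactly the right checks. The problem is your injectivity argument. You propose to build a one-sided inverse $\sigma$ defined only on the subalgebra of $\mathcal{A}_q$ generated by the $\tau$-images, on the grounds that ``one cannot simply assign each individual $h_i$ an image in $\mathcal{U}_q(\mathfrak{\hat g})$.'' But defining a homomorphism on that subalgebra by prescribing values on generators presupposes a presentation of the subalgebra --- which is essentially the statement you are trying to prove (that the image satisfies no relations beyond the Drinfeld ones). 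Your fallback, a splitting via the triangular decomposition together with a ``freeness'' check for the Cartan part inside the diagonal algebra of $\mathcal{A}_q$, is precisely the hard content and is not carried out; without a PBW-type result for $\mathcal{A}_q$ it does not close.

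The paper resolves exactly the obstacle you flag, and in the opposite way: it \emph{does} assign each individual $h_i^{\pm}(z)$ an image in $\mathcal{U}_q(\mathfrak{\hat g})$, so that the inverse $\rho$ is defined on all of $\mathcal{A}_q$, whose presentation is known. The trick is two-step. First one introduces the central square-root series $\kappa^{\pm}(z)$ with $\kappa^{\pm}(z)\kappa^{\pm}(z\zeta)=c^{\pm}(z)$, given by the infinite product \eqref{kappa}, and the endomorphism $\rho_2\colon h_i^{\pm}(z)\mapsto\kappa^{\pm}(z)h_i^{\pm}(z)$; after this twist, the product $h_i^{\pm}(z)h_i^{\pm}(z\zeta)c^{\pm}(z)$ is, by the explicit formula for $c^{\pm}(z)$ in Proposition \ref{pr5}, expressible purely through the ratios $h_{k+1}^{\pm}/h_k^{\pm}$, i.e., through the $\Phi_k^{\pm}$. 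Second, one solves the resulting functional equation $\Gamma_i^{\pm}(z)\Gamma_i^{\pm}(z\zeta)=(\cdots)$ by an explicit infinite product of the series $\widetilde{\Phi}_k^{\pm}$, and sets $\rho_1\colon h_i^{\pm}(z)\mapsto\Gamma_i^{\pm}(z)$. Then $\rho=\rho_1\circ\rho_2$ is checked to be a homomorphism on the full presented algebra $\mathcal{A}_q$, and $\rho\circ\tau=\mathrm{id}$ follows from the formulas. Without this construction (or an equivalent substitute), your proof of injectivity is incomplete.
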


\begin{proof}
 The central elements and the homomorphism are obvious. To show the injectivity, we will construct a homomorphism $\rho\colon \mathcal{A}_q\mapsto \mathcal{U}_q(\mathfrak{\hat{g}})$ such that $\rho\circ\tau$ is the identity homomorphism on $\mathcal{U}_q(\mathfrak{\hat{g}})$.

We first construct a map $\rho_1\colon \mathcal{A}_q\mapsto \mathcal{U}_q(\mathfrak{\hat{g}})$ such that
\begin{align*}
 & X_i^{\pm}(z)\mapsto (-1)^{[\alpha_{i}]}\bigl(q_i-q_i^{-1}\bigr)x_i^{\pm}(zq^{\nu_i}),\qquad 1\leq i\leq m+n,\qquad h_i^{\pm}(z)\mapsto \Gamma_i^{\pm}(z).
\end{align*}
To explain the element $\Gamma_i^{\pm}(z)$, there exist power series $\kappa^{\pm}(z)$ with coefficients in the center of~$\mathcal{A}_q$ such that $\kappa^{\pm}(z)\kappa^{\pm}(z\zeta)=c^{\pm}(z)$, where
\begin{gather}\label{kappa}
 \kappa^{\pm}(z)=\prod\limits_{p=0}^{\infty}c^{\pm}\bigl(z\zeta^{-2p-1}\bigr)c^{\pm}\bigl(z\zeta^{-2p-2}\bigr)^{-1}.
\end{gather}
Then we have the endomorphism $\rho_2\colon \mathcal{A}_q\mapsto\mathcal{A}_q$ such that $X_i^{\pm}(z)\mapsto X_i^{\pm}(z)$, $h_i^{\pm}(z)\mapsto \kappa^{\pm}(z)h_i^{\pm}(z)$. So that
$h_i^{\pm}(z)\kappa^{\pm}(z)h_i^{\pm}(z\zeta)\kappa^{\pm}(z\zeta)=h_i^{\pm}(z)h_i^{\pm}(z\zeta)c^{\pm}(z)$.
Hence, denote that
\begin{gather*}
 \Gamma_i^{\pm}(z)\Gamma_i^{\pm}(z\zeta)=\prod\limits_{k=1}^{n+m}\Phi^{\pm}_{k}(z\zeta q^{-\nu_k})^{-1}\prod\limits_{k=1}^{i-1}\Phi^{\pm}_{k}(z\zeta q^{\nu_k})\prod\limits_{k=i}^{n+m}\Phi^{\pm}_{k}(z q^{\nu_k})^{-1},\qquad i=1,\dots,m+n,
\end{gather*}
and
\begin{gather*}
 \Gamma_{m+n+1}^{\pm}(z)\Gamma_{m+n+1}^{\pm}(z\zeta)=\prod\limits_{k=1}^{n+m}\Phi^{\pm}_{k}(z\zeta q^{-\nu_k})^{-1}
 \times\prod\limits_{k=1}^{n+m}\Phi^{\pm}_{k}(z\zeta q^{\nu_k}).
\end{gather*}
Set $\widetilde{\Phi}^{\pm}_{j}(z)=k^{\mp}_j\Phi^{\pm}_{j}(z)$, then
\begin{align*}
 \Gamma_i^{\pm}(z)={}&\prod\limits_{p=0}^{\infty}\prod\limits_{k=1}^{n+m}\widetilde{\Phi}^{\pm}_{k}\bigl(z\zeta^{-2p} q^{-\nu_k}\bigr)^{-1}\widetilde{\Phi}^{\pm}_{k}\bigl(z\zeta^{-2p-1} q^{-\nu_k}\bigr)\widetilde{\Phi}^{\pm}_{k}\bigl(z\zeta^{-2p-1} q^{\nu_k}\bigr)^{-1}\widetilde{\Phi}^{\pm}_{k}\bigl(z\zeta^{-2p-2} q^{\nu_k}\bigr)\\
 &\times\prod\limits_{k=1}^{i-1}\widetilde{\Phi}^{\pm}_{k}(z q^{\nu_k})\prod\limits_{k=i}^{n}k_i
\end{align*}
for $i=1,\dots,m+n$, and
\begin{align*}
 \Gamma_{m+n+1}^{\pm}(z)={}&\prod\limits_{p=0}^{\infty}\prod\limits_{k=1}^{n+m}\widetilde{\Phi}^{\pm}_{k}\bigl(z\zeta^{-2p} q^{-\nu_k}\bigr)^{-1}\widetilde{\Phi}^{\pm}_{k}\bigl(z\zeta^{-2p-1} q^{-\nu_k}\bigr)\widetilde{\Phi}^{\pm}_{k}\bigl(z\zeta^{-2p-1} q^{\nu_k}\bigr)^{-1}\\
 &\times\widetilde{\Phi}^{\pm}_{k}\bigl(z\zeta^{-2p-2} q^{\nu_k}\bigr)
 \prod\limits_{k=1}^{n+m}\widetilde{\Phi}^{\pm}_{k}(z q^{\nu_k}).
\end{align*}

Under the constructions above, it is straightforward to show that $\rho_1$ is a homomorphism. In fact,
we demonstrate that the relations between the generators $h_i^{\pm}(z)$ and $X_i^{\pm}(w)$ are preserved under the map $\rho_1$, as the remaining relations among all the generators can be verified in a similar manner.
In algebra $\mathcal{U}_q(\mathfrak{\hat{g}})$, we have
\begin{align*}
 &\Phi^{+}_i(z)x_j^{\pm}(w)=\left(\frac{z_{\pm}q^{(\alpha_i,\alpha_j)}-w}{z_{\pm}-wq^{(\alpha_i,\alpha_j)}}\right)^{\pm1}x_j^{\pm}(w)\Phi^{+}_i(z),\\
 & \Phi^{-}_i(z)x_j^{\pm}(w)=\left(\frac{w_{\pm}q^{(\alpha_i,\alpha_j)}-z}{w_{\pm}-zq^{(\alpha_i,\alpha_j)}}\right)^{\mp1}x_j^{\pm}(w)\Phi^{-}_i(z).
\end{align*}A direct calculation gives that
\begin{align*}
 &\Gamma_i^{\pm}(z)x_j^{+}(w)\bigl(\Gamma_i^{\mp}(z)\bigr)^{-1}=\frac{z_{\pm}q^{-(\varepsilon_i,\alpha_j)}-wq^{(\varepsilon_i,\alpha_j)}}{z_{\pm}-w}
 x_j^{+}(w),\\
 &\Gamma_i^{\pm}(z)x_j^{-}(w)\bigl(\Gamma_i^{\mp}(z)\bigr)^{-1}=\frac{z_{\mp}-w}{z_{\mp}q^{-(\varepsilon_i,\alpha_j)}-wq^{(\varepsilon_i,\alpha_j)}}
 x_j^{-}(w)
\end{align*}for $i\neq n+m+1$, and
\begin{align*}
 &\Gamma_{m+n+1}^{\pm}(z)x_{m+n}^+(w)\bigl(\Gamma_{m+n+1}^{\mp}(z)\bigr)^{-1}=\frac{\bigl(z_{\pm}-q^{-1}w\bigr)\bigl(q^{-1}z_{\pm}-qw\bigr)}{\bigl(q^{-1}z_{\pm}-w\bigr)(z_{\pm}-w)}x_{m+n}^+(w),\\
 &\Gamma_{m+n+1}^{\pm}(z)x_{m+n}^-(w)\bigl(\Gamma_{m+n+1}^{\mp}(z)\bigr)^{-1}=\frac{\bigl(q^{-1}z_{\mp}-w\bigr)(z_{\mp}-w)}{\bigl(z_{\mp}-q^{-1}w\bigr)\bigl(q^{-1}z_{\mp}-qw\bigr)}x_{m+n}^-(w).
\end{align*}This implies that
\begin{gather*}
 \rho_1\bigl(h_i^{\pm}(z)X_i^{\epsilon}(w)\bigl(h_i^{\pm}(z)\bigr)^{-1}\bigr)=\rho_1\bigl(h_i^{\pm}(z)\bigr)\cdot\rho_1(X_i^{\epsilon}(w))\cdot\rho_1\bigl(\bigl(h_i^{\pm}(z)\bigr)^{-1}\bigr),
\end{gather*}where $\epsilon=\pm1$.

Set $\rho=\rho_1\circ\rho_2$, it is easy to see that the map $\rho\circ\tau$ is the identity map on $\mathcal{U}_q(\mathfrak{\hat{g}})$ by the formulas $c^{\pm}(z)$ in Proposition \ref{pr5}.
\end{proof}

\begin{Proposition}\label{Apr1}Between the algebras $\mathcal{U}_q(\mathfrak{\hat{g}})$ and $\mathcal{A}_q$, we have the tensor product decomposition
\[
 \mathcal{U}_q(\mathfrak{\hat{g}})\otimes_{\mathbb{C}(q^{1/2})}\mathfrak{C}=\mathcal{A}_q,
\]
where $\mathfrak{C}$ is the subalgebra of $\mathcal{A}_q$ generated by the coefficients of the series $c^{\pm}(z)$.
\end{Proposition}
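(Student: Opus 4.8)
The plan is to realise the claimed decomposition as an explicit isomorphism induced by multiplication, and to certify it by producing a two-sided inverse. First I would introduce the multiplication map
\[
\mu\colon\ \mathcal{U}_q(\mathfrak{\hat{g}})\otimes_{\mathbb{C}(q^{1/2})}\mathfrak{C}\to\mathcal{A}_q,\qquad u\otimes c\mapsto\tau(u)\,c .
\]
Because the coefficients of $c^{\pm}(z)$ are central in $\mathcal{A}_q$ by Proposition \ref{Apr0} and $\tau$ is an algebra homomorphism, the elements $\tau(u)$ and $c$ commute, so $\mu$ is a well-defined homomorphism of superalgebras. It then remains to prove that $\mu$ is bijective.

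For surjectivity I would show that every Gaussian generator of $\mathcal{A}_q$ lies in the image. The currents $X_j^{\pm}(z)$ already lie in $\tau(\mathcal{U}_q(\mathfrak{\hat{g}}))$ via the rule $(-1)^{[\alpha_j]}x_j^{\pm}(z)\mapsto (q_j-q_j^{-1})^{-1}X_j^{\pm}(zq^{-\nu_j})$. For the diagonal series $h_i^{\pm}(z)$ I would exploit that $\tau(\Phi_i^{\pm}(z))=h_{i+1}^{\pm}(zq^{-\nu_i})h_i^{\pm}(zq^{-\nu_i})^{-1}$ supplies all consecutive ratios, so the elements $\Gamma_i^{\pm}(z)$ assembled from the $\Phi_k^{\pm}$ in the proof of Proposition \ref{Apr0} lie in $\tau(\mathcal{U}_q(\mathfrak{\hat{g}}))$, whereas the series $\kappa^{\pm}(z)$ of \eqref{kappa}, being an infinite product in the coefficients of $c^{\pm}(z)$, lies in a formal completion of $\mathfrak{C}$. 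Combining the product identities for $\Gamma_i^{\pm}(z)\Gamma_i^{\pm}(z\zeta)$ with $\kappa^{\pm}(z)\kappa^{\pm}(z\zeta)=c^{\pm}(z)$ then expresses each $h_i^{\pm}(z)$ as $\tau(\Gamma_i^{\pm}(z))\,\omega_i^{\pm}(z)$ with $\omega_i^{\pm}(z)\in\mathfrak{C}$, whence $\mu$ is onto.

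For injectivity, which I expect to be the crux, I would construct the inverse $\sigma\colon\mathcal{A}_q\to\mathcal{U}_q(\mathfrak{\hat{g}})\otimes_{\mathbb{C}(q^{1/2})}\mathfrak{C}$ on generators by $X_j^{\pm}(z)\mapsto\rho(X_j^{\pm}(z))\otimes1$ and $h_i^{\pm}(z)\mapsto\Gamma_i^{\pm}(z)\otimes\omega_i^{\pm}(z)$, where $\rho$ is the retraction of Proposition \ref{Apr0} and $\Gamma_i^{\pm}(z),\omega_i^{\pm}(z)$ are the two factors produced in the surjectivity step, so that $\tau(\Gamma_i^{\pm}(z))\,\omega_i^{\pm}(z)=h_i^{\pm}(z)$. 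The real work is to verify that $\sigma$ respects every defining relation of $\mathcal{A}_q$; this is where the normalization factors $g\bigl((zq^{\pm c}/w)^{\pm1}\bigr)$ occurring in the $h_i$--$h_i$ relations of $U\bigl(R^{(m|n)}\bigr)$ in Theorem \ref{th3}(2), yet absent from the corresponding $\Phi_i^{\pm}$-relations of $\mathcal{U}_q(\mathfrak{\hat{g}})$, must be absorbed entirely into the central component $\omega_i^{\pm}(z)$. Once $\sigma$ is shown to be a homomorphism, the identities $\mu\circ\sigma=\mathrm{id}$ and $\sigma\circ\mu=\mathrm{id}$ follow from $\rho\circ\tau=\mathrm{id}$ together with the factorization $\tau(\Gamma_i^{\pm}(z))\,\omega_i^{\pm}(z)=h_i^{\pm}(z)$.

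The main obstacle is precisely this last verification, and it splits into two linked checks. One must show that the central series $c^{\pm}(z)$ (equivalently $\kappa^{\pm}(z)$) carry exactly the single degree of freedom lost when passing from the $h_i^{\pm}(z)$ to the ratios $\Phi_i^{\pm}(z)$, so that $\tau(\mathcal{U}_q(\mathfrak{\hat{g}}))$ and $\mathfrak{C}$ intersect only in the scalars and the product is a genuine tensor product; and one must confirm that the $g$-factored relations of Theorem \ref{th3}(2) decouple cleanly into the $g$-free Drinfeld relations for the $\Gamma_i^{\pm}$-part and purely central relations for the $\omega_i^{\pm}$-part. Both reduce to the explicit formula for $c^{\pm(m|n)}(z)$ in Proposition \ref{pr5}, so I would carry them out coefficientwise in the formal-series completion, paralleling the non-super arguments of Frenkel--Mukhin and Jing--Liu--Molev.
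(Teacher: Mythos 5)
Your proposal follows essentially the same route as the paper: the paper also defines the map $\mathcal{A}_q\to\mathcal{U}_q(\mathfrak{\hat{g}})\otimes_{\mathbb{C}(q^{1/2})}\mathfrak{C}$ on generators by $X_i^{\pm}(z)\mapsto\rho(X_i^{\pm}(z))\otimes1$ and $h_i^{\pm}(z)\mapsto\rho(h_i^{\pm}(z))\otimes\kappa^{\pm}(z)$, with the inverse given by $a\otimes1\mapsto\tau(a)$, $1\otimes\kappa^{\pm}(z)\mapsto\kappa^{\pm}(z)$, relying on the retraction $\rho$ and the central square-root series $\kappa^{\pm}(z)$ already built in Proposition~\ref{Apr0}. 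Your $\omega_i^{\pm}(z)$ is just $\kappa^{\pm}(z)$ (independent of $i$), and the relation-checking you flag as the crux is exactly the content the paper delegates to the proof of Proposition~\ref{Apr0}.
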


\begin{proof}In the proof of Proposition \ref{Apr0}, we can define a surjective homomorphism $\tilde{\rho}\colon\mathcal{A}_q\rightarrow \mathcal{U}_q(\mathfrak{\hat{g}})\otimes_{\mathbb{C}\bigl(q^{1/2}\bigr)}\mathfrak{C}$ such that
\begin{align*}
 & X_i^{\pm}(z)\mapsto \rho(X_i^{\pm}(z))\otimes1,\qquad 1\leq i\leq m+n,\\
 &h_i^{\pm}(z)\mapsto \rho\bigl(h_i^{\pm}(z)\bigr)\otimes\kappa^{\pm}(z),\qquad 1\leq i\leq m+n.
\end{align*}
On the other hand, there is a map from $\mathcal{U}_q(\mathfrak{\hat{g}})\otimes_{\mathbb{C}(q^{1/2})}\mathfrak{C}$ to
$\mathcal{A}_q$ by
\begin{align*}
 & a\otimes1\mapsto\tau(a),\qquad a\in \mathcal{U}_q(\mathfrak{\hat{g}}),\qquad 1\otimes\kappa^{\pm}(z)\mapsto\kappa^{\pm}(z).
\end{align*}
It is easy to see that the above map is a inverse homomorphism of $\tilde{\rho}$.
\end{proof}

\subsection[Decomposition of universal R-matrix and inverse map]{Decomposition of universal $\boldsymbol{R}$-matrix and inverse map}

First, we recalled some properties of the universal $R$-matrix. Using the same notations of~\cite{SMVN2}, we set
\begin{gather*}
 \exp_q(x)\doteq1+x+\frac{x^2}{(2)_q!}+\dots+\frac{x^n}{(n)_q!}+\dots=\sum_{n\geq0}\frac{x^n}{(n)_q!},\qquad
 (a)_q\doteq\frac{q^a-1}{q-1}.
\end{gather*}
 Consider the $\hbar$-adic settings and let $q=\exp(\hbar)\in\mathbb{C}[[\hbar]]$.
Introduce elements $h_1,\dots,h_{n+m}$ by defining $K_i=\exp(\hbar h_i)$.
Let $\widehat{\Delta}$, $\widehat{\Delta}_+$ and $\widehat{Q}$ be the affine root system, affine positive root system and affine root lattice of $\mathfrak{\hat{g}}$, respectively.
There exists a bilinear map $Q\times \mathbb{Z}\rightarrow\widehat{Q}$ such that~${(\alpha_i,0)\mapsto \alpha_i}$ for $i=1,\ldots,m+n$ and $(-\theta,1)\mapsto \alpha_0$. Let $\underline{\Delta}_+$ denote the reduced root system obtained from the positive root system $\Delta_+$ of $\mathfrak{g}$ by excluding roots $\alpha$ such that $\alpha/2$ are odd roots.
Then the reduced positive root system with multiplicity of $\mathfrak{\hat{g}}$, denoted by $\widetilde{\underline{\Delta}}_+$, is given by
\begin{gather*}
\widetilde{\underline{\Delta}}_+=\widetilde{\underline{\Delta}}^{\text{re}}_{>}\cup \widetilde{\Delta}^{\text{im}}\cup \widetilde{\underline{\Delta}}^{\text{re}}_{<},
\end{gather*}
where \smash{$\widetilde{\underline{\Delta}}^{\text{re}}_{>}=\{ (\alpha,k) \mid \alpha\in \underline{\Delta}_+, k\geqslant 0 \}$}, \smash{$\widetilde{\Delta}^{\text{im}}=\{ (0,k) \mid k>0 \}\times I$}, $\smash{\widetilde{\Delta}^{\text{re}}_{<}=\{ (-\alpha,k)} \mid \alpha\in \underline{\Delta}_+, \allowbreak {k\geqslant 1} \}$.
Establish a fixed ordering on \smash{$\widetilde{\underline{\Delta}}_+$} (see \cite[Section 3]{SMVN} and
\cite {XWLHZ}), and for any \smash{$\alpha\in\widetilde{\underline{\Delta}}_+$}, set \smash{$\hat{q}_{\alpha}=(-1)^{[\alpha]}q^{(\alpha,\alpha)}$}.
\begin{Proposition}[{\cite[Theorem 4.1]{SMVN}}]\label{URD}
The universal $R$-matrix $\mathfrak{R}$ $($up to a multiplicative con\-stant$)$ of $\widetilde{U}_q(\mathfrak{\hat{g}})$ has a unique solution, and takes the form
\begin{gather*}
 \mathfrak{R}=\prod\limits_{\alpha\in\widetilde{\underline{\Delta}}_+ }^{\rightarrow}\mathfrak{R}_{\alpha}\cdot \mathcal{K}=\mathfrak{R}^{>0}\mathfrak{R}^{0}\mathfrak{R}^{<0}\cdot \mathcal{K},
\end{gather*}
where
\begin{gather*}
 \mathfrak{R}^{>0}=\prod\limits_{\alpha\in\widetilde{\underline{\Delta}}_+\setminus\widetilde{\Delta}^{\text{im}} }\exp_{\hat{q}_{\alpha}^{-1}}\bigl((-1)^{[\alpha]}\bigl(q-q^{-1}\bigr)c(\alpha)^{-1}\mathfrak{E}_{\alpha}\otimes \mathfrak{F}_{\alpha}\bigr), \\
 \mathfrak{R}^{<0}=\prod\limits_{\alpha\in\widetilde{\underline{\Delta}}_+\setminus\widetilde{\Delta}^{\text{im}} }\exp_{\hat{q}_{\alpha}^{-1}}\bigl((-1)^{[\alpha]}\bigl(q-q^{-1}\bigr)c(\alpha)^{-1}\mathfrak{E}_{-\alpha}\otimes \mathfrak{F}_{-\alpha}\bigr),\\
 \mathfrak{R}^{0}=\exp\Biggl(\sum\limits_{k>0}\sum\limits_{i,j=1}^{m+n}(-1)^{[k\delta]}\bigl(q-q^{-1}\bigr)c_{ij}(k)\mathfrak{E}_{k\delta^{(i)}}\otimes \mathfrak{F}_{k\delta^{(j)}}\Biggr),
\end{gather*}
where $\mathcal{K}=\mathcal{T}q^{-c\otimes d-d\otimes c}$, $\mathcal{T}=\exp\bigl(\hbar {A_{ij}^{\operatorname{sym}}}^{-1}h_i\otimes h_j\bigr)$ for $i,j\in\{1,\dots,n+m\}$, and $\bigl(A_{ij}^{\operatorname{sym}}\bigr)_{i,j=1}^{n+m}=A^{\operatorname{sym}}=CA$ is the symmetric Cartan matrix and $C=\operatorname{diag}(1, 1,\dots,1/2)$.
 $c(\alpha)$ be the coefficients determined by \smash{$[\mathfrak{E}_{\alpha}, \mathfrak{F}_{\alpha}]=c(\alpha)\frac{K_{\delta}^k-K_{\delta}^{-k}}{q-q^{-1}}$}, where
$(c_{ij}(k))$ is an inverse to the matrix $(\tilde{c}_{ij}(k))$ with the elements determined by
\begin{gather*}
[\mathfrak{E}_{k\delta^{(i)}}, \mathfrak{F}_{k\delta^{(j)}}]=\tilde{c}_{ij}(k)\frac{K_{\delta}^k-K_{\delta}^{-k}}{q-q^{-1}}.
\end{gather*}
\end{Proposition}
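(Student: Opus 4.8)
The plan is to realize $\widetilde{U}_q(\mathfrak{\hat{g}})$ as a Drinfeld (quantum) double of its two affine Borel subalgebras and to identify $\mathfrak{R}$ with the canonical element of the associated Hopf pairing. Writing $\widetilde{U}_q(\mathfrak{\hat{g}})\cong D\bigl(U_q(\hat{\mathfrak{b}}^+),U_q(\hat{\mathfrak{b}}^-)\bigr)$, there is a nondegenerate graded Hopf pairing $\langle\,,\,\rangle$ between the two halves, and the universal $R$-matrix is the canonical element $\mathfrak{R}=\sum_\mu e_\mu\otimes e^\mu$ for dual bases $\{e_\mu\}$, $\{e^\mu\}$. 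For this element the coproduct identities $(\Delta\otimes 1)\mathfrak{R}=\mathfrak{R}_{13}\mathfrak{R}_{23}$, $(1\otimes\Delta)\mathfrak{R}=\mathfrak{R}_{13}\mathfrak{R}_{12}$ and the intertwining property $\mathfrak{R}\Delta(X)=\Delta^T(X)\mathfrak{R}$ hold automatically; conversely these equations, being triangular in each graded component, pin $\mathfrak{R}$ down uniquely up to an overall scalar, which is precisely the ``up to a multiplicative constant'' uniqueness asserted in the statement.

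To obtain the explicit product I would first fix the convex ordering on the reduced positive root system with multiplicities $\widetilde{\underline{\Delta}}_+=\widetilde{\underline{\Delta}}^{\text{re}}_{>}\cup\widetilde{\Delta}^{\text{im}}\cup\widetilde{\underline{\Delta}}^{\text{re}}_{<}$ and use it to build dual PBW-type bases of $U_q^{\pm}$ out of the affine root vectors $\mathfrak{E}_{\alpha},\mathfrak{F}_{\alpha}$ (real roots) and $\mathfrak{E}_{k\delta^{(i)}},\mathfrak{F}_{k\delta^{(j)}}$ (imaginary roots) produced by the braid-group action recalled before Theorem \ref{DR-DRJ}. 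The key input is that the pairing is ``almost orthogonal'' with respect to these bases: root vectors attached to distinct real roots pair to zero modulo higher terms in the chosen order, so the canonical element factorizes as the ordered product $\prod^{\rightarrow}_{\alpha}\mathfrak{R}_\alpha$. Each real-root factor is then evaluated to the $q$-exponential $\exp_{\hat q_\alpha^{-1}}\bigl((-1)^{[\alpha]}(q-q^{-1})c(\alpha)^{-1}\mathfrak{E}_{\pm\alpha}\otimes\mathfrak{F}_{\pm\alpha}\bigr)$, with the normalizing scalar $c(\alpha)$ read off from $[\mathfrak{E}_\alpha,\mathfrak{F}_\alpha]=c(\alpha)\frac{K_\delta^{k}-K_\delta^{-k}}{q-q^{-1}}$, exactly as in the Khoroshkin--Tolstoy construction \cite{SMVN}.

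For the imaginary block I would diagonalize the contribution of the root spaces attached to $k\delta$: the generators $\mathfrak{E}_{k\delta^{(i)}},\mathfrak{F}_{k\delta^{(j)}}$ span a Heisenberg-type subalgebra whose pairing matrix $(\tilde c_{ij}(k))$ is computed from $[\mathfrak{E}_{k\delta^{(i)}},\mathfrak{F}_{k\delta^{(j)}}]=\tilde c_{ij}(k)\frac{K_\delta^{k}-K_\delta^{-k}}{q-q^{-1}}$; inverting this matrix yields the coefficients $c_{ij}(k)$ and assembles the factor $\mathfrak{R}^0$. The Cartan part of the canonical element gives $\mathcal{K}=\mathcal{T}q^{-c\otimes d-d\otimes c}$ with $\mathcal{T}=\exp\bigl(\hbar\,(A^{\mathrm{sym}})^{-1}_{ij}h_i\otimes h_j\bigr)$, the factor $q^{-c\otimes d-d\otimes c}$ being forced by the degree element $d$ so that $\mathfrak{R}$ commutes with the full Cartan. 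The main obstacle, and the genuine difference from the classical case, is bookkeeping the $\mathbb{Z}_2$-grading throughout: one must track the signs $(-1)^{[\alpha]}$ in the $q$-exponentials and in the pairing, verify that passing to the reduced system $\underline{\Delta}_+$ (excluding $\alpha$ with $\alpha/2$ an odd root) is exactly what makes the root vectors behave uniformly, and control convergence of the infinite ordered product in the $\hbar$-adic topology on $\mathbb{C}[[\hbar]]$.
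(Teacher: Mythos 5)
This proposition is not proved in the paper at all: it is imported verbatim from Khoroshkin--Tolstoy \cite[Theorem~4.1]{SMVN}, and your sketch --- realizing $\widetilde{U}_q(\mathfrak{\hat{g}})$ as a quantum double, identifying $\mathfrak{R}$ with the canonical element of the Hopf pairing, factorizing it along a convex ordering of $\widetilde{\underline{\Delta}}_+$ into real-root $q$-exponentials, an imaginary block obtained by inverting $(\tilde c_{ij}(k))$, and the Cartan factor $\mathcal{K}$ --- is exactly the Khoroshkin--Tolstoy argument that the citation stands for. So your proposal is correct and follows essentially the same route as the paper's (cited) proof.
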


From Section \ref{sec3}, we have
$\mathfrak{R}(z)=\mathfrak{R}^{>0}(z)\mathfrak{R}^{0}(z)\mathfrak{R}^{<0}(z)$,
where
\begin{gather*}
 \mathfrak{R}^{>0}(z)=\prod\limits_{\alpha\in\underline{\Delta}_{+} }\prod\limits_{k\geq0 }\exp_{\hat{q}_{\alpha}^{-1}}\bigl((-1)^{[\alpha]}(q_{\alpha}-q_{\alpha}^{-1})z^kc(\alpha+k\delta)^{-1}\mathfrak{E}_{\alpha+k\delta}\otimes \mathfrak{F}_{\alpha+k\delta}\bigr), \\
 \mathfrak{R}^{<0}(z)=\mathcal{T}^{-1}\prod\limits_{\alpha\in\underline{\Delta}_{+} }\prod\limits_{k>0 }\exp_{\hat{q}_{\alpha}^{-1}}\bigl((-1)^{[\alpha]}(q_{\alpha}-q_{\alpha}^{-1})z^kc(-\alpha+k\delta)^{-1}\mathfrak{E}_{-\alpha+k\delta}\otimes \mathfrak{F}_{-\alpha+k\delta}\bigr)\mathcal{T},
\\
 \mathfrak{R}^{0}(z)=\exp\Biggl(\sum\limits_{k>0}\sum\limits_{i,j=1}^{n+m}\frac{\bigl(q_i-q_i^{-1}\bigr)\bigl(q_j-q_j^{-1}\bigr)}{q-q^{-1}}
 \frac{k}{[k]_q}\bigl(A_{ij}^{\operatorname{sym}}\bigl(q^k\bigr)\bigr)^{-1}
 z^kq^{kc/2}
 a_{i,k}\otimes a_{j,-k}q^{-kc/2}\Biggr)\mathcal{T}.
\end{gather*}
Here \smash{$A^{\operatorname{sym}}\bigl(q^k\bigr)=\bigl(A_{ij}^{\operatorname{sym}}\bigl(q^k\bigr)\bigr)_{i,j=1}^{n+m}=\bigl(\bigl[A_{ij}^{\operatorname{sym}}\bigr]_{q^k}\bigr)_{i,j=1}^{n+m}$} be the $q$-deformed matrix of the symmetric Cartan matrix.

Furthermore, the inverses of $A^{\operatorname{sym}}$ and $A^{\operatorname{sym}}(q)$ for $\mathfrak{g}$ are ($i\geq j$)
\begin{equation*}
 \bigl(A_{ij}^{\operatorname{sym}}\bigr)^{-1}=
 \begin{cases}
 -j,& 1\leq j\leq n,\\
 j-2n,& n< j\leq m+n.
 \end{cases}\qquad \bigl(A_{ij}^{\operatorname{sym}}(q)\bigr)^{-1}=\frac{A_{ij}^*(q)}{\operatorname{det}\bigl(A^{\operatorname{sym}}(q)\bigr)},
\end{equation*}
where
\begin{equation*}
 \operatorname{det}\bigl(A^{\operatorname{sym}}(q)\bigr)=(-1)^n([m-n]_q-[m-n-1]_q),
\end{equation*}
and
\begin{equation*}
 A_{ij}^*(q)=
 \begin{cases}
 (-1)^{n+1}[j]_q, & 1\leq j<n,\quad i=m+n,\\
 (-1)^n[j-2n]_q, & n\leq j\leq m+n,\quad i=m+n,\\
 (-1)^{n+1}[j]_q([m-n+i]_q-[m-n+i-1]_q), & 1\leq j\leq i<n,\\
 (-1)^n[j-2n]_q([m+n-i]_q-[m+n-i-1]_q), & n\leq j\leq i< m+n,\\
 (-1)^{n+1}[j]_q([m+n-i]_q-[m+n-i-1]_q), & 1\leq j<n\leq i< m+n.
 \end{cases}
\end{equation*}

For the $L$-operators $\mathfrak{L}^{\pm}(z)$ as defined in equation~\eqref{L-operators}, let us establish the following notations:
\begin{align*}
 &\dot{L}^+(z)=\mathfrak{L}^+(z)\kappa^{+}(z), \qquad
 \dot{L}^-(z)=\mathfrak{L}^-(z)\kappa^{-}(z).
\end{align*}By the defining relation of $\kappa^{\pm}(z)$ (see \eqref{kappa}), the coefficients of the series in $z^{\pm1}$ belong to $\mathcal{A}_q$. Therefore, by Proposition \ref{U(ENDV)}, we have
\begin{align*}
 &R(z/w)\dot{L}_1^{\pm}(z)\dot{L}_2^{\pm}(w)=\dot{L}_2^{\pm}(w)\dot{L}_1^{\pm}(z) R(z/w),\\
 &R(z_+/w_-)\dot{L}_1^{+}(z)\dot{L}_2^{-}(w)=\dot{L}_2^{-}(w)\dot{L}_1^{+}(z) R(z_-/w_+).
\end{align*}

\begin{Proposition} The map defined by
\smash{$ RA\colon L^{\pm}(z) \mapsto \dot{L}^{\pm}(z)
$}
establishes a homomorphism from the superalgebra $U(R)$ to $\mathcal{A}_q$.
\end{Proposition}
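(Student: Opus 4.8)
The plan is to show that the assignment $L^{\pm}(z)\mapsto\dot{L}^{\pm}(z)$ (together with $q^{c/2}\mapsto q^{c/2}$) respects all the defining relations of $U(R)$, so that it extends uniquely to an algebra homomorphism on the coefficients of the generating series. Since $U(R)$ is presented by the two families of $RLL$-relations \eqref{a1} together with the constant-term conditions $l^+_{ii}[0]l^-_{ii}[0]=l^-_{ii}[0]l^+_{ii}[0]=1$ and $l^{\pm}_{ij}[0]=0$ for $i>j$, it suffices to treat these two groups in turn. Recall that the coefficients of $\dot{L}^{\pm}(z)$ indeed lie in $\mathcal{A}_q$ by the defining property of $\kappa^{\pm}(z)$ in \eqref{kappa}, so the map is well defined on generators before any relations are checked.

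The $RLL$-relations are immediate. By Proposition \ref{U(ENDV)} the operators $\mathfrak{L}^{\pm}(z)$ satisfy precisely the relations \eqref{a1}, and right multiplication by the central scalar series $\kappa^{\pm}(z)$ preserves them; this is exactly the content of the two displayed identities for $\dot{L}^{\pm}(z)$ recorded just above the proposition, so nothing further is required here.

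It remains to verify the triangularity and invertibility of the constant terms, and for this I would use the factorization $\mathfrak{R}(z)=\mathfrak{R}^{>0}(z)\mathfrak{R}^{0}(z)\mathfrak{R}^{<0}(z)$ of Proposition \ref{URD} together with the level-$0$ vector representation $\pi_V$ of Proposition \ref{Level-0}. Extracting the $z^{0}$-coefficient one finds that $\mathfrak{R}^{<0}(z)$ contributes only $\mathcal{T}^{-1}\mathcal{T}=1$, since every factor carries a positive power of $z$; that $\mathfrak{R}^{0}(z)$ contributes the Cartan factor $\mathcal{T}$; and that $\mathfrak{R}^{>0}(z)$ contributes the product over $\alpha\in\underline{\Delta}_{+}$ of the $k=0$ exponentials $\exp_{\hat{q}_{\alpha}^{-1}}\bigl(\cdots\mathfrak{E}_{\alpha}\otimes\mathfrak{F}_{\alpha}\bigr)$. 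Applying $1\otimes\pi_V$ and using that $\pi_V$ sends the finite positive root vectors to strictly triangular elementary matrices while $\mathcal{T}$ goes to a diagonal matrix, one reads off that $\dot{L}^{+}[0]$ is upper triangular with the Cartan images on the diagonal; the same computation applied to $\mathfrak{L}^{-}(z)=(1\otimes\pi_V)\mathfrak{R}_{21}\bigl(z_{-}^{-1}\bigr)^{-1}$ shows $\dot{L}^{-}[0]$ is upper triangular with the inverse Cartan images on the diagonal. Comparing diagonals then yields $l^+_{ii}[0]l^-_{ii}[0]=l^-_{ii}[0]l^+_{ii}[0]=1$.

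The hard part will be the bookkeeping in this last step. One must track precisely how the scalar series $\kappa^{\pm}(z)$, whose behaviour is controlled by the central elements $c^{\pm}(z)$ of Proposition \ref{pr5}, combine with the diagonal coming from $\mathcal{T}$, so that the two constant diagonals emerge \emph{exactly} mutually inverse rather than merely proportional; and one must confirm the orientation of the triangularity is the \emph{upper} one demanded by the definition of $U(R)$, which hinges on the fixed ordering of $\widetilde{\underline{\Delta}}_{+}$ and the sign and index conventions of $\pi_V$ in Proposition \ref{Level-0}. Once the $z^{0}$-coefficient of $\mathfrak{R}(z)$ is pinned down through Proposition \ref{URD}, these identifications become routine.
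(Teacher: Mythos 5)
Your treatment of the $RLL$-relations is exactly the paper's (essentially unwritten) argument: Proposition~\ref{U(ENDV)} gives the relations for $\mathfrak{L}^{\pm}(z)$, and multiplying by the central series $\kappa^{\pm}(z)$ preserves them, which is precisely the content of the two displays recorded just above the proposition. That half is correct and complete.

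The constant-term half is where I see a problem with your sketch. You are right that these conditions must be checked and that the $z^{0}$-coefficient of the factorization $\mathfrak{R}(z)=\mathfrak{R}^{>0}(z)\mathfrak{R}^{0}(z)\mathfrak{R}^{<0}(z)$ is the way to do it, and your analysis of $\mathfrak{R}^{<0}(z)$ and $\mathfrak{R}^{0}(z)$ at $z^{0}$ is fine. But the step ``$\pi_V$ sends the finite positive root vectors to strictly triangular elementary matrices \ldots\ one reads off that $\dot L^{+}[0]$ is upper triangular'' does not follow as written. In $\mathfrak{R}^{>0}(z)$ the \emph{second} tensor slot carries the negative root vectors $\mathfrak{F}_{\alpha+k\delta}$, and for the surviving $k=0$ factors $\pi_V(\mathfrak{F}_{\alpha})$ is strictly \emph{lower} triangular (e.g.\ $\pi_V(x^-_{i,0})=E^{i+1}_i-E^{\overline{i}}_{\overline{i+1}}$ from Proposition~\ref{Level-0}); these $k=0$ exponentials do not obviously vanish, so your computation actually produces a lower-unitriangular factor in front of the diagonal, i.e.\ the opposite orientation from the one you assert and from the relation $l^{+}_{ij}[0]=0$ for $i>j$ you are trying to verify. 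Reconciling the $k=0$ contributions with the required triangularity, and matching the diagonal entries of $\dot L^{+}[0]$ and $\dot L^{-}[0]$ as exact mutual inverses (which also involves the constant term of $\kappa^{\pm}(z)$ and the formula for $c^{\pm}(z)$ in Proposition~\ref{pr5}), is exactly the bookkeeping the paper carries out later in the explicit Gauss decomposition of Proposition~\ref{FHE}; it is not routine, and your proof should either perform that analysis for the $k=0$ factors explicitly or defer to that decomposition rather than to Proposition~\ref{URD} alone.
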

\begin{proof}
This is straightforward.
\end{proof}

Denote the matrices
\begin{align*}
 &\dot{F}^+(z)=(1\otimes\pi)\mathfrak{R}^{>0}\bigl(zq^{-c/2}\bigr),\qquad \dot{E}^+(z)=(1\otimes\pi)\mathfrak{R}^{<0}\bigl(zq^{-c/2}\bigr), \\
 &\dot{F}^-(z)=(1\otimes\pi)\mathfrak{R}^{>0}_{21}\bigl(\bigl(zq^{-c/2}\bigr)^{-1}\bigr)^{-1},\qquad \dot{E}^-(z)=(1\otimes\pi)\mathfrak{R}^{<0}_{21}\bigl(\bigl(zq^{-c/2}\bigr)^{-1}\bigr)^{-1}, \\
 & \dot{H}^+(z)=(1\otimes\pi)\mathfrak{R}^{0}\bigl(z_q^{-c/2}\bigr)\kappa^{+}(z),\qquad
 \dot{H}^-(z)=(1\otimes\pi)\mathfrak{R}_{21}^{0}\bigl(\bigl(z_q^{-c/2}\bigr)^{-1}\bigr)^{-1}\kappa^{-}(z)^{-1}.
\end{align*}
For the Drinfeld generators $x_{i,k}^{\pm}$ of $\mathcal{U}_q(\mathfrak{\hat{g}})$, let
\begin{align*}
 & x_i^-(z)^{\geq0} =\sum\limits_{k\geq0}x_{i,k}^{-}z^k,\qquad x_i^+(z)^{>0} =\sum\limits_{k>0}x_{i,k}^{+}z^k,\qquad x_i^-(z)^{<0} =\sum\limits_{k>0}x_{i,-k}^{-}z^{-k},\\
 & x_i^+(z)^{\leq0} =\sum\limits_{k\geq0}x_{i,-k}^{+}z^{-k}.
\end{align*}
Then set for $1\leq i\leq m+n-1$,
\begin{align*}
 & f_i^+(z)=(-1)^{[\alpha_{i}]}\bigl(q_i-q_i^{-1}\bigr)x_i^+(z_-q^{\nu_i})^{>0},\qquad e_i^+(z)=(-1)^{[\alpha_{i}]}\bigl(q_i-q_i^{-1}\bigr) x_i^-(z_+q^{\nu_i})^{\geq0},\\
 & f_i^-(z)=(-1)^{[\alpha_{i}])}\bigl(q_i^{-1}-q_i\bigr)x_i^+(z_+q^{\nu_i})^{\leq0},\qquad e_i^-(z)=(-1)^{[\alpha_{i}]}\bigl(q_i^{-1}-q_i\bigr) x_i^-(z_-q^{\nu_i})^{>0},
\end{align*}
and
\begin{align*}
 & f_{n+m}^+(z)=(-1)^{[\alpha_{n+m}]}\bigl(q_{n+m}-q_{n+m}^{-1}\bigr)[2]^{1/2}_{q_{n+m}}x_{n+m}^+(z_-q^{m-n})^{>0},\\ & f_{n+m}^-(z)=(-1)^{[\alpha_{n+m}]}\bigl(q_{n+m}^{-1}-q_{n+m}\bigr)[2]^{1/2}_{q_{n+m}}x_{n+m}^+(z_+q^{m-n})^{\leq0},\\
 &e_{n+j}^+(z)=(-1)^{[\alpha_{n+m}]}\bigl(q_{n+m}-q_{n+m}^{-1}\bigr)[2]^{1/2}_{q_{n+m}} x_{n+m}^-(z_+q^{m-n})^{\geq0},\\
 &e_{n+m}^-(z)=(-1)^{[\alpha_{n+m}]}\bigl(q_{n+m}^{-1}-q_{n+m}\bigr)[2]^{1/2}_{q_{n+m}} x_{n+m}^-(z_-q^{m-n})^{>0}.
\end{align*}

\begin{Proposition}\label{FHE}
Under the above construction, we have the following decomposition:
\begin{align*}
 \dot{L}^{\pm}(z)={}&\dot{F}^{\pm}(z)\dot{H}^{\pm}(z)\dot{E}^{\pm}(z)=\left(
 \begin{matrix}
 1 & & & & & \\
 f_1^{\pm}(z) & 1 & & & & \\
 & \ddots & \ddots & & & \\
 & & f_{n+m}^{\pm}(z) & 1 & & \\
 & & & -f_{n+m-1}^{\pm}(zq) &1 & \\
 & & * & \ddots& \ddots & \\
 & & & & -f_{1}^{\pm}\bigl(z\zeta q^2\bigr) & 1 \\
 \end{matrix}
 \right)\\
 & \times \dot{H}^{\pm}(z)\times
 \left(
 \begin{matrix}
 1 & e_1^{\pm}(z) & & & & \\
 & \ddots & \ddots & & * & \\
 & & 1 & e_{n+m}^{\pm}(z) & & \\
 & & & 1 & -e_{n+m-1}^{\pm}(zq) & \\
 & & & \ddots & \ddots & \\
 & & & & 1 & -e_{1}^{\pm}\bigl(z\zeta q^2\bigr) \\
 & & & & & 1\\
 \end{matrix}
 \right),
\end{align*}
where
 \begin{align*}
\dot{H}^{\pm}(z)={}&\operatorname{diag}\bigl(h_1^{\pm}(z), \dots, h_{n+m}^{\pm}(z), h_{n+m+1}^{\pm}(z), c^{\pm(1)}(z)h_{n+m}^{\pm}(zq)^{-1} ,\dots,\\
& c^{\pm ( m-1)}h_{n+1}^{\pm}\bigl(zq^{2m-1}\bigr)^{-1},
c^{\pm( m)}h_{n}^{\pm}\bigl(zq^{2m-3}\bigr)^{-1},\dots,
c^{\pm(n+m)}(z)h_{1}^{\pm}(z\zeta)^{-1}\bigr).
\end{align*}
\end{Proposition}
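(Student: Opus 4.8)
The plan is to read the decomposition directly off the multiplicative factorization $\mathfrak{R}(z)=\mathfrak{R}^{>0}(z)\mathfrak{R}^{0}(z)\mathfrak{R}^{<0}(z)$ of the universal $R$-matrix recalled from Proposition \ref{URD}, and then to identify the explicit Gaussian entries by evaluating the affine root vectors in the level-$0$ vector representation of Proposition \ref{Level-0}. For the first equality, observe that $1\otimes\pi_V$ is an algebra homomorphism on the second tensor factor, so applying it to the ordered product $\mathfrak{R}(z_-)=\mathfrak{R}^{>0}(z_-)\mathfrak{R}^{0}(z_-)\mathfrak{R}^{<0}(z_-)$ respects the factorization and $\mathfrak{L}^+(z)=(1\otimes\pi_V)\mathfrak{R}(z_-)$ splits as a product of the images of the three factors. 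Multiplying on the right by the central series $\kappa^+(z)$ from \eqref{kappa} and absorbing it into the middle term yields $\dot L^+(z)=\dot F^+(z)\dot H^+(z)\dot E^+(z)$ with $\dot F^+,\dot E^+,\dot H^+$ precisely as defined before the statement; the case of $\dot L^-(z)$ is identical upon replacing $\mathfrak{R}$ by $\mathfrak{R}_{21}^{-1}$ evaluated at the inverse spectral parameter. It then remains to show that each factor, once evaluated, is triangular (resp. diagonal) with the claimed entries.

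Next I would compute $\dot F^+(z)=(1\otimes\pi_V)\mathfrak{R}^{>0}(zq^{-c/2})$. In the vector representation the images $\pi_V(\mathfrak{F}_{\alpha+k\delta})$ of the negative affine root vectors are elementary-type matrices: for a simple root they are the matrices $E^{i+1}_i$ together with their involuted partners $E^{\overline i}_{\overline{i+1}}$ read off from Proposition \ref{Level-0}, and for higher roots they are the corresponding products. Each such image squares to zero in the relevant block, so every $\hat q_\alpha^{-1}$-exponential in $\mathfrak{R}^{>0}$ collapses to $1+X$ and contributes a single off-diagonal entry; collecting these over $k\ge 0$ reproduces exactly the generating series $f_i^+(z)$. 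Here the sign $(-1)^{[\alpha_i]}$ and the factor $(q_i-q_i^{-1})$ come from the coefficient $(-1)^{[\alpha]}(q_\alpha-q_\alpha^{-1})c(\alpha+k\delta)^{-1}$ in Proposition \ref{URD}, the shift $z\mapsto z_-q^{\nu_i}$ arises from the eigenvalue $(zq^{\nu_i})^k$ of the representation together with the substitution $z\mapsto zq^{-c/2}$, and the extra $[2]^{1/2}_{q_{n+m}}$ at the last node is the normalization of $x_{n+m,k}^{\pm}$ in Proposition \ref{Level-0}. The entries in the barred half are then fixed by the involution relations $\mathfrak{e}^{\pm}_{\overline{(i+1)}}(z)=-\mathfrak{e}^{\pm}_i(\cdots)$ of Lemma \ref{le4} together with their $\mathfrak{f}$-analogues, which produce the minus signs and shifted arguments $-f^{\pm}_{n+m-1}(zq),\dots,-f^{\pm}_{1}(z\zeta q^2)$. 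The computation of $\dot E^+(z)$ from $\mathfrak{R}^{<0}$ is entirely parallel.

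Finally I would identify the diagonal $\dot H^+(z)$. The factor $\mathfrak{R}^0$ is built from the imaginary root vectors $a_{i,\pm k}$, whose images under $\pi_V$ are diagonal by Proposition \ref{Level-0}, so $\dot H^+(z)$ is diagonal; its first $m+n+1$ entries are by definition the series $h_i^+(z)$, chosen so that the quotients $h_{i+1}^+(zq^{-\nu_i})h_i^+(zq^{-\nu_i})^{-1}$ recover $\Phi_i^+(z)$. The remaining barred entries are forced by the central relations: combining the identity $D\mathcal L^+(z\zeta)^tD^{-1}\mathcal L^+(z)=c^+(z)$ of Proposition \ref{Apr2} with the explicit product formula for $c^{\pm(m|n)}(z)$ in Proposition \ref{pr5} and its diagonal consequence \eqref{For2} expresses each barred entry as $c^{\pm(l)}(z)$ divided by the appropriate shifted $h^{\pm}$, exactly as stated. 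The same chain of arguments applies verbatim to $\dot L^-(z)$.

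I expect the main obstacle to be the bookkeeping in the middle step: verifying that the product of $\hat q_\alpha^{-1}$-exponentials in the vector representation assembles into a genuinely lower-triangular matrix whose only independent entries are the subdiagonal currents, and tracking the precise spectral shifts, signs, and the $[2]^{1/2}_{q_{n+m}}$ normalization through the involution $a\mapsto\overline a$ for the barred half and through the distinguished node $n+m$. Once these normalizations are pinned down, matching the computed entries with the definitions of $f_i^{\pm}$, $e_i^{\pm}$, $h_i^{\pm}$ and with Lemma \ref{le4} and Proposition \ref{pr5} is routine.
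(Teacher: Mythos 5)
Your proposal follows essentially the same route as the paper's proof: factor $\mathfrak{R}(z)$ as $\mathfrak{R}^{>0}(z)\mathfrak{R}^{0}(z)\mathfrak{R}^{<0}(z)$ via Proposition~\ref{URD}, push the factorization through $1\otimes\pi_V$, let the nilpotency of the root-vector images collapse each $q$-exponential to $1+X$, and assemble the resulting series into the currents $f_i^{\pm}$, $e_i^{\pm}$ with the $\mathcal{T}$-conjugation and the $[2]^{1/2}_{q_{n+m}}$ normalization tracked at the distinguished node. This is correct in outline and matches the paper step for step for the triangular factors.

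The one point where your argument as written has a gap is the diagonal factor. You assert that the first $m+n+1$ entries of $\dot H^{+}(z)$ are $h_i^{+}(z)$ ``by definition, chosen so that the quotients recover $\Phi_i^{+}(z)$.'' But the quotient condition $\Phi_i^{+}(z)=h_{i+1}^{+}(zq^{-\nu_i})h_i^{+}(zq^{-\nu_i})^{-1}$ only determines the diagonal up to a common central series; pinning down each individual entry is precisely where the work lies. In the paper this occupies the bulk of the proof: one must compute the $(j,j)$-entry of $(1\otimes\pi_V)\mathfrak{R}^{0}(z_-)\mathcal{T}\,\kappa^{+}(z)$ explicitly using the inverse of the $q$-deformed symmetric Cartan matrix $A^{\operatorname{sym}}(q^k)$, Taylor-expand the resulting rational functions of $q^k$ into geometric series in $\zeta^{-k}$, recognize the outcome as the infinite telescoping product of shifted $\widetilde\Phi_k^{+}$'s (the series $\Gamma_j^{\pm}(z)$ from Proposition~\ref{Apr0}), and only then invoke Propositions~\ref{pr5} and~\ref{Apr0} together with the normalization $\kappa^{+}(z)$ of \eqref{kappa} to conclude the entry equals $h_j^{+}(z)$. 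You do cite the right ingredients (Propositions~\ref{Apr2}, \ref{pr5}, and relation \eqref{For2}) for the barred half, so the plan is salvageable, but the unbarred diagonal entries require the same explicit computation and cannot be dispatched as definitional.
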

\begin{proof} We only consider the decomposition of $\dot{L}^+(z)$ since $\dot{L}^-(z)$ is similar.
By the isomorphism relations in Theorem \ref{DR-DRJ}, for simple roots $\alpha_i$ with $i=1,\dots,m+n$, we can write the product
\begin{align*}
\prod\limits_{k\geq0 }\exp_{(-1)^{[\alpha_i]}q_{i}^{-1}}\bigl((-1)^{[\alpha_i]}\bigl(q_{i}-q_{i}^{-1}\bigr)\bigl(zq^{-c/2}\bigr)^k\mathfrak{E}_{\alpha_i+k\delta}\otimes \mathfrak{F}_{\alpha_i+k\delta}\bigr)
\end{align*}as
\begin{align*}
\prod\limits_{k\geq0 }\exp_{(-1)^{[\alpha_i]}q_{i}^{-1}}\bigl((-1)^{[\alpha_i]}\bigl(q_{i}-q_{i}^{-1}\bigr)\bigl(zq^{-c/2}\bigr)^k x^{+}_{i,k}\otimes x^{-}_{i,-k}\bigr).
\end{align*}
Suppose that $i\leq n$, then by the representation $\pi_{V}$ presented in Proposition \ref{Level-0} for $V(1)=V$, we get
\begin{align*}
&(1\otimes\pi_V) \prod\limits_{k\geq0 }\exp_{(-1)^{[\alpha_i]}q_{i}^{-1}}\bigl((-1)^{[\alpha_i]}\bigl(q_{i}-q_{i}^{-1}\bigr)\bigl(zq^{-c/2}\bigr)^k x^{+}_{i,k}\otimes x^{-}_{i,-k}\bigr)\\
&\qquad=\prod\limits_{k\geq0 }\exp_{(-1)^{[\alpha_i]}q_{i}^{-1}}\bigl((-1)^{[\alpha_i]}\bigl(q_{i}-q_{i}^{-1}\bigr)\bigl(z_-q^{i}\bigr)^k x^{+}_{i,k}\otimes E^{i+1}_i-(-1)^{[\alpha_i]}\bigl(q_{i}-q_{i}^{-1}\bigr)\\
&\phantom{\qquad=}{}\times\bigl(z_-q^{2n-2m-i+1}\bigr)^k x^{+}_{i,k}\otimes E^{\overline{i}}_{\overline{i+1}}\bigr).
\end{align*}
Expanding the $q$-exponent and using the definition of $f_i^+(z)$ and $x_i^+(z)^{>0}$, we deduce that \smash{$1+f_i^+(z)\otimes E^{i+1}_i-f_i^+\bigl(z\zeta q^{2i}\bigr)\otimes E^{\overline{i}}_{\overline{i+1}}$} for $i\leq n$ as required. A similar calculation shows that this holds for $n<i$, thereby giving us the expression of $\dot{F}^+(z)$.
For $\dot{E}^+(z)$, first from the definition elements of $k_i\tilde{\mapsto} K_i=\exp(\hbar h_i)$, we have
\begin{align*}
(1\otimes\pi_V)(\mathcal{T}_{12})={}&\exp\Biggl(\hbar\sum\limits_{i=1}^{n+m}\sum\limits_{j=1}^{n+m}\bigl(A_{ij}^{\operatorname{sym}}\bigr)^{-1}h_i\otimes\pi_V(h_j)\Biggr)\\
={}&\exp\Biggl\{\hbar\sum\limits_{i=1}^{n+m}\Biggl(\sum\limits_{j=1}^{n-1}\bigl(A_{ij}^{\operatorname{sym}}\bigr)^{-1}h_i\otimes
\bigl(E^{j+1}_{j+1}-E^{j}_{j}-E^{\overline{j+1}}_{\overline{j+1}}+E^{\overline{j}}_{\overline{j}}\bigr)\\
&+
\bigl(A_{in}^{\operatorname{sym}}\bigr)^{-1}h_i\otimes\bigl(-E^{n+1}_{n+1}+E^{n}_{n}+E^{\overline{n+1}}_{\overline{n+1}}-E^{\overline{n}}_{\overline{n}}\bigr)\\
&+\sum\limits_{j=n+1}^{m+n-1}
\bigl(A_{ij}^{\operatorname{sym}}\bigr)^{-1}h_i\otimes\bigl(E^{j}_{j}-E^{j+1}_{j+1}-E^{\overline{j}}_{\overline{j}}+E^{\overline{j+1}}_{\overline{j+1}}\bigr)\\
&+\bigl(A_{i,m+n}^{\operatorname{sym}}\bigr)^{-1}h_i\otimes\bigl(E^{m+n}_{m+n}-E^{\overline{m+n}}_{\overline{m+n}}\bigr)
\Bigr)\Bigr\}\\
={}&\exp\Biggl\{\hbar\sum\limits_{i=1}^{n+m}\Biggl(\sum\limits_{j=2}^{n}\bigl(\bigl(A_{i,j-1}^{\operatorname{sym}}\bigr)^{-1}-\bigl(A_{ij}^{\operatorname{sym}}\bigr)^{-1}\bigr)h_i\otimes \bigl(E^{j}_{j}-E^{\overline{j}}_{\overline{j}}\bigr)\\
&+\bigl(A_{i1}^{\operatorname{sym}}\bigr)^{-1}h_i\otimes \bigl(E^{\overline{1}}_{\overline{1}}-E^{1}_{1}\bigr)+\sum\limits_{j=n+1}^{m+n}
\bigl(\bigl(A_{ij}^{\operatorname{sym}}\bigr)^{-1}-(A_{i,j-1}^{\operatorname{sym}})^{-1}\bigr)h_i\\
&\otimes \bigl(E^{j}_{j}-E^{\overline{j}}_{\overline{j}}\bigr)\Bigr)\Bigr\}.
\end{align*}
By the formulas of \smash{$\bigl(A_{ij}^{\operatorname{sym}}\bigr)^{-1}$}, it is evident that the image $(1\otimes\pi_V)\mathcal{T}$ forms a diagonal matrix given by
\begin{gather}\label{T12}
 \operatorname{diag}\Biggl(\prod\limits_{i=1}^{n+m}k_i, \dots, \prod\limits_{i=j}^{n+m}k_j, \dots, k_{m+n},1, k_{m+n}^{-1}, \dots, \prod\limits_{i=1}^{n+m}k_i^{-1}\Biggr).
\end{gather}
Following the same calculation procedure as for $\dot{F}^+(z)$, and utilizing the relations $\smash{k_ix_{j,k}^{\pm}k_i^{-1}}=\smash{q_i^{\pm A_{ij}}x_{j,k}^{\pm}}$, we derive the expression for $\dot{E}^+(z)$ in the decomposition.
For $\dot{H}^+(z)$, actually, using the vector representation $\pi_V$, we obtain $\dot{H}^+(z)$ with the form
\begin{align*}
 \dot{H}^+(z)={}&\exp\Biggl(\sum\limits_{k>0}\sum\limits_{i,j=1}^{n+m}\frac{\bigl(q_i-q_i^{-1}\bigr)\bigl(q_j-q_j^{-1}\bigr)}{q-q^{-1}}
 \frac{k}{[k]_q}\bigl(A_{ij}^{\operatorname{sym}}\bigl(q^k\bigr)\bigr)^{-1} z^k
 a_{i,k}\otimes\pi_V(a_{j,-k})\Biggr)\\
 &\times(1\otimes\pi_V)\mathcal{T}\kappa^{+}(z),
\end{align*}
and it is a diagonal matrix due to the action of the generators. For the exponent in this expression, consider the $(i,i)$-entry, we have
\begin{gather}\label{Ai-1}
 \exp\Biggl\{\sum\limits_{k>0}\sum\limits_{i=1}^{n+m}\bigl(q_i^{-1}-q_i\bigr)
 A_{i1}^{\operatorname{sym}}\bigl(q^k\bigr)^{-1} z^k
 a_{i,k}\Biggr\}\otimes E^1_1,
\end{gather}
and
\begin{gather}
 \exp\Biggl\{\sum\limits_{k>0}\Biggl(\sum\limits_{i=1}^{n}\bigl(q_i-q_i^{-1}\bigr)
 \bigl(q^{\nu_{j-1}k}A_{i,j-1}^{\operatorname{sym}}\bigl(q^k\bigr)^{-1}-q^{\nu_jk}A_{ij}^{\operatorname{sym}}\bigl(q^k\bigr)^{-1}\bigr)+ \sum\limits_{i=n+1}^{n+m}\bigl(q_i-q_i^{-1}\bigr)\nonumber\\
 \qquad\times \bigl(q^{\nu_jk}A_{ij}^{\operatorname{sym}}\bigl(q^k\bigr)^{-1}-q^{\nu_{j-1}k}A_{i,j-1}^{\operatorname{sym}}\bigl(q^k\bigr)^{-1}\bigr)\bigr) z^k
 a_{i,k}\bigr\}\otimes E^j_j\label{Aij}
\end{gather}
for $j=2,\dots,n+m$,

As the coefficient of $E^1_1$ in \eqref{Ai-1},
\begin{gather*}
 \exp\Biggl\{\sum\limits_{k>0}\sum\limits_{i=1}^{n+m}\bigl(q_i^{-1}-q_i\bigr)
 A_{i1}^{\operatorname{sym}}\bigl(q^k\bigr)^{-1} z^k
 a_{i,k}\Biggr\}\\
 \qquad=\exp\Biggl\{\sum\limits_{k>0}\Biggl(\sum\limits_{i=1}^{n}\bigl(q-q^{-1}\bigr)
 \frac{[m-n+j]_{q^k}-[m-n+j-1]_{q^k}}{[m-n]_{q^k}-[m-n-1]_{q^k}}\\
 \phantom{\qquad=}{}+\sum\limits_{j=n+1}^{m+n-1}\bigl(q-q^{-1}\bigr)
 \frac{[m+n-j]_{q^k}-[m+n-j-1]_{q^k}}{[m-n]_{q^k}-[m-n-1]_{q^k}}\\
 \phantom{\qquad=}{} +\bigl(q_{m+n}-q_{m+n}^{-1}\bigr)\frac{1}{[m-n]_{q^k}-[m-n-1]_{q^k}}
 \biggr) z^k
 a_{i,k}\biggr\}.
\end{gather*}
By a directly calculation,
\begin{gather*}
 \exp\Biggl\{\sum\limits_{k>0}\sum\limits_{i=1}^{n+m}\bigl(q_i-q_i^{-1}\bigr)
 A_{i1}^{\operatorname{sym}}\bigl(q^k\bigr)^{-1} z^k
 a_{i,k}\Biggr\}\\
 \qquad=\exp\Biggl\{\sum\limits_{k>0}\Biggl(\sum\limits_{i=1}^{n}\bigl(q-q^{-1}\bigr)\times
 \frac{q^{ik}-q^{-ik}\zeta^{-k}}{1+\zeta^{-k}}+
 \sum\limits_{j=n+1}^{m+n-1}\bigl(q-q^{-1}\bigr)
 \frac{q^{(2n-j)k}-q^{(-2n+j)k}\zeta^{-k}}{1+\zeta^{-k}}\\
 \phantom{\qquad=}{} +
 \frac{\bigl(q_{m+n}-q_{m+n}^{-1}\bigr)\bigl(q^{(n-m)k}-q^{(m-n+1)k}\bigr)}{1+\zeta^{-k}}
 \biggr) z^k
 a_{i,k}\biggr\}.
\end{gather*}
Using the Taylor formula, we expand the fractions into power series as
\begin{gather*}
 \exp\Biggl\{\sum\limits_{k>0}\sum\limits_{i=1}^{m+n-1}\sum\limits_{p=0}^{\infty}\bigl(q-q^{-1}\bigr)(-1)^p
 \bigl(\zeta^{-pk}q^{-\nu_ik}1+\zeta^{-(p+1)k}q^{\nu_ik}\bigr)z^k
 a_{i,k}\Biggr\}\\
 \qquad \times\exp\Biggl\{\sum\limits_{k>0}\sum\limits_{p=0}^{\infty}\bigl(q_{m+n}-q_{m+n}^{-1}\bigr)(-1)^p
 \bigl(\zeta^{-pk}q^{(n-m)k}1+\zeta^{-(p+1)k}q^{(m-n+1)k}\bigr)z^k
 a_{m+n,k}\Biggr\}.
\end{gather*}
 Set \smash{$\widetilde{\Phi}_i^+(z)=k_i^{-1}\Phi_i^+(z)$}, where $\Phi^+_i(z)$ is the definition \eqref{homo2}. Then the above expression take the form
\begin{gather*}
 \prod\limits_{p=0}^{\infty}\prod\limits_{k=1}^{n+m}\widetilde{\Phi}^{+}_{k}\bigl(z\zeta^{-2p} q^{-\nu_k}\bigr)^{-1}\widetilde{\Phi}^{+}_{k}\bigl(z\zeta^{-2p-1} q^{-\nu_k}\bigr)\widetilde{\Phi}^{+}_{k}\bigl(z\zeta^{-2p-1} q^{\nu_k}\bigr)^{-1}\widetilde{\Phi}^{+}_{k}\bigl(z\zeta^{-2p-2} q^{\nu_k}\bigr)
.
\end{gather*}
Therefore, applying Propositions \ref{pr5} and \ref{Apr0}, we deduce that
\begin{gather*}
 \exp\Biggl\{\sum\limits_{k>0}\sum\limits_{i=1}^{n+m}\bigl(q_i-q_i^{-1}\bigr)
 A_{i1}^{\operatorname{sym}}\bigl(q^k\bigr)^{-1} z^k
 a_{i,k}\Biggr\}(1\otimes\pi_V)\mathcal{T}\kappa^{+}(z)=h_1^{+}(z)
\end{gather*}
for \smash{$\widetilde{\Phi}^{+}_{i}(z)=k_i^{-1} h_{i+1}^{+}(zq^{-\nu_i})h_{i}^{+}(zq^{-\nu_i})^{-1}$} and
the formulas of diagonal matrix
\eqref{T12}.

Moreover, by the similar arguments and the formulas of \smash{$A_{ij}^{\operatorname{sym}}\bigl(q^k\bigr)$},
the expression in the position $E^j_j$, $j=2,\dots,m+n$, of \eqref{Aij} can be determined as
\begin{align*}
 \Upsilon_j={}&\prod\limits_{p=0}^{\infty}\prod\limits_{k=1}^{j-1}\widetilde{\Phi}^{+}_{k}\bigl(z\zeta^{-2p} q^{-\nu_k}\bigr)^{-1}\widetilde{\Phi}^{+}_{k}\bigl(z\zeta^{-2p} q^{\nu_k}\bigr)\widetilde{\Phi}^{+}_{k}\bigl(z\zeta^{-2p-1} q^{-\nu_k}\bigr)\widetilde{\Phi}^{+}_{k}\bigl(z\zeta^{-2p-1} q^{\nu_k}\bigr)^{-1}\\
&\times \prod\limits_{p=0}^{\infty}\prod\limits_{k=j}^{n+m}\widetilde{\Phi}^{+}_{k}\bigl(z\zeta^{-2p} q^{-\nu_k}\bigr)^{-1}\widetilde{\Phi}^{+}_{k}\bigl(z\zeta^{-2p-1} q^{\nu_k}\bigr)^{-1}\widetilde{\Phi}^{+}_{k}\bigl(z\zeta^{-2p-1} q^{-\nu_k}\bigr)\widetilde{\Phi}^{+}_{k}\bigl(z\zeta^{-2p-2} q^{\nu_k}\bigr),
\end{align*}
and hence
$\Upsilon_j(1\otimes\pi_V)\mathcal{T}\kappa^{+}(z)=h_j^{+}(z)
$
via Propositions \ref{pr5} and \ref{Apr0}.
The remaining expression in the position \smash{$E_{\overline{i}}^{\overline{i}}$}, $i=1,\dots,m+n$, are similar. So, we have the diagonal matrix \smash{$\dot{H}^+(z)$}.
\end{proof}

\begin{Remark}
The submatrix decomposition in this proposition for the indexes $n+1\leq i\leq 2m+n+1$ have the similar result in \cite{JMAB} via Remark \ref{RM2}\,(1).
\end{Remark}

Now, the main result in this section, we claim the following.

\begin{Theorem}\label{th4}
The superalgebra $U(R)$ is isomorphic to $\mathcal{A}_q$.
\end{Theorem}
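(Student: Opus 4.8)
The plan is to exhibit two mutually inverse homomorphisms between $U(R)$ and $\mathcal{A}_q$. On one side stands the tautological map $AR\colon\mathcal{A}_q\to U(R)$ sending each defining generator of $\mathcal{A}_q$ to the Gaussian generator of $U(R)$ bearing the same name; this is a well-defined homomorphism precisely because Theorem~\ref{th3}\,(2) shows that the Gaussian generators $h_i^{\pm}(z)$, $X_j^{\pm}(z)$ of $U(R)$ satisfy the very relations imposed on $\mathcal{A}_q$. On the other side stands $RA\colon U(R)\to\mathcal{A}_q$, $L^{\pm}(z)\mapsto\dot{L}^{\pm}(z)$, which is a homomorphism since the normalized operators $\dot{L}^{\pm}(z)=\mathfrak{L}^{\pm}(z)\kappa^{\pm}(z)$ obey the same $RLL$-relations \eqref{a1} as $L^{\pm}(z)$. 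First I would record that $AR$ is surjective: the simple Gaussian generators $h_i^{\pm}(z)$, $e_i^{\pm}(z)=e_{i,i+1}^{\pm}(z)$ and $f_i^{\pm}(z)=f_{i+1,i}^{\pm}(z)$ generate $U(R)$ through the Gauss decomposition $L^{\pm}(z)=F^{\pm}(z)H^{\pm}(z)E^{\pm}(z)$, while the remaining ``upper'' generators $\mathfrak{h}_{\overline{i}}^{\pm}$, $e_{\overline{i}}^{\pm}$, $f_{\overline{i}}^{\pm}$ are reconstructed from the former via Lemma~\ref{le4} and the central elements of Proposition~\ref{pr5}; hence every generator of $U(R)$ lies in the image of $AR$.

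The heart of the argument is the identity $RA\circ AR=\operatorname{id}_{\mathcal{A}_q}$, and this is supplied almost directly by Proposition~\ref{FHE}. Applying the homomorphism $RA$ to the Gauss factorization $L^{\pm}(z)=F^{\pm}(z)H^{\pm}(z)E^{\pm}(z)$ in $U(R)$ produces the factorization $\dot{L}^{\pm}(z)=\dot{F}^{\pm}(z)\dot{H}^{\pm}(z)\dot{E}^{\pm}(z)$, in which $\dot{F}^{\pm}(z)$ is lower unitriangular, $\dot{H}^{\pm}(z)$ is diagonal and $\dot{E}^{\pm}(z)$ is upper unitriangular by Proposition~\ref{FHE}. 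By the uniqueness of the Gauss decomposition (Section~\ref{sec4.3}) these must be the Gauss factors of $\dot{L}^{\pm}(z)$, so $RA$ carries the Gaussian generators of $U(R)$ to the entries read off from $\dot{H}^{\pm}(z)$, $\dot{E}^{\pm}(z)$, $\dot{F}^{\pm}(z)$. Reading Proposition~\ref{FHE} entrywise, these entries are exactly the Drinfeld series $h_i^{\pm}(z)$ for $1\le i\le m+n+1$ and $X_i^{\pm}(z)$ for $1\le i\le m+n$ of $\mathcal{A}_q$. Consequently $RA\circ AR$ fixes every defining generator of $\mathcal{A}_q$; since the central series $c^{\pm}(z)$ are themselves products of the $h_i^{\pm}(z)$ with $i\le m+n+1$ by Proposition~\ref{pr5}, they too are fixed, and therefore $RA\circ AR=\operatorname{id}_{\mathcal{A}_q}$.

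Finally I would conclude: $RA\circ AR=\operatorname{id}_{\mathcal{A}_q}$ forces $AR$ to be injective, and combined with its surjectivity this makes $AR$ an isomorphism; its inverse is then necessarily $RA$. Hence $U(R)\cong\mathcal{A}_q$, as claimed. The genuinely delicate input is Proposition~\ref{FHE}, where the vector representation of Proposition~\ref{Level-0} and the explicit factorization of the universal $R$-matrix (Proposition~\ref{URD}) are used to identify the Gauss entries of $\dot{L}^{\pm}(z)$ with the Drinfeld currents. The points to handle with care in the present theorem are the legitimacy of invoking uniqueness of the Gauss decomposition in the appropriate formal completion—that is, confirming $\dot{F}^{\pm}(z)$ and $\dot{E}^{\pm}(z)$ are genuinely unitriangular so that the factorization of $\dot{L}^{\pm}(z)$ is unique—together with the verification that the ``upper half'' of the Gaussian generators is recovered in the image of $AR$, so that surjectivity indeed holds.
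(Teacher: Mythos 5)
Your proposal is correct and follows essentially the same route as the paper: both arguments rest on the homomorphism $AR$ (well defined by Theorem~\ref{th3}), the homomorphism $RA\colon L^{\pm}(z)\mapsto\dot{L}^{\pm}(z)$, and the Gauss factorization of $\dot{L}^{\pm}(z)$ in Proposition~\ref{FHE} to show the two maps are mutually inverse. The only cosmetic difference is that you replace the paper's appeal to Proposition~\ref{Apr1} for the identity $AR\circ RA=\operatorname{id}_{U(R)}$ by an explicit surjectivity argument for $AR$, which is a legitimate reorganization of the same proof.
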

\begin{proof}
It is straightforward to observe that the map $AR\colon \mathcal{A}_q\rightarrow U(R)$, defined as follows:
\begin{align*}
 &X_i^{+}(z)\mapsto X^{+}_i(z)=f^+_{i}(z_+)-f^-_{i}(z_-), \qquad 1\leq i\leq m+n,\\
 &X^{-}_i(z)\mapsto X^{-}_i(z)=e^+_{i}(z_+)-e^-_{i}(z_-),\qquad 1\leq i\leq m+n,\\
 &h_j^{\pm}(w)\mapsto h_j^{\pm}(w), \qquad 1\leq j\leq m+n+1,
\end{align*}
defines a homomorphism. On the other hand, Propositions \ref{Apr1} and \ref{FHE} collectively imply that the homomorphism $RA$ serves as the inverse map of $AR$, thereby completing the proof.
\end{proof}

\begin{Definition}The $R$-matrix presentation of quantum affine superalgebra \smash{$\mathcal{U}_q^{R}(\mathfrak{\hat{g}})$} is an associative superalgebra over $\mathbb{C}\bigl(q^{1/2}\bigr)$ generated by an invertible central element $q^{c/2}$ and elements~$l_{ij}^{\pm}[\mp p]$, where the indices satisfy $ 1\leq i,j\leq2n+2m+1$,
 subject to the following relations:
\begin{align*}
 &l^+_{ii}[0]l^-_{ii}[0]=l^-_{ii}[0] l^+_{ii}[0]=1, \qquad l^+_{ij}[0]=l^-_{ij}[0]=0\qquad \text{for} \quad i>j,\\
 &R(z/w)L_1^{\pm}(z)L_2^{\pm}(w)=L_2^{\pm}(w)L_1^{\pm}(z) R(z/w),\\
 &R(z_+/w_-)L_1^{+}(z)L_2^{-}(w)=L_2^{-}(w)L_1^{+}(z) R(z_-/w_+),\\
 &DL^{\pm}(z\zeta)^tD^{-1}L^{\pm}(z)=L^{\pm}(z)DL^{\pm}(z\zeta)^tD^{-1}=1.
\end{align*}
Here
 $z_{\pm}=zq^{\pm c/2}$, and $L_i^{\pm}(z)\in \operatorname{End}\mathbb{C}^N\otimes \operatorname{End}\mathbb{C}^N\otimes
 U(R)$, $i=1,2$, written by
\begin{gather*}
 L_1^{\pm}(z)=\sum\limits_{i,j=1}E^i_j\otimes1\otimes l_{ij}^{\pm}(z),\qquad
 L_2^{\pm}(z)=\sum\limits_{i,j=1}1\otimes E^i_j\otimes l_{ij}^{\pm}(z),
\end{gather*}with
\begin{equation*}
 l_{ij}^{\pm}(z)=\sum\limits_{p=0}l_{ij}^{\pm}[\mp p]z^{\pm p}.
\end{equation*}
\end{Definition}

 Using the same notation of the $R$-matrix superalgebra $U(R)$, we have the following result immediately.
\begin{Corollary} The mapping
\begin{align*}
 &q^{c/2}\mapsto q^{c/2},\qquad
 x_i^{\pm}(z)\mapsto(-1)^{[\alpha_i]}\bigl(q_i-q_i^{-1}\bigr)^{-1}X_i^{\pm}(zq^{-\nu_i}),\qquad 1\leq i\leq m+n-1,\\
 &x_{m+n}^{\pm}(z)\mapsto(-1)^{[\alpha_{m+n}]}\bigl(q_{m+n}-q_{m+n}^{-1}\bigr)^{-1}[2]_{q_{m+n}}^{-1/2}X_{m+n}^{\pm}(zq^{n-m}),\\
 &\Phi^{\pm}_{i}(z)\mapsto h_{i+1}^{\pm}(zq^{-\nu_i})h_{i}^{\pm}(zq^{-\nu_i})^{-1},\qquad 1\leq i\leq m+n,
\end{align*}
define an isomorphism $\mathcal{U}_q(\mathfrak{\hat{g}})\rightarrow\mathcal{U}_q^{R}(\mathfrak{\hat{g}})$.
\end{Corollary}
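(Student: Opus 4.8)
The plan is to obtain the asserted isomorphism by assembling three ingredients already at our disposal: the embedding $\tau\colon\mathcal{U}_q(\mathfrak{\hat{g}})\to\mathcal{A}_q$ of Proposition \ref{Apr0}, the isomorphism $U(R)\cong\mathcal{A}_q$ of Theorem \ref{th4}, and the observation that the $R$-matrix presentation $\mathcal{U}_q^R(\mathfrak{\hat{g}})$ is nothing but the quotient of $U(R)$ by the relation $c^{\pm}(z)=1$. Indeed, comparing the defining relations of $\mathcal{U}_q^R(\mathfrak{\hat{g}})$ with those of $U(R)$, the only additional relation is $DL^{\pm}(z\zeta)^tD^{-1}L^{\pm}(z)=L^{\pm}(z)DL^{\pm}(z\zeta)^tD^{-1}=1$, and by Proposition \ref{Apr2} the common value of the two sides in $U(R)$ is precisely the central series $c^{\pm}(z)$. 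Hence $\mathcal{U}_q^R(\mathfrak{\hat{g}})=U(R)/\mathcal{I}$, where $\mathcal{I}$ is the ideal generated by the coefficients of $c^{\pm}(z)-1$.

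First I would transport this description through Theorem \ref{th4}. Under the isomorphism $AR\colon\mathcal{A}_q\to U(R)$, whose inverse is $RA$, the central series $c^{\pm}(z)$ of $\mathcal{A}_q$ corresponds to the central series $c^{\pm}(z)$ of $U(R)$, because by Proposition \ref{pr5} both are given by the identical expression in the diagonal Gaussian generators $h_i^{\pm}(z)$, and $AR$ fixes these generators. Consequently $AR$ carries the ideal of $\mathcal{A}_q$ generated by the coefficients of $c^{\pm}(z)-1$ onto $\mathcal{I}$, and therefore induces an isomorphism of the quotients $\mathcal{A}_q/(c^{\pm}(z)=1)\cong\mathcal{U}_q^R(\mathfrak{\hat{g}})$.

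Next I would identify the left-hand quotient with $\mathcal{U}_q(\mathfrak{\hat{g}})$. By Proposition \ref{Apr1} we have the tensor decomposition $\mathcal{A}_q=\mathcal{U}_q(\mathfrak{\hat{g}})\otimes_{\mathbb{C}(q^{1/2})}\mathfrak{C}$, where $\mathfrak{C}$ is the central subalgebra generated by the coefficients of $c^{\pm}(z)$. Imposing $c^{\pm}(z)=1$ collapses $\mathfrak{C}$ to the ground field and identifies $\mathcal{A}_q/(c^{\pm}(z)=1)$ with $\tau(\mathcal{U}_q(\mathfrak{\hat{g}}))\cong\mathcal{U}_q(\mathfrak{\hat{g}})$. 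Composing $\tau$ with $AR$ and the projection onto $\mathcal{U}_q^R(\mathfrak{\hat{g}})$ then yields the desired isomorphism $\mathcal{U}_q(\mathfrak{\hat{g}})\to\mathcal{U}_q^R(\mathfrak{\hat{g}})$, and the images of the Drinfeld generators $x_i^{\pm}(z)$ and $\Phi^{\pm}_i(z)$ are read off directly from the formulas of Proposition \ref{Apr0}.

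The one point requiring genuine care is the normalization at the terminal node $i=m+n$: there the stated map carries the extra factor $[2]_{q_{m+n}}^{-1/2}$ together with the shift $zq^{n-m}=zq^{-\nu_{m+n}}$, which are not present uniformly in $\tau$. This is accounted for by the vector representation of Proposition \ref{Level-0}, in which $x_{n+m,k}^{\pm}$ acts with an additional factor $[2]_{q_{m+n}}^{1/2}$ compared with the other nodes; tracing this factor through the Gauss decomposition of the image of the universal $R$-matrix in Proposition \ref{FHE}, where $f_{n+m}^{\pm}$ and $e_{n+m}^{\pm}$ are defined with exactly this $[2]_{q_{m+n}}^{1/2}$, produces the compensating $[2]_{q_{m+n}}^{-1/2}$ in the final assignment. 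Confirming that this single rescaling is consistent with all the relations of Theorem \ref{th3} at the last node is the main technical obstacle; the remainder is bookkeeping that follows formally from Propositions \ref{Apr0}, \ref{Apr1}, \ref{pr5} and Theorem \ref{th4}.
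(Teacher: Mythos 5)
Your proposal is correct and follows exactly the route the paper intends: the paper states this corollary as an immediate consequence of Theorem \ref{th4}, Propositions \ref{Apr0}, \ref{Apr1}, \ref{Apr2} and \ref{FHE}, namely that $\mathcal{U}_q^R(\mathfrak{\hat{g}})$ is $U(R)\cong\mathcal{A}_q$ modulo $c^{\pm}(z)=1$, which by the tensor decomposition of Proposition \ref{Apr1} collapses to $\mathcal{U}_q(\mathfrak{\hat{g}})$. One small clarification: the spectral shift at the last node is not actually an anomaly, since $\nu_{m+n}=m-n$ gives $zq^{-\nu_{m+n}}=zq^{n-m}$, so only the $[2]_{q_{m+n}}^{-1/2}$ normalization differs from the uniform formula, and you correctly trace it to the factor $[2]_{q_{m+n}}^{1/2}$ in the vector representation and in the Gaussian entries $e_{n+m}^{\pm}$, $f_{n+m}^{\pm}$ of Proposition \ref{FHE}.
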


\subsection*{Acknowledgements}

The authors would like to sincerely thank the anonymous referees for their valuable suggestions and contributions to improve the paper.
H.~Zhang is partially supported by the support of the National Natural Science Foundation of China (No.~12271332), and Shanghai Natural Science Foundation grant 22ZR1424600.

\pdfbookmark[1]{References}{ref}
\LastPageEnding

\end{document}